\def\wt{\widetilde}
\def\wh{\widehat}
\def\:{{\colon}}
\def\lam{{\lambda}}
\def\alp{{\alpha}}
\def\veps{{\varepsilon}}
\newcommand{\ord}{\operatorname{ord}}
\newcommand{\Ram}{\operatorname{Ram}}
\newcommand{\Coker}{\operatorname{Coker}}
\newcommand{\trdeg}{{\mathrm{tr.deg.}}}
\newcommand{\hatotimes}{{\wh{\otimes}}}
\newcommand{\circcirc}{{\circ\circ}}
\newcommand{\di}{{\diamond}}
\newcommand{\rmlog}{{\mathrm log}}
\newcommand{\Br}{{\rm Br}}
\newcommand{\cha}{{\rm char}}
\newcommand{\mon}{{\rm mon}}
\newcommand{\hyp}{{\rm hyp}}
\newcommand{\Int}{{\rm Int}}
\newcommand{\slope}{{\rm slope}}
\newcommand{\Ann}{{\rm Ann}}
\newcommand{\toisom}{\widetilde{\to}}
\newcommand{\into}{{\hookrightarrow}}
\newcommand{\bfF}{{\mathbf{F}}}
\newcommand{\bfA}{{\mathbf{A}}}
\newcommand{\bfP}{{\mathbf{P}}}
\newcommand{\bfR}{{\mathbf{R}}}
\newcommand{\bfQ}{{\mathbf{Q}}}
\newcommand{\bfZ}{{\mathbf{Z}}}
\newcommand{\bfN}{{\mathbf{N}}}
\newcommand{\bfC}{{\mathbf{C}}}
\newcommand{\gtr}{{\mathfrak{r}}}
\newcommand{\calO}{{\mathcal{O}}}
\newcommand{\calC}{{\mathcal{C}}}
\newcommand{\calM}{{\mathcal{M}}}
\newcommand{\calH}{{\mathcal H}}
\newcommand{\calA}{{\mathcal A}}
\newcommand{\calE}{{\mathcal E}}
\newcommand{\calF}{{\mathcal F}}
\newcommand{\calG}{{\mathcal G}}
\newcommand{\gtX}{{\mathfrak{X}}}
\newcommand{\gtY}{{\mathfrak{Y}}}
\newcommand{\gty}{{\mathfrak{y}}}
\newcommand{\tilt}{{\wt t}}
\newcommand{\tilx}{{\wt x}}
\newcommand{\tily}{{\wt y}}
\newcommand{\tilf}{{\wt f}}
\newcommand{\tilk}{{\wt k}}
\newcommand{\tilK}{{\wt K}}
\newcommand{\tilE}{{\wt E}}
\newcommand{\oy}{{\overline{y}}}
\newcommand{\oV}{{\overline{V}}}
\newcommand{\oU}{{\overline{U}}}
\newcommand{\tilcalE}{{\mathcal{\widetilde E}}}
\newcommand{\tilcalF}{{\mathcal{\widetilde F}}}
\newcommand{\tilcalG}{{\mathcal{\widetilde G}}}
\newcommand{\wHx}{{\wt{\calH(x)}}}
\newcommand{\wHy}{{\wt{\calH(y)}}}
\newcommand{\calOcirc}{\calO^\circ}
\newcommand{\calMcirc}{\calM^\circ}
\newcommand{\hatOmega}{{\wh\Omega}}
\newcommand{\hatl}{{\wh{l}}}
\newcommand{\kcirc}{{k^\circ}}
\newcommand{\Kcirc}{{K^\circ}}
\newcommand{\lcirc}{{l^\circ}}
\newcommand{\Lcirc}{{L^\circ}}
\newcommand{\Fcirc}{{F^\circ}}
\newcommand{\kcirccirc}{{k^{\circ\circ}}}
\newcommand{\lcirccirc}{{l^{\circ\circ}}}
\newcommand{\Kcirccirc}{{K^{\circ\circ}}}
\newcommand{\Lcirccirc}{{L^{\circ\circ}}}
\newcommand{\Fcirccirc}{{F^{\circ\circ}}}
\newcommand{\val}{\operatorname{val}}
   \newtheorem{theorem}[subsubsection]{Theorem}
      \newtheorem*{theorem*}{Theorem}
   \newtheorem{lemma}[subsubsection]{Lemma}
   \newtheorem{corollary}[subsubsection]{Corollary}
   \newtheorem*{conjecture*}{Conjecture}
\theoremstyle{definition}
          \newtheorem*{exercise*}{Exercise}
   \newtheorem{example}[subsubsection]{Example}
   \newtheorem*{example*}{Example}
   \newtheorem{definition}[subsubsection]{Definition}
   \newtheorem*{definition*}{Definition}
   \newtheorem{rem}[subsubsection]{Remark}
   \newtheorem{remark}[subsubsection]{Remark}
\begin{document}

\author{Adina Cohen}
\address{Einstein Institute of Mathematics, The Hebrew University of Jerusalem, Giv'at Ram, Jerusalem, 91904, Israel}
\email{adina.cohen@mail.huji.ac.il}

\author{Michael Temkin}
\address{Einstein Institute of Mathematics, The Hebrew University of Jerusalem, Giv'at Ram, Jerusalem, 91904, Israel}
\email{temkin@math.huji.ac.il}

\author{Dmitri Trushin}
\address{Einstein Institute of Mathematics, The Hebrew University of Jerusalem, Giv'at Ram, Jerusalem, 91904, Israel}
\email{trushindima@yandex.ru}

\keywords{Berkovich analytic spaces, the different, topological ramification.}
\thanks{This work was supported by the European Union Seventh Framework Programme (FP7/2007-2013) under grant agreement 268182 and BSF grant 2010255.}
\title{Morphisms of Berkovich curves and the different function}

\begin{abstract}
Given a generically \'etale morphism $f\:Y\to X$ of quasi-smooth Berkovich curves, we define a different function $\delta_f\:Y\to[0,1]$ that measures the wildness of the topological ramification locus of $f$. This provides a new invariant for studying $f$, which cannot be obtained by the usual reduction techniques. We prove that $\delta_f$ is a piecewise monomial function satisfying a balancing condition at type 2 points analogous to the classical Riemann-Hurwitz formula, and show that $\delta_f$ can be used to explicitly construct the simultaneous skeletons of $X$ and $Y$. As another application, we use our results to completely describe the topological ramification locus of $f$ when its degree equals to the residue characteristic $p$.
\end{abstract}
\maketitle

\section{Introduction}

\subsection{Motivation}
Throughout this paper, $k$ denotes an algebraically closed complete non-archimedean real-valued field whose valuation will be denoted $|\ |\:k\to\bfR_{\ge 0}$. By a nice compact Berkovich curve we mean a compact separated quasi-smooth strictly $k$-analytic curve. Such objects play a central role in a variety of recent papers (e.g., \cite{metrics_on_curves}, \cite{ABBR}, \cite{XFaber1}, \cite{Baldassarri}, \cite{Poineau-Pulita}), and their structure is adequately described by the semistable reduction theorem. Nevertheless, morphisms between nice curves are not understood so well, and the main aim of this paper is to start filling in this gap. Since the case when $f\:Y\to X$ is tame is classical, we study the phenomena occurring in the wild case. For this, we introduce a {\em different function} $\delta_f\:Y\to[0,1]$ that measures the ``wildness'' of $f$, and this paper is devoted to a detailed study of $\delta_f$ and the properties of $f$ reflected by $\delta_f$.

In particular, we will see that $\delta_f$ is tightly related to the minimal simultaneous semistable reduction of $Y$ and $X$, and if the degree of $f$ equals to $\cha(\tilk)$ then its topological ramification locus and metric structure are completely encoded in $\delta_f$. In a sequel work \cite{radialization}, we will show that in the general case the latter are completely controlled by a more complicated invariant $\phi_f$, which can be viewed as a family of Herbrand functions and associates to points of $Y$ piecewise monomial automorphisms of $[0,1]$, and the different $\delta_f(y)$ is just the coefficient of the linear part of $\phi_f(y)$. Note, however, that our study of $\delta_f$ in this work is much more detailed than the study of $\phi_f$ in \cite{radialization}, and many results, including the genus formulas are not extended to $\phi_f$. So, this work and \cite{radialization} are rather complementary.

\subsection{Known results}
Before outlining our methods and results, let us discuss the state of the art in the field.

\subsubsection{The tame case}\label{intrtame}
Berkovich introduced tame \'etale coverings in \cite[Section 6.3]{berihes} and showed that any connected tame \'etale covering of a disc is trivial and any connected tame \'etale covering of an annulus is Kummer (\cite{berihes} uses the word ``standard''). As a corollary, one can easily obtain the following description of an arbitrary tame morphism $f$: there is a compatible pair of skeletons $\Gamma_X\subset X$ and $\Gamma_Y=f^{-1}(\Gamma_X)$ such that $f$ totally splits on their complements. In particular, the topological ramification locus is a finite graph. Moreover, it suffices to choose $\Gamma_X$ that contains the image $f(\Ram(f))$ of the ramification locus of $f$, since its preimage is automatically a skeleton. The latter observation can be used to give a simple proof of the semistable reduction theorem for curves $Y$ that admit a morphism $Y\to\bfP^1_k$ without wild topological ramification (e.g., when $\cha(\tilk)=0$).

\subsubsection{The wild case}
The situation with wild morphisms is much more complicated. By the simultaneous semistable reduction theorem, see \ref{simulsec}, one can find skeletons $\Gamma_X$ and $\Gamma_Y=f^{-1}(\Gamma_X)$ such that the restriction of $f$ onto their complements is a disjoint union of \'etale coverings of open discs by open discs. However, these coverings do not have to split and may be pretty complicated, so the description of $f$ provided by this theorem is not really satisfactory. In addition, it is not clear how $(\Gamma_Y,\Gamma_X)$ is related to $f$ even when $X=\bfP^1_k$. In the tame case, we can simply take $\Gamma_X$ to be the convex hull of $f(\Ram(f))$, but in the wild case the latter is not so informative (e.g., it can be a single point when $\cha(k)>0$).

Furthermore, if $k$ is of mixed characteristic $p$, for example $k=\bfC_p$, then already for the wild Kummer covering $\bfP^1_k\to\bfP^1_k$ given by $t\mapsto t^p$, the topological ramification locus $T$ is a metric neighborhood of the interval $[0,\infty]\subset\bfP^1_k$. Although $T$ is a huge set, it possesses a reasonable ``finite combinatorial description'', so it is natural to wonder if the topological ramification locus can be described ``combinatorially'' in general. To the best of our knowledge, this question was only studied in the works~\cite{XFaber1} and~\cite{XFaber2} of X. Faber. In particular, Faber managed to bound from above the topological ramification locus of morphisms $\bfP^1_k\to\bfP^1_k$ when $\Ram(f)$ contains no wildly ramified points (e.g., $\cha(k)=0$): it is contained in a certain metric neighborhood of the convex hull of $f(\Ram(f))$. In addition, Faber showed that no such metric neighborhood exists if there are wildly ramified points. It was not even conjectured in the literature what a precise structure of the topological ramification locus in general might be (see Section~\ref{sequel} below).

\subsubsection{The different}
The different is a classical invariant that measures wildness of a valued field extension, so it is quit natural to consider it when studying wild covers $f\:Y\to X$ of Berkovich curves. Nevertheless, it seems that the different was not used in the literature devoted to Berkovich spaces, although it did show up in the adjacent areas of rigid and, especially, formal geometries. In rigid geometry, L\"utkebohmert, following Gabber's ideas, used the different to prove a rather deep non-archimedean version of Riemann's existence theorem, see \cite{Lutkebohmert}. In fact, L\"utkebohmert implicitly introduced the different function on certain intervals in $Y$, showed that it is piecewise monomial on them, and obtained certain estimates on $\delta_f$, specific for the mixed characteristic case. Later, Ramero gave in \cite{Ramero} another proof of Riemann's existence theorem, which also makes use of the different.

In formal geometry, the different was used in a whole cluster of works related to lifting problems, automorphisms of open discs and Oort's conjecture. For example, see \cite{Raynaud}, \cite{Green-Matignon}, \cite{Obus-Wewers} and the literature cited there. The motivation and context in these papers differs from ours. Typically one assumes that the covering is Galois and the characteristic is mixed and studies the covers in a much more detailed way. Often, one also restricts the Galois group, for example, assuming that it is cyclic or even of degree $p$. Finally, the ground field is assumed to be discretely valued.

\subsection{Main results and outline of the paper}

\subsubsection{Preliminaries}
In Section \ref{onedimsec} we study the different of extensions of one-dimensional analytic $k$-fields, i.e. fields that can appear as $\calH(x)$ where $x$ is a point of a curve. To large extent this is based on \cite[Section 6]{temst} and \cite[Chapter 6]{Gabber-Ramero}. For fields, our main formula for computing the differents is established in Corollary~\ref{differentcor}.

Section~\ref{ancurvesec} is devoted to systematization of various material about Berkovich curves we use. Most of it is well known, although some statements are hard to find in the literature.

\subsubsection{Local behaviour of $\delta_f$}
The main player of this paper is introduced in Section~\ref{difsec}: we associate to $f\:Y\to X$ a {\em different function} $\delta_f\:Y\setminus Y(k)\to[0,1)$ whose value at $y$ equals to the different of the extension $\calH(y)/\calH(x)$, where $x=f(y)$. Thus, $\delta_f$ reflects how the different varies in one-dimensional analytic families of extensions of valued fields. In Theorem~\ref{compdeltalem} we compute $\delta_f$ by use of tame parameters, and obtain, as a corollary, that $\delta_f$ is piecewise $|k^\times|$-monomial on any interval $I\subset Y\setminus Y(k)$. This extends L\"utkebohmert's (implicit) results to the case when $I$ has a type 4 endpoint. In addition, we describe in Theorem~\ref{restrictth} all restrictions satisfied by the multiplicity of $f$ at a point $y$, the value of the different at $y$ and the slope of the different in some direction from $y$. As a very particular case, this recovers the classical fact that for a fixed multiplicity, the different is bounded in the mixed characteristic case, unlike the equicharacteristic one.

The major part of Section~\ref{difsec} is occupied with the study of local behaviour of $\delta_f$ at a type 2 point $y$. In Theorem~\ref{prop:local_RH} we show that if $y$ is inner then the slopes of the different along all branches $v$ at $y$ satisfy a balancing condition analogous to the Riemann-Hurwitz formula, where the role of the classical differential ramification indices $R_v$ (e.g., see \cite[IV.2.4]{Hartshorne}) is played by the numbers $S_v=-\slope_v\delta_f+n_v-1$. In particular, we show that almost all $S_v$ vanish, and hence $\delta_f$ increases in almost all directions from $y$ whenever $\delta_f(y)<1$. This indicates that $\delta_f$ is very different from functions of the form $|h|$ for $h\in\Gamma(\calO_Y)$, and is somewhat analogous to $1/r(y)$, where $r(y)$ is a radius function on a disc.

\begin{remark}
(i) We often call this balancing condition the {\em local Riemann-Hurwitz formula} at a type 2 point. The formula is new, though it should be noted that in the situation studied by A. Obus in \cite{Obus} (the ground field is discretely valued, the characteristic is mixed and the covering is Galois with cyclic $p$-Sylow subgroups), one can easily deduce it from \cite[5.6,5.8,5.10]{Obus}, though translation of notation requires some effort.

(ii) The balancing condition should not be confused with local Riemann-Hurwitz formulas at a formal fiber of a closed point of a formal model. The latter type of formulas compute the genus of such a formal fiber, see, for example, \cite[Theorem~3.4]{Saidi}. We establish a formula of this type in Theorem~\ref{genusth2}.

(iii) Although the branches at $y$ correspond to a reduction curve $C_y$ with function field $\wHy$ (see \ref{germredsec}), the numbers $S_v$ cannot be described in terms of any reduction data and our balancing condition does not reduce to a Riemann-Hurwitz formula for curves over $\tilk$. For example, it can freely happen that the extension $\wHy/\wHx$ is purely inseparable, and hence does not provide any new information about $C_y$, while not all $S_v$ vanish, and hence $\delta_f$ distinguishes a few branches at $y$.

(iv) Our main formula for $\delta_f$ (Theorem~\ref{compdeltalem}) involves radii functions $r_t$, so it is not so surprising that $\delta_f$ and $r_t^{-1}$ behave similarly. Also, the different is related to the norms on the sheaves $\Omega_X$ and $\Omega_Y$ (see Section~\ref{kahlersec}), which quite differ from the norm on $\calO_X$. In particular, if $\omega\in\Gamma(\Omega_X)$ then $|\omega|_\Omega$ decreases in almost all directions.
\end{remark}

Finally, in Section \ref{limsec} we extend $\delta_f$ to a function $\delta_f^\rmlog\:Y\to[0,1]$ by continuity and show that it is piecewise $|k^\times|$-monomial with zeros at wild ramification points. Also, we show that the order of the zero at $y\in Y(k)$ is the value of the logarithmic different of the extension of DVRs $\calO_y/\calO_{x}$, see Theorem~\ref{limth}.

\subsubsection{Applications to the structure of $f$}
There is a standard graph-theoretic language describing skeletons of nice compact curves and tame morphisms between them. In Section~\ref{combsec} we extend it by adding in a datum related to the different function. Then we prove a combinatorial Riemann-Hurwitz formula for maps of such graphs.

In Section \ref{mainsec} we study what $\delta_f$ can tell about $f$. In particular, Theorem \ref{genusth} expresses the genus of $Y$ in terms of the genus of $X$, the ramification divisor, and indices $R_b$ at the boundary points of $Y$. By Theorem~\ref{prop:local_RH}, the local Riemann-Hurwitz formula can fail only at a boundary type 2 point $b$, and $R_b$ measures its failure. Again, the essentially new feature here are the indices $R_b$, that cannot be defined without the different function (e.g., in terms of geometry over $\tilk$). In addition, we describe the global structure of $\delta_f$ and its relation to the skeletons. As we saw, $S_v=0$ for almost any branch $v\in\Br(y)$, i.e. the different $\delta_f$ increases with slope $n_v-1$ in the direction of $v$. So, we say that $\delta_f$ is trivialized by a skeleton $\Gamma_Y$ if for any branch $v$ not pointing towards $\Gamma_Y$, we have that $S_v=0$. If one only has that $S_v=0$ for all points $y\in\Gamma_Y$ of type 2 and all branches $v$ at $y$ pointing outside of $\Gamma_Y$, then we say that $\Gamma_Y$ locally trivializes $\delta_f$. By Theorem~\ref{deltatrivth}, any simultaneous skeleton $(\Gamma_Y,\Gamma_X)$ trivializes $\delta_f$, and, conversely, Theorem~\ref{localcharth} states that if $\Gamma$ is the preimage of a skeleton $\Gamma_X$ and $\delta_f$ is locally trivialized by $\Gamma$ then $\Gamma$ is a skeleton (in particular, $\delta_f$ is trivialized by it). As a corollary, one obtains a constructive description of the skeletons of $Y$ in terms of $f$ and the skeletons of $X$, see Remark~\ref{skeletonrem}.

\subsubsection{Degree-$p$ coverings}
We describe the structure of morphisms of degree $p$ in Section~\ref{degpsec}. Trivialization of $\delta_f$ by a skeleton $\Gamma_Y$ allows to express $\delta_f$ in terms of its restriction onto $\Gamma_Y$ and the multiplicity function $n_y\:Y\to\bfN$. This does not give a complete description as $n_y$ can be complicated, but the situation improves when $\deg f=p$. In this case, if $f$ is wild at $y$ then $n_y=p$ and we obtain a full control on $\delta_f$ and the topological ramification locus. Namely, if $\Gamma_Y$ trivializes $\delta_f$ then $\delta_f$ increases in all directions pointing outside of $\Gamma_Y$ with constant slope $p-1$. In particular, we obtain in Theorem~\ref{coneth} the following finite combinatorial description of the topological ramification locus $T$ of $f$: it is a radial set around the subgraph $T\cap\Gamma_Y$ of $\Gamma_Y$ whose radius at $y\in T\cap\Gamma_Y$ is $\delta_f(y)^{1/(p-1)}$.

We conclude the paper by illustrating our results in the case of a degree two covering $f\:E\to\bfP_k^1$, where $E$ is an elliptic curve. In particular, we classify all ten possible configurations of the minimal skeleton of $f$, explain their relation to $\delta_f$ and relate their metric to the absolute value of the $j$-invariant, see Theorem~\ref{skelelliptic}. In the wild case (i.e. $\cha(\tilk)=2$) this seems to be new, especially, what concerns the ``tropicalization" of the supersingular configurations, although a brief analysis of the mixed characteristic case can be found in \cite[5.1]{Green-Matignon}.

\subsection{A sequel}\label{sequel}
To complete a ``combinatorial'' description of finite generically \'etale morphisms $f\:Y\to X$ between nice compact curves, one should provide a ``finite'' description of the multiplicity function $n_y$. This will be done in a separate paper \cite{radialization} by proving the following result:

Let $T_d$ be the set of points with $n_y=d$. Then each $T_{\ge d}=\coprod_{i\ge d}T_i$ is a closed set and there exists a skeleton $\Gamma_Y$ and piecewise monomial functions $r_i\:\Gamma_Y\to(0,1]$ with $1\le i\le [\log_p(\deg f)]$, such that each $T_d$ with $d\notin p^\bfN$ is contained in $\Gamma_Y$ and each $T_{\ge p^i}$ is the radial set with center $\Gamma_Y\cap T_{\ge p^i}$ of radius $r_i(y)$. In addition, it will be shown in \cite{radialization} that the radii $r_i(y)$ can be easily obtained from the breaks of the ramification filtration of $\calH(y)/\calH(x)$ and, in fact, they determine the ramification filtration. 

\setcounter{tocdepth}{1}
\tableofcontents

\section{One-dimensional analytic fields}\label{onedimsec}
In Section \ref{onedimsec} we recall some facts about extensions of {\em analytic} fields, i.e. complete real-valued fields. Recall that the ground field $k$ is assumed to be analytic, non-trivially valued and algebraically closed.

\subsection{Local uniformization and parameters}

\subsubsection{One-dimensional $k$-fields and their types}
An analytic $k$-field $K$ is called {\em one-dimensional} if it is finite over a subfield $\wh{k(t)}$ with $t\notin k$. Recall that the sum of $F_{K/k}=\trdeg_\tilk(\tilK)$ and $E_{K/k}=\dim_\bfQ(|K^\times|/|k^\times|\otimes_\bfZ\bfQ)$ is bounded by 1 by Abhyankar's inequality. We say that $K$ is of type 2 if $F_{K/k}=1$, of type 3 if $E_{K/k}=1$, and of type 4 if $E_{K/k}=F_{K/k}=0$.

\subsubsection{Parameters}
If $K$ is a one-dimensional analytic $k$-field and $t\in K\setminus k$ then the extension $K/\wh{k(t)}$ is finite by \cite[Corollary~6.3.4]{temst}. If $K/\wh{k(t)}$ is separable (resp. tame, resp. unramified) then we say that $t$ is a parameter (resp. tame parameter, resp. unramified parameter). The following theorem is proved in \cite[Theorem 6.3.1(i)]{temst} by a direct (though involved) valuation-theoretic argument. One can view it both as a local uniformization of one-dimensional fields and a far reaching generalization of the separable transcendence basis theorem in dimension one.

\begin{theorem}\label{unramparam}
Any one-dimensional $k$-field possesses an unramified parameter.
\end{theorem}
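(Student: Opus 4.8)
The plan is to argue according to the three types of $K$, constructing the parameter $t$ by prescribing its behaviour on either the residue field or the value group, and using that $\tilk$ is algebraically closed together with the fact that for types 2 and 3 the extension $K/k$ is Abhyankar (i.e. $F_{K/k}+E_{K/k}$ equals the one-dimensional transcendence dimension), which forces the fundamental equality and thereby rules out defect. Throughout I use that $K/\wh{k(t)}$ is finite for any $t\in K\setminus k$, and I recall that $|k^\times|$ is divisible, so that $|K^\times|/|k^\times|$ is torsion free: if $|x|^n=|a|$ with $a\in k^\times$, then $a$ has an $n$-th root $b\in k$ and $|x|=|b|\in|k^\times|$.

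Suppose first that $K$ is of type 2, so $\trdeg_\tilk\tilK=1$. Since $\tilk$ is perfect, $\tilK/\tilk$ is separably generated, and I would choose $\bar t\in\tilK$ with $\tilK/\tilk(\bar t)$ finite and separable, then lift it to $t\in\Kcirc$ with $|t|=1$. A Gauss-norm computation shows that the norm of $K$ restricts on $k(t)$ to the Gauss norm, so $\wh{k(t)}$ has value group $|k^\times|$ and residue field $\tilk(\bar t)$. Because $E_{K/k}=0$ and $|K^\times|/|k^\times|$ is torsion free we get $|K^\times|=|k^\times|$, so the value-group index is $1$, while the residue extension $\tilK/\tilk(\bar t)$ is separable by construction. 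It then remains only to exclude defect, which follows since the type 2 case is Abhyankar ($F_{K/k}+E_{K/k}=1$): there $[K:\wh{k(t)}]$ equals the product of the residue degree and the value-group index, so $K/\wh{k(t)}$ is unramified and $t$ is the desired parameter.

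Suppose next that $K$ is of type 3. Then $|K^\times|/|k^\times|$ is finitely generated (being finite over the value group of any $\wh{k(s)}\subset K$), torsion free, and of rational rank $E_{K/k}=1$, hence isomorphic to $\bfZ$. I would pick $t\in K$ whose class generates $|K^\times|/|k^\times|$; then $\wh{k(t)}$ has the same value group as $K$, and its residue field is $\tilk=\tilK$ since $F_{K/k}=0$. Thus both the value-group index and the residue degree are $1$, and invoking once more the absence of defect in the Abhyankar case yields $[K:\wh{k(t)}]=1$, so $K=\wh{k(t)}$ and $t$ is trivially an unramified parameter.

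The genuinely hard case is type 4, where $\tilK=\tilk$ and $|K^\times|=|k^\times|$, so no generator is visible in either the residue field or the value group and $K/k$ is not Abhyankar. Here any parameter $t$ automatically satisfies $e=f=1$, so an unramified parameter is precisely an element $t$ with $K=\wh{k(t)}$; the task is therefore to show that a type 4 field is the completion of $k(t)$ for a single $t$, i.e. to remove all (necessarily wild) defect ramification. I expect this to be the main obstacle, requiring the involved valuation-theoretic argument: beginning from an arbitrary parameter, one realizes the type 4 valuation as the limit of the type 2/3 valuations attached to a decreasing sequence of discs with empty intersection, and successively corrects the parameter by a Krasner-type estimate so that the defect of $K/\wh{k(t)}$ strictly drops, passing to the limit to obtain $K=\wh{k(t)}$. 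In positive residue characteristic one must simultaneously ensure that the chosen $t$ gives a separable extension, which I would arrange by selecting lifts transversal to $K^p$.
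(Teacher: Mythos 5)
Your treatment of types 2 and 3 is essentially sound: lifting a separating generator of $\widetilde{K}/\widetilde{k}$ (type 2), respectively choosing $t$ whose value generates $|K^\times|/|k^\times|\cong\mathbf{Z}$ (type 3), does produce an element with $e=1$ and separable residue extension, so that everything reduces to defectlessness of $K/\widehat{k(t)}$. But note that your appeal to ``Abhyankar forces the fundamental equality'' is not elementary valuation theory: it is the stability theorem for the (generalized) Gauss field $\widehat{k(t)}$ (Grauert--Remmert--Gruson; in this generality Kuhlmann's generalized stability theorem, and a substantial part of what \cite[Section~6]{temst} develops). Citing it is legitimate, but it is a deep input of comparable weight to the statement being proved, not a routine fact.

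The genuine gap is the type 4 case, and it is not a peripheral one: it is the entire content of the theorem. The paper itself does not reprove the statement; it cites \cite[Theorem~6.3.1(i)]{temst} precisely because of the ``direct (though involved) valuation-theoretic argument'' required there, and that argument is needed exactly for type 4. In that case your stability input is unavailable: a type-4 valuation on $k(s)$ is not Abhyankar, and finite separable extensions of $\widehat{k(s)}$ with nontrivial defect genuinely exist in positive residue characteristic. The assertion to be proved is that such an extension $K$ is nevertheless of the form $\widehat{k(t)}$, i.e.\ that all defect can be absorbed into a change of parameter. Your plan --- ``successively correct the parameter by a Krasner-type estimate so that the defect strictly drops, then pass to the limit'' --- supplies no mechanism for any of the three critical points: why a parameter with strictly smaller defect exists at all (the defect of $K/\widehat{k(s)}$ is an invariant of the chosen subfield, and producing a better subfield \emph{is} the problem); why the sequence of corrected parameters converges to an element of $K$ that still generates; and why separability can be preserved along the way when $\mathrm{char}(k)=p>0$. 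As written, the type 4 case is a restatement of the goal rather than a proof, so the proposal does not establish the theorem.
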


\begin{remark}
Theorem \ref{unramparam} is an easy consequence of the semistable reduction theorem recalled below. Conversely, using Theorem \ref{unramparam} one can prove the semistable reduction theorem relatively easily, and in the algebraic setting this was done in \cite{temst}.
\end{remark}

\subsubsection{Monomial parameters}
We say that a parameter $t\in K$ is {\em monomial} if the induced valuation on $l=k(t)$ is a generalized Gauss valuation, i.e. $|\sum_i a_it^i|=\max_i|a_i|r^i$, where $r=|t|$. 

\begin{lemma}\label{monomparamlem}
Assume that $K$ is a one-dimensional analytic $k$-field and $t\in K$ is a parameter.

(i) The infimum $\inf_{c\in k}|t-c|$ is achieved if and only if $K$ is of type 2 or 3.

(ii) The infimum is achieved for $c=c_0$ if and only if $t-c_0$ is a monomial parameter.
\end{lemma}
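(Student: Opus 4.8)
The plan is to prove (ii) first by a direct computation with the monomial (Gauss) norm, and then to deduce (i) by relating monomiality to the type of $K$. Throughout, observe that both the infimum $\inf_{c\in k}|t-c|$ and the monomiality of $t-c_0$ depend only on the restriction of the valuation to $k(t)$, so they are really statements about the image point $y\in\bfA^1_k=\calM(k[T])$ cut out by $t$. Since $t\notin k$ and $k$ is complete, a sequence $c_n$ with $|t-c_n|\to 0$ would force $t=\lim c_n\in k$; hence $\rho:=\inf_c|t-c|>0$ and $y$ is not of type $1$.

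For (ii), set $s=t-c_0$ and $\rho_0=|s|$. The direction ``monomial $\Rightarrow$ infimum achieved at $c_0$'' is immediate: if $s$ is monomial then $|t-c|=|s-(c-c_0)|=\max(\rho_0,|c-c_0|)\ge\rho_0=|s|$ for every $c$, so the infimum equals $\rho_0$ and is attained at $c_0$. For the converse I assume the infimum is attained at $c_0$, i.e. $|s-c|\ge\rho_0$ for all $c\in k$. The ultrametric inequality always gives $|P(s)|\le\max_i|a_i|\rho_0^i$ for $P=\sum_i a_iT^i$, so it remains to prove the reverse inequality. Since $k$ is algebraically closed I factor $P=a_n\prod_j(T-b_j)$ and, using multiplicativity of both $|\cdot|$ and the Gauss norm, reduce to the claim $|s-b|=\max(|b|,\rho_0)$ for each root $b$. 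This splits into three cases: if $|b|>\rho_0$ then $|s-b|=|b|$; if $|b|<\rho_0$ then $|s-b|=\rho_0$; and if $|b|=\rho_0$ then the ultrametric inequality gives $|s-b|\le\rho_0$ while minimality gives $|s-b|\ge\rho_0$, whence equality. Thus $|P(s)|=\max_i|a_i|\rho_0^i$ and $s$ is monomial. The case $|b|=\rho_0$, where minimality is exactly what forbids a drop in the norm, is the crux of the computation.

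For (i), the key auxiliary fact is that the type of $K$ coincides with the type of $\calH(y)=\wh{k(t)}$, because the finite extension $K/\wh{k(t)}$ alters neither $\trdeg_\tilk\tilK$ nor the rational rank of the value group; that is, it preserves both $F_{K/k}$ and $E_{K/k}$. For the forward implication, if the infimum is attained at $c_0$ then by (ii) the parameter $t-c_0$ is monomial, so $y$ is the generic point $\eta_{c_0,\rho}$ of the disc of radius $\rho$ about $c_0$; this is of type $2$ when $\rho\in|k^\times|$ and of type $3$ otherwise, so $K$ is of type $2$ or $3$. Conversely, if $K$ is of type $2$ or $3$ then so is $y$, and by Berkovich's classification of the points of $\bfA^1_k$ every such point equals $\eta_{a,r}$ for some $a\in k$ and $r>0$, for which $|t-c|=\max(|c-a|,r)\ge r$ with equality at $c=a$; hence the infimum is attained.

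I expect the main obstacle to be organizing the equivalence in (i) cleanly, rather than any single estimate. The substantive point is that ``infimum not achieved'' must force $y$ to be of type $4$, equivalently that $K$ is then neither of type $2$ nor of type $3$; this is precisely where the classification of $\bfA^1_k$ together with the invariance of $F_{K/k}$ and $E_{K/k}$ under $K/\wh{k(t)}$ does the real work.
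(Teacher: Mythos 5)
Your proof is correct and follows essentially the same route as the paper's: both reduce to the point $y\in\bfA^1_k$ with $\calH(y)=\wh{k(t)}$ (using that the finite extension $K/\wh{k(t)}$ preserves the type) and then appeal to the standard description of type 2 and 3 points of the line as generalized Gauss points $\eta_{a,r}$. The only difference is one of detail: the paper declares the relevant facts about $\bfP^1_k$ obvious, whereas you verify them by hand (the three-case factorization computation for (ii), and the classification of points for (i)).
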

\begin{proof}
The field $L=\wh{k(t)}$ is of the same type as $K$, so it suffices to work with $L$. In this case, $L=\calH(x)$ for a point $x\in\bfP_k^1$ and the assertion of the lemma reduces to the following obvious claims: $x$ can be moved to a point of the interval $(0,\infty)$ by an appropriate translation of $\bfP_k^1$ if and only if it is of type 2 or 3, and $x\in(0,\infty)$ if and only if the valuation of $\calH(x)$ is a generalized Gauss valuation.
\end{proof}

\subsubsection{Radius}
Given a parameter $t\in K$, the number $r_t=\inf_{c\in k}|t-c|$ will be called the {\em radius} of $t$. Note that $t$ induces a map $\calM(K)\to\bfA^1_k$ and $r_t$ is the radius of its image $x\in\bfA^1_k$, i.e. the infimum of radii of discs containing $x$.

\subsection{Completed differentials}

\subsubsection{K\"ahler seminorm}
As in \cite[4.1.1]{Temkintopforms}, given an extension of real-valued fields $l/k$ we provide the module of differentials $\Omega_{l/k}$ with the maximal $l$-seminorm $|\ |_\Omega=|\ |_{\Omega,l/k}$ such that $d\:l\to\Omega_{l/k}$ is contracting. We call $|\ |_\Omega$ the {\em K\"ahler seminorm}.

\subsubsection{Completed differentials}
The completion with respect to $|\ |_\Omega$ will be denoted $\hatOmega_{l/k}$. For shortness, we call its norm the {\em K\"ahler norm} and denote it $|\ |_\Omega$.

\subsubsection{Unit balls}
The unit balls of $\Omega_{l/k}$ and $\hatOmega_{l/k}$ will be denoted $\Omega^\di_{l/k}$ and $\hatOmega^\di_{l/k}$, respectively. Perhaps, $\Omega^\circ$ would be a better notation than $\Omega^\di$, but in non-archimedean geometry $\circ$ is reserved for the spectral seminorm of algebras. Note that $\hatOmega^\di_{l/k}$ is the $\pi$-adic completion of $\Omega^\di_{l/k}$, where $0\neq\pi\in\lcirccirc$.

\subsubsection{A relation to differentials of rings of integers}
The unit balls considered above are tightly related to the $\lcirc$-module $\Omega_{\lcirc/\kcirc}$ and its $\pi$-adic completion $\hatOmega_{\lcirc/\kcirc}$.

\begin{lemma}\label{compldiflem}
Let $l/k$ be as above and $L=\hatl$, then

(i) The natural map $\hatOmega_{l/k}\to\hatOmega_{L/k}$ is an isometric isomorphism.

(ii) The natural map $\phi\:\Omega_{\lcirc/\kcirc}\to\Omega_{l/k}$ is an embedding and $|\ |_\Omega$ is the maximal seminorm on $\Omega_{l/k}$ such that $\Omega_{\lcirc/\kcirc}\subseteq\Omega^\di_{l/k}$.

(iii) The natural map $\hatOmega_{\Lcirc/\kcirc}\to\hatOmega_{L/k}$ is an embedding and the norm on $\hatOmega_{L/k}$ is the maximal norm such that $\hatOmega_{\Lcirc/\kcirc}\subseteq\hatOmega^\di_{L/k}$.
\end{lemma}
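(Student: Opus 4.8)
The plan is to reduce all three parts to the differential module of the valuation rings and, in the end, to a single identification of $|\ |_\Omega$ with the gauge (Minkowski functional) of the lattice $\phi(\Omega_{\lcirc/\kcirc})$. First I would record the localization isomorphism $\Omega_{l/k}\cong l\otimes_{\lcirc}\Omega_{\lcirc/\kcirc}$. This follows from the cotangent sequence for $\kcirc\to\lcirc\to l$ together with the vanishing $\Omega_{k/\kcirc}=0$ (as $k=\operatorname{Frac}(\kcirc)$ is a localization of $\kcirc$), which identifies $\Omega_{l/\kcirc}$ with $\Omega_{l/k}$, and with the fact that differentials commute with the localization $\lcirc\to l$. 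In particular $\phi$ is the canonical map $M\to l\otimes_{\lcirc}M$ for $M=\Omega_{\lcirc/\kcirc}$, so its kernel is the $\lcirc$-torsion of $M$; thus the injectivity asserted in (ii) is equivalent to $\Omega_{\lcirc/\kcirc}$ being torsion-free, which I would extract from the valuation-theoretic analysis of \cite[Chapter 6]{Gabber-Ramero}.

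For the seminorm statement in (ii), the inclusion $\phi(\Omega_{\lcirc/\kcirc})\subseteq\Omega^\di_{l/k}$ is immediate: any $\sum a_i\,db_i$ with $a_i,b_i\in\lcirc$ has $|\ |_\Omega\le\max_i|a_i||b_i|\le1$ since $d$ is contracting. Hence $|\ |_\Omega$ is one of the competing seminorms, and what remains is maximality, i.e. that any $l$-seminorm $\nu$ with $\phi(\Omega_{\lcirc/\kcirc})$ in its unit ball satisfies $\nu\le|\ |_\Omega$. By homogeneity, and then taking the infimum over all representations $\omega=\sum a_i\,db_i$, this reduces to the inequality $\nu(d\log b)\le1$ for every $b\in l^\times$, where $d\log b=db/b$. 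When $b$ is a unit of $\lcirc$ this is clear, and in general I would write $b=c\,u$ with $u\in(\lcirc)^\times$ and $|c|=|b|$, reducing to the finitely many generators of the value group $|l^\times|/|k^\times|$ (of rank $\le1$ by Abhyankar). The one genuinely delicate case is a generator whose value lies outside $|k^\times|$ (the type $3$ direction): here no integral unit realizes the optimal bound, and I would instead approximate, using that $|k^\times|$ is dense (as $k$ is algebraically closed, hence has divisible value group) to produce elements of $\Omega_{\lcirc/\kcirc}$ of norm arbitrarily close to the required value. Phrased invariantly, this shows that $|\ |_\Omega$ equals the gauge $\omega\mapsto\inf\{|c|:c\in l^\times,\ \omega\in c\,\phi(\Omega_{\lcirc/\kcirc})\}$, which is by construction the maximal seminorm with $\phi(\Omega_{\lcirc/\kcirc})$ in the unit ball.

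Part (iii) I would obtain by applying (ii) to the complete field $L=\hatl$ and then passing to $\pi$-adic completions. The inclusion $\Omega_{\Lcirc/\kcirc}\subseteq\Omega^\di_{L/k}$ $\pi$-adically completes to $\hatOmega_{\Lcirc/\kcirc}\to\hatOmega^\di_{L/k}$, using the stated identification of $\hatOmega^\di_{L/k}$ with the $\pi$-adic completion of $\Omega^\di_{L/k}$; that this remains injective, and that the gauge characterization survives completion, is exactly the statement that $\hatOmega_{L/k}$ is the norm-completion and its norm is again the gauge of the completed lattice $\hatOmega_{\Lcirc/\kcirc}$. Finally, for (i) I would use that $\Lcirc$ is the valuation-ring completion of $\lcirc$, so that the $\pi$-adically completed integral differentials coincide, $\hatOmega_{\lcirc/\kcirc}\toisom\hatOmega_{\Lcirc/\kcirc}$, by the standard invariance of $\pi$-adically completed Kähler differentials under $\pi$-adic completion of the algebra. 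Since by the completed form of (ii)--(iii) both $\hatOmega_{l/k}$ and $\hatOmega_{L/k}$ are recovered from these two canonically isomorphic lattices by the same gauge-and-completion procedure, the natural map between them is an isometric isomorphism; density of $d(l)$ in $\Omega_{L/k}$ (immediate from $d$ contracting and density of $l$ in $L$) gives an independent check of surjectivity.

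The main obstacle, I expect, is not the formal bookkeeping but two valuation-theoretic points: the torsion-freeness of $\Omega_{\lcirc/\kcirc}$ needed for injectivity of $\phi$, and the compatibility of $\pi$-adic completion with both injectivity and the gauge/norm, so that completing the embedding of lattices in (ii) yields the embedding and maximal-norm statements of (iii), and thence the isometry in (i). Both hinge on controlling the $\pi$-adic and $\lcirc$-torsion of $\Omega_{\lcirc/\kcirc}$, for which I would lean on \cite[Chapter 6]{Gabber-Ramero} and \cite[Section 6]{temst}; the only analytic subtlety, the density argument in the type $3$ direction, is comparatively soft given that $k$ is algebraically closed.
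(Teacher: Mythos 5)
Your proposal is correct, and where the paper's proof actually argues rather than cites, you argue the same way: injectivity in (ii) comes from the localization identity $\Omega_{l/k}=\Omega_{\lcirc/\kcirc}\otimes_{\lcirc}l$ plus torsion-freeness of $\Omega_{\lcirc/\kcirc}$ from \cite{Gabber-Ramero}, and (iii) comes from (ii) by passing to $\pi$-adic completions. The genuine difference is that you prove what the paper outsources: the maximality claim in (ii) is cited in the paper from \cite[5.1.9(i) and 5.2.3]{topforms} (with density of $|k^\times|$ flagged as the key hypothesis), whereas you give a direct argument, reducing via the quotient-seminorm formula $|\omega|_\Omega=\inf\max_i|a_i||b_i|$ to the bound $\nu(db/b)\le 1$ and proving that bound by exact factorization $b=cu$, $c\in k^\times$, $u\in(\lcirc)^\times$ when $|b|\in|k^\times|$, and by approximation using density of $|k^\times|$ otherwise; and part (i) is cited in the paper from \cite[Corollary 5.4.8]{topforms}, whereas you derive it from the invariance $\hatOmega_{\lcirc/\kcirc}\toisom\hatOmega_{\Lcirc/\kcirc}$ (valid because one may take $\pi\in\kcirc$, so $\Omega_{(\lcirc/\pi^n)/\kcirc}=\Omega_{\lcirc/\kcirc}/\pi^n$ and $\lcirc/\pi^n\cong\Lcirc/\pi^n$) combined with the gauge description from (ii). This also reverses the logical order: the paper gets (iii) from (i) and (ii), while you get both (i) and (iii) from (ii). Your route buys a self-contained proof organized around one principle, namely that $|\ |_\Omega$ is the gauge of the lattice $\phi(\Omega_{\lcirc/\kcirc})$, at the cost of redoing material from \cite{topforms}. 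Two small caveats: the appeal to ``finitely many generators of $|l^\times|/|k^\times|$'' is unnecessary and not quite right, since the lemma concerns an arbitrary real-valued extension $l/k$ for which this group need not be finitely generated (Abhyankar's inequality enters the paper only for one-dimensional fields), but your per-element density argument makes this moot; and the completion steps in (i) and (iii) (that injectivity and the gauge property survive completion) are asserted rather than checked, though at the same level of brevity as the paper's own ``by passing to the completions.''
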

\begin{proof}
Part (i) is proved in \cite[Corollary 5.6.7]{Temkintopforms} . Since $\Omega_{l/k}=\Omega_{\lcirc/\kcirc}\otimes_{\lcirc}l$, and $\Omega_{\lcirc/\kcirc}$ is torsion free by \cite[Theorem 6.5.20(i)]{Gabber-Ramero}, $\phi$ is an embedding. The group $|k^\times|$ is dense in $\bfR_{>0}$, hence the second claim of (ii) follows from \cite[Corollary~5.3.3 and Theorem~5.1.8]{Temkintopforms}. Finally, (iii) is obtained from (i) and (ii) by passing to the completions.
\end{proof}

\subsection{Differentials of one-dimensional fields}

\subsubsection{Main computation}
In the one-dimensional case, K\"ahler norm and related $\Kcirc$-modules can be explicitly computed as follows.

\begin{theorem}\label{compldifth}
Assume that $K$ is a one-dimensional analytic $k$-field and $t\in K$ is a tame parameter, then

(i) $\hatOmega_{K/k}$ is a one-dimensional vector space with basis $dt$, and $|dt|_\Omega=r_t$.

(ii) $\hatOmega_{\Kcirc/\kcirc}$ is the $\Kcirc$-submodule of $\hatOmega_{K/k}$ generated by the elements $\frac{dt}c$, where $c\in k$ is such that there exists $a\in k$ with $|t-a|\le|c|$.
\end{theorem}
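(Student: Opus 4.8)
The plan is to establish the module statement in (i) from separability, to reduce the metric statements to a single derivative estimate, and to read off (ii) and the value $|dt|_\Omega=r_t$ from that estimate.

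\emph{Module structure.} Since $t$ is a parameter, $K/\widehat{k(t)}$ is finite separable, so $\Omega_{K/\widehat{k(t)}}=0$ and the (completed) cotangent sequence yields an isomorphism $\hatOmega_{\widehat{k(t)}/k}\otimes_{\widehat{k(t)}}K\toisom\hatOmega_{K/k}$. By Lemma~\ref{compldiflem}(i) one may replace $\widehat{k(t)}$ by $k(t)$, and $\Omega_{k(t)/k}=k(t)\,dt$ is free of rank one; hence $\hatOmega_{K/k}$ is a cyclic $K$-module generated by $dt$. It is therefore either zero or one-dimensional, and the norm computation below, giving $|dt|_\Omega=r_t>0$, shows that $dt$ is a basis. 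In particular the derivation $d/dt$ extends to a $k$-derivation of $K$, and I write $g'=dg/dt$, so that $dg=g'\,dt$ in $\hatOmega_{K/k}$.

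\emph{The key estimate.} Everything else rests on the inequality $|g'|\cdot r_t\le|g|$ for all $g\in K$. For a monomial parameter this is immediate from the Gauss-norm formula, writing $g$ as a limit of Laurent polynomials $\sum_i a_i t^i$ and using $|i|\le1$; by Lemma~\ref{monomparamlem} a type $2$ or $3$ point becomes monomial after translating $t$ by a suitable $c_0\in k$ (which changes neither $dt$ nor $r_t$), and the type $4$ case follows by approximating the image point $x\in\bfA^1_k$ of $\calM(K)$ by Gauss points of discs whose radii decrease to $r_t$ and passing to the limit. This settles the estimate on $L=\widehat{k(t)}$, and tameness of the parameter is exactly what propagates it to $K$: the extension $K/L$ is built from unramified steps and Kummer steps $s^n=u$ with $n$ prime to $\cha(\tilk)$, for which $|s'|=|u'|\,|s|/(|n|\,|u|)=|u'|\,|s|/|u|\le|s|/r_t$, the factor $|n|=1$ being essential. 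Granting the estimate, $\nu(f\,dt):=|f|\,r_t$ is a contracting $K$-seminorm on $\hatOmega_{K/k}$, so maximality of the K\"ahler norm gives $|dt|_\Omega\ge\nu(dt)=r_t$; combined with $|dt|_\Omega=|d(t-c)|_\Omega\le|t-c|$ for every $c\in k$, whose infimum over $c$ is $r_t$, this proves $|dt|_\Omega=r_t$ and completes (i).

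\emph{Part (ii) and the main obstacle.} For $c\in k$ admitting $a\in k$ with $|t-a|\le|c|$, the element $c^{-1}(t-a)$ lies in $\Kcirc$, so $\frac{dt}{c}=d\bigl(c^{-1}(t-a)\bigr)\in\Omega_{\Kcirc/\kcirc}$, giving the inclusion $\supseteq$. Conversely each generator $dg$ with $g\in\Kcirc$ equals $g'\,dt$ with $|g'|\le|g|/r_t\le r_t^{-1}$ by the estimate, so it lies in $\sum_c\Kcirc\frac{dt}{c}$; hence the claimed description of $\hatOmega_{\Kcirc/\kcirc}$ holds. As a cross-check, feeding (ii) into Lemma~\ref{compldiflem}(iii) recovers $|dt|_\Omega=\inf\{|c|:\exists a,\ |t-a|\le|c|\}=r_t$. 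The main obstacle is making the estimate rigorous uniformly in the type of $x$—especially the limiting argument at a type $4$ point and the precise point at which $|n|=1$ is used in the tower $K/L$. A secondary subtlety arises for type $3$, where $r_t\notin|k^\times|$ and the algebraic module $\sum_c\Kcirc\frac{dt}{c}$ attains the norm $1$ only in the limit; this is precisely why the statement concerns the $\pi$-adic completion $\hatOmega_{\Kcirc/\kcirc}$, and the inclusion $\subseteq$ in (ii) must be verified after completion rather than on the algebraic module.
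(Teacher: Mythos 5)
Your overall strategy --- reduce both the norm statement in (i) and part (ii) to the single derivative estimate $|g'|\,r_t\le|g|$ on all of $K$, prove it on $L=\wh{k(t)}$, and propagate it through a tame tower --- is genuinely different from the paper's route: the paper proves (ii) directly on the non-complete field $k(t)$ by a partial-fraction computation, passes to completions, and then transfers the result to $K$ in a single step, using Gabber--Ramero's injectivity of $\Omega_{\Lcirc/\kcirc}\otimes_{\Lcirc}\Kcirc\to\Omega_{\Kcirc/\kcirc}$ together with $\Coker=\Omega_{\Kcirc/\Lcirc}=0$ for tame extensions (Lemma~\ref{tamelem}), so no tower decomposition is ever needed. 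Your plan could in principle work, but as written it fails at two places and is seriously incomplete at a third. First, at a type 2 point Laurent polynomials are \emph{not} dense in $\wh{k(t)}$: for $|a|=|t|=1$ the element $1/(t-a)$ has reduction $1/(\tilt-\tilde{a})\notin\tilk[\tilt,\tilt^{-1}]$, hence lies at distance $1$ from every Laurent polynomial. So your ``immediate'' Gauss-norm verification covers only a non-dense subring precisely in the most important case; the estimate must instead be proved for rational functions, e.g.\ via $df=\sum_i n_i(t-a_i)^{-1}f\,dt$ combined with $|n_i|\le 1$ and $|t-a_i|\ge r_t$, which is exactly the paper's computation (your type 3 and type 4 arguments are fine, since there Laurent polynomials, resp.\ polynomials, are dense). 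Second, in the tower $K/L$ you bound only the derivative of the Kummer generator $s$: you give no argument for arbitrary elements of a Kummer step (one needs the orthogonality $|\sum_i b_is^i|=\max_i|b_i|\,|s|^i$ to conclude), and none at all for the unramified steps, which in the non-discretely valued setting is where tameness does its real work --- this is precisely the content the paper imports from Gabber--Ramero. These steps are fillable (Hensel lifting of a residue generator plus an orthonormal-basis computation), but they are the heart of the matter, not bookkeeping.

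The third problem is the one you noticed but resolved incorrectly. For $K$ of type 3 or 4 the admissible $c$ in (ii) all satisfy $|c|>r_t$ (in type 3 because $r_t\notin|k^\times|$, in type 4 because the infimum defining $r_t$ is not attained), so $\sum_c\Kcirc\frac{dt}{c}=K^\circcirc_{r_t^{-1}}dt$ contains only elements of norm strictly less than $r_t^{-1}$, in agreement with Corollary~\ref{compldifcor}(ii). Hence your non-strict bound $|g'|\le r_t^{-1}$ does not give membership, and passing to the $\pi$-adic completion cannot repair this: $K^\circcirc_{r_t^{-1}}$ is already $\pi$-adically complete (a $\pi$-adic limit of elements of norm $<r_t^{-1}$ again has norm $<r_t^{-1}$), so completion adds no boundary elements. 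What actually saves the statement is a strict inequality: in types 3 and 4 one has $\tilK=\tilk$, so any $g\in\Kcirc$ may be written $g=a+g_0$ with $a\in\kcirc$ and $|g_0|<1$; then $dg=dg_0$ and $|g'|=|g_0'|\le|g_0|/r_t<r_t^{-1}$. The paper's type 3 computation does exactly this by discarding the constant term $a_0$ of the Laurent series. Without this step your inclusion $\subseteq$ in (ii) remains unproved in types 3 and 4.
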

\begin{proof}
Set $l=k(t)$ and $F=\wh{l}$. By Lemma~\ref{compldiflem}(i), $\hatOmega_{F/k}$ is one-dimensional with basis $dt$. The extension $K/F$ is finite and separable, hence the map $\Omega_{F/k}\otimes_FK\to\Omega_{K/k}$ is an isomorphism, and we obtain that the map $$\psi_{K/F/k}\:\hatOmega_{F/k}\otimes_FK\to\hatOmega_{K/k}$$ has a dense image. It follows that either $\hatOmega_{K/k}$ vanishes, or it is one-dimensional with basis $dt$, and in the latter case $\psi_{K/F/k}$ is an isomorphism. Since (ii) implies that $|dt|_{\Omega,K/k}=r_t>0$, we see that (ii) implies (i). Furthermore, if $|t-a|\le|c|$ then $\frac{dt}{c}=d\frac{t-a}c\in\hatOmega_{\Kcirc/\kcirc}$. So we should only prove that any $df$ with $f\in\Kcirc$ lies in the module generated by the elements $\frac{dt}c$ as in (ii).

Step 1. {\it The theorem holds true when $K=F$.} First, assume that $K/k$ is of type 3. To prove the theorem we can replace $t$ with any element $t-a$ for $a\in k$. So, we can assume that $K=k\{r^{-1}t,rt^{-1}\}$. Obviously, $\hatOmega_{\Kcirc/\kcirc}$ is generated by the elements $df$ where $f=\sum a_it^i\in\Kcirc$ and $a_0=0$. Since $a_0=0$ we actually have that $|f|<1$, hence $df=f_tdt=\sum ia_it^{i-1}dt$ and $|f_t|<|t^{-1}|$. In particular, taking $c\in k$ with $|f_t|<|c|^{-1}<|t^{-1}|$ we achieve that $df\in\lcirc\frac{dt}c$ and $|t|<|c|$.

Assume, now, that the type is 2 or 4. Let $C$ denote the set of all elements $c\in l$ such that $|t-a|\le |c|$ for some $a\in k$. Recall that $\Omega_{\lcirc/\kcirc}$ embeds into $\Omega_{l/k}=ldt$ by Lemma~\ref{compldiflem}(ii). We claim that $\Omega_{\lcirc/\kcirc}$ is $\lcirc$-generated by the elements $\frac{dt}c$ with $c\in C$. Take any element $f\in\lcirc$. Then $f=a\prod_i(t-a_i)^{n_i}$ and hence $df=\sum_i n_i(t-a_i)^{-1}fdt$. Note that for any $i$ there exists $c_i\in C$ such that $|c_i|\le|t-a_i|$. Indeed, if no such $c_i$ exists then $|t-a_i|=r_t\notin|k^\times|$, and so $K$ is of type 3. Thus, we can choose $c_i$ as above and then $|n_i(t-a_i)^{-1}f|\le|c_i|^{-1}$. In particular, $df\in\sum_i\lcirc\frac{dt}{c_i}$ and we have proved an analogue of the theorem for the extension $l/k$. Passing to the completions we obtain the assertion of Step 1.

Step 2. {\em The general case}. We start with the following result.

\begin{lemma}\label{tamelem}
If $K/F$ is a tamely ramified algebraic extension of real-valued fields and the valuation on $F$ is not discrete then $\Omega_{\Kcirc/\Fcirc}=0$.
\end{lemma}
\begin{proof}
We have that $\Omega_{\Kcirc/\Fcirc}=\Kcirccirc/\Fcirccirc\Kcirc$ by \cite[Lemma~5.2.7]{Temkintopforms}, and it remains to note that $\Kcirccirc/\Fcirccirc\Kcirc=0$ since the valuation on $F$ is not discrete.
\end{proof}

Returning to the proof of Theorem~\ref{compldifth}, consider the map $$\phi\:\Omega_{\Fcirc/\kcirc}\otimes_{\Fcirc}\Kcirc\to\Omega_{\Kcirc/\kcirc}.$$ It has zero kernel by \cite[Theorem 6.3.23]{Gabber-Ramero} and $\Coker(\phi)=\Omega_{\Kcirc/\Fcirc}=0$ by Lemma~\ref{tamelem}. So, $\phi$ is an isomorphism and hence its completion $$\psi_{\Kcirc/\Fcirc/\kcirc}\:\hatOmega_{\Fcirc/\kcirc}\hatotimes_{\Fcirc}\Kcirc\to\hatOmega_{\Kcirc/\kcirc}$$ is an isomorphism. Since the theorem holds for $F$, we obtain that the assertion (ii) holds for $K$. The assertion (i) follows.
\end{proof}

The assertion of Theorem \ref{compldifth} becomes especially convenient for applications when $t$ is a tame monomial parameter, and so $|t|=r_t$. Let us make the assertion of the theorem more explicit in this case. We will use notation $K^\circ_s=\{x\in K|\ |x|\le s\}$ and $K^\circcirc_s=\{x\in K|\ |x|<s\}$.

\begin{corollary}\label{compldifcor}
Assume that $K$ is a one-dimensional analytic $k$-field, $t\in K$ is a tame parameter and $s=r_t^{-1}$. Then,

(i) $\hatOmega_{K/k}^\di=K^\circ_sdt$. In particular, if $t$ is a tame monomial parameter then $\frac{dt}t$ is a basis of $\hatOmega_{K/k}^\di$.

(ii) $\hatOmega_{\Kcirc/\kcirc}=K^\circ_sdt$ if $K$ is of type 2, and $\hatOmega_{\Kcirc/\kcirc}=K^\circcirc_sdt$ if $K$ is of type 3 or 4. In particular, $\hatOmega_{\Kcirc/\kcirc}$ is a free module if and only if $K$ is of type 2, and in this case for any tame monomial parameter $x$ with $|x|=1$ we have that $\hatOmega_{\Kcirc/\kcirc}=\Kcirc dx$.
\end{corollary}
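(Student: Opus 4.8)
The plan is to derive both parts directly from Theorem~\ref{compldifth}, translating its two descriptions into statements about unit balls and $\Kcirc$-modules, with the three types separated by whether the infimum defining $r_t$ is attained.

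For part (i), I would invoke Theorem~\ref{compldifth}(i): $\hatOmega_{K/k}$ is one-dimensional over $K$ with basis $dt$ and $|dt|_\Omega=r_t$. Writing a general element as $\lambda\,dt$ with $\lambda\in K$, its K\"ahler norm is $|\lambda|\,r_t$, so it lies in the unit ball exactly when $|\lambda|\le r_t^{-1}=s$, i.e.\ when $\lambda\in K^\circ_s$; this gives $\hatOmega_{K/k}^\di=K^\circ_s\,dt$. For the monomial case, a tame monomial parameter satisfies $|t|=r_t$, hence $|t^{-1}|=s$, and then $x\in K^\circ_s$ iff $|xt|\le 1$ iff $x\in\Kcirc t^{-1}$, so $K^\circ_s=\Kcirc t^{-1}$. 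Therefore $K^\circ_s\,dt=\Kcirc\,\frac{dt}t$ and $\frac{dt}t$, which has norm $1$, is a basis.

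For part (ii), I would start from the generators provided by Theorem~\ref{compldifth}(ii): $\hatOmega_{\Kcirc/\kcirc}$ is the $\Kcirc$-module generated by the elements $c^{-1}\,dt$ for $c\in k$ admitting $a\in k$ with $|t-a|\le|c|$. Since $\inf_{a\in k}|t-a|=r_t$, such a $c$ is admissible whenever $|c|>r_t$, is admissible for $|c|=r_t$ precisely when the infimum is attained, and is never admissible when $|c|<r_t$. By Lemma~\ref{monomparamlem}(i) the infimum is attained exactly in types 2 and 3. In type 2 one has $r_t\in|k^\times|$, so some admissible $c$ has $|c|=r_t$, giving a generator of maximal norm $|c^{-1}|=s$ and hence $\hatOmega_{\Kcirc/\kcirc}=\Kcirc c^{-1}\,dt=K^\circ_s\,dt$. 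In types 3 and 4 no admissible $c$ satisfies $|c|=r_t$ (in type 3 because $r_t\notin|k^\times|$, in type 4 because the infimum is not attained), so every admissible $c$ has $|c|>r_t$; the generated module is $\bigcup_{|c|>r_t}K^\circ_{|c|^{-1}}\,dt$, which is contained in $K^\circcirc_s\,dt$, and using that $|k^\times|$ is dense (as $k$ is algebraically closed) one finds a $c$ with $r_t<|c|\le|y|^{-1}$ for any $y$ with $|y|<s$, giving the reverse inclusion $\hatOmega_{\Kcirc/\kcirc}=K^\circcirc_s\,dt$.

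Finally, for the freeness statement I would note that any nonzero free $\Kcirc$-submodule of the rank-one $K$-space $\hatOmega_{K/k}$ has the form $\Kcirc\alpha=K^\circ_{|\alpha|}\,dt$, i.e.\ a closed ball possessing an element of maximal norm. The type 2 module $K^\circ_s\,dt$ is of this shape, hence free, whereas the open ball $K^\circcirc_s\,dt$ of types 3 and 4 has no element of maximal norm and so is not free. In the type 2 case, applying (ii) to a tame monomial parameter $x$ with $|x|=1$ gives $r_x=1$, $s=1$, and $K^\circ_1=\Kcirc$, whence $\hatOmega_{\Kcirc/\kcirc}=\Kcirc\,dx$. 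The main subtlety throughout is the bookkeeping of the boundary value $|c|=r_t$: distinguishing $|c|\ge r_t$ (attained, type 2) from $|c|>r_t$ (types 3 and 4) is exactly what separates the closed ball from the open ball, and the passage to the open ball in types 3 and 4 is where density of $|k^\times|$ is used.
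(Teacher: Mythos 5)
Your proof is correct and matches the paper's approach: the paper states Corollary~\ref{compldifcor} without proof, as an immediate explication of Theorem~\ref{compldifth}, and your argument is precisely that intended deduction, with the type trichotomy handled via Lemma~\ref{monomparamlem} (attainment of $r_t$) together with $r_t\in|k^\times|$ in type 2, $r_t\notin|k^\times|$ in type 3, and density of $|k^\times|$ for the open-ball case. The bookkeeping at the boundary value $|c|=r_t$, which you flag as the main subtlety, is handled correctly.
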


\subsubsection{Quasi-invertible modules}
If $K$ is a real-valued field then we call a $\Kcirc$-module $M$ {\em quasi-invertible} if it is torsion free, $M_K=M\otimes_{\Kcirc}K$ is of dimension one, and $0\subsetneq M\subsetneq M_K$. In fact, any quasi-invertible module is of the form $K^\circ_s$ or $K^\circcirc_s$. Note that the notion ``almost invertible" introduced by Gabber and Ramero is more general since an almost invertible module may contain torsion.

Given a real-valued field $l$ with an $l$-vector space $V$ and two $\lcirc$-submodules $M,N\subset V$, we define the ratio $(M:_lN)$ to be the set of all elements $x\in l$ such that $xN\subseteq M$. It is a fractional ideal of $\lcirc$. The absolute value of the ratio is $$|M:_lN|=\sup_{x\in(M:_lN)}|x|.$$ In the rank-one case, this absolute value is multiplicative, namely we have the following obvious result.

\begin{lemma}\label{almsostlem}
Assume that $l$ is a real-valued field, $V$ is a one-dimensional $l$-vector space, and $M,P,Q\subset V$ are quasi-invertible submodules. Then $|M:_lP|\cdot|P:_lQ|=|M:_lQ|$.
\end{lemma}

\subsubsection{Relative differentials}
Using absolute differentials, we can also describe differentials of extensions of one-dimensional fields.

\begin{lemma}\label{reldiflem}
Assume that $L/K$ is a finite separable extension of one-dimensional fields. If $L$ is of type 2 then $\Omega_{\Lcirc/\Kcirc}$ is of the form $L^\circ_r/L^\circ_s$, so it is a finitely presented cyclic module, and if $L$ is of type 3 or 4 then $\Omega_{\Lcirc/\Kcirc}$ is of the form $L^\circcirc_r/L^\circcirc_s$.
\end{lemma}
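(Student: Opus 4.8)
The plan is to exhibit $\Omega_{\Lcirc/\Kcirc}$ as the cokernel in the first fundamental sequence for $\kcirc\to\Kcirc\to\Lcirc$ and to identify the two outer terms by Corollary~\ref{compldifcor}. I begin with two reductions. Since $L/K$ is finite, the residue extension $\wt L/\wt K$ is algebraic and $|L^\times|/|K^\times|$ is torsion, so $F_{L/k}=F_{K/k}$ and $E_{L/k}=E_{K/k}$; hence $K$ and $L$ have the \emph{same} type, and in types $2$ and $4$ one has $|K^\times|=|L^\times|=|k^\times|$ (a torsion extension of the divisible group $|k^\times|$), while in type $3$ the value group is dense. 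Separability of $L/K$ gives $\Omega_{L/K}=0$, so $\Omega_{\Lcirc/\Kcirc}$ is a torsion $\Lcirc$-module. I will compute its $\pi$-adic completion and find it bounded $\pi$-torsion; that the completion map $\Omega_{\Lcirc/\Kcirc}\to\hatOmega_{\Lcirc/\Kcirc}$ is then an isomorphism, so that the computation transfers to the uncompleted module, is the technical heart of the argument.

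Next I would pick tame parameters $t\in K$ and $u\in L$, which exist by Theorem~\ref{unramparam} (every unramified parameter is tame); note that a tame parameter of $K$ need not stay tame for $L$, so the two genuinely differ. Writing $dt=h\,du$ with $h\in L^\times$ (nonzero because $L/\wh{k(t)}$ is separable, so $dt$ spans $\hatOmega_{L/k}$), functoriality of the K\"ahler seminorm makes $\hatOmega_{K/k}\to\hatOmega_{L/k}$ contracting, whence $|h|\,r_u=|dt|_{\Omega,L/k}\le|dt|_{\Omega,K/k}=r_t$, i.e. $|h|\le r_t/r_u$. Passing to $\pi$-adic completions as in the proof of Theorem~\ref{compldifth}, I get the right-exact sequence
$$\hatOmega_{\Kcirc/\kcirc}\hatotimes_{\Kcirc}\Lcirc\xrightarrow{\ \alpha\ }\hatOmega_{\Lcirc/\kcirc}\to\hatOmega_{\Lcirc/\Kcirc}\to0,$$
and by Corollary~\ref{compldifcor}(ii) the source is $K^\circ_{s_K}dt$ and the target $L^\circ_{s_L}du$ in type $2$ (resp. the open balls $K^\circcirc_{s_K}dt$ and $L^\circcirc_{s_L}du$ in types $3$ and $4$), where $s_K=r_t^{-1}$ and $s_L=r_u^{-1}$.

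The map $\alpha$ sends a generator $x\,dt$ to $xh\,du$, so $\Im(\alpha)=h\cdot(\Lcirc\cdot K^\circ_{s_K})\,du$. Using $\Lcirc\cdot K^\circ_{s_K}=L^\circ_{s_K}$ — which holds because $s_K\in|K^\times|$ in type $2$ and by density of $|K^\times|$ in types $3$ and $4$ — this image is $L^\circ_{|h|s_K}du$ (resp. $L^\circcirc_{|h|s_K}du$). Since $|h|\le r_t/r_u$ yields $|h|s_K\le s_L$, the cokernel is $L^\circ_{s_L}/L^\circ_{|h|s_K}$ in type $2$ and $L^\circcirc_{s_L}/L^\circcirc_{|h|s_K}$ in types $3$ and $4$; this is exactly the asserted form, with $r=s_L$ and $s=|h|s_K$. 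In type $2$ both radii lie in $|k^\times|$, so the quotient is $\Lcirc/a\Lcirc$ for a suitable $a\in\Lcirc$, hence finitely presented and cyclic. (As a sanity check, when $L/K$ is tame one has $|h|=r_t/r_u$, so $s=r$ and the module vanishes, consistent with Lemma~\ref{tamelem}.)

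The main obstacle I anticipate is the completion bookkeeping: one must justify that the completed fundamental sequence has the terms just described and that $\Omega_{\Lcirc/\Kcirc}$ coincides with $\hatOmega_{\Lcirc/\Kcirc}$. Because the computed cokernel is killed by a fixed power of $\pi$ (the radii satisfy $s<r$), it is $\pi$-adically complete, and this is what pins the uncompleted module down to the same shape. The two supporting identities — the same-type reduction and $\Lcirc\cdot K^\circ_{s_K}=L^\circ_{s_K}$ — are routine but essential, since they are precisely what guarantee that $\Im(\alpha)$ is again a single ball of the correct ($\circ$ versus $\circcirc$) shape rather than an intermediate submodule.
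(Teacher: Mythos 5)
Your overall skeleton is the same as the paper's: exhibit $\Omega_{\Lcirc/\Kcirc}$ as the cokernel of the completed first fundamental sequence $\hatOmega_{\Kcirc/\kcirc}\hatotimes_{\Kcirc}\Lcirc\to\hatOmega_{\Lcirc/\kcirc}$ and identify the outer terms via Corollary~\ref{compldifcor}(ii). Your identifications of the image and cokernel of that map (same type for $K$ and $L$, the equality $\Lcirc\cdot K^\circ_{s_K}=L^\circ_{s_K}$, the contracting property giving $|h|s_K\le s_L$) are correct and in fact spell out details the paper leaves implicit. However, there is a genuine gap at exactly the step you yourself call ``the technical heart'': the identification $\Omega_{\Lcirc/\Kcirc}=\hatOmega_{\Lcirc/\Kcirc}$. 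Your justification --- the \emph{computed completion} is killed by a fixed power of $\pi$, hence complete, hence equal to the uncompleted module --- runs in the wrong direction and is a non sequitur. The natural map $M\to\wh M$ has kernel $\bigcap_n\pi^nM$ and need be neither injective nor surjective for a torsion module $M$; for instance $M=K/\Kcirc$ is torsion and $\pi$-divisible, so $\wh M=0$ while $M\neq 0$. Thus, a priori, $\Omega_{\Lcirc/\Kcirc}$ could contain a $\pi$-divisible submodule that is invisible in the completion, and no property of $\wh M$ (boundedness of torsion, completeness, or otherwise) can rule this out.

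What is needed is a bound on the torsion of the \emph{uncompleted} module: one must show first that $\Omega_{\Lcirc/\Kcirc}$ is annihilated by a single nonzero element of $\Kcirc$; only then does $\pi^nM=0$ for large $n$, so that $M/\pi^nM$ stabilizes and $M=\wh M$, after which your cokernel computation applies verbatim. This is precisely the nontrivial input the paper takes from \cite[6.3.8 and 6.3.23]{Gabber-Ramero} (with an alternative elementary route: use the fundamental sequence to reduce to elementary extensions as in \cite[6.3.13]{Gabber-Ramero} and bound the torsion there by direct computation). Your proof as written omits this input entirely, so the computation pins down $\hatOmega_{\Lcirc/\Kcirc}$ but not $\Omega_{\Lcirc/\Kcirc}$ itself, which is what the lemma asserts.
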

\begin{proof}
Since $\Omega_{\Lcirc/\Kcirc}\otimes_{\Lcirc}L=\Omega_{L/K}=0$, the module $\Omega_{\Lcirc/\Kcirc}$ is torsion. Furthermore, it is annihilated by a single element $0\neq\pi\in\Kcirc$ by \cite[6.3.8 and 6.3.23]{Gabber-Ramero}. (An alternative and more elementary argument is to use the first fundamental sequence to reduce to the case when $L/K$ is an elementary extension as in \cite[6.3.13]{Gabber-Ramero}, and in the latter case one can bound the torsion of $\Omega_{\Lcirc/\Kcirc}$ by the same computations as used in the proof of loc.cit.) It follows that $\Omega_{\Lcirc/\Kcirc}=\hatOmega_{\Lcirc/\Kcirc}$, so completing the first fundamental sequence of $\kcirc\into\Kcirc\into\Lcirc$ we obtain that $\Omega_{\Lcirc/\Kcirc}$ is the cokernel of the map $$\psi_{\Lcirc/\Kcirc/\kcirc}\:\hatOmega_{\Kcirc/\kcirc}\wh{\otimes}_{\Kcirc}\Lcirc\to\hatOmega_{\Lcirc/\kcirc}.$$ Since both the target and the image of $\psi_{\Lcirc/\Kcirc/\kcirc}$ are quasi-invertible modules described by Corollary~\ref{compldifcor}(ii), the lemma follows.
\end{proof}

\subsection{Different for one-dimensional fields}

\subsubsection{The definition}
Given a separable extension of one-dimensional $k$-fields $L/K$ we define its {\em different} as the absolute value of the annihilator of $\Omega_{\Lcirc/\Kcirc}$, i.e. $$\delta_{L/K}=|\Ann(\Omega_{\Lcirc/\Kcirc})|,$$ where we set $|I|=\sup_{x\in I}|x|$ for an ideal $I\subseteq\Lcirc$. Note that $\delta_{L/K}=s/r$, where $r$ and $s$ are as in Lemma~\ref{reldiflem}.

\begin{remark}
(i) If the valuations are not discrete, the different of a tamely ramified extension equals to 1 by Lemma~\ref{tamelem}. In a sense, the different measures ``wildness'' of extensions, though it may be equal to 1 for wildly ramified extensions (these are so-called almost unramified extensions).

(ii) Usually one defines the different as a fractional ideal of $\Kcirc$, but we are only interested in its absolute value.

(iii) Our definition with the annihilator is an analogue of the classical definition that concerns discrete valuation fields with perfect residue fields. However, it is meaningful only because $\Omega_{\Lcirc/\kcirc}$ is quasi-invertible and hence its quotient $\Omega_{\Lcirc/\Kcirc}$ is a ``rank one'' torsion module. In particular, we will show below that the different we define is multiplicative, and Lemma~\ref{almsostlem} will be used in the argument.

(iv) For general extensions of valued fields one can define the different by use of an analogue of the zeroth Fitting ideal of $\Omega_{\Lcirc/\Kcirc}$, see \cite{Gabber-Ramero}.
\end{remark}

\subsubsection{Comparison of K\"ahler norms}
The different of the extension appears as the scaling factor when comparing the K\"ahler norms of a field and its extension.

\begin{theorem}\label{comparth}
Assume that $L/K$ is a separable extension of one-dimensional fields and consider the isomorphism $\psi\:\hatOmega_{K/k}\otimes_KL\to\hatOmega_{L/k}$. Then $|x'|_{\Omega,L}=\delta_{L/K}|x|_{\Omega,K}$ for any $x\in\hatOmega_{K/k}$ with $x'=\psi(x\otimes 1)$.
\end{theorem}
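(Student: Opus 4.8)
The plan is to reduce the comparison of K\"ahler norms to the computation of a ratio of unit balls, and then to identify that ratio with the different. The key observation is that all the relevant objects are one-dimensional, so measuring norms amounts to comparing the positions of the unit balls $\hatOmega^\di_{K/k}$ and $\hatOmega^\di_{L/k}$ inside the one-dimensional $L$-vector space $\hatOmega_{L/k}$.

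First I would fix a tame parameter $t\in K$. By Theorem~\ref{compldifth}(i), $t$ is automatically a tame parameter of $L$ as well (the composite $L/K/\wh{k(t)}$ is separable, and tameness is insensitive to the intermediate separable extension once we work with unit balls, by Lemma~\ref{tamelem}), so both $\hatOmega_{K/k}$ and $\hatOmega_{L/k}$ are one-dimensional with the common basis $dt$, and the isomorphism $\psi$ simply sends $dt\otimes 1\mapsto dt$. Because $\psi$ is the identity on the chosen basis, the statement $|x'|_{\Omega,L}=\delta_{L/K}|x|_{\Omega,K}$ for all $x$ is equivalent to the single normalization statement $|dt|_{\Omega,L}=\delta_{L/K}\,|dt|_{\Omega,K}$. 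By Theorem~\ref{compldifth}(i), both sides equal $r_t$, which would naively force $\delta_{L/K}=1$ — so the subtlety is that the relevant norm here is the K\"ahler \emph{norm} $|\ |_\Omega$ and the comparison must really be read at the level of the integral structures $\hatOmega_{\Kcirc/\kcirc}$ versus $\hatOmega_{\Lcirc/\kcirc}$, not at the level of the seminorm on the generic fiber alone.

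Thus the real content is the following. The norm $|\ |_{\Omega,K}$ is the gauge of the lattice $\hatOmega_{\Kcirc/\kcirc}$ (this is exactly Lemma~\ref{compldiflem}(iii)), and likewise $|\ |_{\Omega,L}$ is the gauge of $\hatOmega_{\Lcirc/\kcirc}$. So for $x\in\hatOmega_{K/k}$ with image $x'=\psi(x\otimes1)$ one has
$$
\frac{|x'|_{\Omega,L}}{|x|_{\Omega,K}}
=\bigl|\hatOmega_{\Kcirc/\kcirc}\hatotimes_{\Kcirc}\Lcirc:_L\hatOmega_{\Lcirc/\kcirc}\bigr|,
$$
independently of $x$, because both unit balls are almost invertible submodules of the same one-dimensional $L$-vector space and the ratio of gauges of two such lattices is the constant $|M:_L N|$. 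Here I use Lemma~\ref{compldiflem}(iii) to pass from the field norm to the lattice, and the multiplicativity of ratios from Lemma~\ref{almsostlem}. The map $\psi$ identifies $\hatOmega_{\Kcirc/\kcirc}\hatotimes_{\Kcirc}\Lcirc$ with a lattice $N\subset\hatOmega_{L/k}$, and the claim becomes $|N:_L\hatOmega_{\Lcirc/\kcirc}|^{-1}=\delta_{L/K}$, or equivalently that the cokernel of $N\to\hatOmega_{\Lcirc/\kcirc}$ is exactly $\Omega_{\Lcirc/\Kcirc}$ with the stated absolute value.

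This last identification is precisely the content of the first fundamental sequence used in the proof of Lemma~\ref{reldiflem}: completing $\kcirc\into\Kcirc\into\Lcirc$ exhibits $\Omega_{\Lcirc/\Kcirc}$ as the cokernel of
$$
\psi\colon\hatOmega_{\Kcirc/\kcirc}\hatotimes_{\Kcirc}\Lcirc\to\hatOmega_{\Lcirc/\kcirc},
$$
so $N=\Im(\psi)$ and $\hatOmega_{\Lcirc/\kcirc}/N\cong\Omega_{\Lcirc/\Kcirc}$. Writing $\hatOmega_{\Lcirc/\kcirc}=L^\circ_s dt$ (or $L^{\circcirc}_s dt$) and $N=L^\circ_r dt$ (or $L^{\circcirc}_r dt$) as in Lemma~\ref{reldiflem}, the ratio $|N:_L\hatOmega_{\Lcirc/\kcirc}|$ is $r/s$, while by definition $\delta_{L/K}=|\Ann(\Omega_{\Lcirc/\Kcirc})|=s/r$. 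The reciprocal relationship is exactly what makes the different appear as the scaling factor, and the gauge computation gives $|x'|_{\Omega,L}/|x|_{\Omega,K}=s/r=\delta_{L/K}$. I expect the main obstacle to be bookkeeping the direction of the inequality: one must be careful that the \emph{smaller} lattice $N$ produces the \emph{larger} gauge, so that the scaling factor comes out as $\delta_{L/K}$ and not its inverse, and that the type-2 versus type-3/4 cases (closed versus open balls) are handled uniformly since the absolute value of the ratio is insensitive to the open/closed distinction.
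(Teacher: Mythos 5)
The core of your second half is exactly the paper's proof: express each K\"ahler norm as a ratio against the corresponding lattice via Lemma~\ref{compldiflem}(iii), invoke multiplicativity of ratios (Lemma~\ref{almsostlem}), and identify the resulting constant with $\delta_{L/K}$ through the completed first fundamental sequence (the cokernel description underlying Lemma~\ref{reldiflem}); your displayed formula is correct. However, two steps are false as written. The first is the opening claim that a tame parameter $t$ of $K$ is automatically a tame parameter of $L$. This fails precisely in the interesting case: if $L/\wh{k(t)}$ were tame then $L/K$ would be tame, whence $\Omega_{\Lcirc/\Kcirc}=0$ by Lemma~\ref{tamelem} and $\delta_{L/K}=1$. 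So the claim is wrong whenever $L/K$ is wildly ramified, and the ``paradox'' you run into (both norms equal to $r_t$) is a symptom of it. Your resolution misdiagnoses the problem: the theorem \emph{is} a statement about the generic-fiber norms $|\ |_{\Omega,K}$ and $|\ |_{\Omega,L}$ (these are determined by the lattices via Lemma~\ref{compldiflem}(iii), but they are not different objects from the norms in the statement). What actually fails is the application of Theorem~\ref{compldifth}(i) to the pair $(t,L)$: since $t$ need not be tame for $L$, one only gets $|dt|_{\Omega,K}=r_t$, while $|dt|_{\Omega,L}=\delta_{L/K}\,r_t$ --- which is the content of the theorem, and is why Corollary~\ref{differentcor} uses a separate tame parameter of $L$.

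The second problem is that your final identification is inverted and internally inconsistent. With your labels $\hatOmega_{\Lcirc/\kcirc}=L^\circ_s\,dt$ and $N=L^\circ_r\,dt$ (so $r\le s$), the cokernel is $\Omega_{\Lcirc/\Kcirc}\cong L^\circ_s/L^\circ_r$, whose annihilator consists of those $a$ with $aL^\circ_s\subseteq L^\circ_r$, i.e.\ $|a|\le r/s$; hence $\delta_{L/K}=r/s$, not $s/r$ (note also that $s/r\ge 1$, whereas the different never exceeds $1$). Likewise, the reduction of the theorem should read $|N:_L\hatOmega_{\Lcirc/\kcirc}|=\delta_{L/K}$, with no inverse: under the paper's convention $(M:_LN)=\{x\in L: xN\subseteq M\}$ one computes $|N:_L\hatOmega_{\Lcirc/\kcirc}|=r/s$ as well, so the two quantities are equal, not reciprocal. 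Your last sentence then contradicts your own display: the display gives $|x'|_{\Omega,L}/|x|_{\Omega,K}=|N:_L\hatOmega_{\Lcirc/\kcirc}|=r/s$, while you conclude that this ratio is $s/r$. These are fixable bookkeeping errors --- once the false tameness claim is deleted and the identification is corrected to $|N:_L\hatOmega_{\Lcirc/\kcirc}|=\delta_{L/K}$, your argument closes up and reproduces the paper's proof verbatim --- but as written the chain of equalities is not a valid derivation.
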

\begin{proof}
By Lemma \ref{compldiflem}(iii), $$|x'|_{\Omega,L}=|x'\Lcirc:_L\hatOmega_{\Lcirc/\kcirc}|$$ and similarly
$$|x|_{\Omega,K}=|x\Kcirc:_K\hatOmega_{\Kcirc/\kcirc}|=|x'\Lcirc:_L\psi(\hatOmega_{\Kcirc/\kcirc}\otimes_{\Kcirc}\Lcirc)|.$$
It remains to use Lemma~\ref{almsostlem} and the fact that $$\delta_{L/K}=|\psi(\hatOmega_{\Kcirc/\kcirc}\otimes_{\Kcirc}\Lcirc):_L\hatOmega_{\Lcirc/\kcirc}|.$$
\end{proof}

\begin{corollary}
If $L/K/F$ is a tower of finite separable extensions of one-dimensional analytic $k$-fields then $\delta_{L/F}=\delta_{L/K}\delta_{K/F}$.
\end{corollary}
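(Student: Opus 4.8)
The plan is to deduce the tower formula directly from Theorem~\ref{comparth}, which already identifies each different $\delta_{L/K}$ as the scaling factor of the K\"ahler norm under the canonical base-change isomorphism $\psi_{L/K}\:\hatOmega_{K/k}\otimes_KL\to\hatOmega_{L/k}$. Since the different is encoded purely in how these norms rescale, and norms rescale multiplicatively along a composition of maps, multiplicativity of $\delta$ should fall out as soon as one checks that the base-change isomorphisms compose correctly.

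First I would record the three relevant isomorphisms $\psi_{L/K}$, $\psi_{F/L}\:\hatOmega_{L/k}\otimes_LF\to\hatOmega_{F/k}$, and $\psi_{F/K}\:\hatOmega_{K/k}\otimes_KF\to\hatOmega_{F/k}$, and verify the cocycle identity $\psi_{F/K}=\psi_{F/L}\circ(\psi_{L/K}\otimes_L\mathrm{id}_F)$ under the canonical identification $\hatOmega_{K/k}\otimes_KF\cong(\hatOmega_{K/k}\otimes_KL)\otimes_LF$. This is the transitivity of the base-change maps for completed differentials: for separable extensions each first fundamental sequence degenerates (the relative terms $\Omega_{L/K}$ and $\Omega_{F/L}$ vanish), so each map is an isomorphism, and the compatibility is the standard functoriality of $d$, preserved after completion.

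Next I would fix a single nonzero element $x\in\hatOmega_{K/k}$. By Theorem~\ref{compldifth}(i), $\hatOmega_{K/k}$ is one-dimensional with basis $dt$ for a tame parameter $t$ and $|dt|_\Omega=r_t>0$, so I may take $x=dt$, and in particular $|x|_{\Omega,K}\neq0$. Setting $x'=\psi_{L/K}(x\otimes1)$ and $x''=\psi_{F/L}(x'\otimes1)$, the cocycle identity gives $x''=\psi_{F/K}(x\otimes1)$. Applying Theorem~\ref{comparth} three times yields $|x'|_{\Omega,L}=\delta_{L/K}|x|_{\Omega,K}$, $|x''|_{\Omega,F}=\delta_{F/L}|x'|_{\Omega,L}$, and $|x''|_{\Omega,F}=\delta_{F/K}|x|_{\Omega,K}$. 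Chaining the first two and comparing with the third gives $\delta_{F/K}|x|_{\Omega,K}=\delta_{F/L}\delta_{L/K}|x|_{\Omega,K}$, and cancelling the nonzero factor $|x|_{\Omega,K}$ finishes the argument.

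The only point requiring care is the cocycle compatibility of the three base-change isomorphisms; everything else is a one-line manipulation of real numbers. I do not expect genuine difficulty there, since it reduces to transitivity of K\"ahler differentials under composition, but it is the step that must be made explicit to legitimize the chain of three applications of Theorem~\ref{comparth}.
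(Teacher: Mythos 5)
Your proof is correct and matches the paper's intended argument: the paper states this corollary without proof, treating it as an immediate consequence of Theorem~\ref{comparth}, and your chain of three applications of that theorem is exactly the implicit reasoning. The two details you make explicit --- the cocycle compatibility of the base-change isomorphisms (standard functoriality of completed differentials) and the nonvanishing $|dt|_{\Omega,K}=r_t>0$ from Theorem~\ref{compldifth}(i) needed to cancel --- are precisely what the authors left to the reader.
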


As another corollary, we obtain a convenient way to compute the differents.

\begin{corollary}\label{differentcor}
Assume that $L/K$ is a finite separable extension of one-dimensional $k$-fields and $t\in L$, $x\in K$ are parameters. Then,

(i) $\delta_{L/K}=\left|\frac{dx}{dt}\right|\frac{|dt|_{\Omega,L}}{|dx|_{\Omega,K}}$.

(ii) If the parameters are tame then $\delta_{L/K}=\left|\frac{dx}{dt}\right|\frac{r_t}{r_x}$. 
\end{corollary}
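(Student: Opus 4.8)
The plan is to derive both statements directly from the comparison of K\"ahler norms in Theorem~\ref{comparth}, applied to the single element $dx$. First I would record that, exactly as in the dense-image argument in the proof of Theorem~\ref{compldifth}, separability of the parameters makes $\hatOmega_{K/k}$ one-dimensional over $K$ with basis $dx$ and $\hatOmega_{L/k}$ one-dimensional over $L$ with basis $dt$. Under the natural isomorphism $\psi\:\hatOmega_{K/k}\otimes_KL\to\hatOmega_{L/k}$, functoriality of $d$ shows that $\psi(dx\otimes 1)$ is just $dx$ viewed in $\hatOmega_{L/k}$, so I can write $dx=\frac{dx}{dt}\,dt$ for a well-defined element $\frac{dx}{dt}\in L$.

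Next I would apply Theorem~\ref{comparth} to the element $dx\in\hatOmega_{K/k}$, obtaining $|dx|_{\Omega,L}=\delta_{L/K}\,|dx|_{\Omega,K}$. Since the K\"ahler norm is an $L$-seminorm, the relation $dx=\frac{dx}{dt}\,dt$ yields $|dx|_{\Omega,L}=\left|\frac{dx}{dt}\right|\,|dt|_{\Omega,L}$. Combining the two and dividing by $|dx|_{\Omega,K}$ gives
$$\delta_{L/K}=\left|\frac{dx}{dt}\right|\frac{|dt|_{\Omega,L}}{|dx|_{\Omega,K}},$$
which is (i). For (ii) the parameters are in addition tame, so Theorem~\ref{compldifth}(i) computes $|dt|_{\Omega,L}=r_t$ and $|dx|_{\Omega,K}=r_x$; substituting these into (i) gives the asserted formula.

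The only point that needs care is the implicit claim that $|dx|_{\Omega,K}\neq 0$, so that the ratio in (i) is meaningful and the division legitimate; this is where I expect the main (though minor) obstacle. For tame parameters it is immediate from $|dx|_{\Omega,K}=r_x>0$, but for a general separable parameter one must argue that $dx$ is genuinely a basis of $\hatOmega_{K/k}$ rather than zero. I would settle this by comparison with a tame parameter: by Theorem~\ref{unramparam} choose an unramified (hence tame) parameter $u\in K$, so that $\hatOmega_{K/k}=K\,du$ with $|du|_{\Omega,K}=r_u>0$; writing $dx=\frac{dx}{du}\,du$, separability of $x$ forces $dx$ to be a basis, whence $\frac{dx}{du}\neq 0$ and $|dx|_{\Omega,K}=\left|\frac{dx}{du}\right|r_u>0$. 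The identical remark applies to $t$ in $L$, so all quantities appearing in (i) are positive and the computation above is valid.
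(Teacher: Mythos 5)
Your proof is correct and takes essentially the same route as the paper: both apply Theorem~\ref{comparth} to the element $dx$, use $L$-homogeneity of the K\"ahler norm together with $dx=\frac{dx}{dt}\,dt$, and deduce (ii) from Theorem~\ref{compldifth}(i). The only difference is that you also verify, via Theorem~\ref{unramparam} and the dense-image argument, that $dt$ and $dx$ are genuine bases so that $|dt|_{\Omega,L}$ and $|dx|_{\Omega,K}$ are nonzero --- a point the paper's proof leaves implicit.
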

\begin{proof}
By definition, $\frac{dx}{dt}$ is the element $h\in L$ such that $hdt=dx$. By Theorem~\ref{comparth}, $$|dt|_{\Omega,L}=|h^{-1}dx|_{\Omega,L}=|h^{-1}|\cdot|dx|_{\Omega,L}=\delta_{L/K}|h^{-1}|\cdot|dx|_{\Omega,K}$$
and we obtain (i). The second claim follows from (i) and Theorem~\ref{compldifth}(i).
\end{proof}

\section{Analytic curves}\label{ancurvesec}
In Section~\ref{ancurvesec} we provide a brief review of some basic facts about $k$-analytic spaces, making a special accent on curves. Concerning facts about curves, proofs can be found in \cite[Section 4.1]{berbook}, \cite{curvesbook} and the literature cited there.

\subsection{$G$-topology}\label{Gsec}
First, we recall the terminology of \cite{Temkintopforms} related to the $G$-topology.

\subsubsection{Choice of the $G$-topology}
Although a separated analytic curve $X$ is good and hence the sheaf $\calO_X$ is reasonable, we prefer to work with coherent sheaves in the $G$-topology. Since we consider only strictly analytic curves, $X_G$ denotes the $G$-topology of strictly $k$-analytic domains throughout the paper.

\subsubsection{The space $X_G$}
For any strictly $k$-analytic space $X$, the topos of sheaves of sets $X_G^\sim$ on the $G$-topological space $X_G$ is coherent and hence has enough points by the famous theorem of Deligne (see \cite[VI.9.0]{sga42}). It follows easily, see \cite[Theorem~9.1.6]{Temkintopforms}, that $X_G^\sim$ is equivalent to the topos of a natural topologization $|X_G|$ of the set of points of $X_G^\sim$. In particular, it is safe to replace $X_G$ with the larger space $|X_G|$ when working with sheaves. From now on, we will use the notation $X_G$ to denote the honest topological space $|X_G|$, adopting the convention of \cite{Temkintopforms}.

\begin{remark}
(i) Any $G$-sheaf on $X$ extends uniquely to $X_G$, so we can (and will) view any $G$-sheaf on $X$ as a sheaf on $X_G$, justifying the notation. The main profit of working with $X_G$ is that we can consider {\em non-analytic points}, i.e. points of $X_G\setminus X$, and stalks at these points as an integral part of the picture.

(ii) On the set-theoretical level, we identify $X$ with a subset of $X_G$. In fact, the $G$-topology of $X$ is induced from the topology of $X_G$ in the following sense: $G$-open sets are restrictions of open sets of $X_G$ and a covering $U=\cup_i U_i$ is a $G$-covering if there exist an open covering $V=\cup_i V_i$ in $X_G$ such that $V\cap X=U$ and $V_i\cap X=U_i$.
\end{remark}

\subsubsection{Residue fields}
Given a point $x\in X_G$, by $\kappa_G(x)$ we denote the residue field of $\calO_{X_G,x}$. The spectral seminorms on affinoid subspaces containing $x$ induce a norm $|\ |_x$ on $\calO_{X_G,x}$ via the rule $|f|_x=\inf\rho_{\calA}(f)$, where $\rho_\calA$ denotes the spectral seminorm of $\calA$ and the infimum is over all affinoid domains $V=\calM(\calA)$ such that $x\in V$ and $f$ is defined on $V$. It is easy to see that $|\ |_x$ is a real semivaluation with kernel equal to the maximal ideal $m_{G,x}$. In particular, giving $|\ |_x$ is equivalent to giving a real valuation on $\kappa_G(x)$ that will also be denoted $|\ |_x$.

\subsubsection{Completed residue field}
Let $\calH(x)$ be the completion of $(\kappa_G(x),|\ |_x)$. This notation is compatible with the classical notation when $x\in X$. Indeed, $\calH(x)$ is preserved when passing to an analytic subdomain containing $x$, and if $X$ is good then $\kappa(x)\subseteq\kappa_G(x)$ is dense. Thus, $\calH(x)$ is the completion of both fields.

\subsubsection{Adic interpretation}
In the framework of rigid geometry, van der Put studied the points of $X_G$ in \cite{Put@compositio}. In fact, he used the language of prime filters of analytic domains, but this is equivalent to the topos-theoretic definition. In particular, van der Put showed that $\calOcirc_{X_G,x}$ is the preimage of a valuation subring of $\kappa_{G}(x)$ under $\calO_{X_G,x}\to\kappa_G(x)$. Thus, giving the subring $\calOcirc_{X_G,x}\subseteq\calO_{X_G,x}$ is equivalent to giving an equivalence class of semivaluations on $\calO_{X_G,x}$ with kernel equal to $m_{G,x}$. By a slight abuse of language we fix one such semivaluation and the induced valuation on $\kappa_G(x)$ and denote them $\|\ \|_x$. This valuation extends to $\calH(x)$ by continuity and we will use the same notation for the extension. Note that throughout this paper $|\ |$ refers to real-valued (semi) valuations, while $\|\ \|$ refers to (semi) valuations that may have values in arbitrary valued groups.

\begin{remark}
The valuative interpretation of the points of $X_G$ is very important in adic geometry. In fact, $X_G$ is the underlying topological space of the Huber's adic space corresponding to $X$, see \cite{Huberbook}.
\end{remark}

\subsubsection{The retraction $\gtr_X$}
Further properties of the space $X_G$ were studied in \cite{Put-Schneider}. In particular, one shows that there is a retraction $\gtr_X\:X_G\to X$ and $\gtr_X$ identifies $X$ with the maximal Hausdorff quotient of $X_G$. In particular, any point $x\in X_G$ has a single generization $y=\gtr_X(x)$ in $X\subseteq X_G$ and $\gtr_X^{-1}(y)=\oy$ is the closure of $y$ in $X_G$.

\subsubsection{Germ reductions}
If $x\in X_G$ and $y=\gtr_X(x)$ then the generization map $\phi\:\calO_{X_G,x}\to\calO_{X_G,y}$ is local and induces an embedding of real valued fields $\kappa_G(x)\into\kappa_G(y)$. In particular, $|\ |_x$ is induced from $|\ |_y$ via $\phi$, so in the sequel we will freely use $|\ |_y$ instead of $|\ |_x$. In addition, $\calH(x)\toisom\calH(y)$, see \cite[Lemma~9.2.5]{Temkintopforms}. The valuation $\|\ \|_x$ is composed from the real valuation $|\ |_x$ of $\calH(x)$ and a valuation on the residue field $\wHx$. So, all points of $\oy$ are interpreted as valuations on $\wHy$.

\begin{remark}
In fact, the closure of $y$ in $X_G$ can be identified with the reduction of the germ $(X,y)$, which is a certain Riemann-Zariski space $\wt{(X,y)}$ of valuations on $\wHy$, see \cite[Remark~2.6]{temlocal}.
\end{remark}

\subsection{Topological ramification}

\subsubsection{Topological ramification index}
Assume that a morphism of $k$-analytic spaces $f\:Y\to X$ is \'etale at a point $y\in Y$ and let $x=f(y)$. The number $n_y=[\calH(y):\calH(x)]$ will be called the {\em topological ramification index} or the {\em multiplicity} of $f$ at $y$. Naturally, we say that $f$ is {\em topologically ramified} at $y$ if $n_y>1$, and the set of all points with $n_y>1$ is called the {\em topological ramification locus}.

\begin{remark}\label{topramrem}
(i) In \cite[Section 6.3]{berihes}, Berkovich calls $n_y$ the ``geometric ramification index'', but we prefer to change the terminology since the word ``geometric'' usually refers to a property satisfied after an arbitrary base field extension.

(ii) The term ``topological ramification'' is justified by the observation that $n_y=1$ if and only if $f$ is injective on a neighborhood of $y$. In fact, $n_y=1$ if and only if $f$ is a local isomorphism at $y$ by \cite[Theorem 3.4.1]{berihes}.

(iii) In fact, $n_y$ is the local degree of $f$ at $y$, the notion that makes sense for arbitrary flat quasi-finite morphisms. In particular, if $f$ is finite and the nice compact curves are connected then the value of $n_x=\sum_{y\in f^{-1}(x)}n_y$ is independent of $x$.
\end{remark}

\subsubsection{Tame and wild ramification}
We say that $f$ is {\em tamely topologically ramified} at $y$ if $n_y$ is invertible in $\tilk$. Otherwise, the topological ramification at $y$ is called {\em wild}. Usually, we will simply say that $f$ is wild \'etale or tame \'etale at $y$.

\begin{remark}
The definition of tameness is due to Berkovich, see \cite[Section 6.3]{berihes}. One might be surprised that a (type 2) point $y$ with an unramified extension $\calH(y)/\calH(x)$ is declared wildly ramified when $p|n_y$. However, this definition has the following two advantages: any tame \'etale covering of a disc is trivial (see \cite[Theorem 6.3.2]{berihes}) and the tame ramification locus is a finite graph (we do not need the second claim, and it will be proved elsewhere). Both claims would fail if one extends the definition of tameness by only requiring that $\calH(y)/\calH(x)$ is tame.
\end{remark}

\subsubsection{Ramification points}
It will be convenient to extend the above classification to actual ramification points, at least in the case of curves. So, assume that $f\:Y\to X$ is a generically \'etale morphism of quasi-smooth curves, and $y\in Y$ is a ramification point of $Y$. Let $n_y$ be the usual ramification index, i.e. $m_{f(y)}\calO_y=m_y^{n_y}$. We say that the ramification at $y$ is {\em tame} if $n_y$ is invertible in $k$, and the ramification is {\em wild} otherwise. This fits the usual algebraic definition.

In addition, we classify $y$ as a point of topological ramification and call $n_y$ the topological ramification index at $y$. We say that the topological ramification at $y$ is {\em tame} if $n_y$ is invertible in $\tilk$. Otherwise, the topological ramification is wild. This fits the intuition that topological ramification is related to the ramification of the completed residue fields $\calH(s)$, while usual ramification is related to ramification of the local rings $\calO_s$.

\subsubsection{Wild and tame loci}
The set of all points $y\in Y$ where $f$ is topologically tame (resp. topologically wild) will be called the {\em tame locus} (resp. the {\em wild locus}) of $f$.

\subsection{Local structure of analytic curves}

\subsubsection{Nice compact curves}
A purely one-dimensional $k$-analytic space is called a $k$-analytic curve, and we will omit $k$ as a rule. Almost all our work is concerned with compact separated strictly $k$-analytic curves $X$ that are smooth at Zariski closed points. For the sake of brevity, we will refer to such curves {\em nice compact curves} throughout the paper.

Note that the smoothness assumption on a nice compact curve $X$ means that $X$ is rig-smooth, i.e. the associated rigid space is smooth. Since $X$ is strictly analytic, this is also equivalent to requiring that $X$ is quasi-smooth, see \cite[Section~2.1.8]{Ducpl}. Furthermore, $X$ is smooth if and only if it has no boundary, so this happens if and only if it is proper.

\subsubsection{Types of points}\label{typessec}
Let $X$ be a $k$-analytic curve. Points of type 1 are the $k$-points. For any other point $x\in X$, the $k$-field $\calH(x)$ is one-dimensional and the type of $x$ is the  type of $\calH(x)$. To any type 2 point we associate the {\em genus} $g(x)$ equal to the genus of the functional $\tilk$-field $\wHx$. It will also be convenient to set $g(x)=0$ for any point of another type. A point is called {\em monomial} (resp. {\em hyperbolic}) if it is of type 2 or 3 (resp. 2, 3 or 4). The set of such points will be denoted $X^\mon$ (resp. $X^\hyp$).

\subsubsection{Intervals}
By an interval $I\subset X$ we mean a topological subspace homeomorphic to a closed or an open interval and provided with an orientation. Often we will denote $I$ as $[x,y]$ or $(x,y)$; such notation specifies the boundary of the interval and the orientation.

\subsubsection{Branches}\label{brsec}
A {\em branch} $v$ of $X$ at a point $x\in X$ is an isomorphism class of germs of intervals $[x,y]\subset X$, see \cite[Secion~1.7]{curvesbook}. The set of branches at $x$ will be denoted $\Br(x)$, it can also be identified with the set of connected components of $V\setminus\{x\}$, where $V$ is a sufficiently small connected neighborhood of $x$. We claim that any point of type 1 or 4 is unibranch, and any point of type 3 has two branches. This is clear for points in $\bfP^1_k$ and the general case is reduced to this by Section~\ref{elemsec} below. Note that we use here that $X$ is nice: non-smooth points of type 1 may have more than one branch and boundary points of type 3 may have 1 or 0 branches (and even more than two branches in the non-separated case). For shortness, we will denote the branch at a type 1 or 4 point $x$ by the same letter.

\begin{remark}
It is convenient to introduce branches at arbitrary points, but only branches at type 2 points are really informative.
\end{remark}

\subsubsection{Elementary neighborhoods}\label{elemsec}
A point $x\in X$ has a fundamental family of elementary neighborhoods $U_i$, see \cite[Section~3.6]{berihes}. Recall that $U_i$ are isomorphic to discs if the type of $x$ is 1 or 4, $U_i$ are annuli if the type is 3, and $U_i\setminus\{x\}$ is a union of open discs and finitely many open annuli if the type is 2. In the latter case, these discs and annuli are parameterized by the branches at $x$.

\subsubsection{Parameters}
Recall that \'etale morphisms in rigid geometry correspond to quasi-\'etale morphisms of Berkovich spaces. The discrepancy is due to the fact that \'etale morphisms in Berkovich geometry are defined to be without boundary. A morphism is quasi-\'etale if it is \'etale $G$-locally on the source and the target, see \cite[Section 3]{bervanish1} for the precise definition.

By a {\em parameter at} $x$ we mean any element $t_x\in\calO_{X_G,x}$ such that the induced map $f\:U\to\bfA^1_k$ from an affinoid subdomain containing $x$ is quasi-\'etale at $x$. Thus, if $x$ is of type 1 then $t_x-c$ is a uniformizer of $\calO_{X,x}$ for $c=t_x(x)\in k$, and if $x$ is hyperbolic then $t_x$ is an element of
$\kappa_G(x)\setminus k$ if $\cha(k)=0$ and an element of $\kappa_G(x)\setminus(\kappa_G(x))^p$ if $\cha(k)=p$.

If $x$ is hyperbolic then the parameter $t_x$ is {\em tame} (resp. {\em monomial}) if it is a tame (resp. monomial) parameter of $\calH(x)$. If $x$ is of type 1 then any uniformizer $t_x$ is tame by definition, and $t_x$ is {\em monomial} if $t_x(x)=0$. We warn the reader that even when $t_x$ is tame, it may happen that $p$ divides $[\calH(x):\wh{k(t_x)}]$ and thus $f$ has wild topological ramification at $x$.

There always exists a tame parameter, and if $x$ is monomial then it can also be chosen monomial. Indeed, this is obvious when $x$ is of type 1, and for hyperbolic points one can choose such a parameter $t'_x\in\calH(x)$ and take $t_x\in\kappa_G(x)=\calO_{X_G,x}$ such that $|t'_x-t_x|<r_{t_x}$. It is easy to see that $t_x$ is also a tame (resp. tame and monomial) parameter.

\subsection{Type 5 points}

\subsubsection{Germ reduction curves}\label{germredsec}
Any (non-analytic) point $x\in X_G\setminus X$ corresponds to a non-trivial valuation on $\wHy$, where $y=\gtr(x)\in X$. For curves this can happen only when $y$ is of type 2 and then $x$ corresponds to a discrete valuation on $\wHy$ trivial on $\tilk$. The closure of $y$ in $X_G$ can be identified with a normal $\tilk$-curve $C_y$ such that $\tilk(C_y)=\wHy$. Indeed, $C_y=\widetilde{(X,y)}$ is a one-dimensional Riemann-Zariski space over $\tilk$, hence it is a normal curve. We will call $C_y$ the {\em germ reduction} of $X$ at $y$. For concrete computations one can often use the following recipe: there always exist a formal model $\gtX$ such that the reduction map $\pi_\gtX\:X\to\gtX_s$ takes $y$ to the generic point of an irreducible component $Z\subset\gtX_s$ and $\tilk(Z)=\wHy$, and then $C_y$ is the normalization of $Z$. Note also that $C_y$ is proper if and only if $y\in\Int(X)$ is an inner point, see \cite[Theorem~4.1]{temlocal}.

\subsubsection{Type 5 points}
If $X$ is a curve then any point $x\in X_G\setminus X$ will be called a {\em type 5} point. It corresponds to a closed point of the germ reduction $C_y$ at a type 2 point $y$. The valuation $\|\ \|_x$ corresponding to $\calOcirc_{X_G,x}$ is of rank two, and it is composed from the real-valuation $|\ |_x$ (i.e. induced from the generization map $\calO_{X_G,x}\to\calO_{X_G,y}$) and a discrete valuation on $\wt{\kappa_G(x)}=\wHx=\wHy$. We denote the latter as $\veps_x\:\wHx^\times\to\lam_x^\bfZ$, where $\lam_x$ is the generator of the group of values satisfying $\lam_x<1$. Since $|\calH(x)^\times|=|k^\times|$, the group of values $\|\calH(x)^\times\|_x$ splits canonically into the lexicographic product $|\calH(x)|_y\times\lam_x^\bfZ$ of ordered groups, in particular, $\veps_x$ extends to a homomorphism (but not a valuation!) $\calH(x)^\times\to\lam_x^\bfZ$ so that $\|f\|_x=(|f|_y,\veps_x(f))$. In the sequel, we will prefer to work with the additive homomorphism $\nu_x\:\calH(x)^\times\to\bfZ$ corresponding to $\veps_x$, so that $\|f\|_x=(|f|_y,\lam_x^{\nu_x(f)})$.

\begin{remark}
There is a natural bijection between type 5 specializations of a type 2 point $y$ and the branches at $y$, see \cite[Section~3.2]{curvesbook}. So, we will freely identify them.
\end{remark}

\subsubsection{Multiplicities}\label{multsec}
Assume that $f\:Y\to X$ is a generically \'etale morphism between nice compact curves, $y\in Y_G$ is of type 5 and $x=f(y)\in X_G$. Consider the type 2 points $\eta$ and $\epsilon$ that generize $y$ and $x$, respectively, and let $f_\eta\:C_\eta\to C_\epsilon$ be the induced morphism between the germ reductions. The {\em multiplicity} $n_y$ of $f$ at $y$ is defined to be the usual (algebraic) multiplicity of $y$ in the fiber $f_\eta^{-1}(x)$.

\begin{remark}\label{mult5rem}
(i) Note that $n_\eta=[\calH(\eta):\calH(\epsilon)]=[\wt{\calH(\eta)}:\wt{\calH(\epsilon)}]$ since $\calH(\epsilon)$ is stable for any point of type 2. This fact and a local computation of the degree of $f_\eta$ allow to extend Remark~\ref{topramrem}(iii) to points of type 5: if $f$ is finite and $X$ is connected then $\sum_{y\in f^{-1}(x)}n_y=\deg(f)$ for any $x$ of type 5. In particular, this indicates that our definition of the multiplicity is ``correct''.

(ii) Note that for type 5 points, $n_y=\#\|\calH(y)^\times\|_y/\|\calH(x)^\times\|_x$ is the ramification index of the extension of valued fields of height two, but it can be strictly smaller than $[\calH(y):\calH(x)]$. Note also that $\lam_x=\lam_y^{n_y}$.
\end{remark}

\subsubsection{Parameters}
We say that points of type 5 are both monomial and hyperbolic, so $X_G^\mon$ consists of $X^\mon$ and points of type 5, and similarly for $X_G^\hyp$. If $x$ is of type 5 then by a parameter at $x$ we mean an element $t_x\in\kappa_G(x)$ such that $t_x\notin k$ and $t_x$ is not a $p$-th power when $\cha(k)=p>0$. Furthermore, $t_x$ is {\em monomial} if $|t_x-c|_x\ge|t_x|$ for any $c\in k$. This happens if and only if $\nu_x(t_x)\neq 0$. Also, $t_x$ is {\em tame} if for the induced map $f\:U\to\bfA^1_k$ the map between the germ reductions $C_x\to C_{f(x)}$ is not wildly ramified at $x$, i.e. $n_x$ is invertible in $\tilk$. In particular, $t_x$ is tame monomial if and only if $\nu_x(t_x)$ is invertible in $\tilk$, hence there exist plenty of such parameters.

\subsection{Global structure}
The main result about global structure of nice compact curves is the semistable reduction theorem, which can be formulated either in terms of formal models or in terms of skeletons, see \cite[Section~4.3]{berbook}. We will only discuss the second approach.

\subsubsection{Skeletons of curves}
By a finite topological graph we mean a topological space $\Gamma$ with a finite set of vertices $\Gamma^0\subseteq\Gamma$ which is isomorphic to the topological realization of a finite graph. In particular, $\Gamma\setminus\Gamma^0$ is a finite disjoint union (perhaps empty) of open intervals called edges of $\Gamma$. If no confusion with combinatorial graphs is possible, we will simply say that $\Gamma$ is a finite graph.

By a {\em skeleton} of a nice compact curve $X$ we mean a finite graph $\Gamma\subset X$ such that the following conditions hold:

(i) $\Gamma^0$ consists of type 1 and 2 points and contains all boundary points and points of positive genus,

(ii) $X\setminus \Gamma$ is a disjoint union of open discs.

We explain below that any nice curve possesses a skeleton, but let us list basic properties of skeletons first. To make notation uniform, by a semi-annulus we mean either an open annulus or an open disc punched at a type 1 point.

\begin{remark}
(i) Since points of type 1 and 4 are unibranch, $\Gamma$ contains no type 4 points, and any $x\in\Gamma$ of type 1 is a vertex (in fact, a leaf).

(ii) Any edge $e\subset\Gamma$ is contained in a semi-annulus $A\subset X$ so that $e$ is the skeleton of $A$, see, for example, \cite[Theorem~4.1.14]{curvesbook}. In fact, this result means that $X\setminus\Gamma^0$ is a disjoint union of open discs and semi-annuli, with semi-annuli parameterized by the edges of $\Gamma$. In terms of \cite{curvesbook} this means that $\Gamma^0$ is a triangulation of $X$.

(iii) Any skeleton $\Gamma$ is a deformation retract of $X$; in particular, $\pi_0(\Gamma)=\pi_0(X)$ and $h^1(\Gamma) = h^1(X)$.
\end{remark}

\subsubsection{Enlarging a skeleton}
One of a very special features of the theory of curves is that any enlargement of a skeleton is again a skeleton.

\begin{lemma}\label{enlargelem}
Let $X$ be a nice compact curve with a skeleton $\Gamma$, and assume that $\Gamma'\subset X$ is a finite subgraph such that $\Gamma\subseteq\Gamma'$, $\pi_0(\Gamma)=\pi_0(\Gamma')$ and all vertices of $\Gamma'$ are of type 1 and 2. Then $\Gamma'$ is a skeleton of $X$.
\end{lemma}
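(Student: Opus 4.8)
The plan is to verify directly conditions (i) and (ii) in the definition of a skeleton for $\Gamma'$. Condition (i) is essentially free: enlarging the vertex set of $\Gamma'$ by the finitely many points of $\Gamma^0$ (all of type 1 or 2, hence admissible, and not affecting the underlying topological space) we may assume $\Gamma^0\subseteq(\Gamma')^0$; since $\Gamma$ is a skeleton, $\Gamma^0$ already contains all boundary points and all points of positive genus, so $(\Gamma')^0$ does too, and by hypothesis every vertex of $\Gamma'$ is of type 1 or 2. Thus the whole content is condition (ii): that $X\setminus\Gamma'$ is a disjoint union of open discs.

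First I would localize the problem inside the complementary discs of $\Gamma$. Since $\Gamma$ is a skeleton, $X\setminus\Gamma=\coprod_i D_i$ with each $D_i$ an open disc whose closure meets $\Gamma$ in a single point $x_i$. As $\Gamma'\setminus\Gamma\subseteq\coprod_i D_i$, set $T_i=\Gamma'\cap\overline{D_i}$; this is a finite subgraph of the topologically contractible space $\overline{D_i}$, hence a forest. Because $D_i$ is attached to $\Gamma$ only at $x_i$, adding material inside the discs cannot join two components of $\Gamma$, so the inclusion $\Gamma\hookrightarrow\Gamma'$ merges nothing; combined with the hypothesis $\pi_0(\Gamma)=\pi_0(\Gamma')$ this forces $\Gamma'$ to have no component disjoint from $\Gamma$. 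Consequently every component of $T_i$ contains $x_i$, i.e. $T_i$ is a tree rooted at $x_i$, and $X\setminus\Gamma'=\coprod_i\big(D_i\setminus(T_i\setminus\{x_i\})\big)$. It therefore suffices to prove: for an open disc $D$ with end point $x_0$ and a finite tree $T\subset\overline{D}$ containing $x_0$ whose remaining vertices are of type 1 or 2, the set $D\setminus(T\setminus\{x_0\})$ is a disjoint union of open discs.

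I would prove this by induction on the number of edges of $T$, building $T$ up from $\{x_0\}$ one edge at a time. Each step attaches an arc $[p,y]$ from a type 2 vertex $p$ into one of the current complementary discs, with $y$ a new leaf of type 1 or 2 (and $p$ possibly promoted to a vertex by subdividing a previously added edge). By the inductive hypothesis this reduces everything to the single model statement that for an open disc $D$ with end $x_0$ and a point $y\in D$ of type 1 or 2, the complement $D\setminus(x_0,y]$ of the arc is a disjoint union of open discs. Here I would invoke the local structure of the disc: after choosing a coordinate identifying $D$ with a standard open disc and $y$ with the Gauss point of a subdisc (or with a type 1 point), the arc $(x_0,y]$ is a segment of the standard axis, and the elementary neighborhood description of Section~\ref{elemsec} shows that the complementary components are precisely the open discs hanging off the type 2 points along the arc, together with the residue discs at $y$ when $y$ is of type 2.

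I expect the main obstacle to be this last model computation, specifically making the ``hanging discs'' description precise and coordinate-free enough to cover an arbitrary type 2 endpoint $y$ rather than just one in standard position; the surrounding steps (the reduction to the complementary discs, the connectivity argument extracting trees $T_i$ from the $\pi_0$ hypothesis, and the verification of condition (i)) are essentially bookkeeping resting on the semistable structure already recorded in the remarks preceding the lemma.
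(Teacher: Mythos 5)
Your proposal is correct and follows essentially the same route as the paper's proof: reduce, using the $\pi_0$ hypothesis, to a finite tree attached at the limit point of a complementary disc of $\Gamma$, induct on the size of that tree down to a single arc $[x,q]$ with $x$ of type 1 or 2, and settle that case by the explicit structure of the disc (which the paper dismisses as ``trivial''). The additional details you supply—checking condition (i), the connectivity argument showing each tree is rooted at the attachment point, and the coordinate model computation for the arc—are precisely the steps the paper's three-line proof leaves implicit.
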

\begin{proof}
This reduces to proving that if $D\subset X$ is an open disc with limit point $q\in X$, and $T$ is a finite connected subgraph of $X$ such that $q\in T$ and $T\setminus\{q\}\subset D$, then $D\setminus T$ is a disjoint union of discs. By induction on the size of $T^0$ this reduces to the case when $T=[x,q]$, where $x$ is of type 1 or 2 inside of $D$. The latter is trivial.
\end{proof}

\subsubsection{Semistable reduction}
The semistable reduction theorem asserts that any nice compact curve $X$ possesses a skeleton $\Gamma$. Moreover, it follows from Lemma~\ref{enlargelem} that for any finite set $V$ of type 1 and 2 points one can achieve that $V\subset\Gamma^0$. As we have seen above, this provides very detailed information about $X$.

\subsubsection{Stable reduction}
Assume that $X$ is connected. The stable reduction theorem sharpens the semistable reduction by asserting that, excluding a few degenerate cases, there exists a unique minimal skeleton $\Delta(X,V)$ containing $V$. It turns out that the only degenerate cases are as follows: $X=\bfP^1_k$ and $V$ consists of at most 2 points of type 1, and $X$ is a Tate curve while $V$ is empty. For example, see \cite[Sections~5.4, 5.5]{curvesbook}.

\subsubsection{Morphisms of annuli}
Let $A=\calM(k\{R^{-1}t,rt^{-1}\})$ be a closed annulus. Its minimal skeleton $l$ can be naturally identified with the interval $[r,R]$. For example, if $A$ is identified with the subdomain of $\bfA^1_k$ given by $r\le|t|\le R$ then $l$ consists of the generalized Gauss valuations with $r\le |t|\le R$. We will need the following classical result whose proof is omitted (for example, see \cite[Section~6.2]{berihes}).

\begin{lemma}\label{annulilem}
Let $A_1=\calM(k\{R^{-1}t,rt^{-1}\})$ and $A_2=\calM(k\{S^{-1}x,sx^{-1}\})$ be annuli with minimal skeletons $l_1$ and $l_2$, respectively, and assume that $f\:A_1\to A_2$ is a finite morphism. If $|\ |_i$ denotes the spectral norm of $A_i$ then

(i) $f$ is given by a series $x=h(t)=\sum_{i=-\infty}^\infty h_it^i$ and there exists $m\in\bfZ$ such that $|h-h_mt^m|_1<|h|_1$. The degree of $f$ equals to $|m|$.

(ii) $f^{-1}(l_2)=l_1$ and the induced map $l_1\to l_2$ is bijective and given by $|x|_2=|h_m|\cdot|t|_1^m$. In particular, $n_y=|m|$ for any $y\in l_1$.
\end{lemma}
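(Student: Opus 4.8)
The plan is to recover $f$ from the induced homomorphism of affinoid algebras and then to extract everything from the Newton polygon of a single Laurent series. Since $A_1,A_2$ are affinoid, giving $f$ is the same as giving a $k$-algebra homomorphism $f^*\:k\{S^{-1}x,sx^{-1}\}\to k\{R^{-1}t,rt^{-1}\}$, and such a map is determined by $h=f^*(x)=\sum_i h_it^i$. As $x$ is a unit of the source, $h$ must be a unit of the target. First I would invoke multiplicativity of every generalized Gauss valuation $|\ |_\rho$ for $\rho\in[r,R]$: from $h\cdot h^{-1}=1$ we get $|h|_\rho|h^{-1}|_\rho=1$, so $\log|h|_\rho$ is at once convex (as a valuation polygon) and concave, hence affine in $\log\rho$ with an integral slope $m$. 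Equivalently, the maximum in $|h|_\rho=\max_i|h_i|\rho^i$ is attained at a single index $m$ for every $\rho$, since two competing terms at some $\rho_0$ would force a zero of $h$ on the circle $|t|=\rho_0$, contradicting that $h$ is a unit. This gives $|h-h_mt^m|_\rho<|h|_\rho$ for all $\rho$; as $|\ |_A=\max(|\ |_r,|\ |_R)$ and each endpoint inequality is strict, $|h-h_mt^m|_A<|h|_A$, which is the normal form in (i).

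Next I would prove the skeleton statements in (ii). A point $x_\rho\in l_1$ is the monomial point with $|t|_{x_\rho}=\rho$, so $|g|_{x_\rho}=|g|_\rho$ for all $g$ and in particular $|x|_{f(x_\rho)}=|h|_\rho=|h_m|\rho^m$. Because $m\neq 0$, subtracting a constant $c\in k$ cannot lower this value (if $|c|<|h|_\rho$ the $t^m$-term still dominates, and otherwise $|h-c|_\rho\ge|c|\ge|h|_\rho$), so $f(x_\rho)$ is again monomial for $x$, and since its value lies in $[s,S]$ it lies on $l_2$. Thus $f$ carries $l_1$ into $l_2$ by the rule $|x|=|h_m|\,|t|^m$, and as $m\neq 0$ this is a homeomorphism of intervals onto $l_2$, in particular injective. (Here one reduces to $m>0$ by composing with the inversion automorphism of $A_2$, which does not affect the degree.)

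To obtain $f^{-1}(l_2)=l_1$ I would argue with the complementary discs. The set $A_2\setminus l_2$ is a disjoint union of open discs, and the preimage under a finite morphism of an open disc is again a disjoint union of open discs (a finite cover of a disc produces no loops and only genus-zero components). Hence $f^{-1}(A_2\setminus l_2)$ is a union of open discs, its complement $f^{-1}(l_2)$ is a skeleton of $A_1$ containing $l_1$, and having disc complement it can have no leaf outside $l_1$; therefore $f^{-1}(l_2)=l_1$. Consequently, for a generic type $3$ point $y_\sigma\in l_2$ the entire fibre $f^{-1}(y_\sigma)$ is the single skeleton point $x_\rho$ with $|h_m|\rho^m=\sigma$, so $\deg f=n_{x_\rho}$, and it remains to compute this multiplicity.

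Finally I would show $n_{x_\rho}=[\calH(x_\rho):\calH(y_\sigma)]=|m|$: the value groups are $|k^\times|\rho^{\bfZ}$ and $|k^\times|\rho^{m\bfZ}$, of index $|m|$, the residue extension is trivial, and there is no defect, so $ef=|m|$. The hard part will be exactly this defectlessness in the wild case $p\mid m$: in the tame case one may extract an $m$-th root $(1+u)^{1/m}$ of the unit $h/(h_mt^m)$ and reduce $f$ to a Kummer cover of degree $m$, but wildly this root is unavailable, and one must instead argue via the Newton polygon of $h(t)-x$ over $\calH(y_\sigma)$, whose Weierstrass factor with roots on $|t|=\rho$ has degree $|m|$, exhibiting $t$ as a degree-$|m|$ element and ruling out defect. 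Together with the fibre computation this yields $\deg f=|m|$ and $n_y=|m|$ for every $y\in l_1$.
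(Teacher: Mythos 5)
Your proposal cannot be compared against the paper's own argument, because the paper gives none: Lemma~\ref{annulilem} is stated as classical with the proof omitted and a pointer to \cite[Section~6.2]{berihes}. Judged on its own merits, a good part of your write-up is sound: the normal form in (i), obtained by playing the convexity of $\rho\mapsto\log|h|_\rho$ against the fact that $h=f^*(x)$ is a unit, is the standard Newton-polygon argument, and your computation that $f$ sends the Gauss point $x_\rho\in l_1$ to the point of $l_2$ with $|x|=|h_m|\rho^m$ (because subtracting a constant cannot destroy the dominant $t^m$-term) is correct. Two smaller debts: you never prove $m\neq 0$ (it follows from finiteness, since a finite morphism surjects onto the irreducible curve $A_2$, while $m=0$ would confine the image to an open disc around $h_0$), and the final step (degree $=|m|$, defectlessness) is, as you admit, only a sketch; note also that your ``generic type $3$ point'' need not exist at all, e.g.\ when $|k^\times|=\bfR_{>0}$, whereas working at a type $2$ point (residue extension $\tilk(\tilde t)/\tilk(\tilde t^{\,m})$ of degree $|m|$, $e=1$, no defect by stability of $\calH(x)$) closes the computation uniformly.

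The genuine gap is in your proof that $f^{-1}(l_2)=l_1$. You assert that the preimage of an open disc under a finite morphism is a disjoint union of open discs, ``since a finite cover of a disc produces no loops and only genus-zero components.'' This principle is false. For a branched example (and your covers can genuinely be branched: when $p\mid m$ the derivative $h'$ may vanish on $A_1$, so $f$ restricted over a residue disc of $A_2$ need not be \'etale), take residue characteristic $\neq 2$, $0<|a|<1$, and the connected degree-$2$ cover $V=\{y^2=x(x-a)\}$ of the open unit disc: $V$ is $\bfP^1_k$ minus two disjoint closed discs, i.e.\ an open annulus, so a loop does occur. Even adding an \'etale hypothesis does not save the claim in the wild case: for a supersingular double cover $E\to\bfP^1_k$ as in Section~\ref{doubelcover}, the component of $\bfP^1_k\setminus\{f(v)\}$ (with $v$ the four-valent skeleton vertex) containing the image of the genus-$1$ point is an open disc disjoint from $f(\Ram(f))$, yet its preimage component containing the genus-$1$ point has positive genus. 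Indeed, the failure of exactly this principle is why the paper's Lemma~\ref{coverdisclem} carries the hypothesis $S_v=0$ and why Theorem~\ref{localcharth} has content at all. As it stands, your step is circular: the disc-preimage claim is essentially equivalent to what the lemma asserts. The step can be repaired without discs by a direct estimate: any $z\in A_1\setminus l_1$ lies in some $D(a,|a|^-)$ with $a\in A_1(k)$, and from $|t^i-a^i|_z\le|t-a|_z\,|a|^{i-1}$ (valid for all $i\in\bfZ$) one gets
\begin{equation*}
|h-h(a)|_z\ \le\ \frac{|t-a|_z}{|a|}\,|h|_{|a|}\ <\ |h|_{|a|}=|h(a)|=|h|_z,
\end{equation*}
so the constant $c=h(a)$ witnesses that $f(z)$ is not a generalized Gauss point, i.e.\ $f(z)\notin l_2$.
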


\subsubsection{Skeleton of a morphism}
Let $f\colon Y\to X$ be a finite generically \'etale morphism of nice compact curves. By a {\em compatible} pair of skeletons we mean skeletons $\Gamma_X\subset X$ and $\Gamma_Y\subset Y$ such that $\Gamma_Y$ is the preimage of $\Gamma_X$, in the sense that $f^{-1}(\Gamma_X)=\Gamma_Y$ and $f^{-1}(\Gamma_X^0)=\Gamma_Y^0$. By a {\em skeleton} of $f$ we mean a compatible pair of skeletons $\Gamma_f=(\Gamma_Y,\Gamma_X)$ such that $\Gamma_Y$ contains the set $\Ram(f)$ of the ramification points of $f$. Note that on the complement to $\Gamma_f$, the morphism $f$ breaks down into a disjoint union of finite \'etale morphisms between open discs.

\begin{lemma}\label{edgemultlem}
If $(\Gamma_Y,\Gamma_X)$ is a skeleton of a morphism of nice compact curves $f\:Y\to X$ then the multiplicity function $n_y$ is constant along any edge $e\subset\Gamma_Y$.
\end{lemma}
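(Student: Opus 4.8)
The plan is to reduce the statement to the case of a finite morphism of annuli, where the multiplicity along the skeleton is computed by Lemma~\ref{annulilem}. First I would analyze the image of an edge. Fix an edge $e\subset\Gamma_Y$, that is, a connected component of $\Gamma_Y\setminus\Gamma_Y^0$. Since $f^{-1}(\Gamma_X^0)=\Gamma_Y^0$, no interior point of $e$ can map to a vertex of $\Gamma_X$; as $e$ is connected and $f(e)\subseteq\Gamma_X$, the image $f(e)$ is then contained in a single edge $e'\subset\Gamma_X$.

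Next I would localize and build annuli. Choose a closed subinterval $[x_1,x_2]$ in the interior of $e'$ and a closed annulus $A'\subset X$ whose minimal skeleton is $[x_1,x_2]$; this is possible because each edge of $\Gamma_X$ is the skeleton of a semi-annulus. Since $f$ is finite, $f^{-1}(A')$ is a compact analytic domain, and I let $B$ be the connected component of $f^{-1}(A')$ meeting $e$, with $f_B\colon B\to A'$ the induced finite morphism. I claim that $B$ is a closed annulus with skeleton the subinterval $[y_1,y_2]:=e\cap B$. Indeed, $B$ is a connected nice compact curve; its boundary points and its points of positive genus lie over those of $A'$, hence in $\Gamma_Y^0=f^{-1}(\Gamma_X^0)$, so they can only occur at the endpoints lying over $x_1,x_2$. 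Moreover $B\setminus\Gamma_Y$ is, by the description of $f$ on the complement of $\Gamma_f$, a disjoint union of open discs mapping finite \'etale onto the disc components of $A'\setminus[x_1,x_2]$. Therefore the skeleton of $B$ is the single interval $[y_1,y_2]$, and $B$ is a closed annulus.

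Finally I would apply Lemma~\ref{annulilem} to the finite morphism of annuli $f_B\colon B\to A'$. By part (ii) of that lemma we have $f_B^{-1}([x_1,x_2])=[y_1,y_2]$, the induced map of skeletons is a bijection, and $n_y=|m|$ for all $y\in[y_1,y_2]$, where $m$ is the monomial degree of $f_B$. Hence $n_y$ is constant on $[y_1,y_2]$. Since every interior point of $e$ lies in such a subinterval, and these subintervals cover and overlap along $e$, the function $n_y$ is locally constant, and therefore constant, on the connected interval $e$.

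The main obstacle is verifying that the component $B$ is genuinely a closed annulus mapping finitely onto $A'$, that is, that its skeleton is exactly the interval $[y_1,y_2]$. This rests on the compatibility $f^{-1}(\Gamma_X^0)=\Gamma_Y^0$ together with the fact that the preimages of the disc pieces of $A'$ are discs, which is precisely the structure of $f$ away from $\Gamma_f$. Once $B$ has been identified as an annulus, the constancy of $n_y$ is immediate from Lemma~\ref{annulilem}.
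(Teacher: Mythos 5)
Your overall strategy is exactly the paper's: the paper proves this lemma in one line by covering the semi-annulus of an edge by closed annuli and invoking Lemma~\ref{annulilem}(ii), and your proposal is an honest unpacking of that reduction. But there is a genuine gap at the crux of the unpacking, namely the claim that $B$ is a closed annulus with skeleton $[y_1,y_2]$. The two facts you cite --- $f^{-1}(\Gamma_X^0)=\Gamma_Y^0$ and the disc structure of $f$ away from $\Gamma_f$ --- show only that $B$ meets $\Gamma_Y$ in the single closed interval $[y_1,y_2]$ (connectedness, since each disc is attached at one point), that $B$ contains no positive-genus points (note that your phrase ``points of positive genus lie over those of $A'$'' is not a valid principle, since genus can jump in covers; what is true is that positive-genus points of $Y$ are vertices of $\Gamma_Y$, hence not in $B$), and that $B\setminus[y_1,y_2]$ is a union of open discs. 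They do \emph{not} show that the only points of $B$ lying over $\partial A'=\{x_1,x_2\}$ are the two endpoints $y_1,y_2$. Since $f_B$ is finite, $\partial B=f_B^{-1}(\{x_1,x_2\})$, so this is precisely the statement that $f$ does not fold the edge $e$ over $e'$, i.e.\ that $f|_e$ is locally injective --- and nothing you wrote rules a fold out. If a fold occurred at an interior point $q$ of $[y_1,y_2]$, then $B$ would have at least three boundary points $\{y_1,q,y_2\}$; its skeleton would still be a chain of intervals, but such a curve (for instance, a closed annulus from which one removes an open residue disc attached at an interior point of its skeleton) is not an annulus, and Lemma~\ref{annulilem} cannot be applied to it. So ``the skeleton is an interval'' does not by itself imply ``annulus''; one needs $\partial B$ to consist of exactly the two endpoints.

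The missing ingredient can be supplied, after which your argument goes through. For instance: $f$ is a finite morphism of quasi-smooth curves, hence flat and therefore open; if both branches of $e$ at an interior point $q$ mapped to the same branch of $e'$ at $x=f(q)$, then for a small neighborhood $V$ of $q$ the image $f(\Gamma_Y\cap V)$ would avoid one side of $e'$ near $x$, whereas the open set $f(V)$ must meet that side, and any preimage in $V$ of such a point lies in $f^{-1}(\Gamma_X)\cap V=\Gamma_Y\cap V$ --- a contradiction. (Alternatively, at an inner type 2 point $q$ the map $C_q\to C_x$ of germ reductions is a finite morphism of proper curves, hence surjective; the disc branches at $q$ map to disc branches at $x$, so the two branches of $e$ at $q$ must hit \emph{both} branches of $e'$ at $x$.) With local injectivity of $f|_e$ in hand, $\partial B=\{y_1,y_2\}$, the identification of $B$ as a closed annulus is legitimate, and the rest of your proof is correct.
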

\begin{proof}
Any open semiannulus is a union of closed annuli, hence the lemma follows from Lemma~\ref{annulilem}(ii).
\end{proof}

\subsubsection{Simultaneous semistable reduction}\label{simulsec}
The simultaneous semistable reduction theorem asserts that any finite generically \'etale morphism of nice compact curves possesses a skeleton. This is not essentially stronger than the semistable reduction theorem and can be deduced from it as follows. Start with any skeleton $\Gamma'_X$ of $X$, choose a skeleton $\Gamma'_Y$ of $Y$ that contains $\Ram(f)$ and $f^{-1}(\Gamma'_X)$, and set $\Gamma_X=f(\Gamma_Y)$ and $\Gamma_Y=f^{-1}(\Gamma_X)$. Clearly, $\Gamma_X$ is connected and contains $\Gamma'_X$, hence it is a skeleton by Lemma~\ref{enlargelem}. We claim that $\Gamma_Y$ is a skeleton too, and hence $(\Gamma_Y,\Gamma_X)$ is a skeleton of $f$.

We should prove that a connected component $D$ of $Y\setminus\Gamma_Y$ is an open disc. Note that $D$ is a connected component of $f^{-1}(E)$, where $E$ is a connected component of $X\setminus\Gamma_X$ and hence $E$ is a disc. In addition, $\Gamma'_Y\subseteq\Gamma_Y$ hence $D$ is contained in a connected component $D'$ of $Y\setminus\Gamma'_Y$, which is an open disc. Finally, $f(D')$ is contained in an open component $E'$ of $X\setminus \Gamma'_X$, which is an open disc too. It remains to use the simple fact that for any morphism $D'\to E'$ between open discs, the preimage of an open disc $E\subseteq E'$ is a disjoint union of open discs.

\begin{remark}
In the language of formal models, the theorem is due to Liu. The skeletal version appeared in \cite{ABBR}.
\end{remark}

\subsubsection{Simultaneous stable reduction}
One can also show that, excluding a couple of degenerate cases, there exists a unique minimal skeleton of $f$. In particular, if one starts with a skeleton $\Gamma$ of $X$ then there exists a unique minimal skeleton $(\Gamma_Y,\Gamma_X)$ of $f$ such that $\Gamma\subseteq\Gamma_X$ (\cite[Corolary~4.18]{ABBR}). Indeed, take the minimal skeleton $\Gamma'\subset Y$ containing $f^{-1}(\Gamma)$ and take $\Gamma_X$ to be the minimal skeleton containing $f(\Ram(f)\cup\Gamma')$.

\subsection{Piecewise monomial structure}

\subsubsection{A metric}
An interval in an analytic curve possesses a natural metric. For brevity, we only recall the approach of \cite[Section~5.58]{metrics_on_curves}, which makes use of semistable reduction. Probably, this is the shortest, though not the most conceptual, way. If $I\subset X^\mon$ is the skeleton of an annulus $A\subseteq X$ isomorphic to the subdomain of $\bfA^1_k$ given by $s<|t|<r$ then $l(I)=\log r-\log s$. In general, it follows from semistable reduction that there exists a finite subset $S$ such that the connected components $I_j$ of $I\setminus S$ are skeletons of open annuli and we set $l(I)=\sum_j l(I_j)$. The length $l(I)$ turns out to be independent of choices, so we obtain a metric on any interval inside of $X^\mon$. Moreover, this metric extends to $X^\hyp$ by continuity. All type 1 points are singular for the metric: if $[a,b]\subset X$ and $a$ is of type 1 then the length of $(a,b)$ is infinite.

\subsubsection{Radius parametrization}
Note that if $[x,y]$ is an interval in $\bfP^1_k$ and $y$ dominates $x$ then $l([x,y])=\log r(y)-\log r(x)$. More generally, by a {\em radius parametrization} of an interval $I\subset X$ we mean a function $r\:I\to[0,\infty]$ such that

(i) $l([a,b])=\log r(b)-\log r(a)$ for any subinterval $[a,b]\subset I$,

(ii) $r(x)\in|k^\times|$ for some point $x\in I$ of type 2.

In particular, if $I=[x,y]$ then $r(x)=0$ if and only if $x$ is of type 1 and $r(y)=\infty$ if and only if $y$ is of type 1. Also, $r(x)\in|k^\times|$ for any type 2 point and $r(x)\notin|k^\times|$ for any type 3 point.

\subsubsection{Piecewise monomial functions}
Let $S$ be a subset of $X$ (our cases of interest are $S=X$ and $S=X^\hyp$). A function $f\:S\to[0,\infty]$ is called {\em piecewise monomial} if for any interval $I\subset S$ there exists a finite subdivision $I=\cup_{j=1}^mI_j$ such that for each $j$ there exist $n\in\bfZ$ and $a\in(0,\infty)$ with $f|_{I_j}=ar^{n}$, where  $r$ is a radius parametrization of $I_j$. If, moreover, $a\in|k^\times|$ then we say that $f$ is {\em piecewise $|k^\times|$-monomial}; this property is independent of the choice of the radius parametrization. Note also that $n$ is independent of the radius parametrization once the orientation of $I_j$ is fixed, and $n$ changes sign if we switch the orientation.

\begin{example}\label{pmexam}
(i) If $f\in\calO_X(U)$ is an analytic function then $|f|$ is a piecewise $|k^\times|$-monomial function on $U$.

(ii) The radius function on $\bfA^1_k$ is piecewise $|k^\times|$-monomial. Note that it is semicontinuous but not continuous (in the usual topology).

(iii) A product of piecewise $|k^\times|$-monomial functions is piecewise $|k^\times|$-monomial.

(iv) If $f\:Y\to X$ is a morphism of curves and $I\subset Y$ is an interval then it follows from Lemma~\ref{pmmaplem} below that for any piecewise $|k^\times|$-monomial function $\phi\:X\to[0,\infty]$ the pullback $\phi^* f=\phi\circ f$ is a piecewise $|k^\times|$-monomial function on $Y$.

(v) As an important particular case of the above consider the following situation: $t\in\Gamma(\calO_Y)$ is a global function on $Y$ and $r_t$ is the radius function of $t$, i.e. $r_t(y)=\inf_{a\in k}|t-a|_y$ for any $y\in Y$. Then $t$ induces a morphism $Y\to\bfA^1_k$ and $r_t$ is the pullback of the radius function on the target. In particular, $r_t$ is piecewise $|k^\times|$-monomial.
\end{example}

\subsubsection{Slopes}
If $\phi\:X\to\bfR_+$ is piecewise $|k^\times|$-monomial, $x\in X$ is a point and $b\in\Br(x)$ is a branch then there exists an interval $I=[x,y]$ in the direction of $b$, and taking $I$ small enough we can achieve that $\phi=ar^n$ is monomial on $I$. We call $n$ the {\em slope} of $\phi$ at $b$ and denote it $\slope_b(\phi)$. As we have mentioned, $n$ depends only on the orientation of the interval, which is fixed by choosing $x$ to be the starting point.

\begin{remark}
(i) If $x$ is of type 3 and $u,v$ are its two branches then the slopes at $u$ and $v$ are opposite, that is, $\phi$ is monomial locally at $x$. Indeed, otherwise $\phi=ar^n$ at $u$ and $\phi=br^m$ at $v$ for $m\neq n$, and one gets that $ar^n=br^m$ at $x$, yielding a contradiction via $|r(x)|=(|a|/|b|)^{1/(m-n)}\in|k^\times|$. Up to the sign, these slopes are equal to the image of $\phi(y)$ in $|\calH(x)^\times|/|k^\times|=\bfZ$.

(ii) If $x$ is unibranch and $f\in\calO_{X,x}$ has zero of order $n$ at $x$ then $|f|$ has slope $n$ at $x$. In particular, $|f|$ is of slope zero at any type 4 point.

(iii) If $x$ is of type 2, $v\in C_x$ and $f\in\calO_{X_G,x}$ then $\|f\|_v=(|f|_x,\lambda_x^{\slope_v|f|})$.
\end{remark}

\subsubsection{Piecewise monomiality of morphisms}
The assumptions on the morphism $f$ in the following lemma are redundant, but we use them to give a short proof based on semistable reduction.

\begin{lemma}\label{pmmaplem}
Assume that $f\:Y\to X$ is a finite morphism between nice compact curves. If $\Gamma\subset Y$ is a finite graph then $f(\Gamma)$ is a finite graph and the induced map $\Gamma\to f(\Gamma)$ is piecewise $|k^\times|$-monomial with respect to the radius parameterizations on the edges of $\Gamma$ and $f(\Gamma)$.
\end{lemma}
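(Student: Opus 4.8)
The plan is to reduce everything to the two situations where piecewise monomiality is already understood: the skeletal part, governed by the annulus computation of Lemma~\ref{annulilem}, and the disc part, governed by the pullback of a coordinate as in Example~\ref{pmexam}. First I would invoke the simultaneous semistable reduction theorem (\ref{simulsec}) to fix a skeleton $(\Gamma_Y,\Gamma_X)$ of $f$. Since $f^{-1}(\Gamma_X^0)=\Gamma_Y^0$ we have $f(\Gamma_Y^0)\subseteq\Gamma_X^0$ automatically. This decomposes $Y=\Gamma_Y\sqcup\coprod_j D_j$ into the skeleton and a disjoint union of open discs, with $f$ restricting to a finite \'etale map $D_j\to D_j'$ onto an open disc $D_j'\subset X\setminus\Gamma_X$. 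As $\Gamma$ is compact it meets only finitely many $D_j$, so $\Gamma=(\Gamma\cap\Gamma_Y)\cup\bigcup_j(\Gamma\cap D_j)$ is a finite decomposition, and it suffices to treat each piece and then glue.

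On the skeletal part, every edge of $\Gamma_Y$ is the skeleton of a semi-annulus mapping finitely onto a semi-annulus whose skeleton is an edge of $\Gamma_X$, so Lemma~\ref{annulilem}(ii) applies directly: in the standard coordinates the induced map on skeletons is $|x|=|h_m|\,|t|^m$ with $m\in\bfZ$ and $|h_m|\in|k^\times|$. Reading $|t|$ and $|x|$ as radius parametrizations, this is exactly a $|k^\times|$-monomial expression $\rho\circ f=|h_m|\,r^m$, and restricting to $\Gamma\cap\Gamma_Y$ preserves it. On a disc part $D_j\to D_j'$ I would fix a coordinate $t'$ on $D_j'$ and set $g=t'\circ f\in\Gamma(\calO_{D_j})$. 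By Example~\ref{pmexam}(v) the radius function $r_{t'}$ is piecewise $|k^\times|$-monomial on $D_j'$, and on each edge of $f(\Gamma)$ inside $D_j'$ it agrees, up to $|k^\times|$-scaling and inversion, with the intrinsic radius parametrization; hence it suffices to show that $r_{t'}\circ f=r_g$ is piecewise $|k^\times|$-monomial on $\Gamma\cap D_j$, which again holds by Example~\ref{pmexam}(v). This regime absorbs the type 3 and type 4 points of $\Gamma$, which necessarily lie in the discs. Throughout I would use that any two radius parametrizations of a given interval differ by $\rho\mapsto c\rho^{\pm1}$ with $c\in|k^\times|$, so the property is insensitive to the particular intrinsic parametrizations carried by the edges of $f(\Gamma)$.

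Finiteness of $f(\Gamma)$ then follows: in the logarithmic radius coordinate each edge of $\Gamma$ maps by a continuous piecewise linear function with integer slopes, so its image is a finite union of arcs, and $f(\Gamma)$ is the union of these with the finitely many points $f(\Gamma^0)$. I expect the main obstacle to be the bookkeeping around the part of $\Gamma$ lying outside $\Gamma_Y$, where Lemma~\ref{annulilem} does not apply: one must verify that nothing pathological occurs at type 4 leaves, at the points where $\Gamma$ crosses $\Gamma_Y$, or where $f(\Gamma)$ acquires a vertex. All of this is handled by enlarging the finite subdivision of each edge so that every subinterval lies inside a single semi-annulus of $\Gamma_Y$ or a single disc $D_j$ and maps into a single semi-annulus or disc of the target, after which the two computations above apply verbatim.
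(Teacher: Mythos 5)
Your decomposition into a skeletal part and disc parts is reasonable, and the skeletal half of your argument (via Lemma~\ref{annulilem}(ii)) matches the paper's. But the disc half contains a genuine circularity: you dispose of $\Gamma\cap D_j$ by invoking Example~\ref{pmexam}(v) to conclude that $r_g=r_{t'}\circ f$ is piecewise $|k^\times|$-monomial. In the paper, Example~\ref{pmexam}(v) is stated as a particular case of Example~\ref{pmexam}(iv), and (iv) is explicitly derived from the present lemma (``it follows from Lemma~\ref{pmmaplem} below''). So you are citing the lemma to prove the lemma. This is not a cosmetic defect: the assertion that $r_g=\inf_{a\in k}|g-a|$ is piecewise monomial with \emph{finitely many} breaks on an arbitrary interval --- in particular on an interval ending at a type 1 or type 4 point, where no skeleton or annulus argument is available --- is precisely the hard content of the lemma. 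An infimum of infinitely many piecewise monomial functions $|g-a|$ has no a priori reason to have finitely many slope changes, and nothing in your proposal supplies that finiteness.

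What is missing is the paper's second step. After the monomial case is settled, one takes an interval $I=[y,q]$ with $y$ of type 1 or 4, shrinks $Y$ and $X$ to unit discs (using \ref{elemsec}), writes $f$ as a power series $h(t)=\sum_{i\ge 0}a_it^i$, moves a closed subinterval of $(y,q]$ to a subinterval of $[0,q]$ by a coordinate change, and observes that the slope of the map equals $m$ on each subinterval where $a_mt^m$ is the dominant term of $h$; hence the slope is a non-negative increasing integer-valued function and in particular changes only finitely often near $y$. Some such dominant-term argument is unavoidable and must replace your citation of Example~\ref{pmexam}(v). Two smaller points: type 3 points need \emph{not} lie in the discs (edges of $\Gamma_Y$ contain type 3 points), though this slip is harmless since both of your regimes treat such points; and note that the paper avoids any disc-interior analysis for monomial points altogether by enlarging the skeleton (Lemma~\ref{enlargelem} plus simultaneous semistable reduction) until it contains all of $\Gamma\cap Y^\mon$, so that only the type 1 and 4 endpoints remain to be handled by hand.
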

\begin{proof}
If $\Gamma\subset Y^\mon$ then it is contained in a sufficiently large skeleton $\Gamma'\subset Y$. By the simultaneous semistable reduction we can find a skeleton $(\Gamma_Y,\Gamma_X)$ of $f$ such that $\Gamma'\subset\Gamma_Y$ (it suffices to require that $f(\Gamma'^0)\subseteq\Gamma_X^0$). Then it is clear that $f(\Gamma)$ is a finite graph and we claim that the maps on the edges are monomial. Indeed, this reduces to study of a map $\phi\:A_1\to A_2$ between closed annuli, and then Lemma~\ref{annulilem}(ii) does the job.

It remains to consider the case when $\Gamma$ contains a point $y$ of type 1 or 4, say $I=[y,q]$, and we should prove that the map is piecewise monomial at $y$. We know that the map is piecewise monomial on $(y,q)$, so we should only prove that it has finitely many breaks near $y$, i.e. the slope of $f$ changes finitely many times in a neighborhood of $y$. Shrinking $Y$ around $y$ we can assume that $Y=\calM(k\{t\})$ is a unit disc (see \ref{elemsec}) and $\Gamma$ is the interval $I=[y,q]$ connecting $y$ with the maximal point of $Y$. Similarly, we can assume that $X$ is a unit disc, and so $f$ is given by a series $h(t)=\sum_{i=0}^\infty a_it^i$. It suffices to prove that the slope of $f$ on $(y,q]$ is a non-negative increasing function. Furthermore, it suffices to check the latter claim for a closed subinterval $J\subset(y,q]$. By change of coordinates we can move $J$ to a subinterval of $[0,q]$, and then the claim becomes obvious: the slope equals to $m$ on any subinterval of $[0,q]$ on which $a_mt^m$ is the dominant term of $h(t)$.
\end{proof}

\subsubsection{The multiplicity function}
Let $n_f\:Y\to\bfN$ denote the {\em multiplicity function} $y\mapsto n_y$.

\begin{lemma}\label{multfuncor}
If $f\:Y\to X$ is as in Lemma~\ref{pmmaplem} then the multiplicity function $n_f$ is upper semicontinuous. In addition, if $I\subset Y$ is a closed interval then the restriction of $n_f$ onto $I$ has finitely many discontinuity points, all of which are of type 2.
\end{lemma}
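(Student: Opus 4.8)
The plan is to derive both assertions from the piecewise monomial structure of $f$ (Lemma~\ref{pmmaplem}) together with the annulus computation of Lemma~\ref{annulilem}, reducing the global claim to the behaviour of $n_f$ along intervals and inside the open discs hanging off a skeleton of $f$. The identity that drives everything is the following local fact: for a monomial point $y$ (type 2 or 3) and a branch $v$ along which $f$ has constant slope, $n_y=|\slope_v(f)|$. At a type 3 point this is immediate, since $y$ has an annulus neighbourhood on which $f$ restricts to a finite map of annuli and Lemma~\ref{annulilem}(ii) gives $n_y=|m|=|\slope|$; the same lemma, applied to a small monomial subinterval through a type 2 point lying on an open edge of an auxiliary skeleton of $f$, shows $n_y=|\slope|$ there as well (and Lemma~\ref{edgemultlem} guarantees this value is the common edge value). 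For a type 1 point our convention makes $n_y$ the algebraic ramification index, which equals the order of vanishing, hence the limiting slope computed in the proof of Lemma~\ref{pmmaplem}; for a type 4 point $n_y=[\calH(y):\calH(x)]$ is the limit of the multiplicities of the type 3 points approaching $y$, again the limiting slope.

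Granting this, the interval statement is quick. Fix a closed interval $I$. By Lemma~\ref{pmmaplem} the map $f|_I$ is piecewise $|k^\times|$-monomial, so it has finitely many breakpoints; since the two branches at an interior type 3 point carry opposite slopes, no break occurs there, so all breaks are type 2 points. On each open piece $n_f$ equals the constant absolute slope, and the first paragraph shows this value propagates continuously through the type 3, type 1 and type 4 endpoints. The only additional discontinuities can come from type 2 points where $n_y$ strictly exceeds the edge slope (e.g.\ points of positive genus), but after enlarging an auxiliary skeleton of $f$ so that the monomial part of $I$ is a union of its edges, such points are confined to the finite vertex set. Hence $n_f|_I$ is constant off a finite set of type 2 points, as claimed.

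For upper semicontinuity I would fix a skeleton $(\Gamma_Y,\Gamma_X)$ of $f$ and study $T_{\ge c}=\{y:n_y\ge c\}$. On $\Gamma_Y$ the function $n_f$ is constant on each open edge by Lemma~\ref{edgemultlem} and the vertex set is finite, so $T_{\ge c}\cap\Gamma_Y$ is closed in $\Gamma_Y$. On an open disc $D$ that is a component of $Y\setminus\Gamma_Y$, with entry point $q\in\Gamma_Y$, the crucial feature is radial monotonicity: $n_{y'}\le n_y$ whenever $y$ lies on the path from $y'$ to $q$. This follows from $n_y=|\slope|$ together with the fact, extracted from the proof of Lemma~\ref{pmmaplem}, that the slope of $f$ is nondecreasing as the radius grows, i.e.\ as one moves from the centre of a sub-disc outwards towards $q$. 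Combined with the decisive local inequality at a type 2 point $y$ --- namely $n_{y_v}\le n_y$ for every branch $v$ and every nearby $y_v$ on $v$ --- one gets that $T_{\ge c}$ meets each hanging disc in a radially closed neighbourhood of the portion of the skeleton it touches, whence $T_{\ge c}$ is closed in $Y$.

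I expect the main obstacle to be precisely this local inequality at type 2 points: that the multiplicity at a type 2 point dominates the multiplicities of all adjacent branches, so that the jumps of $n_f$ always point ``upwards'' towards the point rather than away from it. I would prove it either by conservation of multiplicity for the finite flat morphism $f$ (the multiplicities of the points of $f^{-1}(x')$ specializing to $y$ sum to $n_y$), or, equivalently, by bounding the local degrees of the germ reduction $C_y\to C_x$ by $[\calH(y):\calH(x)]=n_y$. Everything else is a bookkeeping assembly of Lemmas~\ref{annulilem}, \ref{edgemultlem} and \ref{pmmaplem}; it is this domination that upgrades the piecewise constancy along intervals to genuine upper semicontinuity on all of $Y$.
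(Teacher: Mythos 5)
The genuine gap is at the type 4 endpoints of $I$: you assert that for a type 4 point $y$ the multiplicity $n_y=[\calH(y):\calH(x)]$ ``is the limit of the multiplicities of the type 3 points approaching $y$'', but none of your tools yields the lower bound $\lim_{z\to y}n_z\ge n_y$. Upper semicontinuity gives only $\limsup_{z\to y}n_z\le n_y$, and radial monotonicity (the dominant-exponent argument) only shows that $n_f$ stabilizes along $I$ near $y$; a strict drop of multiplicity in the limit is exactly what does happen at type 2 points, so ruling it out at type 4 points requires a genuinely new step. This is where the paper's proof does its real work: after shrinking, $Y$ and $X$ are discs and $n_f$ is constant, say equal to $d$, on $(y,q]$; since the maximal point $q$ of $Y$ is the unique preimage of the maximal point of $X$, one gets $d=\deg f$; hence every point of $(y,q]$ is the unique preimage of its image, and, because the finite flat map $f$ is open, every point of $f^{-1}(f(y))$ is a limit of points of the fibers over $f((y,q])$, so $f^{-1}(f(y))=\{y\}$ and $n_y=\deg f=d$. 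Without some version of this argument you cannot conclude that a type 4 endpoint is not a discontinuity point, so the assertion ``all discontinuity points are of type 2'' remains unproved. (Your treatment of the monomial part of $I$ via an auxiliary skeleton together with Lemmas~\ref{annulilem} and \ref{edgemultlem}, and of type 1 endpoints via the order of vanishing, is correct and agrees with the paper.)

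A secondary point: your route to upper semicontinuity is both over-engineered and slightly leaky. The ``conservation of multiplicity'' argument you mention as a sub-step for type 2 points is the paper's entire proof, and it works verbatim at every point of $Y$: choose $X'$ around $x=f(y)$ so small that the connected component $Y'$ of $f^{-1}(X')$ containing $y$ meets $f^{-1}(x)$ only in $y$; then $Y'\to X'$ is finite of degree $n_y$, whence $n_f\le n_y$ on all of $Y'$. Once you have this, the skeleton, the radial sets and the closedness bookkeeping for $T_{\ge c}$ are superfluous. Moreover, as written your assembly does not close at type 1 and 4 limit points: for a sequence $y_i\to y$ with $y$ of type 4, radial monotonicity bounds $n_{y_i}$ by the multiplicities of points lying \emph{above} $y$, which is the wrong direction, and you would again need either the conservation argument applied at $y$ itself or the missing type 4 continuity.
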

\begin{proof}
Let us show that $n_f$ is upper semicontinuous at a point $y\in Y$. For an analytic neighborhood $X'$ of $x$ let $Y'$ be the connected component of $f^{-1}(X')$ that contains $y$. Taking $X'$ sufficiently small we can achieve that $y$ is the only preimage of $x$ in $Y'$. Then the finite map $Y'\to X'$ is of degree $n_f(y)$ and hence $n_f(y')\le n_f(y)$ for any $y'\in Y'$.

Now, let us study $n_f|_I$. The argument is similar to the one used in Lemma~\ref{pmmaplem}. Assume first that $I\subset Y^\mon$. By the simultaneous semistable reduction, we can find a skeleton $(\Gamma_Y,\Gamma_X)$ such that $I\subseteq\Gamma_Y$. If $e$ is an edge in $\Gamma_Y$ then the multiplicity equals to the absolute value of the slope of $f$ on $e$ and is constant along $e$ by Lemma~\ref{annulilem}(ii).

If $I=[y,q]$ with $y$ of type 1 or 4 then we reduce to the case when $X$ and $Y$ are discs, and the same argument as in the proof of Lemma~\ref{pmmaplem} shows that the multiplicity decreases when we approach $y$. In particular, it stabilizes from some stage. Shrinking $X$ and $Y$ we can assume that $n_f$ is constant along $(y,q]$. Then any point of $f((y,q])$ has a single preimage in $Y$ and, by continuity, $y$ is the single preimage of $f(y)$. Hence, $n_y$ equals to the degree of $f$ and so $n_f$ is constant on all of $[y,q]$.
\end{proof}

\subsubsection{Multiplicity of $f$ at a branch}
Lemma \ref{multfuncor} implies that for any branch $v\in\Br(x)$ there exists an interval $I=(x,y]$ along $v$ such that the multiplicity of $f$ is constant on $(x,y]$. We set $n_v=n_y$ and call it the {\em multiplicity} of $f$ at $v$.

\begin{remark}
The notation $n_v$ will be convenient in the sequel, but it does not contain a new information: if $x$ is of type 1, 3 or 4 then $n_v=n_x$, and if $x$ is of type 2 then $v$ can be viewed as a type 5 point and $n_v$ agrees with the definition of Section~\ref{multsec}.
\end{remark}

\subsubsection{Application to tame parameters}
We conclude Section \ref{ancurvesec} with the following result.

\begin{lemma}\label{tameparamlem}
Assume that $X$ is a nice compact curve, $x\in X_G$ is a point, and $t$ is a tame parameter at $x$. Then there exists an analytic subdomain $Y\subseteq X$ such that $x\in Y_G$ and $t$ is a tame parameter at any point of $Y$.
\end{lemma}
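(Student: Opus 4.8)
The plan is to exhibit $Y$ as a small closed disc or annulus around $x$ spanning a single \emph{unramified} direction, after first arranging that the coordinate map induced by $t$ is \'etale. I would treat $x$ according to its type, the substantial case being type $2$. First I would shrink: let $f\:U\to\bfA^1_k$ be the map induced by $t$ on an affinoid neighborhood of $x$. Since $t$ is a parameter, $f$ is quasi-\'etale at $x$, hence \'etale at $x$ because $X$ is quasi-smooth; as the \'etale locus is open I may assume $f\:U\to V$ is finite \'etale onto an open $V\subseteq\bfA^1_k$, so that $t$ is automatically a parameter at every point of $U$. It then remains only to control tameness, i.e.\ to find an analytic subdomain $Y$ with $x\in Y_G$ on which $\calH(y)/\calH(f(y))$ is tamely ramified for all $y\in Y$.

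The easy types are $1$, $3$ and $4$. If $x$ is of type $1$, or of type $4$ (where tameness forces $n_x=1$ since $E=F=0$ leaves no room beyond a trivial extension), then $n_x=1$, so $f$ is a local isomorphism at $x$ by \cite[Theorem~3.4.1]{berihes}; by upper semicontinuity of the multiplicity (Lemma~\ref{multfuncor}) a small disc $Y\ni x$ maps isomorphically onto its image and $t$ is an unramified, hence tame, parameter throughout $Y$. If $x$ is of type $3$, I take $Y$ to be a small closed annulus around $x$; by Lemma~\ref{annulilem} the induced map is Kummer of degree $n_x$, which is prime to $\cha(\tilk)$ by tameness, so the cover is tame in the sense of \cite[Section~6.3]{berihes} and $t$ is a tame parameter at every point of $Y$.

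The main case is $x$ of type $2$. Here tameness of $t$ at $x$ means exactly that the residue extension $\wHx/\wt{\calH(f(x))}$ is separable, equivalently that the germ reduction map $C_x\to C_{f(x)}$ is generically separable; such a map of $\tilk$-curves is \'etale outside a finite set of closed points, so all but finitely many branches $v\in\Br(x)$ are unramified. I choose one unramified branch $v_0$ whose component of $U\setminus\{x\}$ is an open disc $D_{v_0}$ and set $Y=\{x\}\cup D_{v_0}$, a closed disc having $x$ as its unique boundary point, so that $x\in Y\subseteq Y_G$. Along $v_0$ one has $n_{v_0}=1$, whence $n_y=1$ on all of $D_{v_0}$ by Lemma~\ref{multfuncor}; thus $f|_Y$ is a local isomorphism away from $x$, where $t$ is an unramified parameter, while at the boundary point $x$ it is tame by hypothesis. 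The type $5$ case reduces to this one: a tame parameter at a type~$5$ point $x$ forces $C_\eta\to C_{f(\eta)}$ at the generization $\eta=\gtr(x)$ to be separable, because a generically inseparable map of curves over the perfect field $\tilk$ is wildly ramified at \emph{every} closed point; hence $t$ is tame at $\eta$, and I span the single tame direction corresponding to $x$, keeping $\eta$ on the boundary so that $x\in Y_G$.

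The crux, and the step I expect to be the main obstacle, is the interplay between ``$x\in Y_G$'' and the possible presence of wild branches at a tame type~$2$ point: since every honest neighborhood of a type~$2$ point meets all of its branches, one genuinely cannot keep $x$ in the interior of $Y$, and the argument hinges on excising the wild directions by placing $x$ on the boundary of $Y$ while spanning only an unramified one. Making this precise requires the finiteness of wild branches (from generic separability of the reduction map) together with the local description of $f$ along an unramified branch. A secondary point needing care is the defectlessness of type~$2$ extensions, which is precisely what makes tameness at $x$ equivalent to separability of the residue field extension.
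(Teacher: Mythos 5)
Your treatment of the main case (type 2) has a fatal flaw: the set $Y=\{x\}\cup D_{v_0}$ is not an analytic subdomain of $X$, and no repair of this choice is possible. A strictly analytic domain containing the type 2 point $x$ must, by definition, contain a finite union of affinoid domains containing $x$ whose union is a neighborhood of $x$ \emph{in} $Y$; for any affinoid $V\ni x$ the branches at $x$ pointing into $V$ correspond to the closed points of the germ reduction of $(V,x)$, which is a curve over the algebraically closed field $\tilk$ and hence has infinitely many closed points. So \emph{every} analytic domain containing $x$ contains initial segments of all but finitely many branches at $x$: one can excise only finitely many directions, never all but one. Your picture of $\{x\}\cup D_{v_0}$ as ``a closed disc having $x$ as its unique boundary point'' conflates the topological closure of a single residue disc with an actual closed disc: a closed disc consists of its maximal point together with \emph{all} inward branches, not one. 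Thus your ``crux'' paragraph resolves the difficulty exactly backwards. The correct resolution (and the paper's) is the opposite: since $C_x\to C_{f(x)}$ is generically \'etale, only finitely many branches are ramified; one shows that along every \emph{unramified} branch the corresponding disc maps isomorphically, and then removes only the finitely many bad discs, keeping $x$ and cofinitely many branches inside an affinoid $Y$. The same objection defeats your type 5 reduction: for a type 5 point $x$, the condition $x\in Y_G$ forces the generization $\eta=\gtr_X(x)$ to lie in $Y$ (open sets of $X_G$ are stable under generization), so $Y$ again must meet cofinitely many branches at $\eta$, not ``the single tame direction.''

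There is also a secondary gap in the step you would need even after fixing the above: you claim $n_{v_0}=1$ implies $n_y=1$ on all of $D_{v_0}$ ``by Lemma~\ref{multfuncor}.'' Upper semicontinuity only says that $\{n_f\ge 2\}$ is closed; it does not prevent a nonempty closed set of multiplicity $\ge 2$ sitting deep inside the disc, away from the initial segment of the branch (this is exactly what happens near the skeleton for wild Kummer coverings). What is actually needed --- and what the paper invokes as ``a direct computation'' --- is that the \emph{degree} of the finite map from $D_{v_0}$ onto the corresponding component of $Z\setminus\{z\}$ equals the multiplicity $n_{v_0}$ of $f$ at the branch (e.g.\ by applying Lemma~\ref{annulilem} to annular ends), whence degree $1$ and an isomorphism. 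Once one has this, it applies to every unramified branch simultaneously, which is precisely why the paper can keep all of them. (A minor further point: in your type 3 case, deducing that ``the cover is tame in the sense of Berkovich'' merely from $\deg=n_x$ prime to $\cha(\tilk)$ is circular --- tameness of the covering means all multiplicities are prime to $p$, which is what must be shown; the paper does this by noting the multiplicity is $m$ on the skeleton and $1$ off it.)
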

\begin{proof}
Shrinking $X$ around $x$ we can assume that $t$ induces a morphism $f\:X\to\bfA^1_k$. Type 4 fields have no non-trivial tame extensions, hence if $x$ is of type 4 then $\calH(x)=\wh{k(t)}$. The latter implies that $f$ is a local isomorphism at $x$ (e.g., by \cite[Theorem 3.4.1]{berihes}), and we are done.

The case of $x$ of type 1 is clear because $f$ is a local isomorphism at $x$. If $x$ is of type 3 then we can replace $t$ by $t+c$ with $c\in k$ making it monomial. Then Lemma~\ref{annulilem} implies that for a small enough annulus $A$ around $x$ with a coordinate $\tau$, the map $f$ is given by $t=h(\tau)=\sum h_i\tau^i$ such that $|h-h_m\tau^m|_A<|h|_A$ for some $m\neq 0$. Since $n_x=|m|$ and the parameter is tame, $m$ is invertible in $\tilk$. Then it is easy to see that $f$ has multiplicity $m$ on the skeleton of $A$ and multiplicity one outside of it, hence $f$ is tame everywhere on $A$.

If $x$ is of type 2 then it follows from simultaneous semistable reduction that replacing $X$ by an affinoid domain we can achieve that $X$ is finite over $Z=f(X)$ and $X\setminus\{x\}=\coprod X_i$, $Z\setminus\{z\}=\coprod Z_j$, where $z=f(x)$, and $X_i$ and $Z_j$ are open discs. Since $\calH(x)/\calH(z)$ is unramified, the map $C_x\to C_{z}$ is generically \'etale, and removing some $X_i$'s and $Z_j$'s we can achieve that $C_x\to C_z$ is \'etale. Thus, the multiplicity of $f$ at any branch $v\in C_x$ is one. On the other hand any restriction $f_i\:X_i\to Z_j$ is a finite \'etale morphism between open discs and a direct computation shows that its degree equals to the multiplicity of $f$ at the branch $v\in\Br(x)$ pointing towards $X_i$. Thus, each $f_i$ is an isomorphism, in particular, $t$ is a tame parameter everywhere on $X$.

It remains to consider the case when $x$ is of type 5, say $x\in C_y$ where $y$ is of type 2. By \ref{elemsec}, shrinking $X$ we can achieve that $X\setminus\{y\}$ is  a disjoint union of open discs and annuli parameterized by $C_y$. Let $A$ be the component corresponding to $x$; without restriction of generality, it is an annulus. It follows from the simultaneous semistable reduction that taking $A$ small enough we can achieve that $f$ induces a finite \'etale morphism of $A$ onto an open annulus in $\bfA^1_k$. Then the same argument as used for type 3 points, shows that $f$ is tame on $A$ since it is a tame parameter at $x$. It remains to achieve that $f$ is a tame parameter at the other connected components of $X\setminus\{y\}$. But we are allowed to remove finitely many of them, and it remains to use what we have already proved for type 2 points.
\end{proof}

\section{The different function}\label{difsec}

\subsection{Definition and first properties}

\subsubsection{A morphism $f$}\label{fsec}
In the sequel, we consider a generically \'etale morphism $f\:Y\to X$ between nice compact curves.

\begin{definition}
The {\em different function} of $f$ is the map $\delta_f\:Y^\hyp\to(0,1]$ that associates the different $\delta_{\calH(y)/\calH(f(y))}$ to a point $y\in Y^\hyp$.
\end{definition}

Note that $\delta_f=1$ on the tame locus of $f$, as follows from Lemma~\ref{tamelem}. We will later extend $\delta_f$ to all of $Y$. An extension of $\delta_f$ to type 5 points will not be used, but we prefer to discuss it for the sake of completeness.

\begin{remark}
(i) The only extension of $\delta_f$ to a map $Y_G^\hyp\to(0,1]$ is by composing it with the retraction $Y_G^\hyp\to Y^\hyp$, hence it is not informative. More naturally, one can simply set $\delta_f(y)=\delta_{\calH(y)/\calH(f(y))}$ for any type 5 point (the different of an arbitrary finite separable extension of valued fields is defined in \cite[Section~6]{Gabber-Ramero}). Then $\delta_f(y)$ is an element of $|\calH(y)^\times|$, which is not a subgroup of $\bfR_+^\times$ for type 5 points, and hence $\delta_f$ should be viewed as a section of $\calO_{Y_G}^\times/(\calO_{Y_G}^\circ)^\times$.

(ii) Using the same ideas as in the proof of Theorem~\ref{stalkth}(ii) below, one can show that if $v$ is a type 5 point and $y=\gtr_Y(v)$ then $\delta_f(v)=(\delta_f(y),\lambda_v^{-\slope_v\delta_f})$. In this paper, Theorem~\ref{stalkth}(ii) will be used to control the slopes of $\delta_f$, making it unnecessary to extend $\delta_f$ to type 5 points.
\end{remark}

\subsubsection{The maps $\phi_x$}\label{fibdifsec}
Let $x\in X_G$. For an affinoid domain $V=\calM(\calA_V)$ with $x\in V_G$ consider the map $\hatOmega_{\calA_V/k}\to\hatOmega_{\calH(x)/k}$. These maps are compatible with the inclusions $V'\into V$, so an $\calO_{X_G,x}$-linear colimit map $\phi_x\:\Omega_{X_G,x}\to\hatOmega_{\calH(x)/k}$ and a differential $d\:\calO_{X_G,x}\to\Omega_{X_G,x}$ arise. Moreover, this differential is compatible with the differential of $\calH(x)$, i.e. we obtain the following cartesian square

$$
\xymatrix{
\calO_{X_G,x} \ar[r]\ar[d]^d & \calH(x)\ar[d]^d \\
\Omega_{X_G,x} \ar[r]^{\phi_x} & \hatOmega_{\calH(x)/k}.
}
$$

\subsubsection{Computation of $\delta_f$}
The following lemma is our main tool for working with $\delta_f$. Here the functions $r_{t_y}$ and $r_{t_x}$ are as defined in Example~\ref{pmexam}(v).

\begin{theorem}\label{compdeltalem}
Let $f$ be as in Section \ref{fsec}. Assume that $t_y$ and $t_x$ are tame parameters at points $y\in Y_G^\hyp$ and $x=f(y)$. Then there exists an analytic domain $U$ such that $y\in U_G$, $h=\frac{dt_x}{dt_y}$ is defined in $U$, and for any $z\in U^\hyp$ $$\delta_f(z)=|h(z)|r_{t_y}(z)r_{t_x}(z)^{-1}.$$
\end{theorem}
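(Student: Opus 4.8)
The plan is to deduce the global formula from the field-theoretic computation of Corollary~\ref{differentcor}(ii), applied pointwise at each $z\in U^{\hyp}$ to the extension $\calH(z)/\calH(f(z))$ with the tame parameters $t_y$ and $t_x$. Two things must be arranged: a single analytic domain $U$ on which $t_y$ and $t_x$ are simultaneously tame, and an identification of the field ratio $|dt_x/dt_y|$ occurring in the corollary with the value $|h(z)|$ of an honest analytic function $h$ on $U$.

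First I would build $U$. Applying Lemma~\ref{tameparamlem} to $x$ and $t_x$ produces an analytic subdomain $X'\subseteq X$ with $x\in X'_G$ on which $t_x$ is a tame parameter at every point; applying it to $y$ and $t_y$ produces $Y'\subseteq Y$ with $y\in Y'_G$ on which $t_y$ is everywhere a tame parameter. Set $U=Y'\cap f^{-1}(X')$. This is an analytic domain, and since $f(y)=x\in X'_G$ we have $y\in U_G$; by construction $t_y$ is a tame parameter at every $z\in U$ and $t_x$ is a tame parameter at every $f(z)$.

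Next I would define $h$ and tie it to the corollary. Since $t_y$ is a parameter on $U$, the induced map $U\to\bfA^1_k$ is quasi-\'etale, so $\Omega_{U/\bfA^1_k}=0$ and the coherent $G$-sheaf $\Omega_{U/k}$ is free of rank one with basis $dt_y$. Hence the pulled-back function $f^*t_x$ satisfies $d(f^*t_x)=h\,dt_y$ for a uniquely determined $h\in\calO_{U_G}$; this is the function $h=\frac{dt_x}{dt_y}$ of the statement. To evaluate the corollary's ratio in $\calH(z)$, I invoke the cartesian square of Section~\ref{fibdifsec}: the colimit map $\phi_z\:\Omega_{U_G,z}\to\hatOmega_{\calH(z)/k}$ commutes with $d$ and, by Theorem~\ref{compldifth}(i) (valid since $t_y$ is tame at $z$), sends $dt_y$ to a basis of the one-dimensional space $\hatOmega_{\calH(z)/k}$. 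Therefore $\phi_z(d(f^*t_x))=h(z)\,\phi_z(dt_y)$ exhibits the element $dt_x/dt_y\in\calH(z)$ of the corollary as $h(z)$, whence $|dt_x/dt_y|=|h(z)|$.

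Finally, Corollary~\ref{differentcor}(ii) applied at each $z\in U^{\hyp}$ gives $\delta_f(z)=|dt_x/dt_y|\,r_{t_y}r_{t_x}^{-1}$, where the radii are those of the two parameters in $\calH(z)$ and $\calH(f(z))$; by Example~\ref{pmexam}(v) these are the values $r_{t_y}(z)$ and $r_{t_x}(z)=r_{t_x}(f(z))$ of the radius functions, giving the asserted equality. I expect the genuinely delicate point to be the third paragraph: confirming that $h$ is regular because $dt_y$ generates $\Omega_{U/k}$, and matching the stalk-level differential with the field ratio through $\phi_z$; the construction of $U$ and the last pointwise step are routine given the cited results. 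One should also note that $\delta_f$ is defined on $U^{\hyp}$ precisely because $f$ is generically \'etale and the tame parameters are separating, so $\calH(z)/\calH(f(z))$ is separable there, as required by the corollary.
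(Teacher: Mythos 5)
Your proposal is correct and follows the paper's own proof essentially step for step: shrink via Lemma~\ref{tameparamlem} so that both parameters are tame everywhere, show that $h=\frac{dt_x}{dt_y}$ is a well-defined analytic function, and then apply Corollary~\ref{differentcor}(ii) pointwise, transferring the stalk identity $dt_x=h\,dt_y$ into $\hatOmega_{\calH(z)/k}$ via the maps $\phi_z$ of Section~\ref{fibdifsec}. The only (harmless) variation is your justification that $dt_y$ trivializes $\Omega_{U/k}$, which you get from quasi-\'etaleness of the induced map $U\to\bfA^1_k$, whereas the paper argues at the stalk: $\Omega_{Y_G,y}$ is invertible and $\phi_y(dt_y)$ generates $\hatOmega_{\calH(y)/k}$ by Theorem~\ref{compldifth}(i), so $dt_y$ generates the stalk and hence the sheaf on a small neighborhood.
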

\begin{proof}
By Lemma~\ref{tameparamlem}, we can replace $Y$ with an analytic domain containing $y$ so that $t_y$ is a tame parameter at any point of $Y$. Similarly, we can achieve that $t_x$ is a tame parameter everywhere.

Consider the $\calO_{Y_G,y}$-linear map $\phi_y\:\Omega_{Y_G,y}\to\hatOmega_{\calH(y)/k}$ as defined in Section \ref{fibdifsec}; it is compatible with the differentials of $\calO_{Y_G,y}$ and $\calH(y)$. Since $dt_y$ is a generator of $\hatOmega_{\calH(y)/k}$ by Theorem~\ref{compldifth}(i), it is a generator of the invertible $\calO_{Y_G,y}$-module $\Omega_{Y_G,y}$. Hence $dt_y$ is a generator of $\Omega_{Y_G}$ in a small enough neighborhood $U_G\subseteq Y_G$ of $y$, and then $h$ is defined in $U$. Let $z\in U$. Since $\phi_z$ is $\calO_{Y_G,z}$-linear, one also has that $dt_x=hdt_y$ in $\hatOmega_{\calH(z)/k}$. So, the claim follows from Corollary~\ref{differentcor}(ii).
\end{proof}

\subsubsection{Piecewise monomiality}
As a first corollary of Theorem~\ref{compdeltalem} we obtain that the different function is piecewise monomial.

\begin{corollary}\label{pmlem}
Assume that $f$ is as in Section \ref{fsec}. Then the different function $\delta_f\:Y_\hyp\to(0,1]$ is piecewise $|k^\times|$-monomial.
\end{corollary}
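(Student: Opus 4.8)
The plan is to reduce everything to the pointwise computation of Theorem~\ref{compdeltalem} and then invoke the closure properties of piecewise $|k^\times|$-monomial functions collected in Example~\ref{pmexam}. First I would fix an arbitrary point $y_0\in Y$ and choose tame parameters $t_y$ at $y_0$ and $t_x$ at $x_0=f(y_0)$; when $y_0$ is of type $1$ one simply takes $t_y$ to be a uniformizer, which is tame by definition, and similarly for $t_x$. By Lemma~\ref{tameparamlem} these parameters remain tame on an entire analytic neighbourhood, so after shrinking we obtain a domain $U\ni y_0$ on which $h=\frac{dt_x}{dt_y}$ is defined and the formula
\[
\delta_f(z)=|h(z)|\,r_{t_y}(z)\,r_{t_x}(z)^{-1}
\]
of Theorem~\ref{compdeltalem} holds for every $z\in U^\hyp$. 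Although that theorem is phrased with a hyperbolic base point, its conclusion is an equality of functions on $U^\hyp$ whose proof only uses that $dt_y$ generates $\Omega_{Y_G}$ near $y_0$ and that $t_y,t_x$ are tame; this is what lets me centre $U$ at a point of any type, and in particular cover a neighbourhood of each type $1$ point, where $h$ is still regular (it vanishes exactly at the ramification points but lies in $\calO_Y(U)$).

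Next I would check that each of the three factors is piecewise $|k^\times|$-monomial on $U$. The factor $|h|$ is piecewise $|k^\times|$-monomial because $h\in\calO_Y(U)$ (Example~\ref{pmexam}(i)); the radius $r_{t_y}$ is piecewise $|k^\times|$-monomial by Example~\ref{pmexam}(v). For the last factor, note that $r_{t_x}(z)$ is by definition the radius of $t_x$ evaluated at $f(z)$, and since $|t_x-a|_{f(z)}=|f^*t_x-a|_z$ this equals the radius function $r_{f^*t_x}$ of the pulled-back parameter $f^*t_x\in\Gamma(\calO_U)$, hence is again piecewise $|k^\times|$-monomial by Example~\ref{pmexam}(v) (equivalently, by Example~\ref{pmexam}(iv) as a pullback along $f$). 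As $\delta_f>0$ on $Y^\hyp$, this factor is nonvanishing on $U^\hyp$, so its reciprocal is piecewise $|k^\times|$-monomial as well: on a monomial piece $ar^n$ one has $a^{-1}\in|k^\times|$ and $(ar^n)^{-1}=a^{-1}r^{-n}$. By Example~\ref{pmexam}(iii) the product $|h|\,r_{t_y}\,r_{t_x}^{-1}$ is therefore piecewise $|k^\times|$-monomial on $U^\hyp$, and it coincides with $\delta_f$ there.

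Finally I would globalise using compactness of $Y$: finitely many such domains $U_1,\dots,U_m$ cover $Y$, and on each $U_i^\hyp$ the function $\delta_f$ is piecewise $|k^\times|$-monomial. Given any interval $I\subset Y^\hyp$, the $U_i$ cover $I$, and intersecting $I$ with the finite subdivisions produced above and passing to their common refinement yields one finite subdivision of $I$ on whose pieces $\delta_f$ is monomial with coefficient in $|k^\times|$, which is the assertion. There is no deep obstacle here once Theorem~\ref{compdeltalem} is in hand; the only subtlety to get right is that the local product formula must hold in neighbourhoods of type $1$ and type $4$ points as well, so that intervals abutting a type $1$ point acquire only finitely many breaks. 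This is guaranteed by the freedom in the base point noted above, together with the fact that $|h|$ has finitely many breaks near such a point (Example~\ref{pmexam}(ii)); the corollary is then purely a matter of assembling the closure properties of piecewise $|k^\times|$-monomial functions.
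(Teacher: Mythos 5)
Your proof is correct and takes essentially the same route as the paper, which deduces the corollary in one line from Theorem~\ref{compdeltalem} by presenting $\delta_f$ $G$-locally as the product of the piecewise $|k^\times|$-monomial functions $|h|$, $r_{t_y}$ and $r_{t_x}^{-1}$. Your extra care near type~1 points and the compactness/refinement step at the end simply make explicit what the paper compresses into the phrase ``$G$-locally'' (and is in fact implicitly relied upon later, e.g.\ in the proofs of Theorems~\ref{deltatrivth} and~\ref{limth}).
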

\begin{proof}
By Theorem~\ref{compdeltalem}, $\delta_f$ can be presented $G$-locally as a product of piecewise monomial functions $|h|$, $r_{t_y}$ and $r_{t_x}^{-1}$.
\end{proof}

\subsection{Restrictions on $\delta_f$}

\subsubsection{Tameness and wildness}
The relation between the different function and the wild topological ramification locus is as follows.

\begin{lemma}\label{wildlocus}
Assume that $f\:Y\to X$ is a finite generically \'etale morphism of nice compact curves. Then,
if $y\in Y^\mon$ is a monomial point then $\delta_f(y)<1$ if and only if the extension $\calH(y)/\calH(x)$ is wildly ramified.
\end{lemma}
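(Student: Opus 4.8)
The plan is to reduce both parts to the criterion that $\delta_{L/K}=1$ precisely when $\Omega_{\Lcirc/\Kcirc}=0$, where $L=\calH(y)$ and $K=\calH(x)$. This is immediate from the definition $\delta_{L/K}=|\Ann(\Omega_{\Lcirc/\Kcirc})|$ together with Lemma~\ref{reldiflem}, which presents $\Omega_{\Lcirc/\Kcirc}$ as $L^\circ_r/L^\circ_s$ (or $L^\circcirc_r/L^\circcirc_s$) with $\delta_{L/K}=s/r$; this quotient vanishes exactly when $s=r$, i.e. $\delta_{L/K}=1$. Since the value groups contain the divisible group $|k^\times|$, the valuations are non-discrete and Lemma~\ref{tamelem} is available. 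For (i), topological tameness means $n_y=[\calH(y):\calH(x)]$ is prime to $p=\cha(\tilk)$; factoring $n_y$ as ramification index times residue degree times defect and recalling that the defect is a power of $p$, we get trivial defect, ramification index prime to $p$, and a residue extension of degree prime to $p$ hence separable, so $\calH(y)/\calH(x)$ is tamely ramified. Lemma~\ref{tamelem} then gives $\delta_f(y)=1$. The same remark proves one implication of (ii): tameness forces $\delta_f(y)=1$, so $\delta_f(y)<1$ implies wildness.

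The substance of (ii) is the converse, that at a monomial point wildness genuinely forces $\delta_f(y)<1$ (no almost unramified wild extension occurs there). The essential input, which I regard as the main obstacle, is that the completed residue fields at monomial points are stable, so that $\calH(y)/\calH(x)$ is defectless; granting this, $n_y=[\wHy:\wHx]$ at a type $2$ point (where the ramification index is $1$), so wildness means $\wHy/\wHx$ is inseparable, while at a type $3$ point (where the residue extension is trivial) wildness means $p\mid n_y=e$. With this in hand I fix tame monomial parameters $t\in\calH(y)$, $s\in\calH(x)$ and compute $\delta_f(y)=\left|\tfrac{ds}{dt}\right|r_t/r_s$ by Corollary~\ref{differentcor}(ii), then analyse each type.

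For a type $3$ point I shrink $Y$ to an annulus on which $f$ is a Laurent series $s=\sum_i h_it^i$ with a unique dominant term $h_mt^m$ and $n_y=|m|$ (Lemma~\ref{annulilem}). Evaluating $\left|\tfrac{ds}{dt}\right|=\max_i|ih_i|\,r_t^{i-1}$ at $y$: if $p\nmid m$ the term $i=m$ dominates (as $m$ is invertible in $\kcirc$) and $\delta_f(y)=1$, whereas if $p\mid m$ that leading contribution drops (the integer $m$ being non-invertible in $\kcirc$) so every surviving term is strictly smaller and $\delta_f(y)<1$. Hence $\delta_f(y)<1$ iff $p\mid n_y$ iff $\calH(y)/\calH(x)$ is wild.

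For a type $2$ point I pass to the germ reductions $C_y$, $C_x$. After normalizing $|t|=|s|=1$ (possible as $r_t,r_s\in|k^\times|$) we have $\delta_f(y)=\left|\tfrac{ds}{dt}\right|\le1$, and the reduction of $\tfrac{ds}{dt}$ equals $\tfrac{d\wt s}{d\tilt}$ computed in $\wHy=\tilk(C_y)$: this follows because $\hatOmega_{\Lcirc/\kcirc}=\Lcirc\,dt$ reduces to $\Omega_{\wHy/\tilk}=\wHy\,d\tilt$ by Corollary~\ref{compldifcor}(ii), while $\tilt$ and $\wt s$ are separating transcendence bases of $\wHy$ and $\wHx$. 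Therefore $\delta_f(y)=1$ iff $\tfrac{d\wt s}{d\tilt}\neq0$ iff $\wHy/\wHx$ is separable, which by the defectlessness recalled above is equivalent to tameness of $\calH(y)/\calH(x)$. This gives $\delta_f(y)<1$ iff $\calH(y)/\calH(x)$ is wild, completing (ii).
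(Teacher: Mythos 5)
Your proposal is correct, and for the substantive half of (ii) it takes a genuinely different route from the paper. The paper's proof is citation-based: part (i) is quoted from \cite[Lemma~5.3.7]{topforms}, and for (ii) it combines Lemma~\ref{reldiflem} (the criterion that $\delta_f(y)<1$ exactly when $\Omega_{\calH(y)^\circ/\calH(x)^\circ}$ has an element not killed by $\kcirccirc$) with stability of $\calH(x)$, hence defectlessness, and then cites \cite[Lemma~5.3.11]{topforms}, which says that a defectless extension has $\delta=1$ if and only if it is tame. You use the same two structural inputs --- the vanishing criterion from Lemma~\ref{reldiflem} and stability/defectlessness --- but replace both external citations by arguments internal to the paper: for (i), Ostrowski's theorem (the defect is a $p$-power) upgrades ``$n_y$ prime to $p$'' to tameness of the valued field extension, and Lemma~\ref{tamelem} then kills $\Omega_{\Lcirc/\Kcirc}$; for (ii), you verify ``wild $\Rightarrow\delta_f(y)<1$'' type by type, via the Laurent-series computation on an annulus (Lemma~\ref{annulilem} plus Corollary~\ref{differentcor}(ii)) at type 3 points, and via reduction of $\frac{ds}{dt}$ to $\frac{d\wt s}{d\tilt}$ at type 2 points --- essentially the computation the paper performs later in Lemma~\ref{red2lem}. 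The paper's proof buys brevity and uniformity (no case distinction); yours buys self-containedness and makes explicit what $\delta_f(y)<1$ detects: inseparability of $\wHy/\wHx$ at type 2 points and $p\mid n_y$ at type 3 points, a dichotomy the paper only records afterwards in Remark~\ref{kumerrem}.

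One step needs tightening. In the type 3 case with $p\mid m$, you assert that every term $|ih_i|\,r_t^{i}$ with $i\neq m$ is strictly smaller than $|h_m|\,r_t^{m}$, but Lemma~\ref{annulilem} only yields the non-strict bound $|h_i|\,r_t^{i}\le|h_m|\,r_t^{m}$, and for $i\neq m$ prime to $p$ one has $|i|=1$ in $k$, so strictness does not come from the coefficient $i$. It does hold, but precisely because $y$ is of type 3: a tie $|h_i|\,r_t^{i}=|h_m|\,r_t^{m}$ with $i\neq m$ would force $r_t=\left(|h_m|/|h_i|\right)^{1/(i-m)}\in|k^\times|$ (using that $|k^\times|$ is divisible), contradicting $r_t\notin|k^\times|$ at a type 3 point. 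With that line added, the type 3 argument, and hence the whole proof, is complete.
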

\begin{proof}
Set $x=f(y)$. It follows from the definition of the different that $\delta_f(y)<1$ if and only if $\Omega_{\calH(y)^\circ/\calH(x)^\circ}$ contains an element not killed by $\kcirccirc$. Since $\calH(x)$ is stable, the extension $\calH(y)/\calH(x)$ is defectless, and \cite[Lemma~5.5.9]{Temkintopforms} implies that $\delta_f(y)=1$ if and only if this extension is tame.
\end{proof}

\begin{remark}\label{kumerrem}
(i) The lemma implies that if $f$ is wild at a monomial point $y$ with $\delta_f(y)=1$ then $y$ is of type 2 and $\calH(y)/\calH(x)$ is an unramified extension of degree divisible by $p$. For a type 4 point, it may freely happen that $\delta_f(y)=1$ but $f$ is not split at $y$ and so $\calH(y)/\calH(x)$ is wild.

(ii) A typical example is provided by the Kummer covering $\bfP^1_k\to\bfP^1_k$ of degree $p$ over $k=\bfC_p$ ($t$ goes to $t^p$). A simple direct computation shows that $f$ is split at all points whose distance from $I$ exceeds $\frac{\log |p|}{p-1}$, the equality $\delta_f=|p|$ holds on the interval $I=[0,\infty]$, and $\delta_f$ increases with slope $p-1$ in all directions from $I$. (This also follows from a general description of degree $p$ coverings we will prove in Theorem~\ref{coneth}.) In particular, the locus of wild points $y$ with $\delta_f(y)=1$ consists of all points whose distance from $I$ is $\frac{\log |p|}{p-1}$, and it contains both type 2 and type 4 points.
\end{remark}

\subsubsection{$\delta_f$ on an annulus}\label{deltaansec}
Consider the annulus $A=\calM(k\{rt^{-1},t\})$ with skeleton $I=[r,1]$. Let $y\in I$ be the end-point given by $|t|_y=1$ and let $v\in\Br(y)$ be the direction along $I$. Assume that $f\:A\to\bfA^1_k$ is a generically \'etale morphism given by $h(t)=\sum_i h_it^i$. Choosing an appropriate coordinate $x$ on the target we can achieve that $h_0=0$ and $|h|_y=\max_i |h_i|=1$. Let $m$ denote the minimal integer such that $|h_m|=1$; note that $n_y=|m|_{\bfR}$ (we prefer to keep the notation $|m|$ for the absolute value of $m$ in $k$). Since $t$ and $x$ are monomial along $I$, Theorem~\ref{compdeltalem} implies that for a point $z\in I$ close enough to $y$, the different can be computed as $\delta_f(z)=|h'|_z|t|_z|x^{-1}|_z=|h'|_z|t^{1-m}|_z$, where $h'=\frac{dx}{dt}=\sum_{i\in\bfZ}ih_it^{i-1}$.

Using the above formula we can compute $\delta=\delta_f(y)$ and $s=\slope_v\delta_f$ as follows: $\delta=|nh_n|$ and $s=1-n+m-1=m-n$, where $n$ denotes the minimal integer such that $|nh_n|=|h'|_y=\max_i |ih_i|$. The numbers $m$, $s$ and $\delta$ are subject to certain restrictions that we are going to describe. First, we claim that
\begin{equation}\label{deltaineq}
|m|\le\delta\le|n|.
\end{equation}
Indeed, the right inequality holds because $|h_n|\le 1$, and the left one holds because $|nh_n|\ge |mh_m|=|m|$. Now let us split into two cases.

Case 1. {\em Assume that $s=0$.}  In this case, $m=n$ and so $\delta=|m|$. (In particular, in the equicharacteristic case we automatically obtain that $\delta=1$.) Conversely, if $m\in\bfZ_{>0}$ and $\delta=|m|$ (in particular, $|m|\neq 0$) then $h=t^m$ gives rise to a generically \'etale morphism $f$ such that $n_v=m$, $\slope_v\delta_f=0$ and $\delta_f(y)=|m|$.

Case 2. {\em Assume that $s\neq 0$.} If $\delta=|n|$ then $h_n=1$, hence $n\ge m$ by the definition of $m$, and we obtain that $s<0$. If $\delta=|m|$ then $|nh_n|=|m|=|mh_m|$, hence $m\ge n$ by the definition of $n$, and we obtain that $s>0$. This shows that at least one inequality in (\ref{deltaineq}) is strict, and so $|n|>|m|$ and $|s|=|n|$. To summarize, $|m|\le\delta\le|s|$ with at least one inequality being strict and $s>0$ (resp. $s<0$) if the first (resp. the second) inequality is an equality.

Conversely, assume that $m\in\bfZ_{>0}$, $s\in\bfZ$ and $\delta\in(0,1]$ satisfy the above condition. A direct computation shows that if $a\in k$ satisfies $|a|=\delta|m-s|^{-1}$, then $h=t^m+at^{m-s}$ induces a morphism $f$ with $\slope_v\delta_f=s$, $\delta_f(y)=\delta$ and $n_v=m$ (recall that $n_v$ denotes the multiplicity of $f$ at the branch $v$, see \ref{multsec}). Furthermore, a similar argument shows that even if $\delta\notin |k|$, one can choose $a\in k$ and a type 3 point $y'\in I$ with a branch $v'\in\Br(y')$ such that $h=t^m+at^{m-s}$ induces a morphism $f$ with $n_{v'}=m$, $\slope_{v'}\delta_f=s$ and $\delta_f(y')=\delta$.

\subsubsection{Slopes and values of $\delta_f$}
It turns out that the above restrictions on $m$, $n$ and $\delta$ are general. In the following theorem all absolute values refer to the valuation of $k$, and given a morphism $f\colon Y\to X$, a point $y\in Y$ and a branch $v\in\Br(y)$, the multiplicities of $f$ at $y$ and $v$ are denoted $n_y$ and $n_v$, respectively.

\begin{theorem}\label{restrictth}
Let $f\colon Y\to X$ be a finite generically \'etale morphism of nice $k$-analytic curves.

(i) If $y\in Y^\hyp$, $m=n_y$ and $\delta=\delta_f(y)$ then $\delta\ge|m|$. Moreover, this is the only restriction on $n_y$ and $\delta_f(y)$, i.e. any pair $m\in\bfZ_{>0}$ and $\delta\in(0,1]$ with $\delta\ge|m|$ is realized for some morphism $f$ and $y$.

(ii) If $y\in Y^\hyp$, $v\in\Br(y)$, $m=n_v$, $s=\slope_v\delta_f$ and $\delta=\delta_f(y)$ then the inequality
$|m-s|\ge\delta\ge|m|$ holds and, in addition, $s\le 0$ whenever the first inequality is an equality, and $s\ge 0$ whenever the second inequality is an equality. Moreover, this is the only restriction on $n_v$, $\slope_v\delta_f$ and $\delta_f(y)$, i.e. any triple $(m,s,\delta)\in\bfZ_{>0}\times\bfZ\times(0,1]$ satisfying this condition is realized for some $f$, $y$ and $v$.
\end{theorem}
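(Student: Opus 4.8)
My plan is to reduce the whole statement to the explicit annulus computation already carried out in Section~\ref{deltaansec}, of which Theorem~\ref{restrictth} is the intended globalization. First I would dispose of the ``moreover'' clauses, which assert that the displayed inequalities are the \emph{only} restrictions: these require no new work, since the morphisms $h=t^m$ (Case~1) and $h=t^m+at^{m-s}$ (Case~2), together with the type~3 variant produced at the end of \ref{deltaansec}, already realize every admissible pair $(m,\delta)$ in~(i) and every admissible triple $(m,s,\delta)$ in~(ii). Thus the real content is to check that the inequalities and the accompanying sign conditions hold for an \emph{arbitrary} hyperbolic point $y$ and branch $v$.

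The main step is a reduction to the annulus situation. Given $y\in Y^\hyp$ of type~2 or~3 and $v\in\Br(y)$, I would use the structure of elementary neighborhoods (\ref{elemsec}) to choose a collar semi-annulus $A\subset Y$ with $y$ on its boundary and $v$ as its skeleton branch; shrinking $A$ I may assume a tame monomial parameter $t$ is an annulus coordinate on $A$ and that $x=t_x$ is a tame monomial parameter on the target, so that $x=h(t)=\sum_ih_it^i$ is a Laurent series on $A$. Then Theorem~\ref{compdeltalem} applies along the skeleton exactly as in \ref{deltaansec}, and the invariants $n_v$, $\slope_v\delta_f$, $\delta_f(y)$ become precisely the numbers $m$, $s=m-n$, $\delta=|nh_n|$ computed there. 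The lower bound $\delta\ge|m|$ is then $|nh_n|\ge|mh_m|=|m|$, and the upper bound and sign dichotomy are the case analysis around (\ref{deltaineq}). The only point to record is that the statement's bound $|m+s|\ge\delta$ coincides with the bound $\delta\le|n|$ of \ref{deltaansec}: in the wild case $|m|<|n|$ one has $|m+s|=|2m-n|=|n|$ since $|2m|\le|m|<|n|$, while when $s=0$ one has $|m+s|=|m|=\delta$; the two sign conditions then translate into ``$\delta=|n|\Rightarrow s\le0$'' and ``$\delta=|m|\Rightarrow s\ge0$'', which are exactly what \ref{deltaansec} gives.

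It remains to treat a point $y$ of type~4, which is unibranch and cannot lie on an annulus skeleton. Here I would argue by continuity along the unique branch $v$: choosing an interval $I=(y,y']$ in the direction of $v$, every interior point is monomial, so the previous step bounds $(n_v,\slope_v\delta_f,\delta_f(z))$ for $z\in I$. Since $\delta_f$ is continuous (Corollary~\ref{pmlem}), $n_f$ is locally constant near $y$ and $\slope_v\delta_f$ is constant on a germ of $I$ at $y$ (Lemma~\ref{multfuncor}), the inequalities and sign conditions pass to the limit $z\to y$; alternatively one applies Theorem~\ref{compdeltalem} directly on $I$ using tame parameters supplied by Lemma~\ref{tameparamlem}. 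Finally, part~(i) follows from part~(ii): the inequality $\delta\ge|m|$ is the second inequality there, and any admissible pair $(m,\delta)$ is realized by choosing a slope $s$ with $|m+s|\ge\delta$ (take $s=0$ when $\delta=|m|$ and $s=1-m$ when $\delta>|m|$, so that $|m+s|=1$) and invoking the realization in~(ii).

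I expect the delicate points to be, first, the reduction itself: arranging a collar annulus along an arbitrary branch (including branches pointing into discs at a type~2 point) on which $x$ is genuinely a Laurent series in a tame monomial coordinate, and matching the orientation and slope conventions of \ref{deltaansec} with those of the statement so that the signs come out correctly. The secondary obstacle is the type~4 limiting argument, where $\delta_f$ is not of the form $|h|$ and its slope along the unique branch may be nonzero, so one must verify that the monomial behaviour on $(y,y']$ really controls both the value and the slope at the endpoint.
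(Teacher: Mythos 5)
Your overall strategy---settle the ``moreover'' clauses with the explicit series of \ref{deltaansec}, and prove the constraints by reducing to that annulus computation---is the paper's strategy, and your dictionary between the bound $\delta\le|n|$ of \ref{deltaansec} and the statement's bound $|m+s|\ge\delta$ (including the sign conditions) is correct. The step that fails as you state it is the ``direct'' reduction at a type 2 point. There is no collar annulus containing $y$ on which $f$ is automatically of the form treated in \ref{deltaansec}: any analytic domain containing a type 2 point $y$ contains all but finitely many branches at $y$ (near $y$ it is cut out by finitely many inequalities $|f_i|\le 1$, each failing along only the finitely many branches corresponding to poles of $\tilde f_i$ on $C_y$), and the restriction of $f$ to such a domain need not be finite onto an annulus. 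But the identification of the dominant exponent of the Laurent series with the multiplicity, which in \ref{deltaansec} rests on Lemma~\ref{annulilem} and hence on finiteness, can then fail. For instance, restrict the degree 2 map $x=t+t^{-1}$ of $\bfP^1_k$ to $\{r\le|t|\le1\}$: the normalization $h_0=0$, $|h|_y=1$ holds, the dominant term at the Gauss point $y$ is $t^{-1}$, yet $n_y=2$ because $\tilk(\tilde t)/\tilk(\tilde t+\tilde t^{-1})$ has degree 2; only the branch multiplicity $n_v=1$ is seen by the dominant term. This is exactly why the paper never places $y$ on an annulus: by simultaneous semistable reduction it produces, for each $t\in(y,z)$ strictly inside the branch, a closed annulus with skeleton $[t,z]$ on which $f$ is finite onto an annulus in $X$, applies \ref{deltaansec} there, and then deduces the assertion at $y$ from constancy of $(n_{v(t)},\slope_{v(t)}\delta_f)$, continuity of $\delta_f$, and closedness of the conditions. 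In other words, the limiting argument you reserve for type 4 points is the whole proof of (ii), uniformly for all hyperbolic points.

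The second, independent gap is your deduction of part (i) from part (ii). In (i) the integer is $m=n_y=[\calH(y):\calH(f(y))]$, whereas (ii) bounds $\delta$ below by $|n_v|$ for branch multiplicities. At a type 2 point no branch need satisfy $n_v=n_y$: if $\wHy/\wHx$ is separable of degree $p$, then $n_y=p$ while a generic branch has $n_v=1$. So (ii) only yields $\delta\ge\max_{v\in\Br(y)}|n_v|$, and one still has to prove $\max_v|n_v|\ge|n_y|$; this is not formal. The paper supplies the missing step: a generic branch has $n_v$ equal to the inseparability degree of $\wHy/\wHx$, which divides $n_y$, whence $|n_v|\ge|n_y|$ for that branch. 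Your proposal has no substitute for this argument. (Your realization argument for (i) is fine as it stands, since in the annulus constructions the chosen point is a skeleton endpoint of a finite map of annuli, where $n_y=n_v$.)
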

\begin{proof}
We start with (ii). In the case when $Y$ is an annulus and $X$ is a domain in $\bfA^1_k$, this condition on the triple was established in \ref{deltaansec} (for example, the asserted inequality is nothing else but (\ref{deltaineq})). Moreover, we saw that any such triple can be obtained already when $Y$ is an annulus and $X=\bfA^1_k$. Although in this case $f$ is not finite, we can shrink $Y$ and $\bfA^1_k$ around $v$ and $f(v)$ so that $f$ becomes finite. It remains to deduce that the triple $(m,s,\delta)$ satisfies the assertion of (ii) when $f$ is arbitrary. We will do this using the continuity of the triple along intervals.

Let $I=[y,z]$ be an interval in $Y$ in the direction of $v$. It follows from the simultaneous semistable reduction theorem that shrinking $I$ we can achieve that for any $t\in(y,z)$, the interval $[t,z]$ is the minimal skeleton of an annulus $A$ and $f$ restricts to a finite morphism $A\to A'$ with $A'$ an annulus in $X$. Let $v(t)\in\Br(t)$ be the branch towards $z$. Shrinking $I$ we can achieve that $n_{v(t)}=n_v$ and $\slope_{v(t)}\delta_f=s$ for any $t\in(y,z]$. By the case of annuli,
the triple $(m,s,\delta_{v(t)})$ satisfies the condition of (ii). It remains to use that the condition is closed and $\delta_f$ is continuous on $I$.

Now, let us prove (i). We claim that there exists a branch $v\in\Br(y)$ such that $n_v|n_y$. Indeed, only the case when $y$ is of type 2 needs an argument, but then the multiplicity of a general branch equals to the degree of inseparability of $\wt{\calH(y)}/\wt{\calH(x)}$, where $x=f(y)$. For such branch, $|n_v|\ge|n_y|$, and we use that $\delta_f(y)\ge|n_v|$ by (ii). It remains to prove that any pair $(m,\delta)$ with $\delta\ge|m|$ is achieved for some $f$ and $y$. This is done similarly to the construction in \ref{deltaansec}: one takes $Y$ to be an annulus and uses a binomial when the inequality is strict, and a monomial when it is an equality.
\end{proof}

\begin{remark}\label{restrictrem}
(i) The tame case (i.e. $|m|=1$) of Theorem~\ref{restrictth}(i) is trivial. In the wild case, we see that the different can be any number from $(0,1]$ in the equicharacteristic case, and it can be any number from $[|m|,1]$ in the mixed characteristic case. This is the control on the different in the mixed characteristic case that misses in the equicharacteristic one. Particular cases of this (e.g., for stable fields) showed up in \cite{Lutkebohmert} and \cite{XFaber1}.

(ii) Part (ii) of Theorem~\ref{restrictth} provides a strip for the values of $\delta$; clearly $s$ is non-negative at the low border and non-positive at the top border. In addition, $s=0$ happens only on the border of the strip, and if $s\neq 0$ then $|s|>|m|$ and the inequality rewrites as $|s|\ge\delta\ge|m|$.

(iii) We will later need the particular  case when $p=2$ and $f$ is wild at $v$, i.e. $n_v$ is even. In the equicharacteristic case, this automatically implies that $s$ is odd. In the mixed characteristic case, there are more options, but if we assume, in addition, that $n_v\in 4\bfZ+2$ then either $s$ is odd or $s=0$ and $\delta=|2|$.
\end{remark}

\subsection{K\"ahler norm on $\Omega_{X_G}$ and the different}\label{kahlersec}
Our next aim is to study the behaviour of $\delta_f$ in a neighborhood of a type 2 point. This question is not local for the $G$-topology, in particular, we cannot work with a single parameter and a sheaf-theoretic argument is required. In the current section we will interpret $\delta_f$ as annihilator of a certain torsion $\calOcirc_{X_G}$-sheaf.

\subsubsection{The norm on $\Omega_{X_G}$}\label{kahlernormsec}
Recall that a seminorm on a sheaf of modules $\calF$ on a site $\calC$ is introduced in \cite[3.1.2]{Temkintopforms} as a family of (perhaps unbounded) seminorms on the modules $\calF(U)$ that satisfy certain natural conditions. A K\"ahler seminorm $|\ |_\Omega$ on the sheaf $\Omega_{X_G/S_G}$ is introduced in \cite[6.1.1]{Temkintopforms}, and by \cite[Theorem~6.1.13]{Temkintopforms} $|\ |_\Omega$ is a so-called analytic seminorm, in particular, it is determined by its stalks as $|\omega|_{\Omega,U}=\max_{x\in U}|\omega|_{\Omega,x}$, see \cite[\S\S3.2.7, 3.3.1, 3.3.3]{Temkintopforms}. Finally, the stalks of $|\ |_\Omega$ are described by  \cite[Theorem~6.1.8]{Temkintopforms}. In particular, for $\Omega_{X_G}=\Omega_{X_G/k}$ this works as follows: take $\phi_x$ as in Section~\ref{fibdifsec} and define a seminorm on $\Omega_{X_G,x}$ by the rule $|\omega|_x=|\phi_x(\omega)|_{\Omega,\calH(x)/k}$.

\subsubsection{The sheaf $\Omega^\di_{X_G}$}
By $\Omega^\di_{X_G}$ we denote the unit ball of $|\ |_\Omega$. It is the $\calO^\circ_X$-submodule of $\Omega_X$ whose sections satisfy $|\omega|_{\Omega,x}\le 1$ at any point $x\in X_G$.

\begin{theorem}\label{stalkth}
Let $X$ be a nice compact curve. The stalk of $\Omega^\di_{X_G}$ at a point $x\in X_G$ is described as follows:

(i) If $x$ is of type 1 then $\Omega^\di_{X_G,x}=\Omega_{X_G,x}$.

(ii) If $x$ is of type 2, 3, or 5 then $\Omega^\di_{X_G,x}$ is a free $\calO^\circ_{X_G,x}$-module with basis $\frac{dt_x}{t_x}$ where $t_x$ is a tame monomial parameter at $x$.

(iii) If $x$ is of type 4 then $\Omega^\di_{X_G,x}=\kappa^\circcirc_sdt_x$, where $t_x$ is a tame parameter at $x$, $\kappa=\kappa_G(x)=\calO_{X_G,x}$ and $s=r_{t_x}(x)^{-1}$.
\end{theorem}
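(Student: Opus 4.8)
The plan is to pull the stalk norm back to the field-level K\"ahler norm through the cartesian square of Section~\ref{fibdifsec}. Fix a tame parameter $t_x$; as in the proof of Theorem~\ref{compdeltalem} the element $dt_x$ generates the invertible $\calO_{X_G,x}$-module $\Omega_{X_G,x}$, and $\calO_{X_G,x}$-linearity of $\phi_x$ gives $|g\,dt_x|_{x'}=|g|_{x'}\,|dt_x|_{x'}=|g|_{x'}\,r_{t_x}(x')$ for every hyperbolic $x'$, the last equality being Theorem~\ref{compldifth}(i). Since $r_{t_x}(x')=\inf_{c\in k}|t_x-c|_{x'}\le|t_x|_{x'}$, this yields the uniform bound $\left|\frac{dt_x}{t_x}\right|_{x'}=r_{t_x}(x')/|t_x|_{x'}\le 1$ at every point where $t_x$ is tame; by Lemma~\ref{tameparamlem} that holds on a whole neighborhood of $x$, and at type~$1$ points the norm vanishes since the residue field equals $k$. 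Consequently $\frac{dt_x}{t_x}$, and hence every $\calOcirc_{X_G,x}$-multiple of it, already lies in $\Omega^\di_{X_G,x}$: this is the forward inclusion in all of (ii). For (i) I use the same formula in the opposite direction: near a type~$1$ point a uniformizer has $r_{t_x}(x')\to 0$, so $|g\,dt_x|_{x'}=|g|_{x'}r_{t_x}(x')\to 0$ for any $g\in\calO_{X_G,x}$, whence every germ has norm $\le 1$ on a small enough neighborhood and $\Omega^\di_{X_G,x}=\Omega_{X_G,x}$.

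For the reverse inclusion at an analytic hyperbolic point I note that the stalk is contained in the pointwise unit ball $\{\omega:|\omega|_x\le 1\}$, since $x$ is always among the nearby points. At a type~$2$ or~$3$ point I take $t_x$ tame monomial, so $r_{t_x}(x)=|t_x|_x$ and the pointwise ball is $\{g\,\frac{dt_x}{t_x}:|g|_x\le 1\}$; as the stalk valuation there is of rank one, the adic description of $\calOcirc_{X_G,x}$ gives $\{g\in\calO_{X_G,x}:|g|_x\le 1\}=\calOcirc_{X_G,x}$, so the pointwise ball is exactly $\calOcirc_{X_G,x}\frac{dt_x}{t_x}$. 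Together with the forward inclusion this proves (ii) for types~$2$ and~$3$, with $\frac{dt_x}{t_x}$ a free generator. At a type~$4$ point there is no monomial parameter, so I keep a tame $t_x$ and get the pointwise ball $\{g\,dt_x:|g|_x\le s\}=\kappa^\circ_s\,dt_x$ with $s=r_{t_x}(x)^{-1}$; since $x$ is of type~$4$ one has $s\notin|k^\times|=|\kappa^\times|$, whence $\kappa^\circ_s=\kappa^\circcirc_s$ and (iii) follows. The opposite inclusion \emph{pointwise ball}~$\subseteq$~\emph{stalk} is, for types~$2$ and~$3$, the forward inclusion already shown; for type~$4$ it follows from continuity of $x'\mapsto|\omega|_{x'}$ at the unibranch point $x$, the value $|\omega|_x<1$ being strict because $s\notin|k^\times|$.

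The delicate case, and the main obstacle, is type~$5$. Here the stalk valuation $\|\ \|_v$ is of rank two although the K\"ahler norm stays real-valued, so the pointwise ball $\{g\,\frac{dt_v}{t_v}:|g|_v\le 1\}$ is strictly larger than the target $\calOcirc_{X_G,v}\frac{dt_v}{t_v}=\{g:\|g\|_v\le 1\}\frac{dt_v}{t_v}$, and the missing inequality must be read off from the nearby analytic points. I first check that $\frac{dt_v}{t_v}$ is a legitimate unit-norm generator: a tame monomial parameter $t_v$ at $v$ restricts to a tame monomial parameter of $\calH(v)=\calH(y)$, where $y=\gtr_Y(v)$. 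It is monomial because $\nu_v(t_v)\ne 0$ forces $r_{t_v}(y)=|t_v|_y$, and it is tame because the valued-field extension has ramification index $1$ (both $\calH(y)$ and $\wh{k(t_v)}$ have value group $|k^\times|$) while $p\nmid\nu_v(t_v)$ makes the induced map of germ reduction curves tamely, hence separably, ramified at $v$; thus Theorem~\ref{compldifth}(i) applies and $\left|\frac{dt_v}{t_v}\right|_v=1$. For the reverse inclusion I use that, $v$ being a specialization of $y$, every neighborhood of $v$ meets the branch attached to $v$ arbitrarily close to $y$; on that branch $t_v$ remains monomial, so $\left|\frac{dt_v}{t_v}\right|_{x'}=1$, while $\log|g|_{x'}$ is affine there with slope $\slope_v|g|=\nu_v(g)$ and value $\log|g|_v$ at $v$. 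Hence the stalk condition $|g|_{x'}\le 1$ near $v$ forces, when $|g|_v=1$, the inequality $\nu_v(g)\ge 0$, which together with $|g|_v\le 1$ is precisely $\|g\|_v\le 1$, i.e. $g\in\calOcirc_{X_G,v}$. This yields the reverse inclusion and completes (ii) for type~$5$.
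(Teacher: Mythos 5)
Your treatment of types 1, 2, 3 and 5 follows essentially the same route as the paper's own proof: the generator $\frac{dt_x}{t_x}$, a uniform bound $\left|\frac{dt_x}{t_x}\right|_{x'}\le 1$ on a whole neighborhood via Lemma~\ref{tameparamlem}, and at type 5 points reading the rank-two condition $\|g\|_v\le 1$ off the nearby analytic points of the branch (one small caution there: to get the equality $\left|\frac{dt_v}{t_v}\right|_{x'}=1$ along the branch you need $t_v$ to be \emph{tame} monomial at those points, not merely monomial, since monomiality alone only gives $\le 1$; this is what the paper's shrinking argument and Lemma~\ref{tameparamlem} supply). The genuine problem is the type 4 case. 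You assert that $s=r_{t_x}(x)^{-1}\notin|k^\times|$ because $x$ is of type 4, and you use this twice: to identify $\kappa^\circ_s$ with $\kappa^\circcirc_s$, and to get the strict inequality $|\omega|_x<1$. This confuses type 4 with type 3. A type 4 point has $|\calH(x)^\times|=|k^\times|$ and $\wt{\calH(x)}=\tilk$; what characterizes it (Lemma~\ref{monomparamlem}) is that the infimum $r_{t_x}(x)=\inf_{c\in k}|t_x-c|_x$ is not \emph{attained}, not that it avoids $|k^\times|$. Over any non-spherically complete $k$ (e.g. $k=\bfC_p$) one can construct, for an arbitrary prescribed $\rho>0$, a nested sequence of discs with radii decreasing to $\rho$ and empty intersection; hence type 4 points with $r_{t_x}(x)=\rho\in|k^\times|$ exist whenever type 4 points exist at all.

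When $s\in|k^\times|$ your argument breaks down at exactly the point where the theorem has content: take $c\in k$ with $|c|=s$; then $\omega=c\,dt_x$ satisfies $|\omega|_x=1$, so it lies in your ``pointwise ball'' $\kappa^\circ_s\,dt_x$ but not in $\kappa^\circcirc_s\,dt_x$, and the theorem asserts it is \emph{not} in the stalk. Neither of your inclusions can decide this boundary case: ``stalk $\subseteq$ pointwise ball'' does not exclude $\omega$, and continuity gives nothing from $|\omega|_x=1$. This is where the paper's proof does its real work: for $\omega=f\,dt_x$ with $|f|\ge s$ it evaluates at nearby points $y$ on the interval from $x$ toward the maximal point of a small disc in which $t_x$ is a coordinate (using $\calH(x)=\wh{k(t_x)}$); there $|f|_y=|f|$ is constant (the slope of $|f|$ at a unibranch type 4 point is zero) while $r_{t_x}(y)>r_{t_x}(x)=s^{-1}$ strictly, so $|\omega|_y=|f|\,r_{t_x}(y)>1$ and $\omega\notin\Omega^\di_{X_G,x}$. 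You need this nearby-point estimate (or an equivalent) in place of the false claim; note that the forward inclusion $\kappa^\circcirc_s\,dt_x\subseteq\Omega^\di_{X_G,x}$ never needed $s\notin|k^\times|$ anyway, since the strictness $|\omega|_x<1$ is built into the definition of $\kappa^\circcirc_s$.
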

\begin{proof}
For shortness, we will denote the K\"ahler seminorm simply by $|\ |$. If $x$ is of type 1 then $\hatOmega_{\calH(x)/k}=\hatOmega_{k/k}=0$, so $|\omega|_x=0$ for any $\omega\in\Omega_{X_G,x}$. It follows from the analyticity of $|\ |_\Omega$ (see \cite[\S3.3.3 and Theorem~6.1.13]{Temkintopforms}) that $|\omega|\le 1$ in a sufficiently small neighborhood of $x$, and hence $\omega\in\Omega^\di_{X_G,x}$. This proves (i).

Next, let us prove (iii). If $\omega\in\kappa^\circcirc_sdt_x$ then $|\omega|_x<1$ by Theorem~\ref{compldifth}(i) and, by the semicontinuity, $\omega\in\Omega^\di_{X_G,x}$. Conversely, assume that $\omega=fdt_x$ with $f\in\kappa$ and $|f|\ge s$. Note that $|f|$ is fixed in a neighborhood of $x$. On the other hand, $\calH(x)=\wh{k(t_x)}$ hence $t_x$ is a coordinate of a sufficiently small disc $E$ with $x\in E\subseteq X$. At any point $y$ of the interval connecting $x$ with the maximal point of the disc we have that $s^{-1}<r_{t_x}(y)$ and hence $|\omega|_y=|f|_yr_{t_x}(y)=sr_{t_x}(y)>1$ when $y$ is close enough to $x$. Thus, $\omega\notin\Omega^\di_{X_G,x}$.

It remains to prove (ii). Shrinking $X$ at $x$ we can assume that $t_x$ is defined on all of $X$ and, by Lemma \ref{tameparamlem}, is a tame parameter at every point of $X$.  By Theorem~\ref{compldifth}(i), $\left|\frac{dt_x}{t_x}\right|_y\le 1$ for any $y\in X_G$. In particular, $\frac{dt_x}{t_x}\in \Omega^\di_{X_G,x}$. Recall that $\left|\frac{dt_x}{t_x}\right|_x=1$ by Corollary~\ref{compldifcor}(i). So, if $\omega\in\Omega^\di_{X_G,x}$ then $\omega=f\frac{dt_x}{t_x}$ with $f\in\calO_{X_G,x}$ and $|f|_x\le 1$. If $x$ is type 2 or 3 then this implies that $f\in\calO^\circ_{X_G,x}$ and so $\Omega^\di_{X_G,x}=\calO^\circ_{X_G,x}\frac{dt_x}{t_x}$, as claimed.

Assume, finally, that $x$ is of type 5. It suffices to show that for any $f\in\kappa_G(x)\setminus\kappa_G(x)^\circ$ the element $f\frac{dt_x}{t_x}$ is not contained in $\Omega^\di_{X_G,x}$. Working locally we can assume that $t_x$ induces a map $g\:X\to\bfA^1_k$. Let $I$ be an open interval in the direction of $x$. Shrinking $I$ we can achieve that $g$ maps $I$ into $(0,\infty)\subset\bfA^1_k$, i.e. $t_x$ is a monomial parameter at any point of $I$. By Lemma~\ref{pmmaplem}, we can also achieve that the map $I\to(0,\infty)$ is monomial, and then the multiplicity of $g$ along $I$ is constant and equals to its multiplicity at $x$. So, $t_x$ is a tame monomial parameter along $I$. Finally, we can shrink $I$ so that $|f|_y>1$ for any point $y\in I$. Then $\left|f\frac{dt_x}{t_x}\right|_y=|f|_y>1$, and hence $f\frac{dt_x}{t_x}\notin\Omega^\di_{X_G,x}$.
\end{proof}

\subsubsection{Relation to the different}
As a corollary of the above theorem, we can relate the different function to the annihilator of an appropriate sheaf. This fact will not be used in the sequel, but it clarifies the role of the sheaf $\Omega^\di_{Y_G}$ in the study of differents. Given a torsion $\calO^\circ_{X_G}$-sheaf $\calF$ define the {\em annihilator function} $a_\calF\:X\to(0,1]$ by $$a_\calF(x)=|\Ann(\calF_x\otimes_{\calO^\circ_{X_G,x}}\calH(x)^\circ)|.$$

\begin{corollary}\label{deltacor}
Let $f$ be as in Section \ref{fsec}. Then, the sheaf $\calF=\Omega^\di_{Y_G}/f^*\Omega^\di_{X_G}$ is torsion and $\delta_f=a_\calF|_{Y^\hyp}$, where the pullback is defined by $$f^*\Omega^\di_{X_G}=f^{-1}\Omega^\di_{X_G}\otimes_{f^{-1}\calO^\circ_{X_G}}\calOcirc_{Y_G}.$$
\end{corollary}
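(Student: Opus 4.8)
The plan is to compute, for each $y\in Y^\hyp$, the annihilator of the stalk $\calF_y$ after extension of scalars to the valuation ring $\calH(y)^\circ$, and to recognize it as $\delta_{\calH(y)/\calH(x)}$ by means of the K\"ahler-norm comparison of Theorem~\ref{comparth}. So fix $y\in Y^\hyp$ and put $x=f(y)$, $L=\calH(y)$, $K=\calH(x)$; since $L/K$ is a finite separable extension of one-dimensional fields and $L$ is hyperbolic, so is $K$ (the type is preserved), whence $x\in X^\hyp$. The stalk is $\calF_y=\Omega^\di_{Y_G,y}/(f^*\Omega^\di_{X_G})_y$, the cokernel of the pullback map. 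As tensoring is right exact,
\[
\calF_y\otimes_{\calOcirc_{Y_G,y}}\Lcirc \;=\; A/B,
\]
where $A=\Omega^\di_{Y_G,y}\otimes_{\calOcirc_{Y_G,y}}\Lcirc$ and $B$ is the image of $(f^*\Omega^\di_{X_G})_y\otimes_{\calOcirc_{Y_G,y}}\Lcirc=\Omega^\di_{X_G,x}\otimes_{\calOcirc_{X_G,x}}\Lcirc$.

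Next I would realize $A$ and $B$ as almost invertible $\Lcirc$-submodules of the one-dimensional $L$-space $\hatOmega_{L/k}$, using the compatibility square of Section~\ref{fibdifsec}, which gives $\phi_y(f^*dt_x)=\psi(\phi_x(dt_x)\otimes 1)$ for the isomorphism $\psi\:\hatOmega_{K/k}\otimes_K L\toisom\hatOmega_{L/k}$ of Theorem~\ref{comparth}. By Theorem~\ref{stalkth}(ii)--(iii), $\Omega^\di_{Y_G,y}$ is generated over $\calOcirc_{Y_G,y}$ by $\frac{dt_y}{t_y}$ for a tame monomial parameter $t_y$ (types $2,3$), respectively by $dt_y$ subject to the radius bound (type $4$); applying $\phi_y$ and Corollary~\ref{compldifcor}(i) identifies $A$ with the K\"ahler unit ball $\hatOmega^\di_{L/k}$ --- exactly for types $2$ and $3$, and up to the distinction between $L^\circ_s$ and $L^\circcirc_s$ for type $4$. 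The same argument at $x$ yields $B=\psi(\hatOmega^\di_{K/k}\otimes_{\Kcirc}\Lcirc)$.

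Finally, for a rank-one torsion quotient $A/B$ of almost invertible modules in a one-dimensional space one has $|\Ann(A/B)|=|B:_L A|$. By Theorem~\ref{comparth} the map $\psi$ multiplies $|\ |_{\Omega,L}$ by $\delta_{L/K}$, so $B$ is the $|\ |_{\Omega,L}$-ball of radius $\delta_{L/K}$ while $A$ is the unit ball; hence $|B:_L A|=\delta_{L/K}=\delta_f(y)$, and $a_\calF(y)=\delta_f(y)$ for every $y\in Y^\hyp$. The one genuine subtlety --- the step I would be most careful about --- is the bookkeeping at type $3$ and type $4$ points, where the sheaf-theoretic unit ball $\Omega^\di$ and the ring-theoretic modules $\hatOmega_{\Lcirc/\kcirc}$, $\hatOmega_{\Kcirc/\kcirc}$ differ by a strict-versus-nonstrict inequality. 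The resolution is that $|\Ann|$, being a supremum, depends only on the common radius of these modules and not on whether the bounding inequality is strict: concretely, Lemma~\ref{almsostlem} together with the elementary identities $|L^\circcirc_s:_L L^\circ_s|=|L^\circ_s:_L L^\circcirc_s|=1$ shows that replacing $A$ and $B$ by their open or closed variants leaves $|B:_L A|$ unchanged, so the type-$3$ and type-$4$ ambiguity drops out.
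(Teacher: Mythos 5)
Your proof is correct and takes essentially the same route as the paper's: both compute the stalks of $\Omega^\di_{Y_G}$ and $f^*\Omega^\di_{X_G}$ via Theorem~\ref{stalkth} and then identify the annihilator of the resulting cyclic quotient with $\delta_{\calH(y)/\calH(x)}$ through the comparison of K\"ahler norms. The paper merely phrases the final step with explicit tame (monomial) parameters and Corollary~\ref{differentcor}(ii) --- itself an immediate consequence of Theorem~\ref{comparth}, which you invoke directly --- and it treats the type~4 case separately in exactly the way your open/closed-ball bookkeeping does.
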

\begin{proof}
The stalks of $\Omega^\di_{Y_G}$ are quasi-invertible by Theorem ~\ref{stalkth} and the stalks of $f^*\Omega^\di_{X_G}$ are non-zero, hence $\calF$ is torsion. Choose a point $y\in Y^\hyp$ and set $x=f(y)$. Fix tame parameters $t_x$ and $t_y$ at these points. If the points are monomial, then we can also require that the parameters are monomial and then Theorem~\ref{stalkth}(ii) implies that $\calF_y\toisom\kcirc/a\kcirc$ where $|a|=\left|\frac{dt_x}{dt_y}t_yt_x^{-1}\right|_y$. Clearly, $a_\calF(y)=|a|$, and $\delta_f(y)=\left|\frac{dt_x}{dt_y}t_yt_x^{-1}\right|_y$ by Corollary~\ref{differentcor}(ii).

If the points are of type 4, then Theorem~\ref{stalkth}(iii) implies that $\calF_y\toisom\kcirccirc/a\kcirccirc$, where $|a|=\left|\frac{dt_x}{dt_y}\right|r_{t_y}(y)r_{t_x}(y)^{-1}$. Again, $a_\calF(y)=|a|$ and it remains to recall that $\delta_f(y)=\left|\frac{dt_x}{dt_y}\right|r_{t_y}(y)r_{t_x}(y)^{-1}$ by Corollary~\ref{differentcor}(ii).
\end{proof}

\subsection{$\calOcirc_{X_G,C}$-modules}\label{calOsec}

\subsubsection{Notation}
Throughout Section \ref{calOsec} we fix a type 2 point $x\in X$, set $C=C_x$, and denote the embedding of the generic point by $i\:x\into C$. By a {\em distinguished} parameter at a point $v\in C$ we mean a tame monomial parameter at $v$ such that $|t_v|_x=1$ and $\slope_v(|t_v|)=1$.

For any sheaf $\calF$ on $C$ we will use the notation $\calM\calF=i_*\calF_x$. In particular, $\calM_C=\calM\calO_C$ is the sheaf of meromorphic functions and $\calM\Omega_{C/\tilk}$ is the sheaf of meromorphic differentials.

\subsubsection{Restriction onto $C$}
For any sheaf $\calF$ on $X_G$ we denote by $\calF_C$ its restriction onto $C$ via the topological embedding $C\into X_G$. For example, $\calO^\circ_{X_G,C}$ is the restriction of $\calO_{X_G}^\circ$. Although we do not introduce a sheaf $\calMcirc_{X_G}$, we will use the notation $\calMcirc_{X_G,C}=\calM\calOcirc_{X_G,C}$ to denote the sheaf of ``meromorphic functions of $\calOcirc_{X_G,C}$''. It is the constant sheaf associated with $\calOcirc_{X_G,x}$.

\subsubsection{Reduction}
For any $\calOcirc_{X_G,C}$-module $\calG$ we define its {\em reduction} as $\tilcalG=\calG\otimes_{\kcirc}\tilk$. For example, the reductions of $\calOcirc_{X_G,C}$ and $\calMcirc_{X_G,C}$ are canonically isomorphic to $\calO_C$ and $\calM_C$, respectively. In general, $\tilcalG$ is an $\calO_C$-module.

\subsubsection{Twists}
Assume that $D=\sum_{v\in C}n_v v$ is a formal linear combination of closed points of $C$ such that almost all coefficients are non-negative. Then the twist $\calO_C(D)$ is the quasi-coherent submodule of the sheaf of meromorphic functions $\calM_C$ whose sections on an open $U$ satisfy $\ord_P(f)\geq -n_v$ for any $P\in U$. In particular, the stalk at $P$ is $\tilt_v^{-n_v}\calO_{C,P}$.    For any $\calO_C$-module $\calF$ we define $\calF(D)=\calF\otimes_\calO\calO(D)$. The opposite twist $\calF(-D)$ may be not defined, but if $\calF\toisom\calG(D)$ for an $\calO_C$-module $\calG$ then $\calG$ is unique up to a canonical isomorphism and we will use the notation $\calG=\calF(-D)$. In fact, we will need all this in the single case when $D=\sum_{v\in C}v$, and then we will simply write $\calG=\calF(-C)$.

A similar theory of twists exists for $\calOcirc_{X_G,C}$-modules, where $\calOcirc_{X_G,C}(D)$ is defined as the subsheaf of $\calMcirc_{X_G,C}$ whose stalk at $v$ equals to $t_v^{-n_v}\calOcirc_{X_G,v}$. Plainly, twists are compatible with the reduction, i.e. $\tilcalF(D)=\wt{\calF(D)}$.

\subsubsection{Pullbacks}
Assume that $f\:Y\to X$ is as in Section~\ref{fsec} and $y\in f^{-1}(x)$, and let $h\:C_y\to C_x$ be the induced map between the germ reductions. For any $\calOcirc_{X_G,C}$-module $\calF$ we define its {\em pullback} by $$h^*\calF=h^{-1}\calF\otimes_{h^{-1}\calOcirc_{X_G,C_x}}\calOcirc_{Y_G,C_y}.$$ Plainly, this operation is compatible with the reduction (which is also defined by a tensor product), namely, the following result holds.

\begin{lemma}\label{pullbacklem}
Keep the above notation, then $\wt{h^*\calF}=h^*\tilcalF$.
\end{lemma}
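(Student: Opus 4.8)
The plan is to establish the isomorphism as a chain of canonical identifications, the only inputs being associativity of the tensor product, the fact that reduction is the base-change functor $-\otimes_{\kcirc}\tilk$, and the monoidality of the inverse image $h^{-1}$ (i.e. that $h^{-1}$ commutes with tensor products, as it is computed on stalks). The geometric content is already available: the reductions of $\calOcirc_{X_G,C_x}$ and $\calOcirc_{Y_G,C_y}$ are canonically $\calO_{C_x}$ and $\calO_{C_y}$.

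First I would unwind the left-hand side and push the reduction inside the pullback. By the definitions of reduction and pullback,
$$\wt{h^*\calF}=\bigl(h^{-1}\calF\otimes_{h^{-1}\calOcirc_{X_G,C_x}}\calOcirc_{Y_G,C_y}\bigr)\otimes_{\kcirc}\tilk\cong h^{-1}\calF\otimes_{h^{-1}\calOcirc_{X_G,C_x}}\bigl(\calOcirc_{Y_G,C_y}\otimes_{\kcirc}\tilk\bigr),$$
where I have commuted $-\otimes_{\kcirc}\tilk$ past the base change, which is legitimate since $\kcirc$ acts only through the rightmost factor. Using $\calOcirc_{Y_G,C_y}\otimes_{\kcirc}\tilk=\calO_{C_y}$, this becomes $h^{-1}\calF\otimes_{h^{-1}\calOcirc_{X_G,C_x}}\calO_{C_y}$.

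Next I would factor the coefficient map $h^{-1}\calOcirc_{X_G,C_x}\to\calO_{C_y}$ through $h^{-1}\calO_{C_x}$, namely as the inverse image of the reduction map on the base followed by the structure map of $h$. Regrouping the tensor product accordingly gives
$$\wt{h^*\calF}\cong\bigl(h^{-1}\calF\otimes_{h^{-1}\calOcirc_{X_G,C_x}}h^{-1}\calO_{C_x}\bigr)\otimes_{h^{-1}\calO_{C_x}}\calO_{C_y}.$$
Now I would invoke monoidality of $h^{-1}$ to rewrite the inner factor as $h^{-1}\bigl(\calF\otimes_{\calOcirc_{X_G,C_x}}\calO_{C_x}\bigr)$, and since $\calO_{C_x}=\calOcirc_{X_G,C_x}\otimes_{\kcirc}\tilk$ the expression inside equals $\calF\otimes_{\kcirc}\tilk=\tilcalF$. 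Hence the inner factor is $h^{-1}\tilcalF$ and
$$\wt{h^*\calF}\cong h^{-1}\tilcalF\otimes_{h^{-1}\calO_{C_x}}\calO_{C_y}=h^*\tilcalF,$$
as required.

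I do not expect a serious obstacle here: the argument is a formal manipulation once the two reductions of structure sheaves are granted, and the single point deserving care is the monoidality of $h^{-1}$, which holds because the inverse image is exact and preserves filtered colimits, being a stalkwise left-adjoint operation. The remaining task is merely to check that all the isomorphisms above are canonical and natural in $\calF$, so that no compatibility is lost along the chain.
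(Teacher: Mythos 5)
Your proof is correct, and it is exactly the argument the paper has in mind: the paper offers no written proof at all, remarking only that the pullback is ``plainly'' compatible with reduction since both operations are defined by tensor products, and your chain of identifications (associativity of $\otimes$, the canonical reductions $\calOcirc_{Y_G,C_y}\otimes_{\kcirc}\tilk\cong\calO_{C_y}$ and $\calOcirc_{X_G,C_x}\otimes_{\kcirc}\tilk\cong\calO_{C_x}$, and monoidality of $h^{-1}$) is precisely the formal verification being alluded to. Nothing is missing; you have simply written out the details the authors left to the reader.
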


\subsection{Local Riemann-Hurwitz formula}

\subsubsection{Reduction of $\Omega^\di_{X_G,C_x}$}
The reduction of $\Omega^\di_{X_G,C_x}$ is a huge quasi-coherent sheaf, so it is more convenient to work with an appropriate twist.

\begin{lemma}\label{red2lem}
Assume that $X$ is a nice $k$-analytic curve and $x\in X$ is a type 2 point. Then $\Omega^\di_{X_G,C_x}(-C_x)$ exists and is a locally free $\calOcirc_{X_G,C_x}$-module whose reduction is isomorphic to $\Omega_{C_x/\tilk}$.
\end{lemma}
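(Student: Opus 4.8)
The plan is to verify both assertions by reducing to the stalkwise description of Theorem~\ref{stalkth}(ii) and then matching generators with those of the cotangent sheaf of $C_x$. First I would establish local freeness. Fix a point $v\in C_x$ (the generic point $x$, or a closed, i.e.\ type 5, point) and choose a distinguished parameter $t_v$ at $v$, taking it so that its reduction $\tilt_v\in\wHx=\tilk(C_x)$ is a uniformizer at $v$, i.e.\ $\nu_v(t_v)=1$. Such a $t_v$ exists: lift a uniformizer of $\calO_{C_x,v}$ to an element of $\calH(x)^\circ=\calO^\circ_{X_G,x}$ with $|t_v|_x=1$; it is automatically tame monomial at $v$, since $\nu_v(t_v)=1$ is prime to $p$. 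By Theorem~\ref{stalkth}(ii) one has $\Omega^\di_{X_G,v}=\calO^\circ_{X_G,v}\frac{dt_v}{t_v}$, and unwinding the definition of the twist (the stalk of $\calF(-C_x)$ at $v$ is $t_v\calF_v$) gives
$$\big(\Omega^\di_{X_G,C_x}(-C_x)\big)_v=\calO^\circ_{X_G,v}\,dt_v,$$
a free $\calO^\circ_{X_G,v}$-module of rank one; at the generic point $x$ there is no twist and $\frac{dt_x}{t_x}=t_x^{-1}dt_x$ already generates, $t_x$ being a unit there. Since $dt_v$ generates on a whole neighborhood of $v$ in $C_x$, the sheaf $\Omega^\di_{X_G,C_x}(-C_x)$ is invertible.

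Next I would compute the reduction and identify it with $\Omega_{C_x/\tilk}$. Tensoring the stalk with $\tilk$ over $\kcirc$ and using $\wt{\calO^\circ_{X_G,v}}=\calO_{C_x,v}$ yields $\wt{\Omega^\di_{X_G,C_x}(-C_x)}_v=\calO_{C_x,v}\,\overline{dt_v}$, an invertible $\calO_{C_x}$-module. The key point is that $d$ is compatible with reduction: for $f\in\calO^\circ_{X_G,v}$ one has $df\in\big(\Omega^\di_{X_G,C_x}(-C_x)\big)_v$, equivalently $\frac{df}{dt_v}\in\calO^\circ_{X_G,v}$, and $\tilf\mapsto\overline{df}$ is a well-defined $\tilk$-derivation $\bar d\:\calO_{C_x}\to\wt{\Omega^\di_{X_G,C_x}(-C_x)}$. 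Granting this, the universal property of K\"ahler differentials produces a morphism of invertible $\calO_{C_x}$-modules
$$\theta\:\Omega_{C_x/\tilk}\longrightarrow\wt{\Omega^\di_{X_G,C_x}(-C_x)},\qquad d\tilf\mapsto\overline{df}.$$
Being a map of line bundles on a curve, $\theta$ is an isomorphism as soon as it is so stalkwise; at each $v$ the uniformizer $\tilt_v$ satisfies $\theta(d\tilt_v)=\overline{dt_v}$, and since $d\tilt_v$ generates $\Omega_{C_x/\tilk,v}$ while $\overline{dt_v}$ generates the target, $\theta$ is an isomorphism at $v$, hence globally.

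I expect the main obstacle to be the compatibility of $d$ with reduction asserted above, namely that $\frac{df}{dt_v}\in\calO^\circ_{X_G,v}$ for every $f\in\calO^\circ_{X_G,v}$ and that its reduction depends only on $\tilf$. This is exactly where the choice of $t_v$ with $\nu_v(t_v)=1$ is used: with $\tilt_v$ a uniformizer, $dt_v$ is a genuine generator of the twisted unit ball, so differentiating a function with $|f|_x\le1$ and $\nu_v(f)\ge0$ cannot create a pole in the $\frac{df}{dt_v}$-coordinate. I would prove this either directly from the explicit stalk description of Theorem~\ref{stalkth}(ii) together with the Leibniz rule, reducing to monomials $\prod_i(t_v-a_i)^{n_i}$ as in the proof of Theorem~\ref{compldifth}, or by invoking the contracting property of the K\"ahler seminorm combined with the height-two valuation structure at $v$. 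The Leibniz rule then also gives that $\bar d$ is a derivation and that $\overline{df}=0$ when $f\in\calO^{\circ\circ}_{X_G,v}$, securing well-definedness.
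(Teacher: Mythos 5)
Your overall architecture coincides with the paper's: both arguments rest on Theorem~\ref{stalkth}(ii) to obtain the stalkwise description $\bigl(\Omega^\di_{X_G,C_x}(-C_x)\bigr)_v=\calOcirc_{X_G,v}\,dt_v$, and both then reduce the identification of the reduction with $\Omega_{C_x/\tilk}$ to a single statement: the differential commutes with reduction. You package this as well-definedness of $\bar d$ plus the universal property of K\"ahler differentials; the paper packages it as matching of transition functions, i.e.\ $\wt{\left(dt_v/dt_u\right)}=d\tilt_v/d\tilt_u$. These two formulations are interchangeable (and note that even your local-freeness step tacitly uses the same fact, since knowing that $dt_v$ generates the stalks at nearby closed points $u$ amounts to knowing that $dt_u/dt_v$ reduces to a unit of $\calO_{C_x,u}$). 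The problem is that you leave exactly this statement --- which is the entire content of the lemma --- unproven, and neither of your two suggested routes closes it as stated.

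Route (b) fails: the contracting property of the K\"ahler seminorm gives $|df|_{\Omega,x}\le|f|_x$, and combined with Theorem~\ref{stalkth}(ii) this only yields $df\in\Omega^\di_{X_G,v}=\calOcirc_{X_G,v}\frac{dt_v}{t_v}$, i.e.\ $\nu_v(df/dt_v)\ge-1$; a simple pole in the $dt_v$-coordinate is still allowed, and the height-two valuation structure by itself does not exclude it. Route (a) is incomplete: reduction to products $\prod_i(t_v-a_i)^{n_i}$ only treats elements of $k(t_v)$, whereas $\calOcirc_{X_G,v}$ consists of germs of analytic functions and $\calH(x)$ is in general a proper finite separable extension of $\wh{k(t_v)}$; passing to completions and to this finite extension requires precisely the Gabber--Ramero-type arguments of Theorem~\ref{compldifth}, so "Leibniz plus monomials" is not a proof but a pointer back to that machinery. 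The paper's actual resolution is available to you and is what you should invoke: by Corollary~\ref{compldifcor}(ii) the unit ball of $\hatOmega_{\calH(x)/k}$ equals $\hatOmega_{\calH(x)^\circ/\kcirc}$, so the map $\phi_x$ of Section~\ref{fibdifsec} restricts to $\Omega^\di_{X_G,x}\to\hatOmega_{\calH(x)^\circ/\kcirc}$; composing with the natural (and obviously $d$-compatible) reduction map $\hatOmega_{\calH(x)^\circ/\kcirc}\to\Omega_{\wHx/\tilk}$ shows that for every $f\in\calOcirc_{X_G,x}$ the image of $df$ in $\Omega_{\wHx/\tilk}$ is $d\tilf$. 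Applied to $f=t_u,t_v$ this gives the transition-function identity; applied to $f\in\calOcirc_{X_G,v}$ it gives $d\tilf=\wt{\left(df/dt_v\right)}\,d\tilt_v$ with $d\tilf/d\tilt_v\in\calO_{C_x,v}$, which is exactly the well-definedness your derivation $\bar d$ needs. Without this bridge between the norm-theoretic unit ball and the residue curve, your proof does not go through.
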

\begin{proof}
Set $C=C_x$ for shortness. Let $\calF$ be the subsheaf of $\Omega^\di_{X_G,C}$ such that for any open $U\subseteq C$ the module $\calF(U)$ consists of all elements $\phi\in\Omega^\di_{X_G,C}(U)$ such that $\phi_v\in t_v\Omega^\di_{X_G,v}$ for any $v\in U$. We claim that for each $v\in C$ the inclusion $\calF_v\subseteq t_v\Omega^\di_{X_G,v}$ is an equality and hence $\calF=\Omega^\di_{X_G,C}(-C)$. Indeed, by Theorem~\ref{stalkth}(ii) each $t_v\Omega^\di_{X_G,v}$ is a free module with basis $dt_v$, where $t_v$ is a distinguished parameter at $v$. For each $v$ we have that $\tilt_v$ is a local parameter on $C$ at $v$, hence $d\tilt_v$ is a generator of $\Omega_{C/\tilk,v}$, and there exists a neighborhood $U_v$ of $v$ in $C$ such that $\Omega_{C/\tilk}(U_v)$ is a free module generated by $d\tilt_v$. In particular, for each point $u\in U_v$ the element $\tilt_v-\tilt_v(u)$ is a local parameter at $u_v$. It follows that $t_v-a_u$ is a distinguished parameter at $u$, where $a_u\in\kcirc$ is a lifting of $\tilt_v(u)$. In particular, $dt_v=d(t_v-a_u)$ is a generator of $\calF_u$ for any $u\in U_v$, and hence $dt_v\in\calF(U_v)$ and $\calF_v$ is as required.

Moreover, we have proved above that each $\calF(U_v)$ is a free module with basis $dt_v$. So, sending $dt_v$ to $d\tilt_v$ we obtain an isomorphism $h_v\:\calF(U_v)\otimes_{\kcirc}\tilk\toisom\Omega_{C/\tilk}(U_v)$ and we claim that this globalizes to an isomorphism $h$ between the reduction of $\calF$ and $\Omega_{C/\tilk}$. We should only check that isomorphisms $h_v$ and $h_u$ are compatible on $U_u\cap U_v$. In other words, we should check that the reduction of an element $\frac{dt_v}{dt_u}\in\calOcirc_{X_G,x}$ equals to the element $\frac{d\tilt_v}{d\tilt_u}\in\wHx$.

Recall that $\hatOmega_{\calH(x)^\circ/\kcirc}$ is the unit ball of $\hatOmega_{\calH(x)/k}$ by Corollary~\ref{compldifcor}(ii). In particular, the map $\phi_x\:\Omega_{X_G,x}\to\hatOmega_{\calH(x)/k}$ from Section~\ref{fibdifsec} restricts to a map $\Omega^\di_{X_G,x}\to\hatOmega_{\calH(x)^\circ/\kcirc}$, and the commutativity of the diagram in Section~\ref{fibdifsec} implies that the left square in the following diagram is commutative
$$
\xymatrix{
\calOcirc_{X_G,x}\ar[r]\ar[d]^{d_1} & \calH(x)^\circ\ar[r]\ar[d]^d & \wHx\ar[d]^{d_2}\\
\Omega^\di_{X_G,x}\ar[r] & \hatOmega_{\calH(x)^\circ/\kcirc}\ar[r] & \Omega_{\wHx/\tilk}.
}
$$
The right square is obviously commutative and so the differentials $d_1$ and $d_2$ are compatible, as claimed.
\end{proof}

\subsubsection{Local Riemann-Hurwitz}
Now we are in a position to prove the following result.

\begin{theorem}\label{prop:local_RH}
Assume that $f\colon Y\to X$ is as in Section \ref{fsec}, $y\in\Int(Y)$ and $x=f(y)\in\Int(X)$ are inner type 2 points, and $h\colon C_y\to C_x$ is the corresponding map on the germ reductions at the points $x$ and $y$. Then,
$$
2g(y)-2 = n(2g(x) - 2) + \sum_{v\in C_y} (-\slope_v\delta_f + n_v - 1)
$$
where $g(x)$ and $g(y)$ are the genera of the curves $C_x$ and $C_y$, respectively, $n=\deg h$, and $n_v$ is the ramification index of $h$ at $v\in C_y$.
\end{theorem}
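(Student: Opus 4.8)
The plan is to deduce the formula from the classical degree computation on the \emph{proper} $\tilk$-curves $C_x$ and $C_y$ (they are proper since $x$ and $y$ are inner, by \cite[Theorem~4.1]{temlocal}), using the different function to manufacture the ramification data even when $h$ is inseparable. The organizing identity will be $\deg\calL=\sum_{v\in C_y}S_v$, where $S_v=-\slope_v\delta_f+n_v-1$ and $\calL=\Omega_{C_y/\tilk}\otimes h^*\Omega_{C_x/\tilk}^{\vee}$. Since $\deg\Omega_{C_y/\tilk}=2g(y)-2$, $\deg\Omega_{C_x/\tilk}=2g(x)-2$, and $\deg h^*\Omega_{C_x/\tilk}=n(2g(x)-2)$ by the projection formula (valid for any finite $h$, separable or not), this identity rearranges to exactly the asserted formula $2g(y)-2=n(2g(x)-2)+\sum_v S_v$.

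First I would record the two line bundles. By Lemma~\ref{red2lem}, $\Omega^\di_{Y_G,C_y}(-C_y)$ and $\Omega^\di_{X_G,C_x}(-C_x)$ are locally free of rank one with reductions $\Omega_{C_y/\tilk}$ and $\Omega_{C_x/\tilk}$, and by Lemma~\ref{pullbacklem} pulling back commutes with reduction, so $h^*(\Omega^\di_{X_G,C_x}(-C_x))$ reduces to $h^*\Omega_{C_x/\tilk}$. The morphism $f$ induces, via multiplication by the Jacobian $J=\frac{dt_x}{dt_y}$ for tame monomial parameters $t_x,t_y$ (Theorem~\ref{compdeltalem}), a map of these rank-one modules. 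The device that saves the inseparable case is to normalize: fix $c\in k$ with $|c|=\delta_f(y)$ and let $\sigma$ be the reduction of $c^{-1}J$. Since $\delta_f(y)=|J|_y$ when $r_{t_x}(y)=r_{t_y}(y)=1$ (Corollary~\ref{differentcor}(ii)), we have $|c^{-1}J|_y=1$, so $\sigma$ is a \emph{nonzero} rational section of $\calL$ whether or not $h$ is separable.

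The heart of the argument is the local claim $\ord_v\sigma=S_v$ at every closed point $v\in C_y$. I would prove this by the explicit annulus computation of Section~\ref{deltaansec}: shrinking to an annulus $A\subset Y$ in the direction of $v$ mapping to an annulus in $X$, the branch invariants $n_v=m$ and $\slope_v\delta_f=s=m-n$ are read off from the series expansion, and one finds that the order of vanishing of the normalized reduced Jacobian at $v$ equals $n-1=-\slope_v\delta_f+n_v-1=S_v$; in particular $S_v\ge 0$, so $\sigma$ is actually a regular section and $\calL$ is effective. Summing over $v$ (a finite sum, since almost all $S_v$ vanish) gives $\deg\calL=\sum_v S_v$, and combining with the degree identities above yields the theorem.

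The main obstacle is precisely this local step in the inseparable case, where the naive pulled-back map $h^*\Omega_{C_x/\tilk}\to\Omega_{C_y/\tilk}$ is \emph{zero} and hence carries no ramification information; the whole point of passing through the analytic sheaves $\Omega^\di$ and normalizing $J$ by its generic value $\delta_f(y)$ is to produce a nonzero section whose divisor still records the ramification. Concretely, the care lies in matching the intrinsic quantities $\slope_v\delta_f$ and $n_v$ with the local-coordinate data of \ref{deltaansec} and with the generators $\frac{d\tilt_v}{d\tilde s_{h(v)}}$ of $\calL$, so that $\ord_v\sigma$ comes out as $S_v$ rather than the bare slope $\slope_v|J|$. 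As a consistency check, the degree-$p$ Kummer example of Remark~\ref{kumerrem}, where $h$ is the purely inseparable Frobenius on $\bfP^1_{\tilk}$, gives $S_v=p-1$ at the two skeleton directions and $S_v=0$ elsewhere, summing to $2(p-1)=(2g(y)-2)-p(2g(x)-2)$; once the local matching is in place, the global degree computation is immediate.
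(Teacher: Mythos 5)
Your proposal is correct and is essentially the paper's own proof: the paper likewise realizes $\Omega_{C_y/\tilk}$ and $h^*\Omega_{C_x/\tilk}$ as reductions of $\Omega^\di_{Y_G,C_y}(-C_y)$ and of the pullback of $\Omega^\di_{X_G,C_x}(-C_x)$ (Lemmas~\ref{red2lem} and \ref{pullbacklem}), rescales the pulled-back lattice by a constant of absolute value $\delta_f(y)$ so that it agrees generically with $\Omega^\di_{Y_G,C_y}(-C_y)$, and obtains $2g(y)-2-n(2g(x)-2)$ as the sum of local indices $(\tilcalG:\tilcalE)_v$, which it identifies with $S_v$ by exactly the local Jacobian computation you describe (the paper phrases it with distinguished parameters at $v$ and $h(v)$ and the homomorphism $\nu_v$ rather than the series expansion of \ref{deltaansec}, but it is the same calculation, and your lattice-index picture and rational-section picture are the same thing). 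One correction: your parenthetical claim that $S_v\ge 0$, hence that $\sigma$ is a regular section and $\calL$ is effective, is false in general --- by Theorem~\ref{restrictth}(ii) a branch with, say, $n_v=p$, $\slope_v\delta_f=p+1$ and $\delta_f(y)=|p|$ is realizable in mixed characteristic, giving $S_v=-2$ --- but this is harmless for your argument, since $\deg\calL$ equals the degree of the divisor of any nonzero \emph{rational} section, which is all you use.
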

\begin{proof}
Since the points are inner, the residue curves $C_y$ and $C_x$ are proper. Set $\calG=\Omega^\di_{Y_G,C_y}(-C_y)$ and $\calF=\Omega^\di_{X_G,C_x}(-C_x)$, so that $\tilcalG\toisom\Omega_{C_y/\tilk}$ and $\tilcalF\toisom\Omega_{C_x/\tilk}$ by Lemma~\ref{red2lem}. Fix an element $a\in k$ such that $|a|=\delta_f(y)^{-1}$ and consider the $\calO_{X_G,C_y}^\circ$-submodule $\calE=ah^*\calF$ of $\Omega_{Y_G}$. Since $\calE\toisom h^*\calF$, Lemma~\ref{pullbacklem} tells us that $\tilcalE\toisom h^*\Omega_{C_x/\tilk}$.

Choose tame monomial parameters $t_x$ and $t_y$ at $x$ and $y$, respectively, such that $|t_x|=|t_y|=1$. Then $r_{t_y}(y)=r_{t_x}(y)=1$, $adt_x$ generates $\calE_y$ and $dt_y$ generates $\calG_y$. Note that $\left|a\frac{dt_x}{dt_y}\right|_y=1$ by Theorem~\ref{compdeltalem}, hence $\calE_y=\calG_y$ and we can view both $\tilcalG$ and $\tilcalE$ as subsheaves of $\calM\tilcalG_y=\calM\tilcalE_y$. Then the index $(\tilcalG:\tilcalE)_v\in\bfZ$ makes sense for any $v\in C_y$ and we have the global degree formula
$$\sum_{v\in C_y}(\tilcalG:\tilcalE)_v=\deg(\tilcalG)-\deg(\tilcalE)=2g(y)-2 - n(2g(x) - 2).$$

To complete the proof it suffices to show that $(\tilcalG:\tilcalE)_v=-\slope_v\delta_f + n_v - 1$. This is a local question at $v$, so fix distinguished parameters $t_v$ at $v$ and $t_u$ at $u=f(v)$. Let $I\subset Y_\mon$ be an interval starting at $y$ in the direction of $v$. Shrinking $I$ we can achieve that $t_v$ is a tame monomial parameter for any point in $I$ and $t_u$ is a tame monomial parameter at any point of $f(I)$. In particular, if $g=\frac{dt_u}{dt_v}$ then by Theorem~\ref{compdeltalem} we obtain that $\delta_f(z)=|gt_vt_u^{-1}|_z$ for any point $z\in I$, and hence $$-\slope_v\delta_f+n_v-1=\nu_v(g^{-1}t^{-1}_vt_u)+n_v-1=\nu_v(g^{-1})=\nu_v(ag^{-1}).$$

It remains to note that $dt_v$ is a basis of $\calG_v$ and $adt_u$ is a basis of $\calE_v$, and so $\nu_v$ of their ratio $\frac{dt_v}{adt_u}=ag^{-1}$ equals to $(\tilcalG:\tilcalE)_v$.
\end{proof}

\subsubsection{The differential indices $R_y$}\label{Ry}
The entries of the local Riemann-Hurwitz formula will show up again and again throughout the paper, so it makes sense to introduce special notation. For any branch $v$ we define the {\em differential slope index} $$S_{v,f}=-\slope_v\delta_f+n_v-1.$$ Since $f$ is usually fixed, we will simple denote it by $S_v=S_{v,f}$. Next, we define a characteristic function $\chi_f\:Y\to\bfN$ by $$\chi_f(y)=2g(y)-2-n_y(2g(x)-2),$$ where $x=f(y)$. Note that excluding a finite set of type 2 points, we have that $g(y)=0$ and hence $\chi_f(y)=2n_y-2$. Finally, for any point $y\in Y$ we define the {\em differential index} $$R_y=\chi_f(y)-\sum_{v\in\Br(y)}S_v.$$

\begin{remark}
We will later see that $\sum_{y\in Y}R_y$ relates the genera of $Y$ and $X$, so let us discuss when these indices do not vanish.

(0) At any unibranch point $y$ we have that $R_y=\slope_y\delta_f+n_y-1$.

(1) We will prove in Theorem~\ref{limth} that $R_y$ is the classical differential index for a type 1 point $y$. In particular, $R_y\ge 0$ and the equality takes place if and only if $y$ is not a ramification point.

(2) Assume that $y$ is of type 2. The local Riemann-Hurwitz formula states that if $y$ is inner then $R_y=0$, so if $R_y\neq 0$ then $y\in\partial(Y)$. In this case, $R_y$ can be negative (depending on the indices at the ``missing branches'').

(3) If $y$ is of type 3 then $y$ is inner because $Y$ is strict, hence $\Br(y)=\{u,v\}$, $\chi_f(y)=2n_y-2$, $n_v=n_u=n_y$ and the numbers $s_u=\slope_u\delta_f$ and $s_v=\slope_v\delta_f$ are opposite. Thus, $$R_y=2n_y-2-(-s_v+n_v-1)-(-s_u+n_u-1)=0.$$

(4) It follows from Theorem~\ref{deltatrivth} below that $R_y=0$ for any type 4 point.
\end{remark}

\subsection{Behaviour at type 1 points}\label{limsec}
We conclude Section~\ref{difsec} with studying the local behaviour of $\delta_f$ at type 1 points.

\subsubsection{Algebraic different}
Assume that $y\in Y$ is of type 1 and $x=f(y)$. Since $f$ is generically \'etale, $\Omega_{Y/X,y}$ is a torsion $\calO_{Y,y}$-module of a finite length $l$, and we set $\delta_{y/x}=l$. It follows from GAGA that if $Y\to X$ is the analytification of a morphism of algebraic $k$-curves then $\delta_{y/x}$ equals to the value of the classical (additive) different of $\calO_y/\calO_x$. Furthermore, the usual argument (e.g., from \cite[IV.2.2(b)]{Hartshorne}) shows that the different $\delta_{y/x}$ can be computed analogously to the formula in Corollary~\ref{differentcor}, but using the discrete valuation $\nu_y$ of $\calO_{Y,y}$.

\begin{lemma}\label{type1dif}
Keep the above notation and choose parameters $t_y$ and $t_x$ at $y$ and $x$, respectively. Then $\delta_{y/x}=\nu_y(\frac{dt_x}{dt_y})$.
\end{lemma}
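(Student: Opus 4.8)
The plan is to reduce the statement to a standard computation in the local ring at $y$. First I would record the two structural facts supplied by quasi-smoothness: since $y$ is a type $1$ point of the nice curve $Y$, the local ring $\calO_{Y,y}$ is a discrete valuation ring whose maximal ideal is generated by $t_y-t_y(y)$, and $\Omega_{Y/k,y}$ is free of rank one with basis $dt_y$; the analogous statements hold for $\calO_{X,x}$, $t_x$ and $dt_x$. Consequently $d(f^*t_x)$ lies in $\Omega_{Y/k,y}=\calO_{Y,y}\,dt_y$, which is exactly what makes $h:=\frac{dt_x}{dt_y}\in\calO_{Y,y}$ a well-defined element, namely the unique one with $dt_x=h\,dt_y$.

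Next I would invoke the first fundamental exact sequence $f^*\Omega_{X/k}\to\Omega_{Y/k}\to\Omega_{Y/X}\to 0$ and localize it at $y$. Under the identifications above, $(f^*\Omega_{X/k})_y=\calO_{Y,y}\,dt_x$ is free of rank one and the left-hand map is multiplication by $h$, sending $dt_x\mapsto h\,dt_y$. Because $f$ is generically \'etale, $\Omega_{Y/X}$ vanishes at the generic point, so this map is an isomorphism there; hence $h\neq 0$, and multiplication by $h$ on the rank-one free module $\calO_{Y,y}\,dt_y$ over the domain $\calO_{Y,y}$ is injective. Therefore the right-exact sequence yields an isomorphism $\Omega_{Y/X,y}\toisom\calO_{Y,y}/(h)$.

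Finally, since $\calO_{Y,y}$ is a discrete valuation ring, the length of $\calO_{Y,y}/(h)$ equals $\nu_y(h)$, so $\delta_{y/x}=\mathrm{length}_{\calO_{Y,y}}\Omega_{Y/X,y}=\nu_y(h)=\nu_y\!\left(\frac{dt_x}{dt_y}\right)$, as claimed. I expect the only point requiring genuine care to be the justification of the two quasi-smoothness inputs in the analytic category (that $\calO_{Y,y}$ is a DVR and that $\Omega_{Y/k,y}$ is free on $dt_y$), together with the injectivity of the left-hand map; once these are in place the argument is precisely the classical differential computation of \cite[IV.2.2(b)]{Hartshorne}, transported verbatim to the local ring $\calO_{Y,y}$.
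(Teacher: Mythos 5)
Your proof is correct and takes essentially the same approach as the paper: the paper's one-line proof invokes precisely the identification $\Omega_{Y/X,y}=\calO_{Y,y}\,dt_y/\calO_{Y,y}\,dt_x\cong\calO_{Y,y}/(h)$, which you derive via the first fundamental sequence. The details you supply (freeness of $\Omega_{Y/k,y}$ on $dt_y$, nonvanishing of $h$ from generic \'etaleness, and the length computation over the DVR $\calO_{Y,y}$) are exactly the ones the paper leaves implicit by referring to the classical argument of \cite[IV.2.2(b)]{Hartshorne}.
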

\begin{proof}
This follows from the fact that $\Omega_{Y/X,y}=\calO_{Y,y}dt_x/\calO_{Y,y}dt_y$.
\end{proof}

\subsubsection{The limit formula}
Now, we can establish the limit formula for $\delta_f$. In particular, it shows that our definition of $R_y$ at type 1 points agrees with the differential ramification index used in the algebraic Riemann-Hurwitz formula.

\begin{theorem}\label{limth}
Assume that $f$ is as in Section \ref{fsec}, $y\in Y$ is a type 1 point and $x=f(y)$. Then
$\slope_y\delta_f=\delta_{y/x}-n_y+1$, or, equivalently, $R_y=\delta_{y/x}$. Moreover, let $I\subset Y$ be an interval starting at $y$, then

(a) If $\cha(k)>0$ then there exists a radius parametrization $r\:I\to[0,a]$ such that $\delta_f(z)=r(z)^{\delta_{y/x}-n_y+1}$ for any point $z\in I\setminus\{y\}$ close enough to $y$. In particular, $\lim_{z\to y}\delta_f(z)=0$ if and only if the ramification at $y$ is wild, and otherwise $\delta_f=1$ near $y$.

(b) If $\cha(k)=0$ then $\delta_f(z)=|n_y|$ on a small enough neighborhood of $y$ in $I$. In particular, $\delta_f(z)<1$ near $y$ if and only if the ramification is topologically wild, and the value of $\delta_f(z)$ near $y$ is the minimal possible for multiplicity $n_y$ (see Theorem~\ref{restrictth}(i)).
\end{theorem}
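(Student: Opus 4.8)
The plan is to reduce everything to the explicit formula of Theorem~\ref{compdeltalem} and then read off the slope of $\delta_f$ at $y$ as a combination of orders of vanishing. First I would choose a uniformizer $t_y$ of $\calO_{Y,y}$ with $t_y(y)=0$ and a uniformizer $t_x$ of $\calO_{X,x}$ with $t_x(x)=0$; both are tame monomial parameters at the type $1$ points $y$ and $x$. Applying Theorem~\ref{compdeltalem} on a small neighborhood, for $z\in I$ close to $y$ one has $\delta_f(z)=|h(z)|\,r_{t_y}(z)\,r_{t_x}(z)^{-1}$ with $h=\frac{dt_x}{dt_y}$. Since $y$ is unibranch of type $1$ and the functions $t_y$ and $f^*t_x$ vanish at $y$ to orders $1$ and $n_y$ respectively, the infima defining $r_{t_y}$ and $r_{t_x}$ are attained at $c=0$ on a sufficiently small initial segment of $I$, so there $r_{t_y}=|t_y|$ and $r_{t_x}=|t_x|$, of slopes $1$ and $n_y$. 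By Lemma~\ref{type1dif} the order of vanishing of $h$ at $y$ is $\delta_{y/x}$, and the slope of $|g|$ at a unibranch point equals $\nu_y(g)$, so $|h|$ has slope $\delta_{y/x}$. Adding slopes yields $\slope_y\delta_f=\delta_{y/x}-n_y+1$, and the unibranch identity $R_y=\slope_y\delta_f+n_y-1$ then gives $R_y=\delta_{y/x}$.

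For part (a), where $\cha(k)=p>0$, I would use that $\delta_f$ is piecewise $|k^\times|$-monomial (Corollary~\ref{pmlem}), hence equal to $c\,r_{t_y}(z)^{s}$ with $s=\delta_{y/x}-n_y+1$ and $c\in|k^\times|$ on a small initial subinterval. In the tame case $p\nmid n_y$ one has $\delta_{y/x}=n_y-1$, so $s=0$; moreover the nearby points $z$ have multiplicity $n_z=n_y$ prime to $p$, hence lie in the tame locus, so $\delta_f=1$ near $y$ by Lemma~\ref{wildlocus}(i). In the wild case $p\mid n_y$ forces $n_y=0$ in $k$, so $h=\frac{dt_x}{dt_y}$ has $\nu_y(h)\ge n_y$ and $s\ge 1>0$. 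Since $k$ is algebraically closed, $|k^\times|$ is divisible, so I may write $c=|a|^{s}$ and replace $r_{t_y}$ by the radius parametrization $r=|a|\,r_{t_y}$, obtaining $\delta_f(z)=r(z)^{s}$. As $r(y)=0$ and $s>0$, the continuous extension gives $\delta_f(y)=0$, which yields the stated dichotomy.

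For part (b), where $\cha(k)=0$, the key observation is that $\calO_{Y,y}/\calO_{X,x}$ is a finite extension of DVRs whose residue fields are both $k$, of characteristic zero; hence it is tamely ramified and $\delta_{y/x}=n_y-1$, so $s=0$ and $\delta_f$ is constant near $y$. Writing $f^*t_x=u\,t_y^{n_y}$ with $u\in\calO_{Y,y}^\times$, one gets $h=n_y u\,t_y^{n_y-1}+u'\,t_y^{n_y}$ with $n_y u(y)\neq 0$, so the leading terms give $\delta_f(z)=|n_y|\,|u(y)|\cdot|u(y)|^{-1}=|n_y|$. Consequently $\delta_f(z)<1$ near $y$ exactly when $p\mid n_y$, i.e.\ when the topological ramification is wild; and since the topological multiplicity $m$ of the points $z$ near $y$ equals $n_y$, the value $|n_y|=|m|$ realizes the boundary value of Theorem~\ref{restrictth}(i).

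The main obstacle I anticipate is the passage from the analytic, or topological, different to the algebraic different: one must be confident that the slopes of $|h|$, $r_{t_y}$ and $r_{t_x}$ at the type $1$ point are genuinely the orders of vanishing $\nu_y(h)=\delta_{y/x}$, $\nu_y(t_y)=1$ and $\nu_y(f^*t_x)=n_y$, and that the infima defining the radii are attained at $c=0$ on a small enough initial segment of $I$. The remaining points are bookkeeping: absorbing the multiplicative constant in (a) via divisibility of $|k^\times|$, and the explicit leading-term computation of the constant $|n_y|$ in (b).
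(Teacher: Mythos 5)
Your proof is correct, and its core is the same as the paper's: both derive $\slope_y\delta_f=\delta_{y/x}-n_y+1$ by feeding the uniformizers $t_y,t_x$ into Theorem~\ref{compdeltalem}, identifying $r_{t_y}=|t_y|$ and $r_{t_x}=|t_x|$ near $y$, reading the slope of $|h|$ as $\nu_y(h)=\delta_{y/x}$ via Lemma~\ref{type1dif}, and both handle the wild case of (a) by the identical rescaling trick using divisibility of $|k^\times|$. Where you genuinely diverge is in pinning down the multiplicative constant in the tame case. The paper does this once, uniformly covering both the tame sub-case of (a) and all of (b): it takes the minimal polynomial $t_y^{n_y}+\sum_{i<n_y}a_it_y^i$ of $t_y$ over $\calO_{X,x}$, replaces $t_x$ by the constant term $a_0$, and differentiates the resulting identity to get $a=1$, $b=|n_y|$, hence $\delta_f=|n_y|$ near $y$. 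You instead split the argument: for (a)-tame you avoid computation altogether by observing that nearby points have multiplicity $n_y$ prime to $p=\cha(k)=\cha(\tilk)$, hence lie in the tame locus, so $\delta_f=1$ there by Lemma~\ref{wildlocus}(i) (the constancy of the multiplicity near a type 1 point is exactly what the proof of Lemma~\ref{multfuncor} provides); for (b) you factor $f^*t_x=u\,t_y^{n_y}$ with $u$ a unit and apply the Leibniz rule, letting $|u|$ cancel between $|h|$ and $r_{t_x}$ — the same differentiation idea as the paper's, but keeping the original parameter $t_x$ at the cost of tracking the unit. Both routes are sound: the paper's single computation is more uniform (it simultaneously recovers $|n_y|=1$ in the equicharacteristic tame case), while your version isolates the only genuinely quantitative situation ($\cha(k)=0$ with positive residue characteristic) and disposes of the equicharacteristic case by a soft, purely topological argument.
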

\begin{proof}
Choose parameters $t_y\in m_y\setminus m_y^2$ and $t_x\in m_x\setminus m_x^2$ and parameterize $I$ by $r(z)=|t_y|_z$. Then $\delta_{y/x}=\nu_y(h)$ for $h=\frac{dt_x}{dt_y}$, and for any point $z\in I$ close enough to $y$ we have that $|t_x|_z=ar(z)^{n_y}$ and $|h|_z=br(z)^{\delta_{y/x}}$ for some $a,b\in|k^\times|$. So, by Theorem~\ref{compdeltalem} we obtain that
\begin{equation}\label{deltaeq}
\delta_f=|ht_yt_x^{-1}|_z=a^{-1}br(z)^{\delta_{y/x}-n_y+1}.
\end{equation}
In particular, $\slope_y\delta_f=\delta_{y/x}-n_y+1$ and $R_y=2n_y-2-(-\slope_y\delta_f+n_y-1)=\delta_{y/x}$.

By the classical theory, $\delta_{y/x}-n_y+1\ge 0$ and the equality holds only in the tame case. So, $\delta_f$ vanishes at $y$ if and only if $y$ is a wild ramification point. In this case $\cha(k)>0$ and the order of zero is as asserted in (a). To complete the wild case it remains to get rid of the constant term, so we re-scale the radius function as $r'=(a^{-1}b)^{1/(\delta_{y/x}-n_y+1)}r$.

Assume now that $y$ is a tame ramification point, and so $n_y$ is invertible in $k$. Let $t_y^{n_y}+\sum_{i=0}^{n_y-1}a_it_y^i$ be the minimal polynomial of $t_y$ over $\calO_{X,x}$. Note that $\nu_x(a_i)\ge 1$ and $\nu_x(a_0)=1$, and so $a_0$ is a parameter at $x$ and we can replace $t_x$ with $a_0$. Then $t_x\in -t_y^{n_y}+t_y^{n_y+1}\calO_{Y,y}$ and hence $|t_x|=|t_y|^{n_y}$ on a small enough neighborhood of $y$ in $I$. In addition, $$n_yt_y^{n_y-1}dt_y+hdt_y+\sum_{i=1}^{n_y-1}ia_it_y^{i-1}dt_y=0$$ and all terms, except the first two, are of order at least $\nu_y(t_x)=n_y$ at $y$. It follows that $h\in -n_yt_y^{n_y-1}+t_y^{n_y}\calO_{Y,y}$ and hence $|h|=|n_yt_y^{n_y-1}|$ near $y$ on $I$. So, $a=1$ and $b=|n_y|$ in (\ref{deltaeq}) and we are done.
\end{proof}

\subsubsection{The log different function}\label{logdiffun}
Using Theorem~\ref{limth} we can extend $\delta_f$ to a piecewise monomial function $\delta_f^\rmlog\:Y_G\to[0,1]$ which has zeros at wild ramification points. We call the latter function the {\em log different function} because its zero at a type 1 point $y$ is of order $\delta_{y/x}-n_y+1$ rather than $\delta_{y/x}$.

\subsection{Aside on log differentials}
The reader might have noticed that log differentials showed up even before we introduced $\delta_f^\rmlog$. Indeed, we saw in Lemma~\ref{red2lem} that the reduction of $\Omega^\di_{X_G,C_c}$ is not $\Omega_{C_x/\tilk}$, as one might expect, but its huge twist $\Omega_{C_x/\tilk}(C_x)$, which is nothing else but the sheaf of log differentials of $C_x$. We conclude Section~\ref{difsec} with a brief explanation of the role of log differentials that was somewhat implicit throughout the section. This will not be used, so the uninterested reader can skip to Section~\ref{combsec}.

\subsubsection{Log different}
Given a finite separable extension of real-valued fields $L/K$, the log different $\delta^\rmlog_{L/K}$ is defined analogously to the usual different but using the module of logarithmic differentials $\Omega^\rmlog_{\Lcirc/\Kcirc}=\Omega_{(\Lcirc,\Lcirc\setminus\{0\})/(\Kcirc,\Kcirc\setminus\{0\})}$ instead of $\Omega_{\Lcirc/\Kcirc}$.

\subsubsection{Relation to the different}
For a real-valued field $K$ set $\lam_K=\sup_{\pi\in\Kcirccirc}|\pi|$. So, $\lam_K$ is the absolute value of a uniformizer $\pi_K$ if the valuation is discrete, and $\lam_K=1$ otherwise. Then $\delta^\rmlog_{L/K}=\delta_{L/K}\lam_L\lam_K^{-1}$ by \cite[Theorem~5.4.9(i)]{Temkintopforms}. Equivalently, $\delta^\rmlog_{L/K}=\delta_{L/K}$ if the valuation on $K$ is not discrete, and $\delta^\rmlog_{L/K}=\delta_{L/K}|\pi_L|^{1-e}$ if $K$ is discretely valued and $e=e_{L/K}$. In particular, this explains the formula for the additive log different we gave in \ref{logdiffun}.

\subsubsection{The log different function}
Since we work over $k$, which is not discretely valued, $\delta^\rmlog_{L/K}=\delta_{L/K}$ for any extension $L/K$ of analytic $k$-fields. In particular, the different function $\delta_f$ on $Y^\hyp$ can be also interpreted as the log different function $\delta^\rmlog_f$. We do feel the difference between the two notions when discrete valuations are used, and this happens at type 1 and type 5 points. In the first case, the continuation to type 1 points is related to the log different of the extension of their local rings, and in the second case, the slope of $\delta_f$ at a type 5 point is related to the log different of the corresponding discrete valuation on the reduction curve.

\begin{remark}
(i) The above discussion shows that it is much more natural to interpret $\delta_f$ as $\delta^\rmlog_f$. However, we preferred to work with the more classical object, the different, to avoid any use of log geometry.

(ii) Another indication of the relevance of log differentials is obtained when the ground field $k$ is discretely or trivially valued. In this case, the discrepancy between $\Omega_{\Lcirc/\Kcirc}$ and $\Omega^\rmlog_{\Lcirc/\Kcirc}$ is not negligible, and it turns out that this is $\Omega^\rmlog_{\Lcirc/\Kcirc}$ that induces the K\"ahler seminorm on $\Omega_{L/K}$ (see \cite[Theorem~5.1.8]{Temkintopforms}).

(iii) For analytic $k$-fields, the modules $\Omega_{\Lcirc/\Kcirc}$ and $\Omega^\rmlog_{\Lcirc/\Kcirc}$ are almost isomorphic, but $\Omega^\rmlog_{\Lcirc/\Kcirc}$ is still more convenient to work with. For example, if $L=\calH(y)$ for a type 3 point and $t$ is a tame monomial parameter then $\Omega^\rmlog_{\Lcirc/\kcirc}$ is a free module with basis $\frac{dt}{t}$, while $\Omega_{\Lcirc/\Kcirc}$ is isomorphic to $\Lcirccirc$.
\end{remark}

\section{Combinatorial Riemann-Hurwitz formula}\label{combsec}

\subsection{Genus graphs}\label{genusgraphsec}

\subsubsection{Combinatorial graphs}
Throughout section \ref{combsec}, a {\em graph} $\Gamma$ means a combinatorial graph $(V,E)$ that may contain loops (i.e. edges whose both endpoints coincide), where $V$ is the set of vertices and $E$ is the set of edges. We will only consider finite graphs. A morphism of graphs $\varphi\:\Gamma'\to\Gamma$ is a pair of maps $\varphi_E\:E'\to E$ and $\varphi_V\:V'\to V$ compatible with the incidence relation.

\subsubsection{Oriented edges and functions}
The set of oriented edges of a graph $\Gamma$ will be denoted by $E^{or}$. If $e$ is an oriented edge from $x$ to $y$, we will write $x\prec e$ and $e\prec y$ for short, and denote the opposite edge by $-e$. By an \emph{oriented function} on $\Gamma$ we mean a function $f\colon E^{or} \to \bfZ$ such that $f(-e) = -f(e)$ for any $e\in E^{or}$.

\subsubsection{Branches}
If $v\prec e$ then we say that $e$ is a {\em branch} at $v$ and the set of all branches at $v$ is denoted by $\Br(v)$. Any morphism of graphs $\varphi\:\Gamma\to\Gamma'$ induces the maps $\Br(v)\to\Br(\varphi(v))$ for $v\in V$.

\subsubsection{Proper morphisms}
An {\em edge-weighted morphism} or an {\em $n$-morphism} of graphs consists of a morphism $\varphi\:\Gamma\to\Gamma'$ and a {\em multiplicity function} $n\:E\to\bfZ_{>0}$. We also view $n$ as a multiplicity function on the set of branches of $\Gamma$. An $n$-morphism $(\varphi,n)$ has a locally constant multiplicity at a vertex $v\in V$ if for any choice of $e'\in\Br(f(v))$, the sum of the multiplicities $n_e$ with $e\in\Br(v)$ and $f(e)=e'$ is independent of $e'$. In such case, this sum is called the {\em multiplicity} of $\varphi$ at $v$ and denoted $n_v$.

We say that an $n$-morphism $(\varphi,n)$ of connected graphs is {\em proper} if it has a locally constant multiplicity at all vertices and, in addition, it has a constant global rank, i.e. for any $v'\in V'$, the number $\sum_{v\in\varphi^{-1}(v')} n_v$ does not depend on $v$. The latter number will be called the \emph{degree} of $\varphi$. We will not need this, but an $n$-morphism of non-connected graphs is proper if it restricts to proper morphisms on the connected components.

\subsubsection{Formal divisors}
A {\em divisor} on a graph $\Gamma$ is a formal sum $\sum_{v\in V} c_v v$, where $c_v\in \bfZ$. The \emph{degree} of a divisor $D = \sum_{v\in V} c_v v$ is $\deg D=\sum_{v\in V}c_v$. For a proper $n$-morphism of graphs $\varphi\colon \Gamma'\to \Gamma$ we define the {\em pullback} $\varphi^*D=\sum_{v'\in V} c_{\varphi(v')}n_{v'} v'$. Then $\deg(\varphi^* D) = \deg \varphi \deg D$ in the obvious way.

\subsubsection{Genus graphs}
A \emph{genus graph} is a finite connected graph $\Gamma$ together with a genus function $g\colon V\to\bfN$ that associates to any vertex $v$ its genus $g(v)$. We then define the genus of $\Gamma$ to be $g(\Gamma)=h^1(\Gamma)+\sum_{v\in V} g(v)$, where $h^1(\Gamma)=|E| - |V| + 1$ is the number of loops of $\Gamma$. The following example is our main motivation for introducing genus graphs.

\begin{rem}\label{genusrem}
(i) We have defined in \ref{typessec} a genus function on any nice compact curve $C$, hence any topological finite subgraph of a nice compact curve gives rise to a genus graph.

(ii) Assume that $Z$ is a connected nodal curve over $\tilk$. Then it is customary to consider the graph $\Gamma_Z$ whose vertices correspond to the irreducible components of $Z$ and whose edges correspond to the nodes. Assigning to a vertex the genus of the corresponding component, we obtain a genus graph. Any finite morphism of constant rank between connected nodal curves gives rise to a proper morphism of the corresponding graphs.

(iii) The examples of (i) and (ii) are related as follows. If $\gtX$ is a connected semistable $\kcirc$-curve with generic fiber $X$ and closed fiber $Z$ then the topological realization of $\Gamma_Z$ can be identified with a skeleton $\Gamma\subset X$ and $\Gamma_Z$ is the genus graph corresponding to $\Gamma$ via (i).
\end{rem}

\subsubsection{Canonical divisors}
Following \cite[Section~2]{ABBR}, we define the \emph{canonical divisor} on a genus graph $\Gamma$ as $K_\Gamma = \sum_{v\in V} (\val v + 2g(v) - 2) v$, where $\val v = |\Br(v)|$ is the {\em valency} of $v$. It is designed to mimic the usual canonical divisor. In particular, $\deg K_\Gamma = 2 g(\Gamma) - 2$ because $\sum_{v\in V}\val (v) = 2 |E|$.

\subsubsection{$\delta$-morphisms}
So far, our definitions were more or less analogous to those of \cite{ABBR}, though we use different terminology. Now, we are going to add a combinatorial datum corresponding to slopes of the different. It is not related to maps of nodal curves (unless an additional structure is specified), but, as we will later see, such a structure naturally arises on a simultaneous skeleton of a map between nice compact curves.

By a {\em $\delta$-morphism} between (genus) graphs we mean a triple $(\varphi,n,s\delta)$, where $(\varphi,n)\:\Gamma\to\Gamma'$ is a proper morphism of graphs and $s\delta$ is an oriented function on $\Gamma$. Intuitively, the latter can be thought off as the slope of the different along the edges, though the different function itself is not defined in this context. In particular, we will abuse notation by writing $s_e\delta$ instead of $s\delta(e)$.

\subsubsection{The ramification divisor}
The following definitions are analogous to those of Section \ref{Ry}. Assume that $(\varphi,n,s\delta)\colon \Gamma\to \Gamma'$ is a $\delta$-morphism between genus graphs. For any edge $e\in E$ $$S_{e} = -s_{e}\delta + n_{e} - 1$$ is called the \emph{differential slope index}. For any vertex $v\in V$ with $v'=\varphi(v)$ we set $$\chi(v)=2g(v)-2-n_v(2g(v')-2)$$ and define the {\em differential index} to be $$R_v=\chi(v)- \sum_{e\in \Br(v)} S_{e}.$$ The {\em ramification divisor} of the $\delta$-morphism is $R_\varphi = \sum_{v\in V} R_{v} v$.

\subsubsection{Riemann-Hurwitz for $\delta$-morphisms}
In the following result, $\Delta_\varphi = \sum_{v\in V} \Delta_{v} v$, where $\Delta_{v} = \sum_{e\in \Br(v)} -s_e\delta$.

\begin{theorem}\label{combRH}
Let $(\varphi,n,s\delta)\colon \Gamma \to \Gamma'$ be a $\delta$-morphism of genus graphs. Then,

(i) $K_\Gamma=\varphi^*(K_{\Gamma'}) + R_\varphi + \Delta_\varphi$,

(ii) $2 g(\Gamma) - 2 = \deg \varphi(2 g(\Gamma') - 2) + \sum_{v\in V}R_{v}$.
\end{theorem}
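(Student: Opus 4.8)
The plan is to derive (ii) from (i) by taking degrees, so the real work is the vertex-by-vertex verification of the divisor identity (i). I would compare the coefficients of both sides of (i) at a fixed vertex $v\in V$, writing $v'=\varphi(v)$. The coefficient of $v$ in $K_\Gamma$ is $\val(v)+2g(v)-2$, while its coefficient in $\varphi^*(K_{\Gamma'})$ is $n_v(\val(v')+2g(v')-2)$ by the definition of the pullback. Expanding $R_v=\chi(v)-\sum_{e\in\Br(v)}S_e$ with $\chi(v)=2g(v)-2-n_v(2g(v')-2)$ and $S_e=-s_e\delta+n_e-1$, and adding $\Delta_v=\sum_{e\in\Br(v)}(-s_e\delta)$, the terms $\sum_e s_e\delta$ produced by the $S_e$ cancel against $\Delta_v$, leaving
$$R_v+\Delta_v=2g(v)-2-n_v(2g(v')-2)-\sum_{e\in\Br(v)}n_e+\val(v).$$
Substituting this into the right-hand side, the two copies of $n_v(2g(v')-2)$ cancel, and the identity at $v$ collapses to
$$\val(v)+2g(v)-2=n_v\val(v')+2g(v)-2-\sum_{e\in\Br(v)}n_e+\val(v),$$
i.e. to the single equation $\sum_{e\in\Br(v)}n_e=n_v\val(v')$.

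The key step is to recognize that this last equation is exactly the local content of properness. Since $(\varphi,n)$ has locally constant multiplicity at $v$, for each $e'\in\Br(v')$ one has $n_v=\sum_{e\in\Br(v),\,\varphi(e)=e'}n_e$; summing over the $\val(v')$ branches $e'$ at $v'$, and using that every branch at $v$ maps to some branch at $v'$ (because $\varphi$ respects incidence), gives $n_v\val(v')=\sum_{e\in\Br(v)}n_e$. This establishes (i).

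For (ii) I would simply apply $\deg$ to the identity of (i). By design $\deg K_\Gamma=2g(\Gamma)-2$, and $\deg\varphi^*(K_{\Gamma'})=\deg\varphi\cdot\deg K_{\Gamma'}=\deg\varphi\,(2g(\Gamma')-2)$ by the pullback degree formula, while $\deg R_\varphi=\sum_{v\in V}R_v$. It remains to check $\deg\Delta_\varphi=0$, and this is where the oriented-function hypothesis on $s\delta$ enters: $\deg\Delta_\varphi=-\sum_{v}\sum_{e\in\Br(v)}s_e\delta$ is a sum of $s_e\delta$ over all oriented edges, and pairing each oriented edge $e$ with its opposite $-e$ the contributions cancel since $s_{-e}\delta=-s_e\delta$. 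Hence $\deg\Delta_\varphi=0$ and (ii) follows.

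I do not expect a genuine obstacle here, as the whole argument is bookkeeping with the definitions; the essential input is simply the properness identity $\sum_{e\in\Br(v)}n_e=n_v\val(v')$. The only points requiring care are matching the orientation conventions in the pullback and in $\Delta_\varphi$, and ensuring that loops are handled correctly in the cancellation $\sum_{e}s_e\delta=0$ (a loop at $v$ contributes both of its orientations to $\Br(v)$, and these still cancel).
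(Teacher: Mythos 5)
Your proposal is correct and follows essentially the same route as the paper: a coefficient-by-coefficient verification of the divisor identity (i), in which the $s_e\delta$ terms from $R_v$ cancel against $\Delta_v$ and the residue is the properness identity $\sum_{e\in\Br(v)}n_e=n_v\val(v')$, followed by taking degrees and using $\deg\Delta_\varphi=0$ for (ii). If anything, you are slightly more explicit than the paper in isolating the properness identity as the key input and in noting that loops cause no trouble in the cancellation.
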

\begin{proof}
The proof of (i) reduces to comparing the coefficients of $v\in V$ on the both sides of the equality. Set $u=f(v)$ and let $l_v$ and $r_v$ be the coefficients of $v$ on the left side and on the right side, respectively. Then
$$r_v=n_v(\val(u)+2g(u)-2)+R_v+\Delta_v=$$
$$n_v(\val(u)+2g(u)-2)+2g(v) - 2 - n_v(2g(u) -2)-\sum_{e\in \Br(v)}(s_e\delta+S_e)=$$
$$2g(v)-2+n_v\val(u)-\sum_{e\in \Br(v)}(n_e-1)=$$
$$2g(v)-2+\val(v)+n_v\val(u)-\sum_{e\in \Br(v)}n_e=2g(v)-2+\val(v)=l_v.$$

Note that $\deg \Delta_\varphi = 0$ because $s\delta$ is an oriented function. So, (ii) is obtained from (i) by comparing the degrees.
\end{proof}

\begin{rem}
The above Riemann-Hurwitz formula is essentially the formula \cite[2.14.2]{ABBR}. Although our ramification divisor is defined differently, so that $s\delta$ is taken into account, this difference is cancelled out in the Riemann-Hurwitz formula because $\deg\Delta_\varphi=0$.
\end{rem}

\subsubsection{Balanced vertices}
Given a $\delta$-morphism $\varphi$ as above, we say that a vertex $v\in V$ is {\em balanced} if $R_v=0$. Only non-balanced vertices contribute to the Riemann-Hurwitz formula. In our applications, the only non-balanced vertices will come from the ramification points and from the boundary components (i.e. non-proper $\tilk$-curves).

\subsection{Stability}

\subsubsection{Contractions of genus graphs}
By a {\em contraction} of a genus graph $\Gamma$ we mean an operation of one of the following two types:

(1) If $v$ is a leaf of genus 0 and $e$ is the edge incident to $v$ then one can remove $v$ and $e$ from $\Gamma$.

(2) If $v$ is a vertex of genus 0 and valence 2 and $v$ is not the only vertex of $\Gamma$, then one can remove $v$ and replace the two edges with endpoint $v$ by a single edge.

\begin{remark}
(i) Combinatorial contractions correspond to blowing down unstable rational components in the closed fiber of a semistable $\kcirc$-curve. Equivalently, such a contraction corresponds to an operation of decreasing a (non-minimal) skeleton of a nice compact curve.

(ii) Any contraction preserves both the topological type of $\Gamma$ and the set of positive genus vertices; in particular, it preserves the genus of $\Gamma$.
\end{remark}

\subsubsection{Stable genus graph}
A genus graph $\Gamma$ is called {\em stable} if it does not admit contractions.

\begin{remark}
It is a simple classical fact that if $g(\Gamma)>1$ then the stable graph $\Gamma'$ obtained from $\Gamma$ by a series of contractions is essentially unique (e.g., it has the same set of vertices $V'\subseteq V$).
\end{remark}

\subsubsection{Contractions of $\delta$-morphisms}
Assume that $\varphi\:\Gamma\to\Gamma'$ is a $\delta$-morphism of genus graphs. By a {\em contraction} of $\varphi$ we mean an operation of one of the following two types:

(1) Assume that $v'$ is a leaf of genus zero with edge $e'$, such that any $v\in\varphi^{-1}(v')$ is a leaf satisfying $g(v)=R_v=0$. Then one can remove $v'$ and $e'$ from $\Gamma'$, and $\varphi^{-1}(v')$ and $\varphi^{-1}(e')$ from $\Gamma$.

(2) Assume that $v'$ is a vertex of genus 0 and valence 2, such that $v'$ is not the only vertex of $\Gamma'$ and any vertex $v\in\varphi^{-1}(v')$ satisfies $g(v)=R_v=0$ and $\val(v)=2$. Then we can remove $v'$ from $\Gamma'$ replacing its two edges with a single edge, and do the same operation with all vertices of $\varphi^{-1}(v')$.

\begin{remark}
Contractions preserve the topological types and the positive genus sets of both $\Gamma$ and $\Gamma'$. In addition, they preserve all unbalanced vertices of $\Gamma$, hence the Riemann-Hurwitz formulas for $\varphi$ and its contraction are essentially the same.
\end{remark}

\subsubsection{Stable $\delta$-morphisms}
Similarly to the absolute case, we say that a $\delta$-morphism is {\em stable} if it cannot be contracted.

\subsection{Classification of stable $\delta$-morphisms of degree 2 and genus $1\mapsto 0$}\label{classsec}
We will later describe analytic morphisms $E\to\bfP^1_k$ of degree two with $E$ an elliptic curve. In this section, we study the combinatorial part of the problem.

\subsubsection{Tame and wild vertices}
Assume that $\varphi\:\Gamma\to\Gamma'$ is a $\delta$-morphism. We say that a vertex $v\in V$ is {\em tame} if $s_e\delta=0$ for any edge $e\in\Br(v)$. Any other vertex is called {\em wild}.

\subsubsection{Special $\delta$-morphisms}\label{specdelta}
A $\delta$-morphism $\varphi\:\Gamma\to\Gamma'$ will be called {\em special} if the following conditions are satisfied:
\begin{itemize}
\item[(1)] $\varphi$ is stable, $\deg(\varphi)=2$, $g(\Gamma)=1$ and $g(\Gamma')=0$.

\item[(2)] If $R_v\neq 0$ for a vertex $v$ of $\Gamma$ then $v$ is a leaf, $g(v)=0$, $R_v>0$ and $n_v=2$.

\item[(3)] One of the following three possibilities holds:

\begin{itemize}
\item[(T)] Tame case: all vertices are tame (i.e. $s\delta$ vanishes identically).

\item[(M)] Mixed case: any vertex with $R_v\neq 0$ is tame, but there also exist wild vertices.

\item[(W)] Wild case: any vertex with $R_v\neq 0$ is wild.
\end{itemize}

\item[(4)] If an edge $e$ of $\Gamma$ splits (i.e. $n_e=1$) then $s_e\delta=0$.

\item[(5)] If $s_e\delta\neq 0$ for an edge of $\Gamma$ then $s_e\delta$ is odd.
\end{itemize}
A special $\delta$-morphism models the minimal skeleton of a morphism of proper curves; in particular, there are no boundary points, and this explains why $R_v\ge 0$ in (2). The meaning of condition (5) is explained by Remark~\ref{restrictrem}. The trichotomy of (3) corresponds to the trichotomy of the characteristics of $k$ and $\tilk$. For the ramification points, its meaning is clear: they are wild when $\cha(k)=2$ and tame otherwise. Also, it is clear that everything is tame when $\cha(\tilk)\neq 2$.

In fact, in our case we will see that all vertices are wild in the wild case and all vertices with $R_v=0$ are wild in the mixed case, but this is an artefact of a relatively small classification that we are going to establish; in particular, this does not generalize to larger genera. Our goal in Section~\ref{classsec} is to classify all special $\delta$-morphisms.

\subsubsection{Ramification points}
Fix a special $\delta$-morphism $\varphi\:\Gamma\to\Gamma'$. By a {\em ramification point} we mean any vertex $v\in\Gamma$ with $R_v\neq 0$. Since $v$ is a leaf, there is a single oriented edge $e$ starting at $v$ and we set $S_v=S_e$ for simplicity.

The set of all ramification points will be denoted $\Ram(\varphi)$. Since $n_v=2$ and $g(v)=0$ for $v\in\Ram(\varphi)$, we have that $S_v=1-s_e\delta$ and $R_v=2-S_v=1+s_e\delta$.

\begin{lemma}\label{ramlem}
If $\varphi$ is a special $\delta$-morphism then one of the following possibilities holds:

(i) There is one ramification point $v$ and $R_v=4$.

(ii) There are two ramification points having $R_v=2$.

(iii) There are four ramification points having $R_v=1$.

Cases (i) and (ii) happen in the wild case, and case (iii) occurs in the mixed and tame cases.
\end{lemma}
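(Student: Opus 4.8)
The plan is to reduce everything to the single global constraint $\sum_v R_v=4$ coming from the combinatorial Riemann-Hurwitz formula, and then to read off the admissible distributions of $R_v$ among the ramification points using the arithmetic restrictions built into the definition of a special $\delta$-morphism in \ref{specdelta}.

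First I would apply Theorem~\ref{combRH}(ii) to $\varphi$. Since $g(\Gamma)=1$ and $g(\Gamma')=0$ with $\deg\varphi=2$, the left-hand side is $2g(\Gamma)-2=0$ and the pullback term is $\deg\varphi\,(2g(\Gamma')-2)=2(-2)=-4$, so the formula collapses to $\sum_{v\in V}R_v=4$. By condition~(2), every vertex with $R_v\neq 0$ is a ramification point, so this sum in fact runs precisely over $\Ram(\varphi)$.

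Next I would analyze a single ramification point $v$. As recorded just before the statement, $n_v=2$, $g(v)=0$, and $R_v=1+s_e\delta$ for the unique oriented edge $e$ starting at $v$. The hypothesis $R_v>0$ forces $s_e\delta\ge 0$ (an integer $>-1$). If $v$ is tame then $s_e\delta=0$ and $R_v=1$; if $v$ is wild then $s_e\delta\neq 0$, and condition~(5) makes it odd, so $s_e\delta\ge 1$ is odd and $R_v=1+s_e\delta$ is even and $\ge 2$.

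Finally, condition~(3) dictates that all ramification points are simultaneously tame (cases (T) and (M)) or simultaneously wild (case (W)). In the tame/mixed situation each $R_v=1$, so $\sum R_v=4$ yields exactly four ramification points, giving (iii). In the wild situation each $R_v$ is even and $\ge 2$; the only ways to write $4$ as a sum of such parts are $4$ and $2+2$, producing one ramification point with $R_v=4$ (case (i)) or two with $R_v=2$ (case (ii)), while three or more wild points are impossible since they would force $\sum R_v\ge 6$. The assignment of (i),(ii) to the wild case and (iii) to the mixed and tame cases then follows at once from condition~(3). There is no genuine obstacle here; the only point demanding care is to combine $R_v>0$ with the parity constraint~(5) so as to conclude that wild differential indices are even and at least $2$, which is exactly what separates cases (i),(ii) from case (iii).
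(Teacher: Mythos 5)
Your proof is correct and takes essentially the same route as the paper: Theorem~\ref{combRH}(ii) gives $\sum_v R_v=4$, and the case analysis then follows from $R_v=1+s_e\delta$ together with conditions (2), (3) and (5) of \ref{specdelta}. The paper compresses this second step into ``this makes the claim obvious''; your write-up simply supplies the parity and positivity details explicitly.
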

\begin{proof}
By Theorem~\ref{combRH}, the sum of all $R_v$'s equals to $2g(\Gamma)-2-2(2g(\Gamma')-2)=4$. This makes the claim obvious.
\end{proof}

\subsubsection{Root subtrees}
By a {\em root subtree} of $\Gamma$ we mean a subtree $T\subseteq\Gamma$ with a special leaf $r$ such that: (a) $r$ is not a ramification point, (b) if $v\neq r$ is in $T$ then $g(v)=0$ and all edges of $v$ are in $T$. Note that by saying that $r$ is a leaf we assume that its valence is 1, and so $T\neq r$. We call $r$ the {\em root} of $T$ while ``leaves" will refer to other leaves only. If $e$ is the oriented edge of $T$ starting at $r$ then $S_e$ is called the {\em slope index} of $T$. We say that an oriented edge $e$ of $T$ is {\em upward} if it goes towards the leaves.

\begin{lemma}\label{subtreelem}
If $T\subseteq\Gamma$ is a root subtree of slope index $s$, then

(i) $n_e=2$ for any edge $e$ in $T$,

(ii) if $e$ is an upward edge then $s_e\delta\le 0$,

(iii) $\sum_{v\in\Ram(f)\cap T}R_v=s$.
\end{lemma}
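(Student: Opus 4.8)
The plan is to prove the three assertions in the order (i), (ii), (iii), with (iii) reduced to a short telescoping computation once (i) is in hand. Throughout I orient the edges of $T$ \emph{upward}, i.e.\ away from the root $r$, and I use freely that every vertex $v\neq r$ of $T$ has $g(v)=0$ and that, by condition (2) in the definition of a special $\delta$-morphism, $R_v=0$ unless $v$ is a ramification leaf. I also use repeatedly that $\deg\varphi=2$ forces $\sum_{\varphi(w)=v'}n_w=2$, so each $n_e,n_v\in\{1,2\}$, and that a split edge ($n_e=1$) has $s_e\delta=0$ and hence $S_e=0$ by condition (4).

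For (i), I would induct from the leaves of $T$ downward. The base case is to rule out a split leaf $v\neq r$. If $n_v=1$, then over $v'=\varphi(v)$ there is a second preimage $\bar v$ with $n_{\bar v}=1$; since $v$ is a leaf, its single edge covers only one edge $e'$ of $v'$. If $v'$ has a further edge, its whole preimage must sit over $\bar v$, forcing $n_{\bar v}=2$, a contradiction; and if $v'$ is a leaf then $v$ and $\bar v$ are both genus-$0$ leaves with $R=0$, so $v'$ could be contracted, contradicting stability (condition (1)). Hence every leaf edge has $n_e=2$. At a non-leaf vertex $v\neq r$, all upward edges already have multiplicity $2$ by the inductive hypothesis; since each contributes $2$ over its image and $n_v\le 2$, this forces $n_v=2$, and then the downward edge of $v$ must also have multiplicity $2$ (a split downward edge would require a second edge of multiplicity $1$ over the same image, which would have to be upward). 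Running the induction down to $r$ gives $n_e=2$ for every edge of $T$, in particular $n_{e_0}=2$ for the root edge.

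For (ii), I would again induct from the top, now using $R_v=0$ together with (i). At a leaf $v\neq r$ the single upward edge $e_v$ satisfies $R_v=1-s_{e_v}\delta\ge 0$, so $s_{e_v}\delta\le 1$; since a nonzero slope is odd by condition (5), the only dangerous value is $s_{e_v}\delta=1$, which would make $v$ a genus-$0$, $R=0$ leaf with $v'$ a leaf of $\Gamma'$ (as in (i)), again contradicting stability. Hence $s_{e_v}\delta\le 0$. For a non-leaf vertex $v$ with downward edge $d$ and upward edges $u_1,\dots,u_k$ ($k\ge 1$), the balancing relation $R_v=0$ reads $S_{-d}+\sum_i S_{u_i}=2(n_v-1)=2$; substituting $S_{u_i}=1-s_{u_i}\delta\ge 1$ (inductive hypothesis) and $S_{-d}=1+s_d\delta$ gives $s_d\delta=1-k+\sum_i s_{u_i}\delta\le 0$. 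Thus every upward edge of $T$ has non-positive slope.

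Finally, (iii) is a telescoping computation. Using $n_e=2$ throughout, each non-root vertex has $R_v=2-\val(v)+\sum_{e\in\Br(v)}s_e\delta$. Summing over all $v\neq r$ in $T$, the valency term gives $\sum_{v\neq r}(2-\val(v))=2(|V(T)|-|E(T)|)-1=1$ since $T$ is a tree with $\val_T(r)=1$, while the slope term collapses because $s\delta$ is an oriented function: every internal edge cancels and only the root edge survives, contributing $-s_{e_0}\delta$. Hence $\sum_{v\neq r}R_v=1-s_{e_0}\delta=S_{e_0}=s$, and since $R_v=0$ for every non-root vertex that is not a ramification point, the left-hand side equals $\sum_{v\in\Ram(f)\cap T}R_v$. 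I expect the main obstacle to be the base-case analysis in (i) and (ii): pinning down the multiplicity-$2$ and sign constraints at the leaves genuinely requires combining the degree-$2$ covering combinatorics with the stability hypothesis (and condition (5) for the sign), whereas the inductive steps and the concluding telescoping are then routine.
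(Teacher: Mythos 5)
Your proof is correct, and its core ingredients coincide with the paper's: stability (contraction of type (1)) is what forces the leaves of $T$ to behave, split branches at a vertex come in pairs because multiplicities over any branch of $\Gamma'$ sum to $\deg\varphi=2$, and the balancing relation $R_v=0$ at non-ramification vertices does everything else. The differences are organizational. The paper runs a single induction on the depth of $T$ measured from the root, decomposing $T$ at the first branching vertex into smaller root subtrees $T_i$ and proving (i)--(iii) simultaneously; in particular (iii) is obtained recursively from the balancing identity $\sum_i S_{e_i}=2-S_{-e}=S_e$. You instead induct from the leaves downward for (i) and (ii), and then obtain (iii) with no induction at all: summing $R_v=2-\val(v)+\sum_{e\in\Br(v)}s_e\delta$ over the non-root vertices, the tree identity gives $1$ from the valency terms, and the oriented function $s\delta$ telescopes down to $-s_{e_0}\delta$, so the sum is $S_{e_0}=s$. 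This closed-form derivation of (iii) is a genuine (and arguably cleaner) alternative to the paper's recursion, though both are unwindings of the same balancing conditions.

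One small gap to patch: in the base case of (i), when $n_v=1$ you assert that the second preimage $\bar v$ of $v'$ is a genus-$0$ leaf with $R_{\bar v}=0$. That $\bar v$ is a leaf and that $R_{\bar v}=0$ are immediate (the latter from condition (2) of \ref{specdelta}, since $n_{\bar v}=1\neq 2$), but $g(\bar v)=0$ is not automatic: $\bar v$ need not lie in $T$, so it could a priori be the genus-one vertex of $\Gamma$. It is ruled out by one more line: the unique branch $f$ at $\bar v$ has $n_f=1$, hence $s_f\delta=0$ by condition (4) and $S_f=0$, so $0=R_{\bar v}=\chi(\bar v)-S_f=2g(\bar v)$, forcing $g(\bar v)=0$. (The paper's own proof is equally terse at exactly this point, so this is a matter of completeness rather than a flaw in your approach.)
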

\begin{proof}
The proof runs by induction on the depth of $T$, i.e. the maximal length of a chain from $r$ to a leaf. Any leaf $v\in T$ is also a leaf of $\Gamma$. Since $\varphi$ cannot be contracted by removing $v$ and its image in $\Gamma'$, we necessarily have that $n_v=2$ and $R_v>0$. If $T$ is of depth 1 then it has a single edge $e$ connecting $r$ with a leaf $v$. Orienting $e$ upward we obtain that $s=1-s_e\delta=1+s_{-e}\delta=R_v$. In particular, $s_e\delta=1-R_v\le 0$.

Assume that the depth is larger than one. Let $e$ be the upward edge starting at $r$, let $x$ be the other end of $e$, let $e_1,\dots,e_m$ be all upward edges starting at $x$, and let $T_i$ be the rational tree growing from $x$ in the direction of $e_i$. Since $e$ is the only edge not contained in any $T_i$, we obtain by the induction assumption that claims (i) and (ii) hold for all edges different from $e$. In particular, it remains to check all claims for $e$.

Since $n_{e_i}=2$ for any $1\le i\le m$ and the edges $f\in\Br(x)$ with $n_f=1$ come in pairs, we obtain that $n_e=2$. In addition, $S_{e_i}=\sum_{v\in\Ram(f)\cap T_i}R_v$ by the induction, and so $\sum_{v\in\Ram(f)\cap T}R_v=\sum_{i=1}^mS_{e_i}$. Since $x$ is balanced we have that $S_{-e}+\sum_{i=1}^mS_{e_i}=\chi(x)=2$, and so $\sum_{i=1}^mS_{e_i}=2-S_{-e}=S_e=s$. Finally, $s_e\delta=1-s\le 0$ because $s=\sum_{v\in\Ram(f)\cap T}R_v$ and, as we mentioned above, $R_v>0$ for any leaf $v$.
\end{proof}

Since the set of leaves of $T$ is a subset of $\Ram(f)$ and the latter was described in Lemma~\ref{ramlem}, the same inductive argument as in the lemma produces a complete list of root subtrees that may occur in $\Gamma$. So, we skip the justification and just describe the eight trees.  The first three have a single edge connecting the root with the leaf and the slope can be 0, 1 or 3. The remaining five are as follows, with the arrow always indicating the direction with negative $s\delta$:

\begin{center}
\scalebox{0.8}{ \setlength{\tabcolsep}{8pt}
\begin{tabular}{ccccc}
{ $\xygraph{
    !{<0cm,0cm>;<1cm,0cm>:<0cm,-.7cm>::}
    !{(0,4)}*+{\bullet}="root"
    !{(0,2)}*+{\circ}="s"
    !{(-0.8,0)}*+{\circ}="l"
    !{(0.8,0)}*+{\circ}="r"
    "root" :@{->}^(.4){1} "s"
    "s" -@`{(-0.9,1.8)}_(.6){0} "l"
    "s" -@`{(0.9,1.8)}^(.6){0} "r"
} $ }&{ $\xygraph{
    !{<0cm,0cm>;<1cm,0cm>:<0cm,-.7cm>::}
    !{(0,4)}*+{\bullet}="root"
    !{(0,2)}*+{\circ}="s"
    !{(-0.8,0)}*+{\circ}="l"
    !{(0.8,0)}*+{\circ}="r"
    "root" :@{->}^(.4){3} "s"
    "s" :@`{(-0.9,1.8)}@{->}_(.6){1} "l"
    "s" :@`{(0.9,1.8)} @{->}^(.6){1} "r"
} $ }&{ $\xygraph{
    !{<0cm,0cm>;<1cm,0cm>:<0cm,-.7cm>::}
    !{(0,4)}*+{\bullet}="root"
    !{(0,2)}*+{\circ}="s"
    !{(-1.2,0)}*+{\circ}="l1"
    !{(-0.4,0)}*+{\circ}="l2"
    !{(0.4,0)}*+{\circ}="r1"
    !{(1.2,0)}*+{\circ}="r2"
    "s" -@`{(-1.3,1.8)}_(.6){0} "l1"
    "s" -@`{(-0.5,1.8)}_(.6){0} "l2"
    "s" -@`{(0.5,1.8)}^(.6){0} "r1"
    "s" -@`{(1.3,1.8)}^(.6){0} "r2"
    "root" :@{->}^(.4){3} "s"
} $ }&{ $\xygraph{
    !{<0cm,0cm>;<1cm,0cm>:<0cm,-.7cm>::}
    !{(0,4)}*+{\bullet}="root"
    !{(0,2)}*+{\circ}="s"
    !{(-0.8,0)}*+{\circ}="l"
    !{(-1.3,-1.8)}*+{\circ}="ll"
    !{(-0.3,-1.8)}*+{\circ}="lr"
    !{(0.8,0)}*+{\circ}="r"
    !{(1.3,-1.8)}*+{\circ}="rr"
    !{(0.3,-1.8)}*+{\circ}="rl"
    "root" :@{->}^(.4){3} "s"
    "s" :@`{(-0.9,1.8)}@{->}_(.6){1} "l"
    "s" :@`{(0.9,1.8)} @{->}^(.6){1} "r"
    "l" -@`{(-1.4,-0.2)}_(.6){0} "ll"
    "l" -@`{(-0.2,-0.2)}^(.6){0} "lr"
    "r" -@`{(1.4,-0.2)}^(.6){0} "rr"
    "r" -@`{(0.2,-0.2)}_(.6){0} "rl"
} $}
&
$ \xygraph{
    !{<0cm,0cm> ;<1cm,0cm>:<0cm,-.7cm>::}
	!{(0,4)}*+{\bullet}="root"
    !{(0,2)}*+{\circ}="ro"
    !{(0,0)}*+{\circ}="s"
    !{(-0.4,-1.8)}*+{\circ}="l"
    !{(0.4,-1.8)}*+{\circ}="r"
    !{(-1,-1.8)}*+{\circ}="l1"
    !{(1,-1.8)}*+{\circ}="r1"
	"root" :@{->}^(.4){3} "ro"
    "ro" :@{->}^(.5){1} "s"
    "s" -@`{(-0.4,-0.2)}_(.6){0} "l"
    "s" -@`{(0.4,-0.2)}^(.6){0} "r"
    "ro" -@`{(-1,1)}_(.6){0} "l1"
    "ro" -@`{(1,1)}^(.6){0} "r1"
} $

\\
\end{tabular}
 }
\end{center}

\subsubsection{A classification: the terminology}
We will classify special $\delta$-morphisms by the characteristic type: tame, mixed or wild, and by the structure of $\Gamma$. Since $g(\Gamma)=1$, one of the following possibilities holds: (B) $\Gamma$ contains a loop (the bad reduction case), (G) $\Gamma$ contains a vertex $r$ of genus $1$ (the good reduction case). As we will see, in the mixed and wild cases, case (G) splits to the two cases: (O) $r$ is of valency 2 (ordinary reduction), (S) $r$ is of valency 1 (supersingular reduction), and in the mixed case there is also a possibility (ME) that $r$ is of valency 3. Also, it will be convenient to split the supersingular case to (S) and (SS). The latter can be thought of as ``strongly supersingular". In \S\ref{doubelcover} below, mixed and wild cases correspond to the case when $p=2$, and then case (SS) corresponds to the case $|j|\le|256|$ while supersingular reduction is obtained already when $|j|<1$.

\subsubsection{Good reduction}\label{goodsec}
Assume that $\Gamma$ contains a vertex $r$ of genus 1. Note that $\Gamma$ is a union of root subtrees $T_i$ with vertex $r$, and let $s_i$ be the slope index of $T_i$. Then the balancing condition at $r$ reads as $\sum_i s_i=\chi(r)=4$. Combining this with the list of root trees we obtain the following list of seven possibilities for $\Gamma$ with symmetric leaves:

\begin{center}
\scalebox{0.7}{
\begin{tabular}{ccccccc}
WSS& WO& TG& MO& WS& MSS&MS
\\
$ \xygraph{
    !{<0cm,0cm>;<1cm,0cm>:<0cm,-.7cm>::}
    !{(0,4)}*+{\bullet}="root"
    !{(0,2)}*+{\circ}="s"
    "root" :@{->}^(.4){3} "s"
}$ &

$ \xygraph{
    !{<0cm,0cm>;<1cm,0cm>:<0cm,-.7cm>::}
    !{(0,4)}*+{\bullet}="s"
    !{(-0.8,2)}*+{\circ}="l"
    !{(0.8,2)}*+{\circ}="r"
    "s" :@`{(-0.9,3.8)}@{->}_(.6){1} "l"
    "s" :@`{(0.9,3.8)} @{->}^(.6){1} "r"
} $

& $ \xygraph{
    !{<0cm,0cm>;<1cm,0cm>:<0cm,-.7cm>::}
    !{(0,4)}*+{\bullet}="s"
    !{(-1.2,2)}*+{\circ}="l1"
    !{(-0.4,2)}*+{\circ}="l2"
    !{(0.4,2)}*+{\circ}="r1"
    !{(1.2,2)}*+{\circ}="r2"
    "s" -@`{(-1.3,3.8)}_(.6){0} "l1"
    "s" -@`{(-0.5,3.8)}_(.6){0} "l2"
    "s" -@`{(0.5,3.8)}^(.6){0} "r1"
    "s" -@`{(1.3,3.8)}^(.6){0} "r2"
} $

& $ \xygraph{
    !{<0cm,0cm>;<1cm,0cm>:<0cm,-.7cm>::}
    !{(0,4)}*+{\bullet}="s"
    !{(-0.8,2)}*+{\circ}="l"
    !{(-1.3,0.2)}*+{\circ}="ll"
    !{(-0.3,0.2)}*+{\circ}="lr"
    !{(0.8,2)}*+{\circ}="r"
    !{(1.3,0.2)}*+{\circ}="rr"
    !{(0.3,0.2)}*+{\circ}="rl"
    "s" :@`{(-0.9,3.8)}@{->}_(.6){1} "l"
    "s" :@`{(0.9,3.8)} @{->}^(.6){1} "r"
    "l" -@`{(-1.4,1.8)}_(.6){0} "ll"
    "l" -@`{(-0.2,1.8)}^(.6){0} "lr"
    "r" -@`{(1.4,1.8)}^(.6){0} "rr"
    "r" -@`{(0.2,1.8)}_(.6){0} "rl"
} $

& $ \xygraph{
    !{<0cm,0cm>;<1cm,0cm>:<0cm,-.7cm>::}
    !{(0,4)}*+{\bullet}="root"
    !{(0,2)}*+{\circ}="s"
    !{(-0.8,0)}*+{\circ}="l"
    !{(0.8,0)}*+{\circ}="r"
    "root" :@{->}^(.4){3} "s"
    "s" :@`{(-0.9,1.8)}@{->}_(.6){1} "l"
    "s" :@`{(0.9,1.8)} @{->}^(.6){1} "r"
} $

& $ \xygraph{
    !{<0cm,0cm>;<1cm,0cm>:<0cm,-.7cm>::}
    !{(0,4)}*+{\bullet}="root"
    !{(0,2)}*+{\circ}="s"
    !{(-1.2,0)}*+{\circ}="l1"
    !{(-0.4,0)}*+{\circ}="l2"
    !{(0.4,0)}*+{\circ}="r1"
    !{(1.2,0)}*+{\circ}="r2"
    "s" -@`{(-1.3,1.8)}_(.6){0} "l1"
    "s" -@`{(-0.5,1.8)}_(.6){0} "l2"
    "s" -@`{(0.5,1.8)}^(.6){0} "r1"
    "s" -@`{(1.3,1.8)}^(.6){0} "r2"
    "root" :@{->}^(.4){3} "s"
} $

& $ \xygraph{
    !{<0cm,0cm>;<1cm,0cm>:<0cm,-.7cm>::}
    !{(0,4)}*+{\bullet}="root"
    !{(0,2)}*+{\circ}="s"
    !{(-0.8,0)}*+{\circ}="l"
    !{(-1.3,-1.8)}*+{\circ}="ll"
    !{(-0.3,-1.8)}*+{\circ}="lr"
    !{(0.8,0)}*+{\circ}="r"
    !{(1.3,-1.8)}*+{\circ}="rr"
    !{(0.3,-1.8)}*+{\circ}="rl"
    "root" :@{->}^(.4){3} "s"
    "s" :@`{(-0.9,1.8)}@{->}_(.6){1} "l"
    "s" :@`{(0.9,1.8)} @{->}^(.6){1} "r"
    "l" -@`{(-1.4,-0.2)}_(.6){0} "ll"
    "l" -@`{(-0.2,-0.2)}^(.6){0} "lr"
    "r" -@`{(1.4,-0.2)}^(.6){0} "rr"
    "r" -@`{(0.2,-0.2)}_(.6){0} "rl"
} $

\end{tabular}
}
\end{center}
and two exceptional configurations
\begin{center}
\scalebox{0.8}{ \setlength{\tabcolsep}{8pt}
\begin{tabular}{cc}
MES& ME
\\
$ \xygraph{
    !{<0cm,0cm>;<0cm,1cm>:<-0.7cm,0cm>::}
	!{(0,4)}*+{\bullet}="root"
    !{(0,2)}*+{\circ}="ro"
    !{(0,0)}*+{\circ}="s"
    !{(-0.3,-1.8)}*+{\circ}="l"
    !{(0.3,-1.8)}*+{\circ}="r"
    !{(-0.9,-1.8)}*+{\circ}="l1"
    !{(0.9,-1.8)}*+{\circ}="r1"
	"root" :@{->}^(.4){3} "ro"
    "ro" :@{->}^(.5){1} "s"
    "s" -@`{(-0.4,-0.2)}_(.6){0} "l"
    "s" -@`{(0.4,-0.2)}^(.6){0} "r"
    "ro" -@`{(-1,1)}_{0} "l1"
    "ro" -@`{(1,1)}^{0} "r1"
} $
&
$ \xygraph{
    !{<0cm,0cm>;<0cm,1cm>:<-0.7cm,0cm>::}
    !{(0,2)}*+{\bullet}="ro"
    !{(0,0)}*+{\circ}="s"
    !{(-0.3,-1.8)}*+{\circ}="l"
    !{(0.3,-1.8)}*+{\circ}="r"
    !{(-0.9,-1.8)}*+{\circ}="l1"
    !{(0.9,-1.8)}*+{\circ}="r1"
    "ro" :@{->}^(.5){1} "s"
    "s" -@`{(-0.4,-0.2)}_(.6){0} "l"
    "s" -@`{(0.4,-0.2)}^(.6){0} "r"
    "ro" -@`{(-1,1)}_{0} "l1"
    "ro" -@`{(1,1)}^{0} "r1"
} $
\end{tabular}
 }
\end{center}

\subsubsection{Bad reduction}\label{badsec}
Now, assume that $\Gamma$ contains a loop $L$. Since $\Gamma'$ is a tree, $n_e=1$ for any edge in $L$ and $n_v=1$ for all but two vertices of $L$ that we denote $x$ and $y$. Note that $\Gamma$ is a union of $L$ and root trees with roots $r$ in $L$. Moreover, by Lemma~\ref{subtreelem}, a root tree can only start at a root $r$ with $n_r=2$, hence $\Gamma$ is a union of $L$ and root trees hanging on $x$ and $y$. In particular, if $v$ is a vertex of $L$ different from $x$ and $y$ then its valency is 2. Since $v$ is not a leaf, it is balanced and hence it can be contracted, contrary to the assumption that $\varphi$ is stable. This proves that $L$ consists of the vertices $x,y$ and two edges $e,f$ connecting them. Since $e$ and $f$ split, $s_e\delta=s_f\delta=0$ and hence $S_e=n_e-1=0$ and $S_f=0$. Thus, the balancing conditions for $x$ and $y$ imply that the sum of slope indices of root trees hanging on each of them equals to $\chi(x)=2$. This leaves us with the following three options:

\begin{center}
\scalebox{0.8}{
\begin{tabular}{ccc}
TB& MB& WB

\\

$ \xygraph{
    !{<0cm,0cm>;<0cm,1cm>:<0.9cm,0cm>::}
    !{(0,4)}*+{\circ}="t"
    !{(0,2)}*+{\circ}="s"
    !{(0.7,5.3)}*+{\circ}="lu"
    !{(-0.7,5.3)}*+{\circ}="ld"
    !{(0.7,0.7)}*+{\circ}="ru"
    !{(-0.7,0.7)}*+{\circ}="rd"
    "t" -@`{(1.6,3)}_{0} "s"
    "t" -@`{(-1.6,3)}^{0} "s"
    "t" -@`{(0.8,4.9)}^(.4){0} "lu"
    "t" -@`{(-0.8,4.9)}_(.4){0} "ld"
    "s" -@`{(0.8,1.1)}_(.4){0} "ru"
    "s" -@`{(-0.8,1.1)}^(.4){0} "rd"
} $

& $ \xygraph{
    !{<0cm,0cm>;<0cm,1cm>:<0.9cm,0cm>::}
    !{(0,4)}*+{\circ}="t"
    !{(0,2)}*+{\circ}="s"
    !{(0,5.3)}*+{\circ}="l"
    !{(0.7,6.1)}*+{\circ}="lu"
    !{(-0.7,6.1)}*+{\circ}="ld"
    !{(0,0.7)}*+{\circ}="r"
    !{(0.7,-0.1)}*+{\circ}="ru"
    !{(-0.7,-0.1)}*+{\circ}="rd"
    "t" -@`{(1.6,3)}_{0} "s"
    "t" -@`{(-1.6,3)}^{0} "s"
    "t" :@{->}^{1} "l"
    "s" :@{->}_{1} "r"
    "l" -@`{(0.9,5.8)}^(.3){0} "lu"
    "l" -@`{(-0.9,5.8)}_(.3){0} "ld"
    "r" -@`{(0.9,0.2)}_(.3){0} "ru"
    "r" -@`{(-0.9,0.2)}^(.3){0} "rd"
} $

& $ \xygraph{
    !{<0cm,0cm>;<0cm,1cm>:<0.9cm,0cm>::}
    !{(0,4)}*+{\circ}="t"
    !{(0,2)}*+{\circ}="s"
    !{(0,5.3)}*+{\circ}="l"
    !{(0,0.7)}*+{\circ}="r"
    "t" -@`{(1.6,3)}_{0} "s"
    "t" -@`{(-1.6,3)}^{0} "s"
    "t" :@{->}^{1} "l"
    "s" :@{->}_{1} "r"
} $
\end{tabular}
}
\end{center}

\subsubsection{The final classification}
It remains to summarize the results of Section \ref{classsec}.

\begin{theorem}\label{specialClassification}
Up to an isomorphism, there exist twelve special $\delta$-morphisms $\varphi\:\Gamma\to\Gamma'$: (TB), (MB), (WB) are the bad reduction cases in each characteristic, (TG) is the good reduction in the tame case, (MO), (WO) are the ordinary reduction cases in the mixed and wild cases, (MS) and (WS) are supersingular reductions in the mixed and wild case, (MSS) and (WSS) are strongly supersingular configurations in the mixed and wild case, (ME) and (MES) are exceptional graphs in the mixed case. The possibilities for $\Gamma$ are shown on the figures in Sections~\ref{goodsec} and \ref{badsec}. In each case, the map $V\to V'$ is bijective and the map $E\to E'$ is bijective on all edges not contained in a loop and, if $h^1(\Gamma)=1$, sends the edges of the loop to the same edge in $E'$.
\end{theorem}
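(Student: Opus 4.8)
The plan is to assemble the case analysis of Sections~\ref{goodsec} and~\ref{badsec} into one exhaustive list, to check that the resulting twelve $\delta$-morphisms are pairwise non-isomorphic, and to read off the behaviour of the vertex and edge maps in each case.

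I would begin from the dichotomy forced by $g(\Gamma)=1$. Since $g(\Gamma)=h^1(\Gamma)+\sum_v g(v)=1$ with all summands non-negative, exactly one of two situations occurs: either $h^1(\Gamma)=1$ and every vertex has genus $0$, so $\Gamma$ contains a loop (bad reduction), or $h^1(\Gamma)=0$, so $\Gamma$ is a tree carrying a unique vertex $r$ of genus $1$ (good reduction). In either case Theorem~\ref{combRH}(ii) gives $\sum_v R_v=2g(\Gamma)-2-\deg\varphi\,(2g(\Gamma')-2)=4$, and by condition~(2) of Section~\ref{specdelta} the only vertices with $R_v\neq 0$ are ramification leaves, so the global ramification pattern is exactly the one classified in Lemma~\ref{ramlem}.

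In the good reduction case $\Gamma$ is the union of the root subtrees $T_i$ emanating from $r$, and by Lemma~\ref{subtreelem}(iii) their slope indices sum to $\sum_v R_v=4$; hence the balancing condition $R_r=0$ reads $\chi(r)=2n_r=4$, forcing $n_r=2$. The distribution of this $4$ among the $T_i$ is constrained by the ramification trichotomy of Lemma~\ref{ramlem} --- one leaf with $R_v=4$, two leaves with $R_v=2$, or four leaves with $R_v=1$ --- and, feeding in the explicit list of the eight admissible root subtrees, reading off the valency of $r$ and the internal structure of the $T_i$ separates the seven symmetric graphs (TG), (MO), (WO), (MS), (WS), (MSS), (WSS) from the two exceptional mixed graphs (ME), (MES). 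In the bad reduction case I would invoke the reduction of the loop from Section~\ref{badsec}: stability collapses the loop to two vertices $x,y$ joined by two split edges of slope index $0$, so the balancing at $x$ and at $y$ forces the slope indices of the root trees hanging on each to sum to $\chi(x)=\chi(y)=2$, and matching against Lemmas~\ref{subtreelem} and~\ref{ramlem} yields precisely (TB), (MB), (WB). Altogether this is $9+3=12$ configurations, and their underlying graphs, ramification patterns, and characteristic types --- the last pinned down by condition~(3) and the parity condition~(5), justified through Remark~\ref{restrictrem} --- distinguish them pairwise.

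It remains to verify the assertions about the maps, which I would read off directly from the graphs. Every edge lying in a root subtree has $n_e=2$ by Lemma~\ref{subtreelem}(i); thus in the good reduction case all edges have multiplicity $2$, whereas in the bad reduction case only the two loop edges split, both folding onto the single loop-image edge of the tree $\Gamma'$. Consequently every vertex has $n_v=2$ --- the genus-$1$ vertex by the computation above, the ramification leaves by Lemma~\ref{ramlem}, and the loop vertices because their two split branches fold onto one image branch --- so each vertex is the unique point of its fibre, $V\to V'$ is a bijection, and $E\to E'$ is bijective off the loop and two-to-one on it. I expect the main obstacle to be purely combinatorial bookkeeping: guaranteeing that the enumeration of root subtrees feeding the balancing identities is complete and non-redundant, and in particular correctly isolating the two exceptional mixed graphs (ME) and (MES), where an extra genus-$0$, valency-$2$ vertex of multiplicity $2$ sits on the tree and breaks the clean ordinary/supersingular dichotomy; one must also confirm that no further arrangement of the $R_v=1$ leaves, compatible with condition~(5) and Lemma~\ref{subtreelem}, produces another graph.
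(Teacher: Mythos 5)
Your proposal reproduces the paper's own proof: the same good/bad reduction dichotomy, the same use of Theorem~\ref{combRH}, Lemma~\ref{ramlem} and Lemma~\ref{subtreelem} to feed the balancing conditions at the genus-one vertex (resp.\ at the two fold vertices of the loop), and the same derivation of the vertex and edge maps from $n_v=2$ together with the edge multiplicities. The only deviations are cosmetic --- e.g.\ you derive $n_r=2$ from Lemma~\ref{subtreelem}(iii) before balancing, where the paper asserts $\chi(r)=4$ directly --- and the combinatorial bookkeeping you flag as the main obstacle (confirming no further arrangement of leaves survives) is precisely the step the paper itself leaves to inspection.
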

\begin{proof}
We have proved above that these twelve cases are the only possibilities for $\Gamma$. In addition, we proved that $n_v=2$ for any vertex of $\Gamma$ and $n_e=2$ if and only if $e$ does not lie in the loop. Thus, if $\Gamma$ extends to a special $\delta$-morphism $\varphi\:\Gamma\to\Gamma'$, then $\varphi$ has to be as stated in the theorem. It is a trivial check that, indeed, in all twelve cases this recipe produces a special $\delta$-morphism.
\end{proof}

\begin{remark}\label{specialrem}
Let $\Gamma_0$ be the convex hull of $f(\Ram(f))$ in $\Gamma'$. In the tame and mixed case, it is a tree with four leaves, so it has either the X-shape (a star graph of valency four) or the H-shape (two vertices of valency 3). The X-shape corresponds to the cases (TG) and (MSS), and the H-shape to (TB), (ME), (MES), (MB), (MO) and (MS). We will see in the next section that the latter three cases can be distinguished by the length $l$ of the {\em bar} (i.e. the path connecting the valency three vertices in H), and the exceptional configurations are excluded by the condition $l>0$.
\end{remark}

\subsection{Metric genus graphs}

\subsubsection{Metric graphs}
Usually, a {\em metric graph} means a topological graph all whose edges are provided with metrics making them homeomorphic to closed intervals $I\subset\bfR$. We extend this definition by allowing {\em infinite leaves}. Each such leaf is singular for the metric, i.e. the metric on its edge induces a homeomorphism $e\toisom[a,\infty]$. The edge $e$ will be called a {\em tail}. All other edges have finite length and they will be called {\em inner}. We will only consider metric graphs of finite type, in the sense that there are finitely many edges and vertices.

\begin{remark}
(i) One can also work within purely combinatorial framework by providing a combinatorial graph $\Gamma$ with a length function $l:E\to(0,\infty]$. The metric graph in our sense is a topological realization of such an object.

(ii) In our situation, tails will correspond to type 1 points. In tropical geometry tails correspond to divisors or marked points. In fact, these two contexts are tightly related.
\end{remark}

\subsubsection{Morphisms}
A morphism $\varphi\:\Gamma\to\Gamma'$ between metric graphs is a continuous map which sends vertices to vertices and edges to edges, and each induced map $e\to e'$ has a constant dilatation factor $n_e\in\bfZ_{>0}$.

\begin{remark}
On the combinatorial side, this corresponds to an $n$-morphism $(\varphi,n)\:\Gamma\to\Gamma'$ such that $l(\varphi(e))=n_el(e)$ for any edge $e\in E$.
\end{remark}

\subsubsection{Proper morphisms}
Similarly to the combinatorial case, a morphism between connected graphs is called {\em proper} if it has a locally constant multiplicity at all vertices (in particular, the multiplicities $n_v$ are defined) and the global rank is constant.

\subsubsection{Metric genus graphs}
By a {\em metric genus graph} we mean a metric graph provided with a genus function $g\:V\to\bfN$ such that $g(v)=0$ for any infinite leaf $v$.

\subsubsection{$\delta$-morphism of metric genus graphs}
Fix a non-archimedean real semivaluation $|\ |$ on $\bfZ$; it is either trivial, or $p$-adic, or induced from the trivial valuation on $\bfF_p$. A {\em $\delta$-morphism} between metric (genus) graphs (with respect to $|\ |$) is a pair $(\varphi,\delta)$, where $\varphi\:\Gamma\to\Gamma'$ is a proper morphism of metric graphs and $\delta\:\Gamma\to[0,1]$ is a continuous function such that $\log\delta|_e$ is a linear function with an integral slope for each edge $e\subset\Gamma$. In addition, we require that $\delta(v)=|n_v|$ for any infinite leaf $v$, and for any other vertex $v$ and edge $e\in\Br(v)$ the condition of Theorem~\ref{restrictth} is satisfied. In particular, if $n_e=2$ and $\cha(\tilk)=2$ then $s_e\delta$ is even only when $\delta=|2|$ along $e$.

\begin{remark}
(i) So far, our definitions run parallel to the combinatorial ones, but the situation with $\delta$ is different. The slope function $s\delta\:E^{or}\to\bfZ$ we considered in Section~\ref{genusgraphsec} does not have to be the differential of any function $\log\delta\:V\to\bfR$.

(ii) In our applications, $\delta$ will be the restriction of the different onto a skeleton. Its slope $s_e\delta$ along an edge $e$ is not determined only by the values of $\delta$ at the vertices of $e$. In order to compute $s_e\delta$ one should also use the length of $e$, so restricting $\log\delta$ onto the set of vertices $V$ and ignoring the lengths one gets a meaningless function not related to $s\delta$.
\end{remark}

\subsubsection{Special $\delta$-morphisms of metric genus graphs}\label{specialmetric}
A $\delta$-morphism of metric genus graphs is {\em special} if it induces a special $\delta$-morphism of the corresponding combinatorial graphs and, in addition, if $r$ is a vertex of genus one then $\delta(r)=1$. It is easy to see that the type of the combinatorial morphism is as follows: tame if $|2|=1$, mixed if $0<|2|<1$, wild if $|2|=0$.

\subsubsection{Classification}
We classify special $\delta$-morphisms of metric genus graphs into twelve types according to the type of the underlying special $\delta$-morphism. In fact, we will see that the exceptional cases cannot occur, so we are left with ten cases. In addition, we describe all possible metrics in these cases.

\begin{theorem}\label{speciallem}
(i) The ten non-exceptional cases are precisely the cases that can be lifted to special $\delta$-morphisms of metric graphs.

(ii) All possible metrics on the liftings are described by the following three rules, where we only describe the lengths in $\Gamma$ since the lengths in $\Gamma'$ are then defined as $l(\varphi(e))=n_el(e)$.

(a) The length of any tail is infinite.

(b) All inner edges of the same slope are of the same length, that we denote $l_0$, $l_1$ and $l_3$ according to the slope.

(c) Set $l_i=0$ if there are no inner edges of slope $i$. Then in each of the ten cases, the only restriction on the numbers $l_i$ is that in the mixed case $\sum_i il_i=-\log|2|$.
\end{theorem}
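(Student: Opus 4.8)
The plan is to work with $\log\delta$, which by definition is piecewise linear along each edge with integral slope $s_e\delta$, so that along an edge $e$ of length $l(e)$ the value of $\log\delta$ changes by $s_e\delta\cdot l(e)$, the sign being fixed by the orientation. Since the combinatorial slopes are already determined by Theorem~\ref{specialClassification}, the whole question becomes: for which edge lengths does there exist a continuous $\delta$ obeying the two available boundary conditions? The first is the locus where $\delta=1$: in the good reduction cases this is the genus-one vertex $r$, imposed directly in \ref{specialmetric}; in the bad reduction cases there is no such vertex, but both loop edges split ($n_e=1$) and carry $s_e\delta=0$, so applying the condition of Theorem~\ref{restrictth} with $m=1$, $s=0$ gives $1\ge\delta\ge1$ at the loop vertices, whence $\delta\equiv1$ along the whole loop. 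The second condition is $\delta(v)=|n_v|=|2|$ at every infinite leaf $v$, that is, at every ramification point.

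First I would settle the tails. The metric is singular at type 1 points, so every tail is infinite, giving (a) at once, and in the tame case $|2|=1$ all slopes vanish and $\delta\equiv1$. In the mixed case each ramification point has $R_v=1$ by Lemma~\ref{ramlem}, hence $s_e\delta=R_v-1=0$ along its tail; as the tail is infinite and $\delta(v)=|2|\in(0,1)$ is finite and positive, the slope must indeed vanish and $\delta\equiv|2|$ there, in agreement with Theorem~\ref{limth}(b). In the wild case $R_v\in\{2,4\}$ gives tail slope $1$ or $3$, and since $|2|=0$ the infinite tail correctly drives $\delta\to0$ as in Theorem~\ref{limth}(a). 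It then remains to integrate $\log\delta$ along the finitely many inner edges from the $\delta=1$ locus out to each leaf. Matching the values at all leaves (each equal to $\log|2|$) along the competing paths forces the inner edges of any fixed slope to share a common length, which together with the morphism relation $l(\varphi(e))=n_el(e)$ on the two loop edges yields (b); and the common drop along a path through inner edges of slopes $i$ gives $\sum_i i\,l_i=-\log|2|$ in the mixed case, which is (c). In the wild case the leaf values are absorbed by the infinite tails and impose no relation on the finite lengths; all inner slopes are then odd, so the parity requirement of \ref{specialmetric} holds, and since $\delta$ decreases monotonically away from the $\delta=1$ locus one also gets $\delta\le1$ throughout.

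Finally I would rule out the two exceptional graphs and check the converse. In (ME) a ramification leaf hangs on the genus-one vertex by a single slope-0 edge, which forces $\delta=1$ at that leaf although it must equal $|2|<1$, so there is no mixed lifting; in (MES) the two leaves on the valency-four vertex force $3l_3=-\log|2|$ while the two leaves beyond the following slope-1 edge force $3l_3+l_1=-\log|2|$, so $l_1=0$, which is impossible for an inner edge. For each of the ten remaining configurations, conversely, one prescribes the lengths by (a), (b), (c), integrates to obtain $\delta$, and verifies that the full strip and parity conditions of Theorem~\ref{restrictth} hold along every branch; this is a finite check over the figures of \ref{goodsec} and \ref{badsec}. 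I expect the main obstacle to be precisely this last bookkeeping: correctly reading off the $\delta=1$ locus in the bad reduction case from the split-edge condition, and confirming that not only the boundary values but the whole inequality and parity constraints of Theorem~\ref{restrictth} are satisfied in each of the three characteristic types---this is exactly where the exceptional cases are caught and where the mixed-characteristic relation in (c) genuinely enters.
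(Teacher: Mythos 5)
Your proposal is correct and follows essentially the same route as the paper's proof: pin down the boundary values ($\delta=1$ on the loop and at the genus-one vertex, $\delta=|2|$ at the tails), integrate $\log\delta$ along paths to force the length relations (b), (c), kill the exceptional graphs (ME), (MES) by incompatible boundary values, and verify the ten remaining cases by a finite check. Your justification that $\delta\equiv 1$ on the loop via Theorem~\ref{restrictth} with $m=1$, $s=0$ is a nice explicit substitute for an assertion the paper leaves implicit, but it does not change the substance of the argument.
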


\begin{remark}
(i) Conditions (b) and (c) above can be explicated as follows. In the bad reduction case, the two edges in the loop are of the same length that can be equal to any positive number $l_0$. Inner edges of positive slope are as follows:

(MB) and (MO) The edges of slope 1 are of length $l_1=-\log|2|$.

(MS) The edges of slope 1 are of the same length $l_1\in(0,-\log|2|)$ and the edge of slope 3 is of length $l_3=\frac{-\log|2|-l_1}{3}$.

(MSS) The edge of slope 3 is of length $l_3=\frac{-\log|2|}3$.

(WS) The length of the edge of slope 3 is an arbitrary number $l_3\in(0,\infty)$.

(ii) The formula $\sum il_i=-\log|2|$ poses a restriction only in the mixed case, but it makes sense more generally. In the tame case, it means that $l_1=l_3=0$. To make sense of it in the wild case, one should redefine $l_i$ with $i>0$ by setting $l_i=\infty$ if there is a tail of slope $i$. Then the formula means that in the wild case there is a tail of slope 1 or 3.
\end{remark}

\begin{proof}[Proof of Theorem~\ref{speciallem}]
One checks straightforwardly that all the suggested metrics give rise to special $\delta$-morphisms of metric graphs. So, it remains to establish the asserted restrictions. In the bad reduction case, the two edges $e$ and $f$ of the loop are mapped to the same edge $h$ of $\Gamma'$ and $n_e=n_f=1$. Hence $l(e)=l(h)=l(f)$. Other restrictions, including the equality of lengths of the edges of the same slope, are only essential in the mixed case, and they all follow in an obvious way from the observation that $\delta=1$ on the loop and at the good reduction point, and $\delta=|2|$ on the tails. For an illustration, let us check this for (MS), (ME) and (MES) cases.

In the exceptional cases, there is an edge of a positive slope that connects two tails. This is impossible since the different on both its ends equals to $|2|$. In the case (MS), there are inner edges $a,b$ of slope 1 and an inner edge $c$ of slope 3. The paths $(c,a)$ and $(c,b)$ connect the good reduction point with the tail, hence $l(a)+3l(c)=l(b)+3l(c)=-\log|2|$.
\end{proof}

Finally, we can use the metric to complete Remark \ref{specialrem} by separating mixed cases.

\begin{remark}\label{specialmetricrem}
Assume that $\varphi\:\Gamma\to\Gamma'$ is of type (MB), (MO) or (MS). The convex hull $\Gamma_0$ of $\varphi(\Ram(\varphi))$ has an H-shape and let $l$ be the length of the bar. In all cases, the bar consists of the images of all inner edges of slopes 0 and 1, and $n_e=2$ on edges of slope 1. It follows that $l=4l_1+l_0$, and using Theorem~\ref{speciallem} we obtain that $l>-\log|16|$ in the case (MB), $l=-\log|16|$ in the case (MO), and $l<-\log|16|$ in the case (MS).
\end{remark}

\section{Main results}\label{mainsec}

\subsection{Ordinary behaviour of $\delta_f$}

\subsubsection{Orientation on a curve}
By an orientation on a curve $X$ we mean a map $\tau$ from the set of branches of $X$ to the set $\{-1,0,1\}$ such that for any point $x\in X$ and a branch $v$ at $x$ there exists an interval $[x,y]$ in the direction of $v$ such that if $x'\in[x,y)$ and $v'$ is the branch at $x'$ corresponding to $[x',y]$ then $\tau(v)=\tau(v')$.

We say that a branch $v$ is {\em downward, neutral} or {\em upward} according to the value of $\tau(v)$. Similarly, if $I=[x,y]$ is an interval and for any $x'\in[x,y)$ with branch $v'$ corresponding to $[x',y]$ the value of $\tau(v')$ is constant on $I$, then we say that $I$ is {\em downward, neutral} or {\em upward}, according to the value of $\tau$. It follows from the definition that any interval $I\subset X$ possesses a finite subdivision into a union of downward, neutral and upward intervals.

\begin{example}
(i) If $f\:Y\to X$ is a finite morphism of curves and $\tau$ is an orientation on $X$ then its pullback $f^*\tau=\tau\circ f$ is an orientation on $Y$.

(ii) Any piecewise monomial function $\phi\:X\to\bfR_+$ induces an orientation $\tau$ on $X$ such that an interval $I\subset X$ is downward, neutral, or upward if and only if $\phi|_I$ strictly decreases, is constant, or strictly increases, respectively. Actually, $\tau(v)={\rm sign}(\slope_v(\phi))$.
\end{example}

\subsubsection{Orientation induced by a skeleton}
Any skeleton $\Gamma\subset X$ naturally induces an orientation $\tau_\Gamma$ on $X$ that points towards $\Gamma$. Namely, the edges of $\Gamma$ are neutral for $\tau_\Gamma$ and any interval $[x,y]$ with $[x,y]\cap\Gamma=\{y\}$ is increasing.

\begin{remark}
In fact, any connected component $D$ of $X\setminus\Gamma$ is an open disc and the restriction of $\tau_\Gamma$ onto $D$ is induced by the radius function on $D$. More generally, the formula $r_\Gamma=\exp^{-d(x,\Gamma)}$ defines a piecewise monomial radius function on $X$ that measures the inverse exponential distance from $\Gamma$, and then $\tau_\Gamma$ is the orientation induced by $r_\Gamma$.
\end{remark}

\subsubsection{$\delta$-ordinary points}
Assume now that $f\:Y\to X$ is a finite generically \'etale morphism of nice compact curves and an open subdomain $V\subset Y$ is provided with an orientation. We say that $y\in V$ is a {\em $\delta$-ordinary} point of the covering $f$ if there is a unique upward direction $v$ at $y$ and $\slope_v(\delta_f)=1-n_y$. We say that $\delta_f$ {\em behaves ordinary} on $V$ if any point of $V$ is $\delta$-ordinary.

\begin{remark}
The condition on existence and uniqueness of $v$ is essential only for type 2 points; it is automatic for other types.
\end{remark}

\subsubsection{Skeletons and trivialization of $\delta_f$}
We say that $\delta_f$ is {\em trivialized} by a skeleton $\Gamma\subset Y$ if it behaves ordinary on $Y\setminus\Gamma$ with respect to the orientation induced by $\Gamma$.

\begin{lemma}\label{trivlem}
Assume that a skeleton $\Gamma$ of $Y$ trivializes $\delta_f$. Then $S_v=0$ for any downward branch $v$ and $R_y=0$ for any unibranch point $y\in Y\setminus\Gamma$.
\end{lemma}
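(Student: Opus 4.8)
The plan is to unwind the two definitions in play — the orientation $\tau_\Gamma$ and $\delta$-ordinariness — and to reduce both assertions to the single slope identity $\slope_w\delta_f = 1-n_{y'}$ satisfied at the upward branch $w$ of any $\delta$-ordinary point $y'$. The structural input is that $Y\setminus\Gamma$ is a disjoint union of open discs, so at a point $y'\in Y\setminus\Gamma$ the unique upward branch is the one starting the (unique) path toward $\Gamma$, while every other branch is downward. In particular a downward branch always points away from $\Gamma$ into a single disc, and a type $1$ or type $4$ point carries no downward branch at all (its single branch is either a neutral edge of $\Gamma$ or points toward $\Gamma$).

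For the unibranch assertion I would first recall the computation of Section~\ref{Ry}, which gives $R_y=\slope_y\delta_f+n_y-1$ at any unibranch point (here $g(y)=g(x)=0$ and $n_v=n_y$ for the single branch $v$). If $y\in Y\setminus\Gamma$ is unibranch, its only branch must point toward $\Gamma$ and is therefore the unique upward direction, so $y$ is $\delta$-ordinary by hypothesis and $\slope_y\delta_f=1-n_y$. Substituting gives $R_y=(1-n_y)+n_y-1=0$.

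For the downward assertion, let $v$ be a downward branch based at a point $y$ (necessarily of type $2$ or $3$ by the remark above), and choose $I=[y,z]$ in the direction of $v$, shrunk so that $\delta_f$ is monomial on $I$ and the multiplicity is constant on $(y,z]$ and equal to $n_v$; then $(y,z]\subset Y\setminus\Gamma$. For an interior point $y'\in(y,z)$ the first step of the path from $y'$ toward $\Gamma$ is back toward $y$, so the unique upward branch $w'$ at $y'$ points toward $y$, and $\delta$-ordinariness yields $\slope_{w'}\delta_f=1-n_{y'}=1-n_v$. The crux is that $w'$ and the branch toward $z$ are the two opposite branches along $I$: writing $\delta_f=a\,r^{m}$ for a radius parametrization $r$ oriented from $y$ to $z$, the slope in the direction of $v$ is $m$, whereas $w'$ is measured with the oppositely oriented parametrization and has slope $-m$. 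Hence $-m=1-n_v$, i.e. $\slope_v\delta_f=n_v-1$, and therefore $S_v=-\slope_v\delta_f+n_v-1=0$.

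I expect the only real obstacle to be the geometric bookkeeping in the downward case: verifying that the interior points of $I$ lie in $Y\setminus\Gamma$ with upward branch pointing back to $y$ (which rests on $v$ pointing away from $\Gamma$ into one disc), that these points are hyperbolic so $\delta_f$ and its slope are defined there, and that the orientation reversal of the radius parametrization is exactly what produces the sign flip $m\mapsto -m$. Once these points are settled, both equalities are immediate substitutions.
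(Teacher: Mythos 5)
Your proof is correct and takes essentially the same route as the paper: both claims are reduced to the ordinariness identity $\slope_w\delta_f=1-n$ at the unique upward branch, with the downward case handled by passing to a small interval in the direction of $v$ and reversing orientation (the paper phrases this as the opposite, upward interval having constant slope $1-n_v$ at all its points, whereas you combine monomiality of $\delta_f$ after shrinking with ordinariness at a single interior point — the same computation). The unibranch case, via $R_y=\slope_y\delta_f+n_y-1$, is identical to the paper's.
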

\begin{proof}
Take a downward interval $I=[x,y]$ in the direction of $v$. By Lemma~\ref{pmmaplem}, choosing $I$ small enough we can achieve that $n_z=n_v$ for any $z\in (x,y]$. The opposite interval $[y,x]$ is upward and since $\delta_f$ behaves ordinary outside of $\Gamma$, it is of constant slope $1-n_v$ on $[y,x]$. Hence $\delta_f$ is of constant slope $n_v-1$ on $[x,y]$, in particular, $S_v=-\slope_v\delta_f+n_v-1=0$.

If $y\in Y\setminus\Gamma$ is unibranch then its branch is upward and hence $\slope_y\delta_f=1-n_y$, $S_y=2n_y-2$ and $R_y=0$.
\end{proof}

The following theorem is our first main result on the connection between $\delta_f$ and skeletons.

\begin{theorem}\label{deltatrivth}
Assume that $f\:Y\to X$ is a finite generically \'etale morphism of nice compact curves and $(\Gamma_Y,\Gamma_X)$ is a skeleton of $f$, then

(i) $\Gamma_Y$ trivializes $\delta_f$.

(ii) $\delta_f$ has constant slope on any oriented edge of $\Gamma_Y$ and hence induces a $\delta$-morphism $(f,n,s\delta_f)\:\Gamma_Y\to\Gamma_X$ of genus graphs.

(iii) Any non-balanced vertex of $(f,n,s\delta_f)$ is contained in $\partial(Y)\cup\Ram(f)$.
\end{theorem}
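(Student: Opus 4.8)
The plan is to prove (i) first --- this is the geometric heart of the statement --- and then to deduce (ii) and (iii) formally from (i) together with the local Riemann--Hurwitz formula (Theorem~\ref{prop:local_RH}) and the limit formula (Theorem~\ref{limth}).

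For (i) I would argue on the discs directly. Since $(\Gamma_Y,\Gamma_X)$ is a skeleton of $f$, on the complement $Y\setminus\Gamma_Y$ the morphism restricts to a disjoint union of finite \'etale morphisms $D\to D'$ of open discs, and the orientation $\tau_{\Gamma_Y}$ singles out at each $y\in D$ a \emph{unique} branch $v$ pointing toward the limit point $q\in\Gamma_Y$ of $D$, all other branches pointing deeper into $D$ (hence downward). Fix coordinates identifying $D,D'$ with standard discs, so $f$ is given by a power series $t_x=\phi(t_y)$; these coordinates are tame parameters everywhere on the discs, and the quantity $h=\frac{dt_x}{dt_y}$ of Theorem~\ref{compdeltalem} is just $\phi'(t_y)$. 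The key observation is that \'etaleness forces $\phi'$ to be \emph{nowhere vanishing} on $D$, so by the Newton polygon (equivalently, by the maximum modulus principle for a unit on a disc) $|\phi'|$ is \emph{constant} on $D$, say $|\phi'|\equiv c_0$. Plugging this into Theorem~\ref{compdeltalem} gives the monomial expression $\delta_f=c_0\,r_{t_y}\,r_{t_x}^{-1}$ on $D^\hyp$. Choosing $t_y$ so that $r_{t_y}$ is a radius parametrization increasing toward $q$, we have $r_{t_x}\propto r_{t_y}^{\,n_y}$ along $v$ because $f$ is modelled by the dominant monomial there and hence scales lengths by $n_v=n_y$ (Lemma~\ref{annulilem}(ii)); therefore $\slope_v\delta_f=1-n_y$, i.e.\ $y$ is $\delta$-ordinary. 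I expect the constancy of $|\phi'|$ to be the one genuinely substantial point: everything else is bookkeeping of slopes and orientations. (As a sanity check, a downward branch $w$ gives $\slope_w\delta_f=n_w-1$, so $S_w=0$, matching Lemma~\ref{trivlem}.)

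Given (i), part (ii) follows by locating the possible breaks of $\delta_f$ along an edge $e\subset\Gamma_Y$. A piecewise monomial function cannot break at a type $3$ point (its two branches carry opposite slopes), so the only candidate breaks are interior type $2$ points $y$ of $e$. For such $y$ we have $y\notin\Gamma_Y^0$, hence $g(y)=0$, and since $\Gamma_Y^0=f^{-1}(\Gamma_X^0)$ also $x=f(y)$ lies on an edge of $\Gamma_X$ with $g(x)=0$, so $\chi_f(y)=2n_y-2$. As $y$ is inner, Theorem~\ref{prop:local_RH} gives $\sum_{w\in\Br(y)}S_w=2n_y-2$. By (i) and Lemma~\ref{trivlem} every downward branch has $S_w=0$, leaving $S_u+S_{u'}=2n_y-2$ for the two branches $u,u'$ along $e$; since $n_u=n_{u'}=n_y$ on the edge (Lemma~\ref{edgemultlem}) this forces $\slope_u\delta_f+\slope_{u'}\delta_f=0$, so $\delta_f$ has no break at $y$. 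Thus $\delta_f$ is monomial along each edge, its edgewise slopes assemble into an oriented function $s\delta_f$, and together with the multiplicities $n$ carried by the skeleton this is exactly the data of a $\delta$-morphism $(f,n,s\delta_f)\colon\Gamma_Y\to\Gamma_X$ of genus graphs.

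Finally, for (iii) I would run through the two kinds of vertices of $\Gamma_Y$. At a type $1$ vertex $y$, Theorem~\ref{limth} gives $R_y=\delta_{y/x}\ge 0$, with equality exactly when $y$ is unramified, so a non-balanced type $1$ vertex lies in $\Ram(f)$. At an inner type $2$ vertex $y$, Theorem~\ref{prop:local_RH} gives $R_y=0$; since $f$ is finite we have $\partial(Y)=f^{-1}(\partial(X))$, so inner points have inner images and the local formula applies, whence a non-balanced type $2$ vertex must lie in $\partial(Y)$. As $\Gamma_Y^0$ contains no type $4$ points, every non-balanced vertex lies in $\partial(Y)\cup\Ram(f)$, as claimed. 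Beyond the constancy argument flagged in (i), the only delicate point throughout is keeping the orientation conventions consistent, so that the slope of $\delta_f$ in the upward direction comes out as $1-n_y$ rather than its negative.
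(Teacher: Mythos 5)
Your proposal is correct and follows essentially the same route as the paper's proof: part (i) via disc coordinates, \'etaleness forcing $|h'|$ to be constant, and the formula of Theorem~\ref{compdeltalem} combined with the dominant-monomial computation of the image radius; part (ii) by ruling out breaks at interior type 2 points of edges using Theorem~\ref{prop:local_RH}, Lemma~\ref{trivlem} and Lemma~\ref{edgemultlem}; part (iii) via the local Riemann--Hurwitz formula and Theorem~\ref{limth}. The only cosmetic difference is at type 1 and 4 points in (i), where the paper runs an explicit continuity/stabilization argument (Corollary~\ref{pmlem} and Lemma~\ref{multfuncor}) that your appeal to Lemma~\ref{annulilem}(ii) compresses but does not essentially change.
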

\begin{proof}
We start with (i). Fix a point $y\in Y\setminus\Gamma_Y$ and let us prove that it is $\delta$-ordinary. The connected component of $y$ in $Y\setminus\Gamma_Y$ is an open disc $Y_0$ and $f$ restricts to an \'etale covering $f_0\:Y_0\to X_0$, where $X_0$ is the connected component of $f(y)$ in $X\setminus\Gamma_X$. We can identify $Y_0$ and $X_0$ with open unit discs with coordinates $t$ and $z$, and then $f_0$ is given by sending $z$ to a series $h(t)=\sum_{i=0}^\infty h_it^i\in\kcirc[[t]]$. Furthermore, $f_0$ is \'etale, hence $h'(t)$ is invertible and so $|h'(a)|=|h_1|$ for any point $a\in Y_0$.

Let $y_r\in Y_0$ denote the maximal point of the disc $E_r$ of radius $r$ with center at 0. Assume, first, that $y$ is of type 2 or 3. Then we can choose $t$ to be monomial at $y$, i.e. we can assume that $y=y_s$ for some $0<s<1$. Note that $h$ induces a finite map $h_s\:E_s\to h(E_s)$ between discs and $y_s$ is the only preimage of the maximal point of the target, hence $n_y=\deg(h_s)$. On the other hand, it follows from the Weierstrass division theorem that $\deg(h_s)$ is the maximal number $d$ such that $|h|_y=\max_n |h_n|s^n$ equals to $|h_d|s^d$. Choose $s_1\in(s,1)$ such that $|h_n|s_1^n<h_{n_y}s_1^{n_y}$ for any $n>n_y$, then $n_{y_q}=n_y$ for any $q\in[s,s_1]$. In particular, if $x_q=f(y_q)$ then $r_z(x_q)=|h_{n_y}|q^{n_y}$. Since $r_t(y_q)=q$, Theorem~\ref{compdeltalem} implies that
$$\delta_f(z)=|h'|_z |h_{n_y}|^{-1}q q^{-n_y}=|h_1h^{-1}_{n_y}|q^{1-n_y}.$$ Therefore, the upward slope of $\delta_f$ equals to $1-n_y$ everywhere on the interval $[y,y_{s_1}]$.

It remains to consider the case when $y$ is of type 1 or 4. Consider an increasing interval $I$ starting at $y$. The function $n_y$ is constant in a neighborhood of $y$ in $I$ because it can only jump at type 2 points (see Lemma~\ref{multfuncor}) and the slope of $\delta_f$ is constant in a neighborhood of $y$ in $I$ because $\delta_f$ is piecewise monomial by Corollary~\ref{pmlem}. By the case of type 2 and 3 points, the upward slope of $\delta_f$ equals to $1-n_y$ for any point of $I\setminus y$, hence the same is true for $y$.

Let us prove (ii). Recall that the value of $n_y$ is fixed along any edge $e$ by Lemma~\ref{edgemultlem}, so we denote it by $n_e$. Then $(f,n)\:\Gamma_Y\to\Gamma_X$ is a proper $n$-morphism of graphs by Remark~\ref{mult5rem}. So, it suffices to show that for any type 2 point $y\in e$ with branches $u$ and $v$ pointing at different directions along $e$, the numbers $s_u=\slope_u(\delta_f)$ and $s_v=\slope_v(\delta_f)$ are opposite. Note that $g(y)=0$ since $y$ is not a vertex of $\Gamma_Y$, and hence also $g(f(y))=0$. In addition, $n=n_e$ coincides with $n_v$, $n_u$ and $n_y$. Any direction $w\in C_y\setminus\{u,v\}$ is downward, hence $S_w=0$ by Lemma~\ref{trivlem}, and the local Riemann-Hurwitz formula at $y$, see \ref{prop:local_RH}, reads as $$-2=-2n+(-s_u+n-1)+(-s_v+n-1).$$ Thus, $s_u+s_v=0$, as required.

Finally, if a non-boundary type 2 point $y$ is a vertex of $\Gamma_Y$, then a similar application of Lemma~\ref{trivlem} and the local Riemann-Hurwitz formula at $y$ proves that $y$ is balanced, whence (iii) follows.
\end{proof}

\subsection{The genus formulas}

\subsubsection{Genus of a nice compact curve}\label{gennicesec}
For any nice compact curve $X$ we define its genus as the sum of its first Betti number and all genera of its type 2 points: $g(X)=h^1(X)+\sum_{x\in X}g(x)$. It is a classical result that this definition agrees with the usual notion of genus when $X$ is a connected smooth proper curve. Since any skeleton $\Gamma$ of $X$ is a deformation retract of $C$ and contains all points of non-zero genus, we have that $g(X)=g(\Gamma)$, where $\Gamma$ is viewed as a genus graph.

\subsubsection{The genus formula for nice compact curves}
The following result extends the classical algebraic Riemann-Hurwitz formula to nice compact curves with boundary.

\begin{theorem}\label{genusth}
Assume that $f\:Y\to X$ is a finite generically \'etale morphism of degree $n$ between connected nice compact curves. Then $$2g(Y)-2-n(2g(X)-2)=\sum_{y\in Y}R_y=\sum_{y\in\Ram(f)}R_y+\sum_{b\in\partial(Y)}R_b.$$
\end{theorem}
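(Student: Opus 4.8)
The plan is to deduce everything from the combinatorial Riemann--Hurwitz formula of Theorem~\ref{combRH} applied to a skeleton of $f$. First I would use the simultaneous semistable reduction (Section~\ref{simulsec}) to fix a skeleton $(\Gamma_Y,\Gamma_X)$ of $f$; by definition $\Ram(f)\subseteq\Gamma_Y$, and by skeleton axiom (i) the boundary $\partial(Y)$ is contained in the vertex set $\Gamma_Y^0$. By Theorem~\ref{deltatrivth}(ii), $\delta_f$ induces a $\delta$-morphism of genus graphs $(f,n,s\delta_f)\colon\Gamma_Y\to\Gamma_X$, and Theorem~\ref{combRH}(ii) then gives
$$2g(\Gamma_Y)-2=\deg(f)\bigl(2g(\Gamma_X)-2\bigr)+\sum_{v\in\Gamma_Y^0}R_v,$$
where $R_v$ is the combinatorial differential index. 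Since a skeleton is a deformation retract containing all positive-genus points, $g(\Gamma_Y)=g(Y)$ and $g(\Gamma_X)=g(X)$ by Section~\ref{gennicesec}, while $\deg(f)=n$ by Remark~\ref{mult5rem}; this rewrites the identity as $2g(Y)-2-n(2g(X)-2)=\sum_{v\in\Gamma_Y^0}R_v$.

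The central point is to match the combinatorial $R_v$ with the analytic indices $R_y$ from Section~\ref{Ry}. For a vertex $y\in\Gamma_Y^0$ the characteristic term $\chi(y)$ agrees with $\chi_f(y)$ by construction, and the slope index $S_e$ of each edge of $\Gamma_Y$ at $y$ coincides with the analytic $S_v$ along the corresponding branch. The curve carries \emph{extra} branches at $y$ pointing into the open discs of $Y\setminus\Gamma_Y$; for the orientation induced by $\Gamma_Y$ these are downward, so $S_v=0$ for them by Lemma~\ref{trivlem} (which applies since $\Gamma_Y$ trivializes $\delta_f$ by Theorem~\ref{deltatrivth}(i)). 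Hence the extra branches contribute nothing to the analytic index and $R_v=R_y$ for every vertex.

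Next I would check that $R_y=0$ at every point $y\notin\Gamma_Y^0$, so that $\sum_{v\in\Gamma_Y^0}R_v=\sum_{y\in Y}R_y$ and the first equality follows. Type~$3$ points satisfy $R_y=0$ by the direct computation recorded in item~(3) of the remark in Section~\ref{Ry}; the interior points of edges of $\Gamma_Y$ are inner type~$2$ points, hence $R_y=0$ by the local Riemann--Hurwitz formula (Theorem~\ref{prop:local_RH}); the unibranch points of $Y\setminus\Gamma_Y$ (types $1$ and $4$) have $R_y=0$ again by Lemma~\ref{trivlem}; and the type~$2$ points of $Y\setminus\Gamma_Y$ lie strictly inside discs, so they are inner and vanish once more by Theorem~\ref{prop:local_RH}.

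Finally, for the second equality I would classify the points carrying $R_y\neq0$. By the previous paragraph these are among the vertices of $\Gamma_Y$, and $\Gamma_Y^0$ consists only of type~$1$ and type~$2$ points. At a type~$1$ point $R_y=\delta_{y/x}$ by Theorem~\ref{limth}, which is nonzero exactly when $y$ is a ramification point, and $\Ram(f)$ consists of type~$1$ points lying interior to $Y$; at a type~$2$ vertex the local Riemann--Hurwitz formula forces $R_y=0$ unless $y\in\partial(Y)$ (this is Theorem~\ref{deltatrivth}(iii)). Since type~$1$ ramification points and the (type $2$, hence disjoint) boundary points exhaust all nonzero contributions, the sum splits as $\sum_{y\in Y}R_y=\sum_{y\in\Ram(f)}R_y+\sum_{b\in\partial(Y)}R_b$. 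The step I expect to demand the most care is the identification $R_v=R_y$ at the vertices: it is precisely here that the trivialization of $\delta_f$ by $\Gamma_Y$ is indispensable, for it is what allows one to discard the downward branches and reconcile the graph-theoretic index with the analytic one.
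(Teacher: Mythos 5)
Your proof is correct and follows essentially the same route as the paper: choose a skeleton of $f$, invoke Theorem~\ref{deltatrivth} to get the induced $\delta$-morphism, identify the analytic indices $R_y$ at vertices with the combinatorial ones via the trivialization (Lemma~\ref{trivlem}), and conclude by Theorem~\ref{combRH}(ii). The only difference is one of exposition: the vanishing of $R_y$ at non-vertex points and the classification of nonzero contributions, which you verify explicitly, are delegated by the paper to the remark in Section~\ref{Ry} and to Theorems~\ref{prop:local_RH}, \ref{limth} and \ref{deltatrivth}(iii).
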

\begin{proof}
Choose a skeleton $(\Gamma_Y,\Gamma_X)$ of $f$; by Theorem~\ref{deltatrivth}(ii) it induces a $\delta$-morphism $\varphi\:\Gamma_Y\to\Gamma_X$. By Section~\ref{gennicesec}, $g(\Gamma_X)=g(X)$ and $g(\Gamma_Y)=g(Y)$. Furthermore, any ramification or boundary point is a vertex of $\Gamma_Y$. For any vertex $y\in\Gamma_Y^0$, we have that $R_{y,f}=R_{y,\varphi}$ because $S_{v,f}=S_{v,\varphi}$ for any $v\in\Br(y)$ pointing along an edge of $\Gamma_Y$ and $S_{v,f}=0$ for any other branch at $y$. Thus, the genus formula for $f$ follows from the combinatorial genus formula for $\varphi$, see Theorem~\ref{combRH}(ii).
\end{proof}

\subsubsection{Wide open domains}\label{specsec}
A connected open domain $V\subset Y$ will be called {\em wide} if $S=\oV\setminus V$ is a finite non-empty set of type 2 points. (Then $V$ is a wide open curve as defined by Coleman.) Note that $V$ is a connected component of $Y\setminus S$. The genus of a wide open domain $V$ is defined similarly to the genus of a nice compact curve, namely $g(V)=h^1(V)+\sum_{y\in V}g(y)$. We will not need the following remark, so its justification is omitted.

\begin{remark}
(i) Wide open domains typically appear as formal fibers, i.e. preimages of closed points under the reduction map $\pi\:Y\to\gtY_s$, where $\gtY$ is a formal model of $Y$. In fact, one can show that any wide open $V$ is a formal fiber of some formal model.

(ii) If $\gty$ is a closed point of $\gtY_s$ and $V=\pi^{-1}(\gty)$ then $g(V)=\delta_\gty-n_\gty+1$, where $n_\gty$ is the number of branches at $\gty$ and $\delta_\gty$ is the classical $\delta$-invariant of $\gty$, that measures the contribution of $\gty$ to the arithmetic genus. In other words, if $Z$ is the normalization of $\gtY_s$ and $z$ is the preimage of $\gty$ in $Z$ with semilocal ring $\calO_z=\calO_{Z,z}$, then $n_\gty$ is the number of points in $z$ and $\delta_\gty=\dim_\tilk(\calO_z/\calO_\gty)$.
\end{remark}

\subsubsection{The genus formula for wide open domains}
Given a wide open domain $V\subseteq Y$ we say that $v$ is a {\em branch at infinity} of $V$ if $v$ is a branch at a point $x\in\oV\setminus V$ and any interval $[x,y]$ along $v$ intersects with $V$. The set of all branches at infinity will be denoted $V_\infty$.

\begin{theorem}\label{genusth2}
Assume that $f\:Y\to X$ is a finite generically \'etale morphism between nice compact curves, $U\subseteq X$ is a wide open domain and $V$ is a connected component of $f^{-1}(U)$. Then $$2g(V)-2-n(2g(U)-2)=\sum_{y\in\Ram(f)\cap V}R_y+\sum_{v\in V_\infty}(2n_v-2-S_v),$$ where $n$ is the degree of the induced morphism $V\to U$.
\end{theorem}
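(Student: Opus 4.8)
The plan is to reduce the statement to the combinatorial Riemann--Hurwitz formula of Theorem~\ref{combRH}(ii), applied to a pair of compact genus graphs built from $V$ and $U$ by ``capping off'' their ends. First I would pass to the compact setting: the closures $\oV$ and $\oU$ are nice compact curves, the induced map $\oV\to\oU$ is finite of degree $n$, and $\oV\setminus V=S$, $\oU\setminus U=S_X$ are the prescribed finite sets of boundary type $2$ points. Using the simultaneous semistable reduction (\ref{simulsec}) together with the enlarging Lemma~\ref{enlargelem}, I would choose a skeleton $(\Gamma_{\oV},\Gamma_{\oU})$ of $\oV\to\oU$ whose vertex sets contain $S$, $S_X$ and $\Ram(f)$, and which is large enough that every branch at infinity of $V$ (resp.\ of $U$) lies along an edge of $\Gamma_{\oV}$ (resp.\ $\Gamma_{\oU}$); then Theorem~\ref{deltatrivth}(ii) endows the induced map of skeletons with the structure of a $\delta$-morphism, with $s\delta=s\delta_f$ on the edges.

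The key construction is the pair of \emph{capped} genus graphs $\Gamma_V^+$ and $\Gamma_U^+$: replace each boundary vertex $s\in S$, of genus $g(s)$ and valency $d_s=|\Br(s)\cap V_\infty|$ in $\Gamma_{\oV}$, by $d_s$ distinct genus-$0$ leaves $\ell_v$, one for each branch at infinity $v\in V_\infty$ at $s$ (thereby capping the half-open ends of $\Gamma_{\oV}\cap V$), and do the same for $U$. Since $\Gamma_V^+$ is obtained from the skeleton $\Gamma_V=\Gamma_{\oV}\cap V$ of the wide open $V$ by capping its half-open ends with genus-$0$ leaves, it is homotopy equivalent to $V$ and contains all points of positive genus of $V$, so that $g(\Gamma_V^+)=h^1(V)+\sum_{y\in V}g(y)=g(V)$, and likewise $g(\Gamma_U^+)=g(U)$ (by the definitions of genus in \ref{gennicesec} and \ref{specsec}). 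The morphism $f$ now induces $\varphi\colon\Gamma_V^+\to\Gamma_U^+$ sending $\ell_v$ to $\ell_{f(v)}$ with multiplicity $n_v$. I would verify that $\varphi$ is a proper $\delta$-morphism of degree $n$; the only point requiring argument is constancy of the global rank over each capping leaf $\ell_{v'}$ of $\Gamma_U^+$, which reduces to $\sum_{v\in V_\infty,\ f(v)=v'}n_v=n$ --- exactly the degree of the finite map $V\to U$ computed along the end $v'$ (all ends of $V$ over $v'$ lie in $V$, as $V$ is a connected component of $f^{-1}(U)$).

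It then remains to apply Theorem~\ref{combRH}(ii), giving $2g(\Gamma_V^+)-2-n(2g(\Gamma_U^+)-2)=\sum_w R_w$, i.e.\ the desired left-hand side equals $\sum_w R_w$. Each capping leaf $\ell_v$ has $g(\ell_v)=0$, maps to a genus-$0$ leaf, and its single incident edge carries multiplicity $n_v$ and slope index $S_v$, so $R_{\ell_v}=\bigl(2\cdot0-2-n_v(2\cdot0-2)\bigr)-S_v=2n_v-2-S_v$, reproducing the $V_\infty$-sum. Every remaining vertex lies in $V$; by the local Riemann--Hurwitz formula (Theorem~\ref{prop:local_RH}) and Theorem~\ref{deltatrivth}(iii), such a vertex is balanced ($R_w=0$) unless it is a ramification point, whose contribution $R_y$ agrees with Section~\ref{Ry}. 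Here one uses that $V\cap\partial(Y)=\emptyset$, so no genuine boundary vertices of $Y$ intervene. Summing these contributions yields the asserted formula.

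I would flag the genus bookkeeping as the main subtlety. The naive alternative of applying Theorem~\ref{genusth} directly to $\oV\to\oU$ fails: merging the $d_{s'}$ ends of $U$ over a boundary point $s'$ creates $d_{s'}-1$ spurious loops in $\oU$, which contribute $2n(d_{s'}-1)$ too much after pullback and corrupt the per-branch data. Exploding each boundary point into separate leaves is precisely what keeps the contributions $2n_v-2-S_v$ intact and cancels this discrepancy, so the careful verification that $g(\Gamma_V^+)=g(V)$, $g(\Gamma_U^+)=g(U)$ and that $\varphi$ is proper is where the real content of the argument sits.
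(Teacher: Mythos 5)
Your combinatorial core is exactly the paper's (largely omitted) argument: restrict a skeleton to $V$ and $U$, turn each branch at infinity into a genus-zero leaf, apply Theorem~\ref{combRH}(ii), identify the leaf contributions as $R_{\ell_v}=2n_v-2-S_v$, and kill all remaining interior vertices via Theorem~\ref{prop:local_RH} and Theorem~\ref{deltatrivth}(iii). The paper's proof says precisely this (``each $v\in V_\infty$ gives rise to a leaf of genus zero and so $R_v=2n_v-2-S_v$'') and omits the rest; your genus bookkeeping $g(\Gamma_V^+)=g(V)$, the properness check over capping leaves, and the observation that capping with \emph{separate} leaves (rather than keeping boundary points as vertices) is what prevents spurious loops, are the correct missing details.

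There is, however, one step that fails as written: the compactification. The closures $\oV$ and $\oU$ are \emph{not} nice compact curves --- they are not even analytic domains --- so neither the simultaneous semistable reduction of \ref{simulsec} nor Theorem~\ref{deltatrivth} can be applied to ``$\oV\to\oU$''. Indeed, a boundary point $s\in\oV\setminus V$ has no affinoid domain inside $\oV$ containing it: already for $V$ an open unit disc inside the closed unit disc and $s$ the Gauss point, any connected affinoid containing $s$ is a closed disc minus finitely many open discs, so it meets all but finitely many residue classes at $s$ and cannot be contained in $V\cup\{s\}$. The repair is exactly what the paper does: choose a skeleton $(\Gamma_Y,\Gamma_X)$ of the \emph{ambient} morphism $f\:Y\to X$ with $\oU\setminus U\subseteq\Gamma_X^0$ and $f(\Ram(f))\subseteq\Gamma_X^0$, enlarged via Lemma~\ref{enlargelem} so that no component of $f^{-1}(U)$ is a disc component of $Y\setminus\Gamma_Y$, and then set $\Gamma_V=\Gamma_Y\cap V$ and $\Gamma_U=\Gamma_X\cap U$; after this substitution your capping construction and all subsequent verifications go through verbatim. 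One further caveat: your assertion that $V\cap\partial(Y)=\emptyset$ is not automatic --- a wide open domain can contain Shilov boundary points (e.g.\ $\{|t|>r\}$ inside a closed unit disc), and at such points non-balanced vertices would contribute additional terms $R_b$ to the formula. This hypothesis is implicit in the statement of the theorem itself (and holds in all of the paper's applications, where $U$ is a component of $X\setminus\Gamma_X$), so it must be assumed or arranged, not asserted as a fact.
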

\begin{proof}
Choose a skeleton $(\Gamma_Y,\Gamma_X)$ of $f$ such that $\oU\setminus U\subseteq\Gamma_X^0$. Define a graph $\Gamma_U$ to be equal to to the compactification of $\Gamma_X\cap U$ by the points of $U_\infty$, that is, $\Gamma_U^0=(\Gamma_X^0\cap U)\cup U_\infty$ and the edges of $\Gamma_U$ are the edges of $\Gamma_X$ lying in $U$. We assign genus zero to the vertices of $U_\infty$. In the same fashion, we define $\Gamma_V$ to be the compactification of $\Gamma_Y\cap V$ by the vertices of $V_\infty$.

Now, the claim reduces to the combinatorial genus formula for $\Gamma_V\to\Gamma_U$ similarly to the proof of Theorem~\ref{genusth}. We omit the details and only remark that the ramification points of $\Gamma_V$ are the usual ramification points of $Y$ lying in $V$ and the points of $V_\infty$. Each $v\in V_\infty$ is a leaf of genus zero, hence $R_v=2n_v-2-S_v$ and we see that the right hand side of the asserted equation is the sum of $R_y$ over all ramification points of $\Gamma_V$.
\end{proof}

\begin{rem}
In fact, the assumption that $f\:V\to U$ comes from a morphism of nice compact curves is only needed to obtain the numbers $n_v$ and $S_v$ for $v\in V_\infty$. The theorem can be easily extended to the case when $V$ and $U$ are wide open domains and $f\:V\to U$ is a finite generically \'etale morphism such that for any $v\in V_\infty$ there exist an interval $[a,v)\subset V$ in the direction of $v$, a branch at infinity $u\in U_\infty$ and an interval $[b,u)\subset U$ in the direction of $u$ such that $f$ maps $[a,v)$ to $[b,u)$ and $n_y$ and $S_y$ are constant along $[a,v)$.
\end{rem}

\subsection{The different and the minimal skeleton of $f$}

\subsubsection{Coverings of an open disc}
Our next result shows that a (compactifiable) \'etale covering of an open disc is a disc if and only if $\delta_f$ behaves ordinary at the branches at infinity.

\begin{lemma}\label{coverdisclem}
Assume that $f\:Y\to X$ is a finite \'etale morphism between connected nice compact curves, $U\subset X$ is a wide open domain isomorphic to a disc and $V$ is a connected component of $f^{-1}(U)$. If $S_v=0$ for any $v\in V_\infty$ then $V$ is an open disc.
\end{lemma}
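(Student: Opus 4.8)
The plan is to feed the data into the genus formula for wide open domains, Theorem~\ref{genusth2}, deduce that $g(V)=0$ and that $V$ has a single branch at infinity, and then invoke the structure theory of curves to identify $V$ with an open disc.

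First I would assemble the inputs. Since $f$ is \'etale, $\Ram(f)=\emptyset$, so the ramification sum in Theorem~\ref{genusth2} is empty. Since $U$ is a disc we have $g(U)=0$, and $U$ has a single branch at infinity; writing $n$ for the degree of the induced finite morphism $V\to U$ and noting that every $v\in V_\infty$ maps to the unique branch at infinity of $U$, the constancy of the total multiplicity (Remark~\ref{mult5rem}) gives $\sum_{v\in V_\infty}n_v=n$. Substituting these, together with the hypothesis $S_v=0$ for all $v\in V_\infty$, into Theorem~\ref{genusth2} yields
$$2g(V)-2+2n=\sum_{v\in V_\infty}(2n_v-2)=2n-2|V_\infty|,$$
whence $g(V)=1-|V_\infty|$. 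As $g(V)\ge 0$ and $V_\infty\neq\emptyset$ (because $V$ is wide open, so $\oV\setminus V\neq\emptyset$, and every point of $\oV\setminus V$ carries at least one branch at infinity), I conclude that $|V_\infty|=1$ and $g(V)=0$.

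It remains to show that a connected wide open domain $V$ of genus $0$ with a single branch at infinity is an open disc. Here $\oV\setminus V=\{x\}$ is a single type $2$ point and exactly one branch at $x$ points into $V$, so that $V=\oV\setminus\{x\}$. From $g(V)=0$ I read off $h^1(V)=0$ and $g(y)=0$ for every $y\in V$. I would then regard the closure $\oV$ as a connected compact curve whose unique boundary point is $x$, choose a skeleton $\Gamma$ of $\oV$ with $x\in\Gamma^0$, and note that $\Gamma$ is a tree (since $h^1(\oV)=h^1(V)=0$) in which $x$ is a leaf (only one branch at $x$ lies in $\oV$) and every vertex other than $x$ has genus $0$. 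Contracting genus-$0$ leaves and genus-$0$ valence-$2$ vertices, as permitted by the stable reduction theorem (cf. the contraction operations of Section~\ref{combsec}), collapses $\Gamma$ to the single vertex $\{x\}$; the defining property of a skeleton then gives that $\oV\setminus\{x\}=V$ is a disjoint union of open discs, and connectedness of $V$ forces it to be a single open disc.

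The computation itself is immediate; the steps demanding care are the bookkeeping that produces $\sum_{v\in V_\infty}n_v=n$ and $|V_\infty|\ge 1$, which is exactly what makes the genus formula collapse to $g(V)=1-|V_\infty|$, and the concluding structural identification of a genus-$0$, single-ended wide open with a disc. I expect the latter to be the main point requiring justification, since $f$ is not assumed tame and hence Berkovich's triviality theorem for tame covers of a disc is unavailable; it is precisely the hypothesis $S_v=0$, encoding that the different behaves ordinarily at every end, that rules out the wild phenomena (higher genus or extra ends) that an \'etale cover of a disc can otherwise exhibit.
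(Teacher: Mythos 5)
Your first half coincides with the paper's own proof: the paper likewise substitutes $g(U)=0$, $\Ram(f)=\emptyset$ and $S_v=0$ into Theorem~\ref{genusth2}, uses $n=\sum_{v\in V_\infty}n_v$ (stated there as ``clear''; your appeal to Remark~\ref{mult5rem} is a fine justification), and deduces $g(V)=0$ and $|V_\infty|=1$ from the inequality $\sum_{v\in V_\infty}(2n_v-2)\le 2n-2$. So up to that point you have reproduced the argument.

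The gap is in your concluding step, which is precisely the step the paper waves off with ``using the semistable reduction theorem it follows easily''. You propose to ``regard the closure $\oV$ as a connected compact curve'' and to ``choose a skeleton $\Gamma$ of $\oV$ with $x\in\Gamma^0$''. This is not legitimate: $\oV=V\cup\{x\}$ is not an analytic domain of $Y$, hence not a nice compact curve, and it has no skeleton in the sense of the paper. Concretely, if $V$ is the open unit disc in $\bfP^1_k$ and $x$ is its endpoint (the Gauss point), then $\bfP^1_k\setminus\oV$ is the disjoint union of \emph{infinitely} many open residue discs; equivalently, every affinoid domain containing $x$ meets infinitely many residue discs of the closed unit disc, so no affinoid neighborhood of $x$ lies inside $\oV$. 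The semistable reduction theorem simply does not apply to $\oV$. The repair keeps your idea but runs it inside $Y$: choose a skeleton $\Gamma$ of $Y$ with $x\in\Gamma^0$ (Lemma~\ref{enlargelem}). Since $x$ is a vertex and the topological boundary of $V$ in $Y$ is $\{x\}$, every edge of $\Gamma$ is either contained in $\oV$ or disjoint from $V$; since only one branch at $x$ enters $V$ and $h^1(V)=0$, the subgraph $T=\Gamma\cap\oV$ is a finite tree rooted at $x$, whose other vertices have genus $0$ and are interior points of $Y$ (indeed $V\subseteq f^{-1}(U)\subseteq\Int(Y)$, because $U$, being isomorphic to a boundaryless curve, lies in $\Int(X)$ and $f$ is finite). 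Deleting genus-zero non-boundary leaves of $T$ one at a time --- the geometric counterpart of the contraction (1) of Section~\ref{combsec}, i.e.\ the inverse of Lemma~\ref{enlargelem} --- produces a skeleton $\Gamma_0$ of $Y$ with $x\in\Gamma_0$ and $\Gamma_0\cap V=\emptyset$. Then $V$ is open in $Y\setminus\Gamma_0$ and also closed there (its closure adds only $x\in\Gamma_0$), so the connected set $V$ is a whole connected component of $Y\setminus\Gamma_0$, i.e.\ an open disc.
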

\begin{proof}
Let $n$ be the degree of $f|_V$; it is well defined since $V$ is connected and non-empty. Clearly, $n=\sum_{v\in V_\infty}n_v$. By our assumption, there are no ramification points hence the genus formula of Theorem~\ref{genusth2} reads as $$2g(V)+2n-2=\sum_{v\in V_\infty}(2n_v-2).$$ Since $\sum_{v\in V_\infty}(2n_v-2)\le 2n-2$ with equality holding if and only if $|V_\infty|=1$, we obtain that $g(V)=0$ and $V$ has a single branch at infinity. Using the semistable reduction theorem it follows easily that $V$ is an open disc.
\end{proof}

\subsubsection{A characterization of skeletons of $f$}
Now we can characterize the skeletons of $f$ in terms of the different. We say that a graph $\Gamma\subset Y$ {\em locally trivializes} $\delta_f$ if for any point $y\in\Gamma$ and a branch $v\in\Br(y)$ pointing outside of $\Gamma$ the equality $S_v=0$ holds.

\begin{theorem}\label{localcharth}
Let $f\:Y\to X$ be a finite generically \'etale morphism of nice compact curves, let $\Gamma_X\subset X$ be a skeleton and let $\Gamma_Y\subset Y$ be the preimage of $\Gamma_X$. Then $(\Gamma_Y,\Gamma_X)$ is a skeleton of $f$ if and only if $\Ram(f)\subset\Gamma_Y^0$ and $\Gamma_Y$ locally trivializes $\delta_f$.
\end{theorem}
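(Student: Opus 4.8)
The plan is to prove the two implications separately; the forward direction is immediate from the results of Section~\ref{mainsec}, while the converse is the substantive part. For the \emph{only if} direction, assume $(\Gamma_Y,\Gamma_X)$ is a skeleton of $f$. By definition $\Ram(f)\subseteq\Gamma_Y$, and since ramification points of the map of curves are Zariski closed (type 1) points, while type 1 points of a skeleton are always vertices, we obtain $\Ram(f)\subseteq\Gamma_Y^0$. For local trivialization I would invoke Theorem~\ref{deltatrivth}(i), by which $\Gamma_Y$ trivializes $\delta_f$, together with Lemma~\ref{trivlem}, which then gives $S_v=0$ for every branch $v$ that is downward for the orientation $\tau_{\Gamma_Y}$. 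A branch at a point of $\Gamma_Y$ pointing outside $\Gamma_Y$ is downward by the very construction of $\tau_{\Gamma_Y}$, so $S_v=0$ for all such $v$, which is precisely local trivialization.

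For the \emph{if} direction, assume $\Ram(f)\subseteq\Gamma_Y^0$ and that $\Gamma_Y$ locally trivializes $\delta_f$; the only nontrivial point is that $\Gamma_Y=f^{-1}(\Gamma_X)$ is a skeleton of $Y$ with vertex set $f^{-1}(\Gamma_X^0)$. Since $f(\Ram(f))\subseteq\Gamma_X$, the morphism $f$ is finite \'etale over $X\setminus\Gamma_X$. Writing $X\setminus\Gamma_X=\coprod_i D_i$ as the disjoint union of open discs provided by the skeleton axioms for $\Gamma_X$, each $D_i$ is a wide open domain with a single branch at infinity. Let $V$ be a connected component of $f^{-1}(D_i)$: then $V$ is a wide open domain, $f|_V\colon V\to D_i$ is finite \'etale (so $\Ram(f)\cap V=\emptyset$), and each branch at infinity of $V$ is attached to a point of $\overline V\setminus V\subseteq\Gamma_Y$ and points into $V$, hence outside $\Gamma_Y$. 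Local trivialization therefore yields $S_v=0$ for all $v\in V_\infty$, and the genus formula of Theorem~\ref{genusth2} (applied with $\Ram(f)\cap V=\emptyset$ and $g(D_i)=0$, exactly as in Lemma~\ref{coverdisclem}) collapses to $2g(V)+2|V_\infty|=2$; since $|V_\infty|\ge 1$ this forces $g(V)=0$ and $|V_\infty|=1$, so $V$ is an open disc.

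It follows that $Y\setminus\Gamma_Y=f^{-1}(X\setminus\Gamma_X)$ is a disjoint union of open discs, which is the second skeleton axiom, and it remains to dispatch the bookkeeping. First, $\Gamma_Y$ is a finite graph: enlarging $\Gamma_X$ to a skeleton $\Gamma_X'\supseteq\Gamma_X$ with $(f^{-1}(\Gamma_X'),\Gamma_X')$ a skeleton of $f$ (simultaneous semistable reduction), the set $f^{-1}(\Gamma_X')$ is a finite graph and $\Gamma_Y=f^{-1}(\Gamma_X)$ is a subgraph of it. Second, setting $\Gamma_Y^0=f^{-1}(\Gamma_X^0)$, the first skeleton axiom holds: finite morphisms preserve the type of a point, so $\Gamma_Y^0$ consists of type 1 and 2 points; $\partial(Y)=f^{-1}(\partial(X))\subseteq f^{-1}(\Gamma_X^0)$ by boundary transitivity for finite maps; and any point of positive genus lies in $\Gamma_Y$ (open discs have genus $0$) and is necessarily a vertex. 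Hence $(\Gamma_Y,\Gamma_X)$ is a skeleton of $f$.

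I expect the main obstacle to be the disc-ness of the components $V$ of $f^{-1}(D_i)$: this is the only place where local trivialization is used in an essential way, through the branch-at-infinity indices $S_v$, and where Theorem~\ref{genusth2} converts the purely local hypothesis $S_v=0$ into the global conclusion $g(V)=0$, $|V_\infty|=1$. By comparison, checking that $\Gamma_Y$ is a genuine finite graph with the prescribed vertex set is routine, though it must be carried out carefully since it rests on type preservation and boundary transitivity for finite morphisms.
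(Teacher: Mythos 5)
Your proposal follows the paper's proof in both directions: the forward implication via Theorem~\ref{deltatrivth} (the paper leaves the passage from trivialization to local trivialization implicit, and your appeal to Lemma~\ref{trivlem} is exactly the right way to fill it in), and the converse by the identical argument --- each component $V$ of $f^{-1}(D)$, for $D$ a disc component of $X\setminus\Gamma_X$, is an \'etale covering whose branches at infinity sit at points of $\Gamma_Y$ and point outside $\Gamma_Y$, so $S_v=0$ by local trivialization and $V$ is an open disc by Lemma~\ref{coverdisclem}; your identity $2g(V)+2|V_\infty|=2$ is just the proof of that lemma via Theorem~\ref{genusth2} made explicit. So on the substantive part the proposal is correct and is essentially the paper's argument.

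One caveat concerns your closing ``bookkeeping'' paragraph, which you present as routine: the assertion that every positive-genus point of $\Gamma_Y$ ``is necessarily a vertex'', i.e.\ lies in $f^{-1}(\Gamma_X^0)$, is given no justification, and it is in fact the only part of the conclusion that does not follow from the disc argument. Nothing in the hypotheses prevents a positive-genus type 2 point $y\in\Gamma_Y$ from lying over the \emph{interior} of an edge of $\Gamma_X$; such a point can moreover carry more than two branches inside $\Gamma_Y$, so that $\Gamma_Y$ with the prescribed vertex set $f^{-1}(\Gamma_X^0)$ is not even literally a finite graph. (A concrete instance: in residue characteristic $2$, the covering $y^2-y=x^3$ of $\bfP^1_k$ by a good-reduction elliptic curve is \'etale outside $x=\infty$, its genus-$1$ point lies over the Gauss point $p_1$ of the $x$-line, and one can check that for $\Gamma_X$ an interval $[p_r,\infty]$, $r<1$, with vertices only at its endpoints, both hypotheses of the theorem hold while the genus-$1$ point sits in the interior of the edge with three branches inside $\Gamma_Y$.) Ruling this out therefore requires either an extra argument or the freedom to subdivide $\Gamma_X$ at finitely many additional type 2 vertices, which is the natural reading of the statement. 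To be fair, the paper's own proof ends with ``this proves that $\Gamma_Y$ is a skeleton of $Y$'' and never touches the vertex conventions at all, so your write-up is, if anything, more complete than the original --- but this step should be flagged as a genuine (if minor and fixable) subtlety rather than dispatched as routine.
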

\begin{proof}
The direct implication is covered by Theorem~\ref{deltatrivth}, so let us prove the opposite one. Let $D$ be any connected component of $X\setminus\Gamma_X$ and let $V$ be a connected component $f^{-1}(D)$. The finite map $V\to D$ is \'etale by our assumption on the ramification locus. In addition, any branch at infinity $v\in V_\infty$ is a branch at a point of $\Gamma_Y$ that points outside of $\Gamma_Y$. Hence $S_v=0$, and by Lemma~\ref{coverdisclem} we obtain that $V$ is an open disc. This proves that $\Gamma_Y$ is a skeleton of $Y$ and we are done.
\end{proof}

\begin{remark}\label{skeletonrem}
The main advantage of the new characterization of the skeletons of $f$ is that it is of local nature on $Y$, in particular, one obtains a pretty explicit way to construct a skeleton of $Y$ in terms of $X$ and the covering. Namely, start with any skeleton $\Gamma_X$ of $X$. Enlarge $\Gamma_X$ to contain the image of the ramification locus of $f$. If there is a point $y\in\Gamma_Y=f^{-1}(\Gamma_X)$ and a branch $v$ at $y$ pointing outside of $\Gamma_Y$ and having $S_v\neq 0$ then there exists an interval $[y,z]$ in the direction of $v$ such that $S_u\neq 0$ for any branch $u$ on $I$ towards $z$. Add $f(I)$ to $\Gamma_X$ and $f^{-1}(f(I))$ to $\Gamma_Y$, and repeat this procedure again. In the end, one obtains the minimal skeleton of $f$ that contains the original $\Gamma_X$ (though this may require transfinite induction if the intervals $I$ are chosen too short).
\end{remark}

\section{Coverings of degree $p$}\label{degpsec}

\subsection{Topological ramification locus}

\subsubsection{Radial sets}
Let $Y$ be a nice compact curve, $\Gamma_Y\subseteq Y$ a skeleton of $Y$, $\Gamma\subseteq \Gamma_Y$ a finite subgraph, and $\phi\colon\Gamma\to(0,1]$ a piecewise monomial function. We provide $Y$ with the orientation with respect to $\Gamma_Y$. For a point $x\in \Gamma$ let $C(\Gamma,x,\phi(x))$ denote the union of all closed downward intervals $I$ starting at $x$ such that $l(I) = -\log \phi(x)$. The \emph{radial set} $C(\Gamma,\phi)$ with center at $\Gamma$ of radius $\phi$ is the union of $C(\Gamma,x,\phi(x))$ for all $x\in \Gamma$.

\begin{remark}\label{conerem}
Let $B(\Gamma,\phi)$ be the metric neighborhood of $\Gamma$ given by $\phi$, i.e. $B(\Gamma,\phi)$ is the union of intervals at $x\in\Gamma$ of length $\psi(x)=-\log\phi(x)$. Obviously, $C(\Gamma,\phi)\subseteq B(\Gamma,\phi)$, but the inclusion may be strict. Indeed, assume that $[x,y]$ is an interval in $\Gamma$ and $\psi(x)-\psi(y)>l([x,y])$; for example, $\phi$ is monomial of slope smaller than $-1$ on $[x,y]$. Choose a downward interval $[y,z]$ of length $l$ such that $\psi(y)<l<\psi(x)-l([x,y])$. Then $z\notin C(\Gamma,\phi)$ since $\psi(y)<l$, but $d(z,x)<\psi(x)$ and hence $z\in B(\Gamma,\phi)$. Intuitively, the radial set behaves as a non-convex set in this case.
\end{remark}

\subsubsection{Coverings of degree $p$}

\begin{theorem}\label{coneth}
Assume that $f\colon Y\to X$ is a finite generically \'{e}tale morphism between nice compact curves and $\deg(f)=p=\cha(\tilk)$. Let $(\Gamma_Y,\Gamma_X)$ be a skeleton of $f$ and let $\Gamma\subseteq \Gamma_Y$ be the subgraph consisting of topological ramification points. Then the topological ramification locus $T$ of $f$ coincides with the radial set $C=C(\Gamma,\delta_f^{1/(p-1)})$.
\end{theorem}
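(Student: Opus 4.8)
The plan is to use two structural inputs specific to $\deg f=p=\cha(\tilk)$: the multiplicity can only be $1$ or $p$ off the skeleton, and on the wild locus $\delta_f$ grows at the \emph{universal} slope $p-1$ away from $\Gamma_Y$. Orient $Y$ towards $\Gamma_Y$, so ``downward'' means into the open discs $Y\setminus\Gamma_Y$. First I would record the dichotomy. Each component of $Y\setminus\Gamma_Y$ is an open disc on which $f$ is étale, and by Theorem~\ref{deltatrivth} the skeleton $\Gamma_Y$ trivializes $\delta_f$, so at every $y\notin\Gamma_Y$ the unique upward branch $v$ has $\slope_v\delta_f=1-n_y$. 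Plugging $(m,s,\delta)=(n_y,\,1-n_y,\,\delta_f(y))$ into Theorem~\ref{restrictth}(ii) gives $1=|m+s|\ge\delta_f(y)\ge|n_y|$, and since $\delta_f(y)=|n_y|$ would force $1-n_y\ge0$, an intermediate value $1<n_y<p$ (where $|n_y|=1$) is impossible. Hence $n_y\in\{1,p\}$ off $\Gamma_Y$, with $n_y=p$ precisely at wild points; there $\delta_f$ has downward slope $p-1$, while $n_y=1$ forces $\delta_f=1$ by Lemma~\ref{wildlocus}(i). Note also that $\delta_f(y)<1$ always implies wildness (contrapositive of Lemma~\ref{wildlocus}(i)), so on $T\setminus\Gamma_Y$ one has $n_y=p$, and $\delta_f<1$ except on the transition locus $\{\delta_f=1\}$.

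For $T\subseteq C$, take a wild $z\notin\Gamma_Y$ and let $x_0\in\Gamma_Y$ be the point where the upward geodesic from $z$ meets $\Gamma_Y$, with $\ell=d(x_0,z)$. Moving up from $z$, the branch at $z$ towards $x_0$ has slope $1-p<0$, so $\delta_f$ decreases and stays $<1$ along all of $(x_0,z]$; thus the whole interval is wild with $n\equiv p$, and upper semicontinuity of $n_f$ (Lemma~\ref{multfuncor}) yields $n_{x_0}=p$, i.e.\ $x_0\in\Gamma$. Since $n\equiv p$ makes $\delta_f$ monomial of slope $1-p$ on $[x_0,z]$, integration gives $\log\delta_f(z)=\log\delta_f(x_0)+(p-1)\ell\le 0$, whence $\ell\le\frac{-\log\delta_f(x_0)}{p-1}=-\log\phi(x_0)$. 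So $z$ lies on a downward interval from $x_0$ of length $-\log\phi(x_0)$, i.e.\ $z\in C$. Points of $T\cap\Gamma_Y=\Gamma$ lie in $C$ trivially.

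For $C\subseteq T$, fix $x_0\in\Gamma$ and $z$ downward from $x_0$ with $\ell=d(x_0,z)\le-\log\phi(x_0)$. If $\delta_f(x_0)=1$ then $\phi(x_0)=1$, so $\ell=0$ and $z=x_0\in T$. If $\delta_f(x_0)<1$, then every downward branch at $x_0$ is wild: a split branch would keep $\delta_f$ constantly $<1$ below $x_0$, contradicting that $n=1$ forces $\delta_f=1$. I would then run a supremum argument: let $w=\sup\{\ell'\le\ell:\ n\equiv p\text{ on }[x_0,z_{\ell'}]\}$. As long as an interior point $z_{\ell'}$ (type $2$ or $3$) has $\delta_f<1$ it is wild and, by the same dichotomy, its downward branch is again wild, so wildness cannot stop before $\delta_f$ reaches $1$; since $\delta_f(z_{\ell'})=\delta_f(x_0)e^{(p-1)\ell'}<1$ for $\ell'<-\log\phi(x_0)$, this forces $w\ge\ell$. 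Thus $n\equiv p$ on $[x_0,z)$ and $n_z=p$ by upper semicontinuity, giving $z\in T$.

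The main obstacle will be the careful bookkeeping on the transition locus $\{\delta_f=1\}$, where $f$ passes from wild to split, and on the endpoints of the closed downward intervals. In particular one must treat type~$4$ points, where $\delta_f=1$ may coexist with wild ramification (Remark~\ref{kumerrem}), and confirm that wildness persists exactly up to depth $-\log\phi(x_0)$ and no further. This is precisely what the supremum argument, together with the upper semicontinuity of $n_f$ and the \emph{constant} downward slope $p-1$ of $\delta_f$ on wild intervals, is designed to control: the uniform slope makes $\{\delta_f=1\}$ a level set of the downward distance from $\Gamma$, which is exactly what renders $T$ radial and equal to $C(\Gamma,\delta_f^{1/(p-1)})$.
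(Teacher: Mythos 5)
Your route differs genuinely from the paper's in the direction $T\subseteq C$: the paper shows that each connected component of $Y\setminus C$ is an open disc with a single branch at infinity $v$, deduces $n_v=1$ from $S_v=0$ together with $\delta_f=1$ at its limit point, and concludes that $f$ splits off $C$; you instead follow geodesics from wild points up to $\Gamma_Y$. (For $C\subseteq T$ both arguments are essentially the same slope computation, yours written out in more detail.) Your route can be made to work, but as written it has a genuine gap at its keystone, the dichotomy $n_y\in\{1,p\}$ off $\Gamma_Y$, on which \emph{both} of your inclusions rely: without it, a tame topologically ramified point with $1<n_y<p$ off the skeleton would lie in $T$ yet escape both your geodesic argument and the set $C$.

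The gap is the substitution $(m,s,\delta)=(n_y,\,1-n_y,\,\delta_f(y))$ in Theorem~\ref{restrictth}(ii). That theorem constrains the triple $(n_v,\slope_v\delta_f,\delta_f(y))$, where $m=n_v$ is the multiplicity of $f$ at the \emph{branch} $v$, whereas Theorem~\ref{deltatrivth} gives you the slope $1-n_y$ in terms of the multiplicity at the \emph{point}. You tacitly assume $n_v=n_y$ for the upward branch, which at a type~2 point is not automatic: $n_y$ is the \emph{sum} of $n_w$ over all branches $w\in\Br(y)$ lying over $f(v)$, so $n_v=n_y$ only if $v$ is alone in its fiber. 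With the correct $m=n_v$, your punchline ``$\delta_f(y)=|m|$ forces $s\ge 0$'' no longer bites, since $|n_v|$ need not equal $\delta_f(y)$. The repair is short and uses only tools you already invoke: if $1<n_y<p$ at $y\notin\Gamma_Y$, then $y$ is topologically tame, so $\delta_f(y)=1$ by Lemma~\ref{wildlocus}(i), while $s=\slope_v\delta_f=1-n_y<0$; now Theorem~\ref{restrictth}(ii) (with $m=n_v$) forces $|n_v|<\delta_f(y)=1$, because $|n_v|=\delta_f(y)$ would imply $s\ge0$; hence $p\mid n_v$, so $n_v\ge p$, contradicting $n_v\le n_y<p$, which follows from upper semicontinuity of $n_f$ (Lemma~\ref{multfuncor}). (Equivalently: points just above $y$ have $\delta_f<1$, hence are wild of multiplicity $p$, again contradicting semicontinuity.) With the dichotomy repaired, your two path-following arguments go through; note only that at a type~4 dead end $z$ strictly inside the critical depth, your phrase ``$z$ lies on a downward interval of length $-\log\phi(x_0)$'' requires reading the radial set as ``downward distance at most $-\log\phi(x_0)$'' --- a wrinkle in the definition that the paper's own proof shares.
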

\begin{proof}
By Theorem~\ref{deltatrivth}, $\Gamma_Y$ trivializes $\delta_f$. Since $\deg(f)=p$, it follows that for any ramification point $x\in \Gamma$ with $\delta_f(x)<1$ and a closed downward interval $I$ starting at $x$, the restriction of $\delta_f$ on $I$ is monomial with the slope $p-1$. Also, if $\delta_f(x)=1$, then $C(\Gamma,x,\delta_f^{1/(p-1)(x)})=\{x\}$. This shows that $C\subset T$ and we claim that this is, in fact, an equality because $f$ splits outside of $C$.

To prove the claim, choose any connected component $D$ of $Y\setminus C$. It is an open disc with limit point $y$ that lies on the boundary of $C$ and hence satisfies $\delta_f(y)=1$. Note that $D$ is a wide open domain (see \ref{specsec}) and $D_\infty=\{v\}$, where $v$ is the branch at $y$ in the direction of $D$. Recall that $\Gamma_Y$ trivializes $\delta_f$, hence $S_v=0$ and $\slope_v\delta_f=n_v-1$. Since $\delta_f(y)=1$, we necessarily have that $\slope_v\delta_f\le 0$ and hence $n_v=1$. The morphism $D\to f(D)$ is finite of rank $n_v$, hence $D\toisom f(D)$ and the claim is proved.
\end{proof}

\subsection{Double coverings of $\bfP^1_k$ of genus 1}\label{doubelcover}
We would like to finish the paper with illustrating our results on the particular case of a double covering $f\:E\to\bfP^1_k$ with $E$ being an elliptic curve. In the tame case, this is classical, e.g., see \cite[Section~9.7.3]{bgr}, but the description of the wild case is new, to the best of our knowledge.

\subsubsection{The minimal skeleton}
In the sequel, $(\Gamma_E,\Gamma_P)$ denotes the minimal skeleton of $f$, and $\varphi\:\Gamma_E\to\Gamma_P$ is the induced morphism of graphs. By Theorem~\ref{deltatrivth}, $(\varphi,\slope(\delta))$ is a $\delta$-morphism, that will be denoted by $\varphi$ for shortness.

\begin{lemma}\label{specskel}
The $\delta$-morphism $\varphi$ is special (\ref{specdelta}) and the type of $\varphi$ is as follows: tame or mixed if $\cha(k)\neq 2$, wild if $\cha(k)=2$. Moreover, the restriction of $\delta$ onto $\Gamma_E$ induces a special $\delta$-morphism of metric genus graphs.
\end{lemma}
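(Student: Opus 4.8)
The plan is to verify the five conditions defining a special $\delta$-morphism (\ref{specdelta}) and then to promote the result to the metric setting of \ref{specialmetric}, using throughout that $E$ and $\bfP^1_k$ are smooth proper, hence boundaryless, and that $(\Gamma_E,\Gamma_P)$ is the minimal skeleton of $f$. For (1), note $\deg\varphi=\deg f=2$, while $g(\Gamma_E)=g(E)=1$ and $g(\Gamma_P)=g(\bfP^1_k)=0$ by \ref{gennicesec}; the genus formula of Theorem~\ref{genusth} gives $\sum_yR_y=2\cdot 1-2-2(0-2)=4$, so $\Ram(f)\neq\emptyset$ and the minimal skeleton of $f$ is non-degenerate. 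Stability of $\varphi$ is forced by minimality: any contraction of the $\delta$-morphism would descend to a strictly smaller skeleton of $f$, and the ramification points, being non-balanced, survive every contraction, so no contraction can exist. Finally, since $\partial(E)=\emptyset$, Theorem~\ref{deltatrivth}(iii) shows every non-balanced vertex lies in $\Ram(f)$; such a point $v$ is of type $1$, hence a leaf with $g(v)=0$ and $n_v=2$ (degree-two covers are totally ramified), and $R_v=\delta_{v/f(v)}>0$ by Theorem~\ref{limth}. This gives (2).

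For the type and conditions (3)--(5), first apply Theorem~\ref{limth} at a ramification leaf $v$: $\slope_v\delta_f=\delta_{v/f(v)}-n_v+1=\delta_{v/f(v)}-1$, which vanishes exactly when the ramification is classically tame, i.e. when $\cha(k)\neq2$, and is at least $1$ when $\cha(k)=2$. Hence all vertices with $R_v\neq0$ are simultaneously tame or simultaneously wild, so $\varphi$ is of type (W) when $\cha(k)=2$ and of type (T) or (M) when $\cha(k)\neq2$; this proves (3) and the asserted trichotomy. Condition (4) is immediate: a split edge has $n_e=1$, so $f$ is a local isomorphism along it, $\delta_f\equiv1$ by Lemma~\ref{wildlocus}(i), and $s_e\delta=0$. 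For (5), suppose $s_e\delta\neq0$ on an edge $e$; then $n_e=2$ by (4). If $\cha(\tilk)\neq2$ this edge is tame, so $\delta_f\equiv1$ and $s_e\delta=0$, a contradiction; hence $\cha(\tilk)=2$, $f$ is wild along $e$, and $n_v=2\in4\bfZ+2$. Remark~\ref{restrictrem}(iii) then allows only $s_e\delta$ odd or $s_e\delta=0$, and since $s_e\delta\neq0$ it must be odd.

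For the metric statement, Lemma~\ref{annulilem}(ii) shows each edge is the skeleton of an annulus on which $f$ multiplies lengths by its multiplicity, so $l(\varphi(e))=n_el(e)$ and $\varphi$ is a proper morphism of metric graphs; the restriction $\delta=\delta_f|_{\Gamma_E}$ is continuous with $\log\delta$ linear of integral slope on each edge by Corollary~\ref{pmlem} and Theorem~\ref{deltatrivth}(ii). At an infinite (type $1$) leaf $v$ we have $\delta(v)=|n_v|=|2|$ by Theorem~\ref{limth} (value $|n_y|$ in residue characteristic $0$, and $0=|2|$ in characteristic $2$), and the restriction of Theorem~\ref{restrictth} at every other vertex holds automatically since $\delta_f$ is an honest different function; together with the combinatorial specialness this yields a metric $\delta$-morphism. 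It remains to check the extra requirement of \ref{specialmetric}: in the good reduction case, with $r$ the genus-one vertex, $\delta(r)=1$. As $r$ and $f(r)$ are type $2$, their value groups both equal $|k^\times|$, so $\calH(r)/\calH(f(r))$ is unramified of residue degree $n_r=2$; the residue extension $\wt{\calH(r)}/\wt{\calH(f(r))}$ is separable, for otherwise $C_r\to C_{f(r)}$ would be purely inseparable and we would get $g(r)=g(f(r))=0$, contradicting $g(r)=1$. An unramified extension is not wildly ramified, so $\delta_f(r)=1$ by Lemma~\ref{wildlocus}(ii). The two steps I expect to require the most care are (5), where the sharp form of Remark~\ref{restrictrem}(iii) together with $2\in4\bfZ+2$ is needed to exclude even nonzero slopes, and the evaluation $\delta_f(r)=1$, which rests on the separability-via-genus argument that forces the residue extension to be unramified.
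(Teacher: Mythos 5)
Your proof is correct and follows essentially the same route as the paper's: conditions (1)--(5) of \ref{specdelta} are checked via minimality/stability, Theorem~\ref{deltatrivth}(iii) together with $\partial(E)=\emptyset$, Theorem~\ref{limth} for the tame/wild dichotomy at the ramification leaves, and Remark~\ref{restrictrem}(iii) with $m=p=2$ for oddness of nonzero slopes, while the metric statement reduces to $\delta=|n_v|$ at the tails (Theorem~\ref{limth}) and $\delta(r)=1$ at the genus-one vertex via the same separability-through-genus argument. The only differences are expository, in that you spell out details the paper leaves implicit (e.g., conditions (2), (4) and the stability claim).
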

\begin{proof}
Let us check conditions (1)--(5) of \ref{specdelta}. The minimality of the skeleton is equivalent to the stability of $\varphi$, and clearly $\deg(\phi)=2$. In addition, $g(\Gamma_E)=1$ and $g(\Gamma_P)=0$ by \ref{gennicesec}, so $\varphi$ satisfies condition (1). Condition (2) is satisfied by Theorem~\ref{deltatrivth}(iii) because $E$ is proper and so $\partial(E)=\emptyset$. Any ramification point $y\in\Ram(f)$ has multiplicity 2, hence the ramification is tame if and only if $\cha(k)\neq 2$. By Theorem~\ref{limth}, the ramification is tame at $v$ if and only if $\slope_v\delta_f=0$, and so $R_v=1$. This establishes condition (3) and the asserted dichotomy between tame or mixed, and wild cases. Condition (4) from \ref{specdelta} is satisfied in the obvious way, and (5) follows from Remark~\ref{restrictrem} in the case of $m=p=2$.

To prove that the morphism of metric graphs is special we should check two more conditions. In the mixed case, Theorem~\ref{limth} implies that $\delta=|n_e|$ for any tail $e$. If $y\in Y$ has genus 1 and $x=f(y)$ then $\calH(y)/\calH(x)$ is an extension of degree 2, and the residue field extension $\wt{\calH(y)}/\wt{\calH(x)}$ separable, because otherwise it must be purely inseparable and we would have that $g(y)=g(x)=0$. Thus, $\calH(y)/\calH(x)$ is unramified and hence $\delta_f(y)=1$.
\end{proof}

\begin{lemma}\label{ordinarylem}
Keep the above notation and assume that $\cha(\tilk)=2$, $Y$ has good reduction, and $y\in Y$ is the point of genus 1. Then $Y$ has ordinary reduction if and only if the valence of $y$ in $\Gamma_E$ is 2.
\end{lemma}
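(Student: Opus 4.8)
The plan is to read the ordinary/supersingular dichotomy off the reduction map at $y$ and to match the number of its branch points with the valence of $y$. Since $E$ has good reduction, $y$ is the unique vertex of $\Gamma_E$ of positive genus; its germ reduction $C_y$ is a genus-$1$ curve over $\tilk$ canonically identified with the reduction of $E$, while $C_x=C_{f(y)}\cong\bfP^1_{\tilk}$. By Lemma~\ref{specskel} the extension $\calH(y)/\calH(x)$ is unramified with separable residue extension $\wHy/\wHx$ of degree $2$, so the induced map $h\colon C_y\to C_x$ is a finite separable morphism of degree $2$; as $\cha(\tilk)=2$ it is a purely wildly ramified (Artin--Schreier) cover. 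Note that $E$ has ordinary (resp.\ supersingular) reduction if and only if $C_y$ is ordinary (resp.\ supersingular), so it suffices to detect this property of $C_y$.

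The first key step is the identification of the edges of $\Gamma_E$ at $y$ with the ramification points of $h$. This is essentially contained in the proof of Theorem~\ref{prop:local_RH}: for a branch $v\in\Br(y)$, viewed via Section~\ref{germredsec} as a closed point of $C_y$, the differential slope index $S_v=-\slope_v\delta_f+n_v-1$ equals the index $(\tilcalG:\tilcalE)_v$ of $h^*\Omega_{C_x/\tilk}$ in $\Omega_{C_y/\tilk}$ at $v$, which is exactly the exponent $d_v$ of the different of $h$ at $v$. Hence $S_v\neq 0$ precisely when $h$ ramifies at $v$. Because $\Gamma_E$ is the minimal skeleton, Theorem~\ref{localcharth} and the construction in Remark~\ref{skeletonrem} show that the edges at $y$ are exactly the branches with $S_v\neq 0$: a branch with $S_v=0$ points outside $\Gamma_E$, while one with $S_v\neq 0$ must lie on $\Gamma_E$; and since $\delta_f(y)=1$ is maximal, a branch with $n_v=2$ cannot have $S_v=0$, while a split branch carries a trivial cover and would be contractible, so no superfluous edge survives at $y$. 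Thus $\val(y)=N$, where $N$ is the number of ramification points of $h$.

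The second step counts $N$ by Riemann--Hurwitz for $h$ over $\tilk$. With $g(C_y)=1$, $g(C_x)=0$ and $\deg h=2$ one gets $\sum_v d_v=4$, and for a degree-$2$ Artin--Schreier cover each ramified $d_v$ is even and $\ge 2$; hence $N\in\{1,2\}$, with $(d_v)=(2,2)$ when $N=2$ and $(d_v)=(4)$ when $N=1$. By the standard structure of elliptic curves in characteristic $2$ (the ramification locus of the degree-$2$ quotient map is the fixed locus of the hyperelliptic involution, i.e.\ the geometric $2$-torsion, which has order $2$ in the ordinary case and $1$ in the supersingular case), $N=2$ holds exactly when $C_y$ is ordinary and $N=1$ exactly when it is supersingular. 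Combining the three steps yields $\val(y)=2\iff N=2\iff C_y$ ordinary $\iff E$ has ordinary reduction.

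I expect the main obstacle to be the bookkeeping in the first step: the clean identity $S_v=d_v$ (reusing the index computation of Theorem~\ref{prop:local_RH}) must be coupled with the verification that in the \emph{minimal} skeleton the valence of $y$ counts exactly the ramified branches, with no accidental split edges; on the arithmetic side one must make sure that $N$ genuinely separates ordinary from supersingular rather than merely satisfying $N\le 2$.
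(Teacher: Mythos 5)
Your proposal is correct and takes essentially the same route as the paper: the paper's proof also passes to the morphism $\tilf\:\tilE\to\bfP^1_{\tilk}$ of germ reductions, identifies the edges of $\Gamma_E$ at $y$ with the ramified branches (so that valence $2$ gives two ramification points, hence ordinary, while valence $1$ gives a single branch with $\slope_v\delta_f=-3$, i.e.\ different exponent $4$, hence supersingular). Your write-up merely makes explicit what the paper leaves terse — the identification $S_v=d_v$ from the proof of Theorem~\ref{prop:local_RH}, the fact that in the minimal skeleton the edges at $y$ are exactly the ramified branches (which is Lemma~\ref{subtreelem}(i) plus trivialization and $\delta_f(y)=1$), and the $2$-torsion count behind the ordinary/supersingular dichotomy.
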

\begin{proof}
Let $x=f(y)$ and let $\tilf\:\tilE\to\bfP^1_\tilk$ be the morphism of smooth proper $\tilk$-curves associated to the extension $\wt{\calH(y)}/\wt{\calH(x)}$. If $y$ has valence two then there are two ramified branches at $y$ hence the morphism $\tilf$ has two ramification points and so $\tilE$ is ordinary. If the valence of $y$ is one then there is $v\in\Br(y)$ with $\slope_v\delta_f=-3$. It follows easily that the different of $\tilf$ at $v$ is $4$, and hence $\tilE$ is supersingular.
\end{proof}

\subsubsection{The tame and mixed cases}
If $\cha(k)\neq 2$ then the ramification is tame, hence $|\Ram(f)|=4$. Moving three ramification points to $0,1,\infty$ we can achieve that the fourth one is $\lam$ such that $|\lam|\ge 1$ and $|1-\lam|\ge 1$. Since $f$ is Kummer, it is given by the equation $y^2=x(x-1)(x-\lambda)$. Note that the $j$-invariant of $E$ is $j=2^8\frac{(\lam^2-\lam+1)^3}{\lam^2(\lam-1)^2}$ in this case (e.g., \cite[p. 317]{Hartshorne}), and so $|j|=|256|\cdot|\lam|^2$ when $|\lam|>1$.

Let $\Gamma_0$ be the convex hull of $f(\Ram(f))$ in $X$. By Remarks~\ref{specialrem} and \ref{specialmetricrem}, $\Gamma_0$ is either of X-shape or H-shape, and the shape together with the length $l$ of the bar, which equals to $|\lam|$, determines the type completely. In addition, the metric is determined by the formulas $|\lam|=l=4l_1+l_0$ and $l_1+3l_3=-\log|2|$ from Remark~\ref{specialmetricrem} and Theorem~\ref{speciallem}(c).

\subsubsection{The wild case}
Assume, now, that $\cha(k)=2$. We should replace the Weierstrass form with a reasonable non-constant one-parametric family. Perhaps the most natural choice is to take Deuring's normal form: $y^2+\alp xy+y=x^3$. Let $E_\alp$ be the associated curve; its $j$-invariant can be computed by Tate's formulae, see \cite[Section~2]{Tate}. The following modular forms from Tate's list are non-zero for this equation: $a_1=\alp$, $a_3=1$, $b_2=\alp^2$, $c_4=\alp^4$, $c_6=\alp^6$, $\Delta=\alp^3+1$ and $j=\frac{\alp^{12}}{\alp^3+1}$. In particular, the $\alp$-line provides a 12-fold covering of the moduli space of elliptic curves, $E_\alp$ is supersingular if and only if $\alp=0$ and $E_\alp$ is nodal if and only if $\alp\in\{1,\infty\}$. Note also that if $|\alp|\le 1=|\alp+1|$ then $E_\alp$ is a good reduction curve whose genus 1 point sits over the Gauss point of the $x$-line. The reduction curve is given by $\tily^2+\wt{\alp}\tilx\tily+\tily=\tilx^3$, so it is supersingular if and only if $|\alp|<1$.

The metric skeleton is as follows: $\varphi$ is of type (WB) if and only if $E_\alp$ has bad reduction. It is classical that this happens if and only if $|j|>1$, and then $\log|j|$ is the length of the loop (the interested reader can also deduce this directly by analysing the case $|\alp+1|<1$). It follows from Lemma~\ref{ordinarylem} that $E$ has ordinary reduction if and only if $\varphi$ is of type (WO). To distinguish the cases (WS) and (WSS) corresponding to the supersingular reduction we note that $|\Ram(f)|=1$ and so $E$ is supersingular and $j=0$ in the case (WSS), while $|\Ram(f)|=2$ and so $E$ is ordinary and $j\neq 0$ in the case (WS). In the cases (WO) and (WSS), $\Gamma_P$ consists of tails. The metric structure of $\Gamma_P$ in (WS) is determined by the length $l_3$ of the edge $e$ connecting the supersingular point with the path between the ramification points. The double covering $f\:E_\alp\to\bfP^1_k$ of the $x$-plane is ramified over the points $x=\frac{1}{\alp},\infty$, hence the image of $e\subset\Gamma_E\subset E_\alp$ in $\bfP^1_k$ is the interval connecting the Gauss point with the line $[\frac{1}{\alp},\infty]$. Its length equals to $-\log|\alp|$ and hence $l(e)=-\frac{1}{2}\log|\alp|=-\frac{1}{24}\log|j|$.

\subsubsection{The summary}
Using the fact the reduction is good if and only if $|j|\le 1$ and the reduction is supersingular if and only if $|j|<1$ and $\cha(\tilk)=2$, we can summarize our classification of double coverings as follows. The relations between $|j|$ and $|\lam|$ or $|\alp|$ we have observed earlier, are used to express the metric in terms of $|j|$ only.

\begin{theorem}\label{skelelliptic}
The ten non-exceptional special $\delta$-morphisms from Theorem~\ref{specialClassification} are precisely the $\delta$-morphisms that occur as the minimal skeleton $\varphi\:\Gamma_E\to\Gamma_P$ of a double covering $f\colon E\to \bfP^1_k$ with $E$ an elliptic curve. Moreover, a special $\delta$-morphism $\Gamma\to\Gamma'$ of metric genus graphs (with respect to the semivaluation of $\bfZ$ induced from $k$), see Theorem~\ref{speciallem}, lifts to such a double covering if and only if the lengths of the inner edges of $\Gamma$ belong to $|k^\times|$. These cases are characterized as follows:

(i) $\varphi$ is (TB) if and only if $\cha(\tilk)\neq 2$ and $|j|>1$ if and only if $\cha(\tilk)\neq 2$ and $E$ has bad reduction. In this case, $l_0=\frac{1}{2}\log|j|$.

(ii) $\varphi$ is (TG) if and only if $\cha(\tilk)\neq 2$ and $|j|\le 1$ if and only if $\cha(\tilk)\neq 2$ and $E$ has good reduction.

(iii) $\varphi$ is (MB) if and only if $\cha(k)=0$, $\cha(\tilk)=2$ and $|j|>1$ if and only if $\cha(k)=0$, $\cha(\tilk)=2$ and $E$ has bad reduction. In this case, $l_0=\frac{1}{2}\log|j|$ and $l_1=-\log|2|$.

(iv) $\varphi$ is (MO) if and only if $\cha(k)=0$, $\cha(\tilk)=2$ and $|j|=1$ if and only if $\cha(k)=0$, $\cha(\tilk)=2$ and $E$ has ordinary reduction. In this case, $l_1=-\log|2|$.

(v) $\varphi$ is (MS) if and only if $\cha(k)=0$, $\cha(\tilk)=2$ and $|256|<|j|<1$. In this case, the reduction is supersingular, $l_1=\frac{1}{8}\log|j|-\log|2|$ and $l_3=-\frac{1}{24}\log|j|$.

(vi) $\varphi$ is (MSS) if and only if $\cha(k)=0$, $\cha(\tilk)=2$ and $|j|\le|256|$. In this case, the reduction is supersingular and $l_3=-\frac{1}{3}\log|2|$.

(vii) $\varphi$ is (WB) if and only if $\cha(k)=2$ and $|j|>1$ if and only if $\cha(k)=2$ and $E$ has bad reduction. In this case, $l_0=\frac{1}{2}\log|j|$.

(viii) $\varphi$ is (WO) if and only if $\cha(k)=2$ and $|j|=1$ if and only if $\cha(k)=2$ and $E$ has ordinary reduction.

(ix) $\varphi$ is (WS) if and only if $\cha(k)=2$ and $0<|j|<1$ if and only if $\cha(k)=2$, $E$ is ordinary and the reduction is supersingular. In this case, $l_3=-\frac{1}{24}\log|j|$.

(x) $\varphi$ is (WSS) if and only if $\cha(k)=2$ and $j=0$ if and only if $E$ is supersingular.

\end{theorem}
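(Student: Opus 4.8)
The plan is to combine the structural constraints already established (Lemma~\ref{specskel}, Theorem~\ref{speciallem}, Lemma~\ref{ordinarylem}) with explicit one-parameter families realizing each type, and then to rewrite all the metric data in terms of $|j|$.

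First I would dispose of the forward direction, that any minimal skeleton $\varphi\:\Gamma_E\to\Gamma_P$ is one of the ten listed $\delta$-morphisms. By Lemma~\ref{specskel}, $\varphi$ is a special $\delta$-morphism whose type is tame or mixed when $\cha(k)\neq 2$ and wild when $\cha(k)=2$, and the restriction of $\delta_f$ to $\Gamma_E$ upgrades it to a special $\delta$-morphism of metric genus graphs. A priori Theorem~\ref{specialClassification} allows twelve combinatorial types, but since $\varphi$ carries the genuine metric coming from the edge lengths of $\Gamma_E$, Theorem~\ref{speciallem}(i) forbids the exceptional configurations (ME) and (MES). This leaves precisely the ten types (TB), (TG), (MB), (MO), (MS), (MSS), (WB), (WO), (WS), (WSS).

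Next I would prove the converse by exhibiting families. For $\cha(k)\neq 2$ the covering is Kummer, so I would use the Legendre form $y^2=x(x-1)(x-\lam)$, normalised so that $|\lam|\ge 1$ and $|1-\lam|\ge 1$; here $\Ram(f)=\{0,1,\infty,\lam\}$ and $|j|=|256|\cdot|\lam|^2$ when $|\lam|>1$. For $\cha(k)=2$ I would use the Deuring family $y^2+\alp xy+y=x^3$ with $j=\alp^{12}/(\alp^3+1)$, noting that $E_\alp$ is supersingular iff $\alp=0$ and has bad reduction iff $|\alp+1|<1$. In each family the combinatorial type is read off from the configuration of the ramification points and the reduction type, using the standard dichotomy good/bad reduction $\Leftrightarrow |j|\le 1$ / $|j|>1$ and, for good reduction, Lemma~\ref{ordinarylem} to separate ordinary from supersingular; sweeping the parameter realizes all ten types. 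The realizability criterion then drops out: the vertices of $\Gamma_E$ are type $2$ points, so every inner edge is the skeleton of an annulus whose endpoints have radius in $|k^\times|$ and therefore has length in $\log|k^\times|$, while conversely a special metric $\delta$-morphism with inner-edge lengths in $|k^\times|$ is matched by an appropriate choice of $\lam$ or $\alp$.

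Finally I would convert the abstract lengths $l_0,l_1,l_3$ of Theorem~\ref{speciallem}(c) and Remark~\ref{specialmetricrem} into the $|j|$-formulas of (i)--(x). Here I would feed in the two relations computed above, $|j|=|256|\cdot|\lam|^2$ in the tame/mixed case and $l(e)=-\tfrac12\log|\alp|=-\tfrac1{24}\log|j|$ in the wild supersingular case, together with the constraint $\sum_i i\,l_i=-\log|2|$; the bar-length comparisons of Remark~\ref{specialmetricrem} distinguishing (MB), (MO), (MS) then translate into $|j|>1$, $|j|=1$, $|256|<|j|<1$, and $|j|\le|256|$ singles out (MSS). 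The main obstacle I anticipate is exactly this last bookkeeping in the mixed case: one must track how the two constraints $\sum_i i\,l_i=-\log|2|$ and the bar length $l=4l_1+l_0$ interact with the threshold $-\log|16|$ to pin down each of (MB), (MO), (MS), (MSS) separately, and check that the boundary value $|j|=|256|$ comes out exactly at the (MS)/(MSS) transition.
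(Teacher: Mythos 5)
Your proposal is correct and follows essentially the same route as the paper: Lemma~\ref{specskel} plus Theorem~\ref{speciallem} to rule out the exceptional types (ME), (MES) and reduce to the ten cases, the Legendre family $y^2=x(x-1)(x-\lam)$ with $|j|=|256|\cdot|\lam|^2$ in residue characteristic $\neq 2$ and the Deuring family $y^2+\alp xy+y=x^3$ with $j=\alp^{12}/(\alp^3+1)$ in characteristic $2$, Lemma~\ref{ordinarylem} to separate ordinary from supersingular reduction, and the bar-length comparisons of Remarks~\ref{specialrem} and \ref{specialmetricrem} to translate the thresholds at $-\log|16|$ into the conditions $|j|>1$, $|j|=1$, $|256|<|j|<1$, $|j|\le|256|$. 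The mixed-case bookkeeping you flag as the main obstacle is exactly the computation the paper carries out, and your sketch of it is accurate.
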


\begin{corollary}
(i) The type of the graph is determined by $|j|$ and the characteristics of $k$ and $\tilk$.

(ii) If one only considers the type of reduction (bad, ordinary, supersingular) instead of $|j|$ then all cases are distinguished except the following two pairs: (MS) versus (MSS), and (WS) versus (WSS). The latter pair is distinguished by the type of $E$ itself.
\end{corollary}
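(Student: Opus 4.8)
The plan is to assemble Lemma~\ref{specskel}, Lemma~\ref{ordinarylem}, Theorem~\ref{specialClassification} and Theorem~\ref{speciallem} with the explicit analysis of the Legendre and Deuring families carried out above, and then to read off the dependence on $|j|$ case by case. I would begin with the direction that any minimal skeleton is one of the ten types: by Lemma~\ref{specskel} the induced $\varphi\colon\Gamma_E\to\Gamma_P$ is a special $\delta$-morphism of metric genus graphs, of tame or mixed type when $\cha(k)\neq 2$ and of wild type when $\cha(k)=2$, and by Theorem~\ref{speciallem}(i) the exceptional configurations (ME), (MES) do not lift, leaving exactly the ten cases. For the edge-length criterion the necessity is immediate: every inner edge of the minimal skeleton $\Gamma_E$ joins vertices of positive genus or of valency at least three, hence type $2$ points, whose radii lie in $|k^\times|$; thus each inner edge has length in $\log|k^\times|$.

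For sufficiency and for the fact that each type is actually attained, the plan is to realize a prescribed metric special $\delta$-morphism by an explicit Weierstrass equation: the Legendre family $y^2=x(x-1)(x-\lam)$ when $\cha(k)\neq 2$, and Deuring's family $y^2+\alp xy+y=x^3$ when $\cha(k)=2$. Since $k$ is algebraically closed, $|k^\times|$ is divisible, so I can solve $|j|=|256|\,|\lam|^2$ (respectively $j=\alp^{12}/(\alp^3+1)$) for an element of $k$ whose absolute value produces any target value of $|j|$ compatible with inner edge lengths in $\log|k^\times|$; the metric formulas of Theorem~\ref{speciallem}(c), together with the lengths computed above, then show the realized morphism carries the prescribed metric.

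The next step is to identify the combinatorial type from $|j|$ using the two classical reduction facts---good reduction precisely when $|j|\le 1$, supersingular reduction precisely when $|j|<1$ and $\cha(\tilk)=2$---and the metric data already obtained. In the tame range this splits (TB) from (TG) at $|j|=1$, with the loop length recovered from Lemma~\ref{annulilem}. In the wild range the Deuring computation gives (WB) for $|j|>1$, (WO) for $|j|=1$ via Lemma~\ref{ordinarylem}, and the two supersingular cases for $|j|<1$, which are separated by $|\Ram(f)|$: one ramification point (equivalently $\alp=0$, $j=0$) gives (WSS), two ramification points ($j\neq 0$) give (WS), with $l_3=-\tfrac1{24}\log|j|$. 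Collecting these equivalences produces items (i)--(iv) and (vii)--(x).

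The step I expect to be the main obstacle is the mixed characteristic separation of (MS) and (MSS) at the threshold $|j|=|256|$. Here I would combine $|j|=|256|\,|\lam|^2$ with the shape analysis of the convex hull $\Gamma_0$ of $f(\Ram(f))$ in Remark~\ref{specialrem} and the bar-length dichotomy of Remark~\ref{specialmetricrem}: the H-shape, with positive bar length $\log|\lam|$, forces $|j|>|256|$ and type (MS) with $l_1=\tfrac18\log|j|-\log|2|$ and $l_3=-\tfrac1{24}\log|j|$, whereas the X-shape, where the four points reduce to distinct points, forces $|j|\le|256|$ and type (MSS) with $l_3=-\tfrac13\log|2|$. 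The delicate point is the non-strict inequality at the boundary, which reflects the extra drop of $|\lam^2-\lam+1|$ below $1$ in residue characteristic $2$; checking that the resulting $l_i$ satisfy the constraint $\sum_i i\,l_i=-\log|2|$ of Theorem~\ref{speciallem}(c) and genuinely distinguish the two configurations is the main bookkeeping. Once all ten equivalences are in place, the corollary is immediate: the type is determined by $|j|$ and the two characteristics, and coarsening $|j|$ to the reduction type merges exactly (MS) with (MSS) and (WS) with (WSS), the latter pair being separated by whether $E$ is supersingular.
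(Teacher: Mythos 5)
Your proposal is correct and follows essentially the same route as the paper: the corollary is simply read off from the ten-case classification of Theorem~\ref{skelelliptic}, which you reconstruct using exactly the paper's own ingredients (Lemma~\ref{specskel}, Lemma~\ref{ordinarylem}, Theorem~\ref{speciallem}, the Legendre and Deuring families, the classical reduction criteria in terms of $|j|$, and the shape/bar-length dichotomy of Remarks~\ref{specialrem} and~\ref{specialmetricrem}). Your treatment of the $(MS)$/$(MSS)$ threshold at $|j|=|256|$, including the possible drop of $|\lam^2-\lam+1|$ below $1$ in residue characteristic $2$, matches the paper's implicit computation.
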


\begin{remark}
Using the notion of canonical subgroups one can also distinguish cases (MS) and (MSS). Recall that if $E$ has ordinary reduction then there is a canonical subgroup $C$ of the 2-torsion group $E[2]$, which lifts the connected component of $\tilE[2]$. Moreover, it is well known that this subgroup extends to some elliptic curves with supersingular reduction. In fact, these are precisely the (MS) curves and one should simply take $C=\{\infty,\lam\}$. For (MSS) curves, any disc in $E$ containing two points of $E[2]$ contains all of $E[2]$.
\end{remark}

\begin{remark}
By Theorem \ref{coneth} the topological ramification locus of $f\:E\to\bfP^1_k$ is the radial set $C(\Gamma_0,\delta_f)$ with center at a subgraph $\Gamma_0$ of $\Gamma_E$ obtained by removing the loop edges. The configuration is supersingular if and only if there is an edge with slope of the different equal to 3. It follows easily from Remark~\ref{conerem} that this happens if and only if $C(\Gamma_0,\delta_f)$ is strictly smaller than the metric neighborhood $B(\Gamma_0,\delta_f)$ of $\Gamma_0$.
\end{remark}

\bibliographystyle{amsalpha}
\bibliography{wild_ramification}

\end{document}